\newcommand{\linelabel}[1]{}
\DeclareSymbolFont{rsfs}{U}{rsfs}{m}{n}
\DeclareSymbolFontAlphabet{\mathrsfs}{rsfs}
\definecolor{darkspringgreen}{rgb}{0.09, 0.45, 0.27}
\definecolor{deepjunglegreen}{rgb}{0.0, 0.29, 0.29}
\newenvironment{NB}{
\color{red}{\bf NB}. \footnotesize
}{}
\newenvironment{NB2}{
\color{blue}{\bf NB2}. \footnotesize
}{}
\let\RR\relax
\let\bN\relax
\let\SL\relax
\let\PGL\relax
\let\oldref=\ref
\renewcommand{\ref}[1]{%
  \def\@mystring{blowup_pre-#1}%
  \@ifundefined{r@\@mystring}{%
    \def\@mystring{Coulomb2-#1}%
    \@ifundefined{r@\@mystring}{%
      \cref{#1}}{%
%      \cite[\namecref{Coulomb2-#1}\oldref{Coulomb2-#1}]{main}%
      \namecref{Coulomb2-#1}II.\oldref{Coulomb2-#1}%
      %\begin{NB} Check an external reference !\end{NB}%
    }%
  }%
  {%
%   \cite[\namecref{blowup_pre-#1}\oldref{blowup_pre-#1}]{blowup}%
    \namecref{blowup_pre-#1}Q.\oldref{blowup_pre-#1}%
%    \begin{NB} Check an external reference !\end{NB}%
  }%
}
\renewcommand{\eqref}[1]{%
  \def\@mystring{blowup_pre-#1}%
  \@ifundefined{r@\@mystring}{%
    \def\@mystring{Coulomb2-#1}%
    \@ifundefined{r@\@mystring}{%
      \textup{\tagform@{\oldref{#1}}}}{%
% \text{\cite[(\oldref{Coulomb2-#1})]{main}}%
      \text{(II.\oldref{Coulomb2-#1})}%
%      \begin{NB} Check an external reference !\end{NB}%
    }%
  }%
  {%
    % \text{\cite[(\oldref{blowup_pre-#1}]{blowup}}%
    \text{(Q.\oldref{blowup_pre-#1})}%
%  \begin{NB} Check an external reference !\end{NB}%
}%
}
\crefname{Theorem}{Theorem\xspace}{Theorems}
\crefname{section}{\S}{\S\S}
\crefname{Lemma}{Lemma\xspace}{Lemmas\xspace}
\crefname{Proposition}{Proposition\xspace}{Propositions\xspace}
\crefname{Corollary}{Corollary\xspace}{Corollaries\xspace}
\crefname{Definition}{Definition}{Definitions}
\crefname{Remark}{Remark\xspace}{Remarks\xspace}
\crefname{Remarks}{Remark\xspace}{Remarks\xspace}
\crefname{Conjecture}{Conjecture\xspace}{Conjectures\xspace}
\crefname{figure}{Figure\xspace}{Figure\xspace}
\crefname{appendix}{Appendix\xspace}{Appendices\xspace}
\crefname{equation}{}{}
\renewcommand{\thesubsection}{\thesection(\@roman\c@subsection)}
\newcounter{number}
\newtheorem{Theorem}[equation]{Theorem}
\newtheorem{Corollary}[equation]{Corollary}
\newtheorem{Lemma}[equation]{Lemma}
\newtheorem{Proposition}[equation]{Proposition}
\theoremstyle{definition}
\newtheorem{Definition}[equation]{Definition}
\newtheorem{Example}[equation]{Example}
\newtheorem{Conjecture}[equation]{Conjecture}
\theoremstyle{remark}
\newtheorem{Remark}[equation]{Remark}
\newtheorem{Remarks}[equation]{Remarks}
\newtheorem*{Claim}{Claim}
\numberwithin{equation}{section}
\newcommand{\thmref}[1]{\ref{#1}}
\newcommand{\secref}[1]{\ref{#1}}
\newcommand{\lemref}[1]{\ref{#1}}
\newcommand{\propref}[1]{\ref{#1}}
\newcommand{\subsecref}[1]{\ref{#1}}
\newcommand{\remref}[1]{\ref{#1}}
\newcommand{\defeq}{\overset{\operatorname{\scriptstyle def.}}{=}}
\newcommand{\CC}{{\mathbb C}}
\newcommand{\ZZ}{{\mathbb Z}}
\newcommand{\RR}{{\mathbb R}}
\newcommand{\proj}{{\mathbb P}}
\newcommand{\CP}{\proj}
\newcommand{\SL}{\operatorname{\rm SL}}
\newcommand{\SU}{\operatorname{\rm SU}}
\newcommand{\GL}{\operatorname{GL}}
\newcommand{\PGL}{\operatorname{PGL}}
\newcommand{\SO}{\operatorname{\rm SO}}
\newcommand{\grpSp}{\operatorname{\rm Sp}}
\newcommand{\algsl}{\operatorname{\mathfrak{sl}}} % because \sl="slant"
\newcommand{\gl}{\operatorname{\mathfrak{gl}}}
\newcommand{\g}{{\mathfrak g}}
\newcommand{\Spec}{\operatorname{Spec}\nolimits}
\newcommand{\Proj}{\operatorname{Proj}\nolimits}
\newcommand{\End}{\operatorname{End}}
\newcommand{\Hom}{\operatorname{Hom}}
\newcommand{\Ext}{\operatorname{Ext}}
\newcommand{\rank}{\operatorname{rank}}
\newcommand{\id}{\operatorname{id}}
\renewcommand{\MR}[1]{}
\newcommand{\dslash}{/\!\!/}
\newcommand{\vin}[1]{\operatorname{i}(#1)} % incoming vertex
\newcommand{\vout}[1]{\operatorname{o}(#1)} % outgoing vertex
\newcommand{\bM}{\mathbf M}
\newcommand{\bN}{\mathbf N}
\newcommand{\shfO}{\mathcal O}
\newcommand{\tslash}{/\!\!/\!\!/}
\newcommand{\tslabar}{\mathbin{
\setbox0=\hbox{/\!\!/\!\!/}\rule[0.4\ht0]{\wd0}{.3\dp0}\kern-\wd0\box0}}
\newcommand{\Hyp}{\operatorname{Hyp}}
\newcommand{\la}{\lambda}
\newcommand{\Gr}{\mathrm{Gr}}
\newcommand{\Fl}{\mathsf{Fl}}
\newcommand{\cC}{\mathcal C}
\newcommand{\cE}{\mathcal E}
\newcommand{\cF}{\mathcal F}
\newcommand{\cG}{\mathcal G}
\newcommand{\cK}{\mathcal K}
\newcommand{\cO}{\mathcal O}
\newcommand{\cR}{\mathcal R}
\newcommand{\cT}{\mathcal T}
\newcommand{\cW}{\mathcal W}
\newcommand{\cX}{\mathcal X}
\newcommand{\cY}{\mathcal Y}
\newcommand{\scP}{\mathscr P}
\newcommand{\scA}{\mathscr A}
\newcommand{\scB}{\mathscr B}
\newcommand{\cA}[1][{}]{%
  \@ifmtarg{#1}%
  {\mathcal A}% if #1 is empty
  {\mathcal A(#1)}% if #1 is not empty
}
\newcommand{\cAh}[1][{}]{%
  \@ifmtarg{#1}%
  {\mathcal A_\hbar}% if #1 is empty
  {\mathcal A_\hbar(#1)}% if #1 is not empty
}
\newcommand{\leftmapsto}{\mapsfrom}
\newcommand{\DD}{\mathbb D}
\newcommand{\DC}{\boldsymbol\omega}
\newcommand{\ft}{\mathfrak t}
\newcommand{\gr}{\operatorname{gr}}
\newcommand{\Iw}{\tilde G_\cK^\cO}
\newcommand{\Res}{\operatorname{Res}}
\newcommand{\For}{\operatorname{For}}
\newcommand{\laF}{{\lambda_F}}
\newcommand{\muF}{{\mu_F}}
\newcommand{\scAres}{\tilscA^{\mathrm{res}}}
\newcommand{\scAfor}{\scA^{\mathrm{for}}}
\newcommand{\cL}{\mathcal L}
\newcommand{\Aut}{\operatorname{Aut}}
\newcommand{\HHom}{\mathcal Hom}
\newcommand{\RHHom}{{\mathcal{RH}om}}
\newcommand{\triv}{\mathrm{triv}}
\newcommand{\po}{\ar@{}[dr]|{\text{\pigpenfont R}}}
\newcommand{\pb}{\ar@{}[dr]|{\text{\pigpenfont J}}}
\newcommand{\pp}{\ar@{}[dr]|{\text{\pigpenfont P}}}
\newcommand{\cM}{\mathcal M}
\newcommand{\reg}{{\operatorname{reg}}}
\newcommand{\BA}{{\mathbb{A}}}
\newcommand{\BC}{{\mathbb{C}}}
\newcommand{\BD}{{\mathbb{D}}}
\newcommand{\BG}{{\mathbb{G}}}
\newcommand{\BN}{{\mathbb{N}}}
\newcommand{\BO}{{\mathbb{O}}}
\newcommand{\BP}{{\mathbb{P}}}
\newcommand{\BZ}{{\mathbb{Z}}}
\newcommand{\bD}{{\mathbf{D}}}
\newcommand{\bq}{{\mathbf{q}}}
\newcommand{\CB}{{\mathcal{B}}}
\newcommand{\CF}{{\mathcal{F}}}
\newcommand{\CG}{{\mathscr{G}}}
\newcommand{\CJ}{{\mathcal{J}}}
\newcommand{\CL}{{\mathcal{L}}}
\newcommand{\CM}{{\mathcal{M}}}
\newcommand{\CN}{{\mathcal{N}}}
\newcommand{\CO}{{\mathcal{O}}}
\newcommand{\CR}{{\mathcal{R}}}
\newcommand{\cS}{{\mathcal{S}}}
\newcommand{\CW}{{\mathcal{W}}}
\newcommand{\oW}{\overline{\mathcal{W}}{}}
\newcommand{\iso}{\overset{\sim}{\longrightarrow}}
\newcommand{\fb}{{\mathfrak{b}}}
\newcommand{\fg}{{\mathfrak{g}}}
\newcommand{\fh}{{\mathfrak{h}}}
\newcommand{\fri}{{\mathfrak{i}}}
\newcommand{\fl}{\mathfrak l}
\newcommand{\fz}{{\mathfrak{z}}}
\newcommand{\fu}{{\mathfrak{u}}}
\newcommand{\sfm}{{\mathsf{m}}}
\newcommand{\ol}{\overline}
\newcommand{\on}{\operatorname}
\newcommand{\wit}{\widetilde}
\newcommand{\oZ}{\mathring{Z}}
\newcommand{\Areg}{\scA_R}
\newcommand{\Weyl}{{\mathbb W}}
\newcommand{\gvee}{{\mathfrak g}^{\!\scriptscriptstyle\vee}}
\newcommand{\tilscA}{\scA}
\newcommand{\fr}{\mathrm{fr}}
\newcommand{\bz}{\mathbf z}
\newcommand{\iialp}{\mathsf n}
\newcommand{\stslash}{%
  \mathord{%
    \begin{tikzpicture}[baseline=.3ex]
      \draw[densely dotted] (0,0) -- (.7ex,2.3ex);
      \draw (.4ex,0) -- (1.1ex,2.3ex);
    \end{tikzpicture}\kern.15ex
  }
}
\let\@wraptoccontribs\wraptoccontribs
\def\@setcontribs{%
  \@xcontribs
  % \MakeUppercase{\xcontribs}% changed by HN
  \textsf{\xcontribs}
}
\toks@\expandafter{#1#2}%
\def\addto@every@math{%
  \expandafter\addto\csname \expandafter\ifx
  \csname mathoptions@on\endcsname\relax % detect nath.sty
  check@mathfonts\else mathoptions@on\fi\endcsname
}
\def\active@def#1{%
  \begingroup\lccode`\~=`#1\relax\lowercase{\endgroup\def~}%
}
\def\fixmathspacing#1#2{%
  \addto@every@math{%
    \catcode`#1=12 \mathcode`#1="8000
    \active@def#1{#2}%
  }%
}
\def\backslash{\delimiter"526E30F\mathopen{}}
\begin{document}

\title[Ring objects from Coulomb branches]{Ring objects in the equivariant derived Satake category arising from Coulomb branches
%{\rm Preliminary Version (\today)}
}
\author[A.~Braverman]{Alexander Braverman}
\address{
Department of Mathematics,
University of Toronto and Perimeter Institute of Theoretical Physics,
Waterloo, Ontario, Canada, N2L 2Y5
}
\email{braval@math.toronto.edu}
\author[M.~Finkelberg]{Michael Finkelberg}
\address{National Research University Higher School of Economics,
Russian Federation, Department of Mathematics, 6 Usacheva st, Moscow 119048;
Skolkovo Institute of Science and Technology}
\email{fnklberg@gmail.com}
\author[H.~Nakajima]{Hiraku Nakajima}
\address{Research Institute for Mathematical Sciences,
Kyoto University, Kyoto 606-8502,
Japan}
\email{nakajima@kurims.kyoto-u.ac.jp}
\contrib[Appendix B by]{Gus Lonergan
}

\subjclass[2000]{}
\begin{abstract}
  This is the second companion paper of \cite{main}. We consider the
  morphism from the variety of triples introduced in \cite{main} to
  the affine Grassmannian. The direct image of the dualizing complex
  is a ring object in the equivariant derived category on the affine
  Grassmannian (equivariant derived Satake category). We show that
  various constructions in \cite{main} work for an arbitrary
  commutative ring object.

  The second purpose of this paper is to study Coulomb branches
  associated with star shaped quivers, which are expected to be
  conjectural Higgs branches of $3d$ Sicilian theories in type $A$ by
  \cite{Benini:2010uu}.
\end{abstract}

\maketitle

\setcounter{tocdepth}{2}
%\tableofcontents

\section{Introduction}

This is the second companion paper of \cite{main}, where
%In \cite{main} 
we give a mathematical definition of the Coulomb branch $\cM_C$ of a
$3d$ SUSY gauge theory associated with a complex reductive group $G$
and its symplectic representation $\bM$ of a form
$\bN\oplus\bN^*$. Recall that $\cM_C$ is defined as an affine
algebraic variety whose coordinate ring is the equivariant Borel Moore
homology group $H^{G_\cO}_*(\cR)$ of a certain space $\cR$, called the
variety of triples. The product is given by the convolution. Here
$G_\cO$ is the $\CC[[z]]$-valued points of $G$.

By its definition, we have a projection $\pi\colon \cR\to\Gr_G$, where
$\Gr_G$ is the affine Grassmannian for $G$.
Therefore we have a natural object $\scA$ in an approproate Ind-completion 
$D_G(\Gr_G)$ of the derived $G_\cO$-equivariant
constructible category on $\Gr_G$ defined by\linebreak[4]
$\pi_*\DC_{\cR}[-2\dim\bN_\cO]$, where $\DC_{\cR}$ is the dualizing
complex on $\cR$.
We can recover $H^{G_\cO}_*(\cR)$ as
$H^*_{G_\cO}(\Gr_G,\scA)$. Moreover the construction of the
convolution product gives us a homomorphism
$\mathsf m\colon\scA\star\scA\to \scA$, where $\star$ is the
convolution product on $D_{G}(\Gr_G)$. It is an associative multiplication on
$\scA$. Then we have an induced multiplication on
$H^*_{G_\cO}(\Gr_G,\scA)$ from $\mathsf m$, which is the same as the
product on $H^{G_\cO}_*(\cR)$ defined in \cite{main}. We also 
prove that it is a commutative object in $D_G(\Gr_G)$, and hence
the induced multiplication on $H^*_{G_\cO}(\Gr_G,\scA)$ is
commutative. It is the second proof of the commutativity of the
product on $H^{G_\cO}_*(\cR)$, which is more conceptual than the first
computational proof in \cite{main}.

In view of the original proposal in \cite{2015arXiv150303676N}, we
expect that this construction can be generalized to the case when
$\bM$ is not necessarily of the form $\bN\oplus\bN^*$.

Anyhow if we have a commutative ring object $\scA$ in $D_G(\Gr_G)$, we
get a commutative ring structure on $H_{G_\cO}^*(\Gr_G,\scA)$, and
hence the `Coulomb branch' as its spectrum.

Our reformulation of the definition of the Coulomb branch via
$(\scA,\mathsf m)$ reminds us a construction of the nilpotent cone and
its Springer resolution via a perverse sheaf $\Areg$
\cite{MR2053952}. Here $\Areg$ is a perverse sheaf corresponding to the
regular representation $\CC[G^\vee]$ of the Langlands dual group
$G^\vee$ under the geometric Satake correspondence, and hence is a
commutative ring object in $\operatorname{Perv}_{G_\cO}(\Gr_G)$.
Let us call it the \emph{regular sheaf}. It is given by
$\bigoplus_\la (V_{G^\vee}^\la)^\vee\otimes_\CC
\operatorname{IC}(\overline{\Gr}_{G}^{\la})$,
where $(V_{G^\vee}^\la)^\vee$ is the dual of the irreducible
representation of $G^\vee$ with the highest weight $\la$ and
$\overline{\Gr}_{G}^{\la}$ is the closure of the $G_\cO$-orbit of
$z^\la$ in $\Gr_G$.
We prove that $\Areg$ is realised as a variant of the above $\scA$ for
a quiver gauge theory in type $A$. (We consider the framed quiver
gauge theory of type $A_{N-1}$ with $\dim V = (N-1,N-2,\dots,1)$,
$\dim W = (N,0,\dots,0)$ and consider the pushforward to
$\Gr_{\PGL(N)}$. See \ref{subsec:ABG} for more detail.)
This constuction might be generalized to type $BCD$, once we can
generalize our definition to the case when $\bM$ is not necessarily of
a form $\bN\oplus\bN^*$ (cotangent type). However we do \emph{not}
expect $\Areg$ arises in a similar way for exceptional types. Hence we
have more examples of commutative ring objects in $D_{G}(\Gr_G)$ than
our construction.

Once we have a collection $\{ \scA_i\}$ of commutative ring objects in
$D_G(\Gr_G)$, we can construct a new commutative ring object as
$i_\Delta^!(\boxtimes\scA_i)$, where
$i_\Delta\colon\Gr_G\to\prod_i \Gr_G$ is the diagonal embedding. We
call this the \emph{gluing construction}. It is motivated by
\cite{Cremonesi:2014kwa}. (See \cite[5(i)]{2015arXiv150303676N} for a
quick review and links to other physics literature.)

The second purpose of this paper is to study Coulomb branches
associated with a star shaped quiver. It is regarded as an example of
the gluing construction of a ring objects from those for legs.
It is expected that Coulomb branches of star shaped quiver gauge
theories are conjectural Higgs branches of $3d$ Sicilian theories in
type $A$ \cite{Benini:2010uu}. (See \cite[3(iii)]{2015arXiv150303676N}
for a review for a mathematician.) Expected properties of these Higgs
branches are listed in \cite{MR2985331}. % We check most of them.
Recently Ginzburg-Kazhdan \cite{GK} construct holomorphic symplectic
varieties satisfying (most of) these properties for any type. The
construction of $\Areg$ as $\scA$ implies that Coulomb branches of
star shaped quiver gauge theories are isomorphic to Ginzburg-Kazhdan
varieties in type $A$ via \cite{MR3366026}.
%
% See the last paragraph in \ref{sec:Sicilian} for more relations
% to our works to theirs.
%
We check two among the remaining properties, which identify
Ginzburg-Kazhdan varieties of type $A_1$, $A_2$ with
$\CC^2\otimes\CC^2\otimes\CC^2$ and the minimal nilpotent orbit of
$E_6$ respectively.

We do \emph{not} expect Ginzburg-Kazhdan varieties for exceptional
groups are Coulomb branches of gauge theories. This is compatible with
physicists' expectation that $3d$ Sicilian theories are \emph{not}
lagrangian theories. Nevertheless $3d$ Sicilian theories are accepted
as well-defined quantum field theories. And there are many such
examples. It is compatible with our observation that
\begin{enumerate}
\item We have examples of ring objects on $D_G(\Gr_G)$,
  which may not arise from any pair $(G,\bN)$.
\item We have manipulations on ring objects, such as the gluing
  construction and hamiltonian reduction (see \ref{ham} for the latter).
\end{enumerate}
We thus hope that ring objects are useful to study non-lagrangian
theories.

There is an appendix \ref{sec:group_action}, which discusses a result
of independent interest. We construct a complex reductive group
hamiltonian action on the Coulomb branch of a framed quiver gauge
theory by integrating hamiltonian vector fields of functions
introduced in \cite[Appendix~B]{blowup}. This extends a torus action
constructed in \cite[\S3(v)]{main} by grading on
$H^{G_\cO}_*(\cR)$. The regular sheaf $\Areg$ has the $G^\vee$-action,
which is identified with this group action for the framed quiver gauge
theory mentioned above.

\begin{NB}
The other appendices \ref{hilbert,det slice} discuss (partial)
desingularization of Coulomb branches and line bundles over them for
Hilbert schemes and generalized slices respectively. They are
applications of study in \S\S\oldref{line Klein}$\sim$\oldref{Klein
  via} where the corresponding result is proved for Kleinian
surfaces.
\end{NB}%

The other parts of the paper are organized as follows. In
\ref{sec:sheav-affine-grassm} we show that
$\pi_*\DC_{\cR}[-2\dim\bN_\cO]$ and its cousin for gauge theory with a
flavor symmetry group are ring objects. We observe that
$\Ext^*_{D_G(\Gr_G)}(\mathbf 1_{\Gr_{G}},\scA)$ is a commutative ring
for a commutative ring object $\scA$ in $D_G(\Gr_G)$, where
$\mathbf 1_{\Gr_{G}}$ is the skyscraper sheaf at the base point in
$\Gr_G$.
Considering skyscraper sheaves at other points, we construct line
bundles over a partial resolution of
$\Spec \Ext^*_{D_G(\Gr_G)}(\mathbf 1_{\Gr_{G}},\scA)$. We follow
\cite{MR2053952} for these constructions.
The gluing construction is explained in \ref{subsec:glue}.
In \ref{sec:commute} we give a proof of commutativity of $\mathsf m$.
The idea is well-known: we use
Beilinson-Drinfeld Grassmannian to deform a situation where the
product is manifestly symmetric. Then we use nearby cycle functors and
dual specialization homomorphisms.
In \ref{monopole} we show that the regular sheaf $\Areg$ arises as a
pushforward in a framed quiver gauge theory in type $A$.
In \ref{sec:Sicilian} we study Coulomb branches associated with star
shaped quivers.
Since \ref{monopole,sec:Sicilian} depend crucially on the construction
in \ref{sec:group_action}, the authors recommend the reader to go to
\ref{sec:group_action} before visiting \ref{monopole,sec:Sicilian}.

In \ref{gusgusgus} written by Gus Lonergan, we give another proof of
the commutativity of the convolution product. This proof is more
direct than the proof in the main text. A key ingredient is a global
version of the convolution diagram for the variety of triples $\cR$.

\subsection*{Notation}

We basically follow the notation in \cite{main} and \cite{blowup}.
The Weyl group is denoted by $\Weyl$ in order to distinugish a vector
space $W$ used for a quiver.

Sections, equations, Theorems, etc in \cite{main} (resp.\
\cite{blowup}) will be referred with `II.' (resp.\ `Q.') plus the
numbering, such as \ref{prop:flat} (resp.\ \ref{Coulomb_quivar}).

\subsection*{Acknowledgments}

We thank
S.~Arkhipov,
R.~Bezrukavnikov,
%T.~Braden,
%M.~Bullimore,
%S.~Cherkis,
%T.~Dimofte,
%P.~Etingof,
%B.~Feigin,
D.~Gaiotto,
D.~Gaitsgory,
V.~Ginzburg,
A.~Hanany,
%K.~Hori,
J.~Kamnitzer,
%R.~Kodera,
%A.~Kuznetsov,
Y.~Namikawa,
%A.~Oblomkov,
%V.~Pestun,
%N.~Proudfoot,
%L.~Rybnikov,
and
Y.~Tachikawa
%Y.~Takayama,
%M.~Temkin
%and
%A.~Tsymbaliuk
for the useful discussions.
We also thank V.~Gorin for his reply to our question in mathoverflow, and
I.~Losev for providing us a proof of \ref{Prop:iso}.
Last but not least, we thank G.~Lonergan for writing an appendix.

A.B.\  was partially supported by the NSF grant DMS-1501047.
M.F.\ was partially supported by
% The paper was prepared within the framework of
a subsidy granted to the HSE by
the Government of the Russian Federation for the implementation of the Global
Competitiveness Program.
The research of H.N.\ is supported by JSPS Kakenhi Grant Numbers
%22244003, % Moriwaki's A
%23224002, % Fukaya's S
%23340005, % my B
24224001, % Saito's S
25220701, % Mukai's S
16H06335. % Moriwaki's S

\section{Complexes on the affine Grassmannian}\label{sec:sheav-affine-grassm}

In this section %is a detour. We
we interpret the convolution product $\ast$ in terms of a complex on
the affine Grassmannian. Our goal is to construct a commutative ring
object in $D_{G}(\Gr_G)$, an appropriate Ind-completion of the
$G_\cO$-equivariant derived constructible category on $\Gr_G$. Here the
multiplication is given by
the product $\star$ appearing in geometric Satake correspondence
\cite{MV2}.

The construction of this section, except \subsecref{subsec:glue}, is
motivated by the work of Arkhipov, Bezru\-kavnikov and Ginzburg
\cite{MR2053952}, where the nilpotent cone $\CN$ of the Langlands dual
group is constructed from the regular sheaf $\Areg$ on $\Gr_G$.

The construction of \subsecref{subsec:glue} is motivated by
\cite{Cremonesi:2014kwa}, as we have mentioned already in
Introduction.
\begin{NB}
It allows us to construct a complicated commutative ring
object from simpler ones. See \cite[\S5(i)]{2015arXiv150303676N} for a
review.
\end{NB}%

\subsection{Categorical generalities}Let $X$ be a scheme of finite type over $\mathbb C$. Then we denote by $D(X)$ the ind-completion of the bounded derived category of constructible sheaves on $X$; same definition applies to the equivariant derived category $D_G(X)$ where $G$ is a (pro)algebraic group acting on $X$. It is obvious that for a $G$-equivariant morphism $f\colon X\to Y$ the derived direct images $f_!,f_*\colon D_G(X)\to D_G(Y)$ are well-defined. The same thing is true for the inverse images $f^!,f^*\colon D_G(Y)\to D_G(X)$. 

Assume that $G$ has finitely many orbits on $X$. Then a morphism $\mathcal F \to \mathcal G$ in $D_G(X)$ is an isomorphism if and only if it is an isomorphism on all $!$-stalks (the assumption that $G$ acts with finitely many orbits is needed in order to guarantee that there is an open dense subset of $X$ on which both $\mathcal F$ and $\mathcal G$ are locally constant).

Let now $X$ be an ind-scheme which is a filtered inductive limit of schemes of finite type over $\mathbb C$ with respect to closed embeddings.
For simplicity we shall assume that $X$ is just the union of closed subschemes $X_0\subset X_1\subset \cdots$ where each  $X_i$ is a scheme of finite type over $\mathbb C$ and each inclusion $X_i\subset X_{i+1}$ is a closed embedding; we denote this embedding by $\sigma_i$. We shall call such ind-schemes {\em good}. Assume that a (pro)algebraic group $G$ acts on each $X_i$ and this action commutes with $\sigma_i$'s. Then we shall say that $X$ is a good $G$-scheme.

For a good $G$-ind-scheme $X$ we define the category $D_G(X)$ whose objects are systems $(\mathcal F_i,\kappa_i)_{i=0}^{\infty}$ where

$\bullet$ $\mathcal F_i\in D_G(X_i)$

$\bullet$ $\kappa_i\colon \sigma_i^!\mathcal F_{i+1}\to \mathcal F_i$ is an isomorphism.

\noindent
A morphism $\alpha\colon  (\mathcal F_i,\kappa_i)\to (\mathcal  F_i',\kappa_i')$ is collection of morphisms $\mathcal F_i\to \mathcal F_i'$ for each $i$ which commute with the $\kappa_i$'s. It is easy to see that $D_G(X)$ is a triangulated category. Assume that $G$ acts with finitely many orbits on each $X_i$; in this case we shall say that $X$ is a very good $G$-ind-scheme.  Then again  a morphism $\mathcal F \to \mathcal G$ in $D_G(X)$ is an isomorphism if and only if it is an isomorphism on all $!$-stalks.

Let $X,Y$ be two good $G$-ind-schemes and let $f\colon X\to Y$ be a $G$-equivariant morphism. Then we can define the functor $f_*\colon D_G(X)\to D_G(Y)$ (but a priori not the functor $f_!$). It is defined in the following way. Given an object $(\mathcal F_i,\kappa_i)$ of $D_G(X)$ we need to define an  object $(\mathcal G_j,\eta_j)$ of $D_G(Y)$. Let $Z_j=f^{-1}(Y_j)$. This is again a good $G$-ind-scheme -- it is the inductive limit of $Z_{i,j}=X_i\cap f^{-1}(Y_j)$. Let $\mathcal F_{i,j}$ denote the $!$-restriction of $\mathcal F_i$ to $Z_{i,j}$. Let also $f_{i,j}\colon  Z_{i,j}\to Y_j$ denote the natural morphism. Since $(\sigma_i^!)$ is right adjoint to $(\sigma_i)_!$, the isomorphism $\kappa_i$ gives rise to a map $(\sigma_i)_!\mathcal F_i=(\sigma_i)_*\mathcal F_i\to \mathcal F_{i+1}$; $!$-restricting this to $Z_j$ we get a morphism $(\sigma_i)_*\mathcal F_{i,j}\to \mathcal F_{i+1,j}$ which gives rise to a natural map $(f_{i,j})_*\mathcal F_{i,j}\to (f_{i+1,j})_*\mathcal F_{i+1,j}$. Hence the inductive limit of $(f_{i,j})_*\mathcal F_{i,j}$'s (with respect to $i$) makes sense and we denote it by $\mathcal G_j$. The construction of isomorphisms $\eta_j$ between the $!$-restriction of $\mathcal G_{j+1}$ and $\mathcal G_j$ is immediate from the usual base change.

In what follows we are going apply it for example to $X$ being $\Gr_G$ for some reductive group $G$. In this case we can talk about the equivariant derived category $D_{G_{\mathcal O}}(\Gr_G)$ which as before we shall simply denote by $D_G(\Gr_G)$ (a priori it depends on a choice of $X_i$'s above; to simplify the discussion we are going to make this choice, although it is not difficult to see that the resulting category is independent of that choice); it is clear that (for any choice of $X_i$'s) $\Gr_G$ is a very good $G_{\mathcal O}$-ind-scheme. The above general discussion also shows that given two objects $\mathcal F,\mathcal G\in D_{G}(\Gr_G)$ we can define their convolution $\mathcal F\star \mathcal G\in D_{G}(\Gr_G)$.

\subsection{Pushforward to the affine Grassmannian}\label{subsec:pushf-affine-grassm}

% Let $D_{G}(\Gr_{G})$ denote the $G_\cO$-equivariant derived category
% of constructible complexes on $\Gr_G$.

Let $\bN$ be a finite dimensional representation of a complex
reductive group $G$. Let $\cR$ be the variety of triples as in
\cite{main}, and $\DC_\cR$ its dualizing complex.

\begin{Proposition}\label{prop:sheav-affine-grassm}
    Let $\pi\colon \cR\to \Gr_G$ be the projection and $\scA \defeq
    \pi_* \DC_\cR[-2\dim\bN_\cO]\in D_{G}(\Gr_G)$.

    \textup{(1)} There exists a natural multiplication homomorphism
    \begin{equation*}
        \mathsf m\colon
        \scA\star\scA
        % (\pi_* \DC_\cR[-2\dim\bN_\cO])\star (\pi_* \DC_\cR [-2\dim\bN_\cO])
        \to
        \scA,
        % \pi_* \DC_\cR[-2\dim\bN_\cO],
    \end{equation*}
    where the left hand side is the convolution product of $\scA$ with
    itself given by the diagram \eqref{eq:1}.
    %\refurl{eq:1},\refurl{subsec:glue}.

    \textup{(2)} Let $\mathbf 1_{\Gr_G}$ denote the skyscraper sheaf
    at the base point in $\Gr_G$. Recall that it is the unit element
    in $D_G(\Gr_G)$, i.e., we have natural isomorphisms $\mathbf
    1_{\Gr_G}\star\scA \cong \scA \cong \scA\star \mathbf 1_{\Gr_G}$. We
    have a homomorphism $1\colon \mathbf 1_{\Gr_G}\to \scA$ such that
    \begin{equation*}
%        \begin{split}
%            &
            \scA \cong \scA\star\mathbf 1_{\Gr_G}
            \xrightarrow{\id\star 1}
            \scA\star\scA\xrightarrow{\mathsf m} \scA,
%            \\
%            &
            \qquad
            \scA \cong \mathbf 1_{\Gr_G}\star\scA
            \xrightarrow{1\star \id}
            \scA\star\scA\xrightarrow{\mathsf m} \scA
%        \end{split}
    \end{equation*}
    are both $\id_{\scA}$.

    \textup{(3)} Under the natural associativity isomorphism
    $\scA\star(\scA\star\scA)\cong (\scA\star\scA)\star\scA$, we have
    \begin{equation*}
        {\mathsf m\circ (\mathsf m\star\id)} =
        {\mathsf m\circ (\id\star\mathsf m)}.
    \end{equation*}

    \textup{(4)} The product on $H^*_{G_\cO}(\Gr_G, \scA) \cong
    H^{G_\cO}_*(\cR)$ induced by $\mathsf m$ is the same as the
    convolution product $\ast$.

    \textup{(5)} \textup{(1)}$\sim$\textup{(4)} remain
    true for the $G_\cO\rtimes\CC^\times$-equivariant setting.
\end{Proposition}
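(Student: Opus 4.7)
The plan is to categorify the construction of the convolution product on $H^{G_\cO}_*(\cR)$ from \cite{main}. That construction used a convolution diagram of the shape
\begin{equation*}
  \cR \times \cR \xleftarrow{p} \widetilde{\cR \times \cR} \xrightarrow{q} \cR \tilde\times \cR \xrightarrow{m_\cR} \cR
\end{equation*}
compatible via $\pi$ with the convolution diagram of $\Gr_G$. In particular the right-hand portion fits into a commutative square with $\tilde\pi\colon \cR \tilde\times \cR \to \Gr_G \tilde\times \Gr_G$ and $m\colon \Gr_G \tilde\times \Gr_G \to \Gr_G$, in which $q$ is a principal $G_\cO$-torsor, $p$ is smooth with fibers of dimension governed by $\dim\bN_\cO$, and crucially $m_\cR$ is ind-proper. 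I will obtain $\mathsf m$ as the $\pi$-pushforward of the trace morphism associated to $m_\cR$.

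For part (1), base change along $q$ together with the K\"unneth formula for dualizing complexes identifies $\scA \tilde\boxtimes \scA$ on $\Gr_G \tilde\times \Gr_G$ with $\tilde\pi_* \DC_{\cR \tilde\times \cR}$ up to the explicit shift $-4\dim\bN_\cO$, i.e.\ twice the shift appearing in the definition of $\scA$. Since $m_\cR$ is ind-proper, the counit of the adjunction $(m_\cR)_* \dashv m_\cR^!$ applied to $\id_{\DC_\cR}$ gives a canonical trace $(m_\cR)_* \DC_{\cR \tilde\times \cR} \to \DC_\cR$. Applying $\pi_*$ and using the base-change identity $\pi_* (m_\cR)_* = m_* \tilde\pi_*$ produces
\begin{equation*}
  \mathsf m\colon \scA \star \scA = m_*(\scA \tilde\boxtimes \scA) \longrightarrow \pi_* \DC_\cR [-2\dim\bN_\cO] = \scA.
\end{equation*}

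For the unit in (2), proper base change along the closed embedding $i\colon \{t_0\} \hookrightarrow \Gr_G$, combined with the fact that $\pi^{-1}(t_0) = \bN_\cO$ is ind-affine (so $\DC_{\bN_\cO}[-2\dim\bN_\cO]$ is the constant sheaf), identifies $i^! \scA \cong \mathbf 1_{\{t_0\}}$; the unit $1\colon \mathbf 1_{\Gr_G} \to \scA$ is then the identity under the adjunction $i_* \dashv i^!$. The unit axioms reduce, by the same base-change analysis, to the statement that the fundamental class of $\bN_\cO$ acts as identity in the triple convolution, which is visible in the convolution diagram. Associativity (3) is proved by running the construction on the triple convolution diagram $\cR \tilde\times \cR \tilde\times \cR \to \cR$: both composites $\mathsf m \circ (\mathsf m \star \id)$ and $\mathsf m \circ (\id \star \mathsf m)$ factor as $\pi_*$ of the single trace $(m_\cR^{(3)})_* \DC_{\cR \tilde\times \cR \tilde\times \cR} \to \DC_\cR$. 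Part (4) follows by applying $H^*_{G_\cO}(\Gr_G, -)$ to $\mathsf m$ and tracing through the base-change identifications, which recovers exactly the diagrammatic definition of $\ast$ in \cite{main}. Part (5) is immediate since every space and map above carries a natural loop rotation $\CC^\times$-action compatible with the $G_\cO$-action, and every step of the argument is functorial.

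The principal technical burden is careful bookkeeping of cohomological shifts: the shift $-2\dim\bN_\cO$ in the definition of $\scA$ is tuned so that $\scA$ behaves like a perverse-degree object on a single $\cR$ factor, and on $\cR \tilde\times \cR$ one picks up twice that. The match with the intrinsic shift of $\DC_{\cR \tilde\times \cR}$ relative to the external product of dualizing complexes from the two $\cR$ factors must be verified precisely for the trace map to land $\mathsf m$ in cohomological degree zero, and an analogous check is needed for the triple convolution diagram used in the associativity argument.
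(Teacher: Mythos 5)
Your proof hinges on the claim that base change along $q$ plus K\"unneth gives an \emph{isomorphism}
\begin{equation*}
  \scA\tilde\boxtimes\scA \;\cong\; \tilde\pi_*\DC_{\cR\tilde\times\cR}[-4\dim\bN_\cO],
\end{equation*}
but this is false whenever $\bN\neq 0$, and closing this gap is precisely the main content of the construction. The twisted product $\cR\tilde\times\cR$ here is $q(p^{-1}(\cR\times\cR))\cong\cR\times_{\bN_\cK}\cR$, and its fiber over a twisted point $(\bar g,[\bar g'])$ is (up to the $G_\cO$-torsor) the space of $s'$ with $s'\in\bN_\cO$, $g's'\in\bN_\cO$, and $gg's'\in\bN_\cO$, which is a \emph{closed sub-pro-ind-scheme of varying codimension} inside the product of the two $\pi$-fibers $(\bN_\cO\cap g^{-1}\bN_\cO)\times(\bN_\cO\cap g'^{-1}\bN_\cO)$. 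In other words $\tilde\pi$ is not pro-smooth in the way $\pi$ is, the K\"unneth formula does not apply, and the two complexes have different stalk dimensions. What exists is only a one-way arrow. The paper produces it as the restriction-with-support (Gysin) morphism \eqref{eq:7} associated with the closed embedding $p^{-1}(\cR\times\cR)\hookrightarrow G_\cK\times\cR$ inside the pro-smooth ambient family over $\cT\times\cR$, yielding a genuinely non-invertible map
\begin{equation*}
  \scA\tilde\boxtimes\scA\;\longrightarrow\;\bar\pi_*\bar i_*\DC_{q(p^{-1}(\cR\times\cR))}[-2\dim\bN_\cO],
\end{equation*}
which is \eqref{eq:58}. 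Your trace map from ind-properness of $\tilde m$ and the base-change identity $\pi_*\tilde m_*=\bar m_*\bar\pi_*$ are then correct and are exactly what the paper does next.

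So the missing ingredient is that $\mathsf m$ is \emph{not} a pushforward of a single trace: it is the composite of a restriction-with-support map (the analogue of a Gysin morphism for the correction of $\bN$-sections) with the trace from $\tilde m$. If you try to prove the isomorphism you asserted, you will find that already the check at a single twisted point fails for generic $\bN$ because $\{s'\mid s',g's',gg's'\in\bN_\cO\}$ is strictly smaller than $\{s\mid s,gs\in\bN_\cO\}\times\{s'\mid s',g's'\in\bN_\cO\}$. (Your computation works in the degenerate case $\bN=0$, where everything collapses to the classical Satake convolution for $\DC_{\Gr_G}$.) Parts (2)--(5) follow the same pattern as your sketch once the restriction-with-support step is in place, and (2), (3) are in fact deferred in the paper to the analogous computations in \cite{main}.
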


The product in (4) is defined as follows:
\begin{NB}
    let $\scA= \pi_* \DC_\cR[-2\dim\bN_\cO]$. We consider
    \begin{equation*}
        H^*_{G_\cO\times G_\cO}(\scA\boxtimes \scA)
        \xrightarrow{\bar p^*} H^*_{G_\cO\times G_\cO}(\bar p^*(\scA\boxtimes
        \scA))
        \xleftarrow[\cong]{\bar q^*}
        H^*_{G_\cO}((\bar q^*)^{-1}\bar p^*(\scA\boxtimes \scA))
        = H^*_{G_\cO}(\bar m_* (\bar q^*)^{-1}\bar p^*(\scA\boxtimes \scA))
        = H^*_{G_\cO}(\scA\star\scA).
    \end{equation*}
    Then we further send $\mathsf m\colon H^*_{G_\cO}(\scA\star\scA)\to
    H^*_{G_\cO}(\scA)$.

    Alternatively, .....
\end{NB}%
Let $x$, $y\in H^*_{G_\cO}(\scA) = \Ext^*_{D_{G}(\Gr_G)}(\CC_{\Gr_G},
\scA)$.
Then $x\star y\in \Ext^*_{D_{G}(\Gr_G)}(\CC_{\Gr_G}\star
\CC_{\Gr_G}, \scA\star \scA)$. We have a natural homomorphism
$\CC_{\Gr_G}\to \CC_{\Gr_G}\star \CC_{\Gr_G}$ from the adjunction
homomorphism $\CC_{\Gr_G}\to m_*m^* \CC_{\Gr_G}$. Therefore
we combine it with $\mathsf m\colon \scA\star \scA\to \scA$, we get
$x\star y\in \Ext^*_{D_{G}(\Gr_G)}(\CC_{\Gr_G}, \scA)$.

\begin{NB}
\begin{center}
    E-mail on Mar. 23, 2015.
\end{center}

Suppose we have a flavor symmetry group $G_F$ (I assume a larger group
is just a product $G\times G_F$ for brevity, I am not sure how this is
essential) and consider the affine Grassmannian
$\operatorname{Gr}_{G_F}$ of $G_F$. Then we first consider the variety
of triples $\mathcal R$ over $\operatorname{Gr}_G\times
\operatorname{Gr}_{G_F}$. In stead of considering the absolute
homology $H^{G\times G_F}_*(\mathcal R)$, we consider the pushforward
$\pi_*(\DC_{\mathcal R})$, where $ \pi \colon\mathcal R\to
\operatorname{Gr}_{G_F}$. Then this pushforward should be the analog
of the regular perverse sheaf $\mathcal R$
\begin{NB2}
    considered in [ABG],
\end{NB2}
(unfortunately we have a conflict of notation). Let us denote it by
$\mathcal R'$. This is more or less you wrote in a message sometime
ago.  Then I guess all the following questions should have affirmative
answers:

Q1. Does $\mathcal R'$ have a morphism $\mathbf m\colon
\mathcal R'\ast\mathcal R'\to\mathcal R'$ ? Here $\ast$ is the
convolution product on $\operatorname{Gr}_{G_F}$.

Q2. Consider $
\operatorname{Ext}^\bullet_{D^b(\operatorname{Gr}_{G_F})}(\mathbf
1_{\operatorname{Gr}}, \mathcal R')$, where $\mathbf
1_{\operatorname{Gr}}$ is the sky-scraper sheaf at the base point of $
\operatorname{Gr}_{G_F}$ as in [ABG]. If Q1 is yes, it has a structure of
an algebra as in [ABG]. Is it true that the algebra is isomorphic to our
Coulomb branch ? (Here the Coulomb branch is defined in terms of $
\operatorname{Gr}_G$, but this should be compatible with that $
\mathbf 1_{\operatorname{Gr}}$ is the sky-scraper sheaf at the base point
of $\operatorname{Gr}_{G_F}$(, not of $\operatorname{Gr}_G$).
This Ext group is isomorphic to the BM homology of the original (smaller)
space $\mathcal R$ of triples for $\operatorname{Gr}_G$.
Therefore the actual question is : are two multiplications the same ?

Q2'. As in Theorem 7.3.1 in [ABG], it may be also possible to consider the
Ext-algebra of $\mathcal R'$. What the algebra is it ? (This
question, I do not have a guess.)

Q3. Now let $\mathcal W_\lambda$ (for a dominant coweight $\lambda$) be the Wakimoto sheaf. (I do not read [ABG] carefully, but it
should be an Iwahori equivariant perverse sheaf on $
\operatorname{Gr}_{G_F}$, which Sergei explained to me.) Then $
\operatorname{Ext}^\bullet_{D^b(\operatorname{Gr}_{G_F})}(\mathbf
1_{\operatorname{Gr}}, \mathcal W_\lambda\ast\mathcal R')$ is a module of
the algebra in Q2 by [ABG].

Is its graded dimension given by the generalized monopole formula ?
(Generalized in the sense of 5(i) of Part 1.)

As far as I understand [ABG] in section 8.4 constructed the multiplication

\begin{equation}
    \label{eq:8}
\operatorname{Ext}^\bullet_{D^b(\operatorname{Gr}_{G_F})}(\mathbf 1_{\operatorname{Gr}}, \mathcal
W_\lambda\ast\mathcal R')\otimes
\operatorname{Ext}^\bullet_{D^b(\operatorname{Gr}_{G_F})}(\mathbf
1_{\operatorname{Gr}}, \mathcal W_\mu\ast\mathcal R')\to
\operatorname{Ext}^\bullet_{D^b(\operatorname{Gr}_{G_F})}(\mathbf 1_{\operatorname{Gr}}, \mathcal
W_{\lambda+\mu}\ast\mathcal R').
\end{equation}

This is basically what I wanted. Considering $\{ n\lambda \mid n\ge
0\}$, we get a graded ring whose Proj is the partial resolution of the
Coulomb branch, which I want. The above module is the space of sections of
a line bundle by tautology. For generic $\lambda$, I want to regard
modules from another $\lambda'$ also as a line bundle, but probably
it should be automatic....
\end{NB}

\begin{NB}
    More precisely, this $\pi$ should be considered as the Borel
    construction for $G$, so that fiber at $\mathbf
    1_{\Gr}\in\Gr_{G_F}$ is $H^G_*(\text{original $\cR$})$. Since
    $G_F$ acts on the original $\cR$, we can also consider $H^{G\times
      G_F}_*(\text{original $\cR$})$. By a general expectation, this
    gives a deformation of the Coulomb branch $\mathcal M_C$. In
    [ABG], the Coulomb branch is the nilpotent cone $\mathcal N$ of
    $G_F$. On the other hand $G_F$-equivariant one is $\g_F^*$, the
    dual of the Lie algebra $\g_F$ of $G_F$. Thus it fits with a
    general expectation. There is also a $G_F$-equivariant version of
    the construction \eqref{eq:8}. See [8.7.9, ABG], where the
    cotangent bundle is replace by the Grothendieck-Springer
    simultaneous resolution of $\g_F^*$.
\end{NB}%

\begin{NB}
    Let us suppose $G_F = 1$ and consider $\pi\colon\cT\to\Gr_G$
    for brevity. We consider the diagram
    \begin{equation*}
        \begin{CD}
            \cT\times\cT \times \cT\times\cT
            @<<p_{12}\times p_{23}<
            \cT\times\cT\times\cT
            @>>p_{13}> \cT\times\cT
            \\
            @A{i\times i}AA @AA{i'}A @AA{i}A
            \\
            \cT\times_{\bN_\cK}\cT \times \cT\times_{\bN_\cK}\cT
            @<\pi_{12}\times \pi_{23}<<
            \cT\times_{\bN_\cK}\cT\times_{\bN_\cK}\cT
            @>\pi_{13}>> \cT\times_{\bN_\cK}\cT
            \\
            @V{\pi\times\pi\times\pi\times\pi}VV
            @VV{\pi\times\pi\times\pi}V
            @VV{\pi\times\pi}V
            \\
            \Gr_G\times\Gr_G\times\Gr_G\times\Gr_G
            @<<q_{12}\times q_{23}<
            \Gr_G\times\Gr_G\times\Gr_G
            @>>q_{13}> \Gr_G\times\Gr_G,
        \end{CD}
    \end{equation*}
    where $i$, $i'$ are inclusions, $p_{ij}$, $\pi_{ij}$, $q_{ij}$ are
    projections to the product of $i^{\mathrm{th}}$ and
    $j^{\mathrm{th}}$ factors. Our goal is to construct a homomorphism
    \begin{equation}
        \label{eq:10}
        q_{13*}(q_{12}\times q_{23})^*\left(
        (\pi\times\pi)_*
        \DC_{\cT\times_{\bN_\cK}\cT}
        \boxtimes
        (\pi\times\pi)_*
        \DC_{\cT\times_{\bN_\cK}\cT}\right)
        \to
        (\pi\times\pi)_* \DC_{\cT\times_{\bN_\cK}\cT}.
    \end{equation}

    We start with the adjunction homomorphism
    \begin{equation*}
        \begin{split}
        \DC_{\cT\times_{\bN_\cK}\cT}
        \boxtimes
        \DC_{\cT\times_{\bN_\cK}\cT}
%              \DC_{\cT\times_{\bN_\cK}\cT\times\cT\times_{\bN_\cK}\cT}
      =
        (i\times i)^! \DC_{\cT\times\cT\times\cT\times\cT}
        \to & (i\times i)^! (p_{12}\times p_{23})_* (p_{12}\times p_{23})^*
        \DC_{\cT\times\cT\times\cT\times\cT}
\\
      & = (\pi_{12}\times\pi_{23})_* i^{\prime!}
      (p_{12}\times p_{23})^* \DC_{\cT\times\cT\times\cT\times\cT},
      \end{split}
    \end{equation*}
    where we have used the base change from the first line to the
    second. If $\cT$ would be smooth, we have
    \begin{equation*}
        i^{\prime!}
      (p_{12}\times p_{23})^* \DC_{\cT\times\cT\times\cT\times\cT}
      = \DC_{\cT\times_{\bN_\cK}\cT\times_{\bN_\cK}\cT}[2\dim \cT].
    \end{equation*}
    Substituting this into the right hand side, we apply
    $(q_{12}\times q_{23})^*
    (\pi\times\pi\times\pi\times\pi)_*$.
    Using the commutativity of the diagram, we replace it as
    \begin{equation*}
        (q_{12}\times q_{23})^* (q_{12}\times q_{23})_*
        (\pi\times\pi\times\pi)_*
        \DC_{\cT\times_{\bN_\cK}\cT\times_{\bN_\cK}\cT}[2\dim \cT]
%        i^{\prime !}
%        (p_{12}\times p_{23})^* \DC_{\cT\times\cT\times\cT\times\cT}.
    \end{equation*}
    Using the adjunction homomorphism $(q_{12}\times q_{23})^*
    (q_{12}\times q_{23})_* \to\operatorname{Id}$, we get a
    homomorphism
    \begin{equation*}
        (q_{12}\times q_{23})^*
    (\pi\times\pi\times\pi\times\pi)_*
    \DC_{\cT\times_{\bN_\cK}\cT\times\cT\times_{\bN_\cK}\cT}
    \to
    (\pi\times\pi\times\pi)_*
    \DC_{\cT\times_{\bN_\cK}\cT\times_{\bN_\cK}\cT}[2\dim \cT].
    %i^{\prime !}
    %(p_{12}\times p_{23})^* \DC_{\cT\times\cT\times\cT\times\cT}.
    \end{equation*}
    Next we apply $q_{13*}$. By the commutativity, we have
    \begin{equation*}
        \begin{split}
            & q_{13*}(q_{12}\times q_{23})^*
    (\pi\times\pi\times\pi\times\pi)_*
    \DC_{\cT\times_{\bN_\cK}\cT\times\cT\times_{\bN_\cK}\cT}
\\
    \to \; &
    (\pi\times\pi)_* \pi_{13*}
        \DC_{\cT\times_{\bN_\cK}\cT\times_{\bN_\cK}\cT}[2\dim \cT].
        \end{split}
    \end{equation*}
Since $\pi_{13}$ is proper, we have $\pi_{13*} = \pi_{13!}$. Moreover
$\DC_{\cT\times_{\bN_\cK}\cT\times_{\bN_\cK}\cT}
= \pi_{13}^! \DC_{\cT\times_{\bN_\cK}\cT}$. Combining with the adjunction
homomorphism $\pi_{13!}\pi_{13}^! \to\operatorname{Id}$, we finally get
the desired homomorphism \eqref{eq:10}.
\end{NB}%

\begin{proof}
Let us combine two diagrams \eqref{eq:1} and \eqref{eq:12}:
\begin{equation}\label{eq:40}
    \begin{CD}
        \cR \times\cR @<\tilde p<< p^{-1}(\cR\times\cR)
        % \{ (g_1, [g_2, s]) \mid g_1 g_2 s\in\bN_\cO\}
        @>\tilde q>> q(p^{-1}(\cR\times\cR))
        % \{ [g_1, [g_2, s]] \mid g_1 g_2 s\in\bN_\cO\}
        @>\tilde m>> \cR
        \\
        @V{i\times\id_\cR}VV @V{i'}VV @VV{\bar i}V @VV{i}V
        \\
        \cT\times \cR @<p<< G_\cK\times\cR @>q>>
        G_\cK\times_{G_\cO}\cR @>m>> \cT
        \\
        @V{\pi\times\pi}VV @V{\id_{G_\cK}\times\pi}VV @VV{\bar\pi}V @VV{\pi}V
        \\
        \Gr_G\times\Gr_G @<<\bar p<
        G_\cK\times\Gr_G %@= G((z))\times\Gr_G
        @>>\bar q> \Gr_G\tilde\times\Gr_G @>>\bar m> \Gr_G,
    \end{CD}
\end{equation}
where we have changed the notation for morphisms in the bottom row
putting `bar'. %We also name maps in the third column.
We also denote $\pi\circ i$ simply by $\pi$ for brevity.

The restriction with support homomorphism \eqref{eq:7} induces
\begin{multline*}
    \scA\boxtimes\scA = (\pi\times\pi)_*(\DC_{\cR\times\cR})[-4\dim\bN_\cO]
\\
    \to (\pi\times\pi)_* \tilde p_*(\DC_{p^{-1}(\cR\times\cR)}
    [-2\dim\bN_\cO-2\dim G_\cO])
\\
   \cong \bar p_*(\id_{G_\cK}\times\pi)_* i'_*
   \DC_{p^{-1}(\cR\times\cR)} [-2\dim\bN_\cO-2\dim G_\cO]).
\end{multline*}
By adjunction, we get
\begin{equation*}
    \bar p^* (\scA\boxtimes\scA)
    % (\pi\times\pi)_*\DC_{\cR\times\cR}
    \to (\id_{G_\cK}\times\pi)_* i'_*
   \DC_{p^{-1}(\cR\times\cR)} [-2\dim\bN_\cO-2\dim G_\cO]).
\end{equation*}
Since $\tilde q$ is the quotient by $G_\cO$, the right hand side is
\begin{equation*}
   (\id_{G_\cK}\times\pi)_* i'_*
   \tilde q^!
   \DC_{q(p^{-1}(\cR\times\cR))} [-2\dim\bN_\cO-2\dim G_\cO])
   \cong \bar q^* \bar \pi_* \bar i_*
      \DC_{q(p^{-1}(\cR\times\cR))} [-2\dim\bN_\cO].
\end{equation*}
Applying $(\bar q^*)^{-1}$, we get a homomorphism
\begin{equation}\label{eq:58}
    \scA\tilde\boxtimes\scA =
    (\bar q^*)^{-1} \bar p^* (\scA\boxtimes\scA)
    \to \bar\pi_* \bar i_*
    \DC_{q(p^{-1}(\cR\times\cR))} [-2\dim\bN_\cO].
\end{equation}
We further apply $\bar m_*$:
\begin{equation*}
    \scA\star\scA = \bar m_*(\scA\tilde\boxtimes\scA)
    \to
    \pi_* i_* \tilde m_* \DC_{q(p^{-1}(\cR\times\cR))} [2\dim\bN_\cO].
\end{equation*}
The left hand side is nothing but the convolution product
$\scA\star\scA$ defined by the diagram \eqref{eq:1}.

Since $\tilde m$ is proper, we have a natural homomorphism $\tilde
m_*\DC_{q(p^{-1}(\cR\times\cR))} [2\dim\bN_\cO] \to
\DC_\cR[2\dim\bN_\cO]$. Thus we obtain the homomorphism in (1).

Proofs of (2),(3) are already given in the proof of
\thmref{thm:convolution}. Note that the associativity isomorphism is
given by the $\Gr_G$-version of the big square diagram appearing in the
proof of \thmref{thm:convolution}. See \cite[Prop.~4.6]{MV2}.

Taking hypercohomology groups, one can check (4). We omit the detail.
\begin{NB}
    Let us give a detail of the proof of (4).

    Let us omit shifts for brevity. Recall we have constructed
    homomorphisms
    \begin{align}
        \label{eq:29}
        & \bar p^* (\pi\times\pi)_*\DC_{\cR\times\cR} \to
        (\id_{G_\cK}\times\pi)_* i'_* \DC_{p^{-1}(\cR\times\cR)},
%        [2\dim\bN_\cO-2\dim G_\cO]),
        \\
        \label{eq:31}
        & (\bar q^*)^{-1} \bar p^* (\pi\times\pi)_*\DC_{\cR\times\cR}
        \to \bar\pi_*\bar i_*\DC_{q(p^{-1}(\cR\times\cR))},
        \\\label{eq:32} & \scA\star\scA =
        \bar m_*(\bar q^*)^{-1} \bar p^*
        (\pi\times\pi)_*\DC_{\cR\times\cR} \to \pi_* i_* \tilde m_*
        \DC_{q(p^{-1}(\cR\times\cR))},% [2\dim\bN_\cO].
        \\\label{eq:30} & \tilde m_*\DC_{q(p^{-1}(\cR\times\cR))} \to
        \DC_{\cR}.
    \end{align}
    We have an obvious commutative diagram:
    \begin{equation*}
    \begin{CD}
        \Ext^*%_{D_{G\times G}(\Gr_G\times\Gr_G)}
        (
        \CC_{\Gr_G\times\Gr_G},
        (\pi\times\pi)_* \DC_{\cR\times\cR}
        )
        @>{(\pi\times\pi)_*\eqref{eq:7}}>>
        \Ext^*%_{G_\cO\times G_\cO}
        (       \CC_{\Gr_G\times\Gr_G},
        \bar p_*
        (\id\times\pi)_*i'_*\DC_{p^{-1}(\cR\times\cR)}
        )
        \\
        @V{\bar p^*}VV @VV{\cong}V \\
        \Ext^*%_{G_\cO\times G_\cO}
        (
        \CC_{G_\cK\times\Gr_G},
        \bar p^*(\pi\times\pi)_* (\DC_{\cR\times\cR})
        )
        @>>{\eqref{eq:29}}>
        \Ext^*%_{G_\cO\times G_\cO}
        (
        \CC_{G_\cK\times\Gr_G},
        (\id\times\pi)_*i'_*\DC_{p^{-1}(\cR\times\cR)}
        )
    \end{CD}
    \end{equation*}
    from the adjunction. We suppress the derived categories
    $D_G(\Gr_G)$, etc from the notation, as they are clear from the
    context. Next note that \eqref{eq:31} is obtained from
    \eqref{eq:29} by applying $(\bar q^*)^{-1}$. Therefore we have a
    commutative diagram
    \begin{equation*}
        \begin{CD}
            \Ext^*%_{G_\cO\times G_\cO}
            ( \CC_{G_\cK\times\Gr_G}, \bar p^*(\pi\times\pi)_*
            (\DC_{\cR\times\cR})) @>\eqref{eq:29}>>
            \Ext^*%_{G_\cO\times G_\cO}
            ( \CC_{G_\cK\times\Gr_G},
            (\id\times\pi)_*i'_*\DC_{p^{-1}(\cR\times\cR)} )
            \\
            @A{\bar q^*}A{\cong}A @A{\bar q^*}A{\cong}A
            \\
            \Ext^*%_{G_\cO}
            (
            \CC_{\Gr_G\tilde\times\Gr_G},
            (\bar q^*)^{-1} \bar p^*(\pi\times\pi)_*
            (\DC_{\cR\times\cR})) @>>{\eqref{eq:31}}>
            \Ext^*%_{G_\cO}
            (
            \CC_{\Gr_G\tilde\times\Gr_G},
            \bar\pi_*\bar
            i_*\DC_{q(p^{-1}(\cR\times\cR))}).
        \end{CD}
    \end{equation*}
    Note that $\CC_{\Gr_G\tilde\times\Gr_G} = (\bar q^*)^{-1}\bar p^*
    \CC_{\Gr_G\times\Gr_G}$.
    Since \eqref{eq:32} is $\bar m_*\eqref{eq:31}$, we further have a
    commutative diagram
    \begin{equation*}
        \begin{CD}
            \Ext^*%_{G_\cO}
            (
            \CC_{\Gr_G\tilde\times\Gr_G},
            (\bar q^*)^{-1} \bar p^*(\pi\times\pi)_*
            (\DC_{\cR\times\cR})) @>{\eqref{eq:31}}>>
            \Ext^*%_{G_\cO}
            (
            \CC_{\Gr_G\tilde\times\Gr_G},
            \bar\pi_*\bar
            i_*\DC_{q(p^{-1}(\cR\times\cR))}).
            \\
            @V{\bar m_*}VV @VV{\bar m_*}V
            \\
            \Ext^*%_{G_\cO}
            (
            \bar m_*\CC_{\Gr_G\tilde\times\Gr_G},
            \bar m_*(\bar q^*)^{-1} \bar
            p^*(\pi\times\pi)_* (\DC_{\cR\times\cR}))
            @>>\eqref{eq:32}>
            \Ext^*%_{G_\cO}
            (
            \bar m_*\CC_{\Gr_G\tilde\times\Gr_G},
            \pi_* i_* \tilde m_*
            \DC_{q(p^{-1}(\cR\times\cR))}).
            \\
            @| @|
            \\
            \Ext^*%_{G_\cO}
            (
            \CC_{\Gr_G}\star\CC_{\Gr_G},
            \scA\star\scA)
            @>>\eqref{eq:32}>
            \Ext^*%_{G_\cO}
            (
            \bar m_*\CC_{\Gr_G\tilde\times\Gr_G},
            \pi_* i_* \tilde m_*
            \DC_{q(p^{-1}(\cR\times\cR))}).
        \end{CD}
    \end{equation*}
    Next use $\CC_{\Gr_G}\to\CC_{\Gr_G}\star\CC_{\Gr_G}$:
    \begin{equation*}
        \begin{CD}
            \Ext^*%_{G_\cO}
            (
            \CC_{\Gr_G}\star\CC_{\Gr_G},
            \scA\star\scA)
            @>\eqref{eq:32}>>
            \Ext^*%_{G_\cO}
            (
            \bar m_*\CC_{\Gr_G\tilde\times\Gr_G},
            \pi_* i_* \tilde m_*
            \DC_{q(p^{-1}(\cR\times\cR))})
            \\
            @VVV @VVV
            \\
            \Ext^*%_{G_\cO}
            (
            \CC_{\Gr_G},
            \scA\star\scA)
            @>>\eqref{eq:32}>
            \Ext^*%_{G_\cO}
            (
            \CC_{\Gr_G},
            \pi_* i_* \tilde m_*
            \DC_{q(p^{-1}(\cR\times\cR))})
        \end{CD}
    \end{equation*}
    Finally \eqref{eq:30} induces the proper pushforward
    \begin{equation*}
     H^{G_\cO}_*(q(p^{-1}(\cR\times\cR))) =
     \Ext^*%_{G_\cO}
     (\CC_{\Gr_G},
     \pi_* i_* \tilde m_*
     \DC_{q(p^{-1}(\cR\times\cR))})
     \to
     \Ext^*%_{G_\cO}
     (\CC_{\Gr_G},
     \pi_*i_* \DC_\cR = \scA)
     = H^{G_\cO}_*(\cR).
    \end{equation*}
    Spaces in the right column are identified with equivariant
    Borel-Moore homology groups appearing in the original definition
    of $\ast$, e.g., $H^{G_\cO\times G_\cO}_*(p^{-1}(\cR\times\cR))$,
    $H^{G_\cO}_*(q(p^{-1}(\cR\times\cR)))$, etc. Therefore arrows in
    the right column give the definition of $\ast$. An exception is
    $
    \Ext^*%_{G_\cO}
    ( \bar m_*\CC_{\Gr_G\tilde\times\Gr_G}, \pi_* i_* \tilde m_*
    \DC_{q(p^{-1}(\cR\times\cR))})
    $. It appears as in the middle of the upper row:
    \begin{equation*}
        \xymatrix{
          \Ext^*%_{G_\cO}
    (\CC_{\Gr_G\tilde\times\Gr_G},
    \mathscr B
    % \bar\pi_* \bar i_* \DC_{q(p^{-1}(\cR\times\cR))}
    ) \ar[r]^-{\bar m_*} \ar@{=}[d] &
    \Ext^*%_{G_\cO}
    ( \bar m_*\CC_{\Gr_G\tilde\times\Gr_G},
    \bar m_* \mathscr B
    % \pi_* i_* \tilde m_* \DC_{q(p^{-1}(\cR\times\cR))}
    )
    \ar[r]^-{\id\to \bar m_*\bar m^*} &
    \Ext^*%_{G_\cO}
    (
    \CC_{\Gr_G},
    \bar m_* \mathscr B
    % \pi_* i_* \tilde m_*  \DC_{q(p^{-1}(\cR\times\cR))}
    ) \ar@{=}[d]
    \\
    H^{G_\cO}_*(q(p^{-1}(\cR\times\cR))) \ar@{=}[rr] &&
    H^{G_\cO}_*(q(p^{-1}(\cR\times\cR))),
  }
\end{equation*}
where $\mathscr B = \bar\pi_* \bar i_*
\DC_{q(p^{-1}(\cR\times\cR))}$. We need to check that this is
commutative. It boils down to check that the composition of
\begin{equation*}
    \CC_{\Gr_G\tilde\times\Gr_G} \cong
    \bar m^* \CC_{\Gr_G} \xrightarrow{\id\to\bar m_*\bar m^*}
    \bar m^* \bar m_* \bar m^* \CC_{\Gr_G}
    \xrightarrow{\bar m^*\bar m_*\to\id}
    \bar m^* \CC_{\Gr_G} \cong    \CC_{\Gr_G\tilde\times\Gr_G}
\end{equation*}
is the identity. But this is obvious. Now the assertion is clear.
\end{NB}%
\end{proof}

\begin{Remarks}\label{rem:ABG-1}
    \textup{(1)} By \cite[Theorem~5]{MR2422266}, $\scA\in
    D_{G_\cO\rtimes\CC^\times}(\Gr_G)$ corresponds to a certain
    differential graded Harish-Chandra bimodule of $G^\vee$. We do not
    know anything about it except the example just below.

    \textup{(2)} Let us denote by $\Areg$ the regular sheaf, i.e.,
    the perverse sheaf corresponding to the regular representation
    $\CC[G^\vee]$ of the Langlands dual group $G^\vee$ under the
    geometric Satake correspondence. It was denoted by $\cR$ in
    \cite{MR2053952}, but it conflicts with our notation for the space
    $\cR$. It is endowed with a natural morphism
    $\mathbf m\colon \Areg\star \Areg\to \Areg$ with properties listed
    in \propref{prop:sheav-affine-grassm}.
    The nilpotent cone $\mathcal N$ of $G^\vee$ and its Springer resolution
    $\tilde{\mathcal N}$ were constructed from $\Areg$ in
    \cite{MR2053952}. Since it is more natural to compare $\Areg$ with
    $\tilscA$ arising in the framework of a flavor symmetry group,
    more detail will be given \subsecref{subsec:ABG}.
    Finally, the dg-Harish-Chandra bimodule corresponding to $\Areg$ is
    the ring $U_\hbar^{[]}\ltimes\BC[G^\vee]$ of $\hbar$-differential operators
    on $G^\vee$.
\end{Remarks}

The construction in this and the subsequent subsections shows that it
is enough to have $\scA$ with $\mathsf m\colon \scA\star\scA\to\scA$,
i.e., a ring object in $D_G(\Gr_G)$ to define the Coulomb branch
$\mathcal M_C$.
For example, $\Areg$. Since $\Areg$ for an
exceptional group is unlikely to arise from any gauge theory
$(G,\bN)$,
it is interesting to find other recipes to construct such an $(\scA,
\mathsf m)$.
We give one example of such a recipe in \subsecref{subsec:glue} below.

\subsection{Commutativity}

In this subsection we forget the loop rotation.

Let $\Theta\colon \scA\star\scA\to\scA\star\scA$ be the commutativity constraint of the convolution product. Its construction, following \cite[\S5]{MV2} and also \cite{MR1826370},
\begin{NB}
Note that \cite[5.3.9]{Beilinson-Drinfeld} is \emph{incomplete}.....
\end{NB}%
will be recalled in \subsecref{subsec:constraint}.

\begin{Theorem}\label{thm:commutative}
    We have $\mathsf m\circ\Theta \cong \mathsf m$ as homomorphism
    $\scA\star\scA\to\scA$.
\end{Theorem}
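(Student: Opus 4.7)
The plan is to globalize the construction of $\scA$ and $\mathsf m$ along the Beilinson--Drinfeld affine Grassmannian $\Gr_{G,X^2}$ over a curve $X = \mathbb A^1$, following the scheme indicated in the introduction. Recall that over the open locus $U = X^2\setminus\Delta$ one has $\Gr_{G,X^2}|_U \cong (\Gr_G\times\Gr_G)\times U$, whereas over the diagonal $\Delta\cong X$ the restriction is the convolution Grassmannian $\Gr_G\tilde\times\Gr_G$. The nearby cycles functor $\Psi$ (for the family $X^2\to X$ collapsing the two points) gives the canonical isomorphism $\Psi(\mathcal F\boxtimes\mathcal G) \cong \mathcal F\star\mathcal G$, and the commutativity constraint $\Theta$ of \subsecref{subsec:constraint} is by definition induced by the swap action of $\mathbb S_2$ on $X^2$ exchanging the two factors.

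First, I would build the BD-variant $\cR_{X^2}$ of the variety of triples, whose fiber over $(x_1,x_2)\in U$ is $\cR\times\cR$ and over $\Delta$ is the iterated space $q(p^{-1}(\cR\times\cR))$ from the convolution diagram \eqref{eq:40}; the point is that $\cR$ is defined by sections over the formal disk at a marked point, and moving the two points apart genuinely factorizes the construction. Setting $\scA_{X^2} \defeq (\pi_{X^2})_*\DC_{\cR_{X^2}}[\text{shift}]$, one has canonical identifications $\scA_{X^2}|_U \cong \scA\boxtimes\scA\boxtimes\CC_U$ and $\Psi(\scA_{X^2}) \cong \scA\star\scA$ from \propref{prop:sheav-affine-grassm} applied factorization-wise.

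Next, globalize the multiplication: construct a morphism $\mathsf m_{X^2}\colon \scA_{X^2} \to (i_\Delta)_*\scA$ whose specialization to the generic point of $\Delta$ recovers $\mathsf m$ after $\Psi$, and whose restriction to $U$ is the ``external'' product coming from the same restriction-with-support construction used in the proof of \propref{prop:sheav-affine-grassm}, but performed for two spatially separated points. The crucial feature of the latter is that it is manifestly $\mathbb S_2$-equivariant on $U$, since swapping the two disjoint points merely reorders the two factors of $\cR\times\cR$, and the restriction-with-support morphism \eqref{eq:7} is symmetric in its two arguments when the supports are disjoint. Applying $\Psi$ to the identity $\mathsf m_{X^2} = \sigma^*\mathsf m_{X^2}$ (which holds on $U$, hence on all of $X^2$ by \cref{prop:sheav-affine-grassm}, since both sides agree after $!$-restriction to $U$ and the relevant sheaves have no nontrivial extensions across $\Delta$ of the required shape — or equivalently, by extending the equivariance from $U$ to the nearby cycles), one gets $\mathsf m \circ \Psi(\sigma) = \mathsf m$, and $\Psi(\sigma) = \Theta$ by the very definition of the commutativity constraint.

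The main obstacle is step two: setting up $\cR_{X^2}$ and the pushforward so that $\Psi(\scA_{X^2})$ is genuinely $\scA\star\scA$ (rather than a variant) and so that the BD multiplication $\mathsf m_{X^2}$ is compatible, under $\Psi$, with the multiplication $\mathsf m$ built from \eqref{eq:40}. This amounts to checking a base-change/specialization compatibility between the BD convolution diagram and the diagram \eqref{eq:40}; a clean way is to use the ``dual specialization'' approach, working with the costalk at $\Delta$ of $\scA_{X^2}$ rather than $\Psi$ directly, so that the $\mathbb S_2$-equivariance over $U$ passes to the special fiber automatically via functoriality of $j_*j^*$ where $j\colon U\hookrightarrow X^2$. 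Once these compatibilities are verified, the commutativity $\mathsf m\circ\Theta = \mathsf m$ is formal.
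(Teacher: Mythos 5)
Your strategy is, in broad outline, the one the paper follows, but the key step you wave away as a "compatibility check" is precisely where the real work lies, and the paper cannot in fact carry it out directly.

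First, a factual correction: the Beilinson--Drinfeld-type space $\cR_{X^2}$ has fiber $\cR$ over the diagonal, not the convolution space $q(p^{-1}(\cR\times\cR))$ — just as $\Gr_{X^2}$ restricts to $\Gr$ over the diagonal, not to $\Gr\tilde\times\Gr$. The convolution Grassmannian arises from a \emph{different} global object, $\Gr_X\tilde\times\Gr_X$ (the total space of the map $m_X$ in the global convolution diagram~\eqref{eq:60}), and after constructing $\cR_{X^2}$ the paper explicitly notes that there is \emph{no} analogous well-defined ind-scheme playing the role of ``$\cR_X\tilde\times\cR_X$'' for general $\bN$. This is exactly why your proposed verification that $\Psi(\scA_{X^2})\cong\scA\star\scA$ intertwines $\mathsf m_{X^2}$ with $\mathsf m$ cannot be done by the naive base-change argument you sketch.

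The paper's resolution in the main text is precisely to avoid this verification. It introduces a second multiplication $\mathsf m^\psi$ defined purely by nearby cycles and dual specialization from $\pi_*\DC_{\cR_{X^2}}|_U$; this $\mathsf m^\psi$ is manifestly $\mathbb S_2$-symmetric. It is \emph{not} clear that $\mathsf m^\psi = \mathsf m$. The paper proves this identity by: (a) checking it directly in the case $\bN=0$, where the convolution diagram globalizes without issue; (b) using the closed embedding $\bz\colon\Gr\hookrightarrow\cT$ and the induced map $\bz^*\colon\scA\to\DC_{\Gr}$ to relate the general case to $\bN=0$ via the commutative square~\eqref{eq:74}; (c) restricting equivariance to the maximal torus $T$, inverting $\CC[\ft]$, and showing $\bz^*$ becomes injective after localization (the cohomology of $\cT\setminus\cR$ is $T$-torsion since that locus has no $T$-fixed points); and (d) using freeness of $\Ext^*_{D_T(\Gr)}(\Res\scA\star\scA,\Res\scA)$ over $\CC[\ft]$ to upgrade the localized equality to an honest one. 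None of this localization machinery appears in your proposal.

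There \emph{is} a direct proof along your lines, but it is substantially harder than you suggest: it is the content of Appendix~\ref{gusgusgus} (Lonergan), which constructs the global convolution diagram $\cR_1^1\times\cR_2^2 \leftarrow\cR_{1+2}\leftarrow\widetilde{\cR}_{1+2}\rightarrow\overline{\cR}_{1+2}\rightarrow\cR_{12}^{12}$ carefully. The crucial subtlety, which your sketch misses, is that $\cR_{1+2}|_U$ is \emph{not} $(\cR\times\cR)|_U$ — it is $(\cR_1^1\times\cR_2^2)|_U\times_U(\bN_1\times\bN_2)|_U$, with an extra jet factor coming from the fact that the formal neighborhood of two disjoint points is larger than that of one. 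Handling this extra factor, and showing that the dual specialization maps interact correctly with the pullback-with-support morphisms and the formal homological shifts, is the substance of the appendix. Your proposal asserts a clean factorization over $U$ and a smooth compatibility with the convolution diagram that simply does not hold on the nose.

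In short: right overall plan, but the step you defer as ``a compatibility check'' is the theorem. Either you need the torus-localization workaround of the main text, or the delicate global convolution construction of the appendix; as stated, the proposal has a gap exactly where the paper itself warns the naive argument breaks down.
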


It means that $(\scA,\mathsf m)$ is a \emph{commutative} ring object
in $(D_G(\Gr_G),\star)$.
We give a proof in \secref{sec:commute}.

Our proof is indirect. We construct another multiplication
$\mathsf m^\psi\colon\scA\star\scA\to \scA$ using nearby cycle
functors and dual specialization. We have
$\mathsf m^\psi\circ\Theta\cong \mathsf m^\psi$. Therefore
$(\scA,\mathsf m^\psi)$ is {commutative}, but we cannot check
$\mathsf m^\psi$ is associative directly.

Next we show $\mathsf m^\psi = \mathsf m$ for $\bN=0$. This implies
that $\mathsf m^\psi = \mathsf m$ holds after the fixed point
localization for general $\bN$. We do not have torsion where
$\mathsf m$ and $\mathsf m^\psi$ live, hence this is enough.

\subsection{A complex on the affine Grassmannian of the flavor
  symmetry group}
\label{subsec:affG_flavor}

\begin{NB}
If we compare $\scA$ with $\Areg$, it is too small, as
$\Ext^*_{D_G(\Gr_G)}(\mathbf 1_{\Gr_G}, \Areg)$, instead
of $H^*_{G_\cO}(\cA) = \Ext^*_{D_G(\Gr_G)}(\CC_{\Gr_G},\scA)$, gives
the nilpotent cone. Here $\mathbf 1_{\Gr_G}$ is the skyscraper sheaf
at the base point in $\Gr_G$.
\begin{NB2}
    \cite[Lemma~7.6.3]{MR2053952} says
    $H^*_{G_\cO}(\Areg)$ gives $\mathfrak t/W\times G^\vee$.
\end{NB2}%
In our case, $\Ext^*_{D_G(\Gr_G)}(\mathbf 1_{\Gr_G}, \scA)$ is an
algebra too (see below), but it just give a trivial one.

Therefore we give a different construction of a ring object in
$D_G(\Gr_G)$.

For this purpose, w
\end{NB}%
We suppose that $\bN$ is a representation of a
larger group $\tilde G$ containing $G$ as a normal subgroup as in
\ref{subsec:flavor}, \ref{subsec:flav-symm-group2}. Let $G_F =
\tilde G/G$. We are going to construct a ring object in $D_{G_F}(\Gr_{G_F})$.

Let us denote $\cT_{\tilde G,\bN}$, $\cR_{\tilde G,\bN}$ by
$\tilde\cT$, $\tilde\cR$ respectively for short as before.
Composing $\tilde\cT\to\Gr_{\tilde G}$ or $\tilde\cR\to\Gr_{\tilde G}$
with the morphism $\Gr_{\tilde G}\to \Gr_{G_F}$, we have
\begin{equation}\label{eq:52}
    \tilde\pi\colon \text{$\tilde\cT$ or $\tilde\cR$}\to \Gr_{G_F}.
\end{equation}
Let us denote the fiber over $\lambda\in\Gr_{G_F}$ by $\tilde\cR^\lambda$,
where $\lambda$ is a coweight of $G_F$ regarded as a point in
$\Gr_{G_F}$. (In \ref{subsec:flav-symm-group2} it was denoted by
$\lambda_F$.)

\begin{NB}
  As an analog of \eqref{eq:4} (this equation number is in {\bf NB} of
  Coulomb2.tex, hence is undefined), we have
\begin{equation*}
    \cT\xleftarrow{p_1}
    G_\cK \times_{G_\cO} \tilde\cR \cong
    \cT\times_{\bN_\cK}\tilde \cT \xrightarrow{p_2}\tilde \cT.
\end{equation*}
This restricts to
\(
  \cT\leftarrow G_\cK \times_{G_\cO} \tilde\cR_x \cong
    \cT\times_{\bN_\cK}\tilde \cT_x \xrightarrow{p_2}\tilde \cT_x.
\)
Hence we have the convolution product
\begin{equation*}
    H_*^{G_\shfO}(\cR)\otimes H_*^{G_\shfO}(\tilde\cR_x)
    \to H_*^{G_\shfO}(\tilde\cR_x),
\end{equation*}
which makes $H_*^{G_\shfO}(\tilde\cR_x)$ as a
$H_*^{G_\shfO}(\cR)$-module.
\end{NB}%

\begin{NB}
    \begin{NB2}
        The following construction contains a \emph{mistake}, and
        hence does not work.
    \end{NB2}%

Let $\Gr_{G_F} = \bigcup_{\la_F} \overline{\Gr}_{G_F}^{\la_F}$ be the
stratification as in \subsecref{triples}, where $\la_F$ is a dominant
coweight of $G_F$. We set $\tilde\cR_{\le\la_F} =
\tilde\pi^{-1}(\overline{\Gr}_{G_F}^{\la_F})$. As in
\secref{sec:degeneration}, we have
\begin{equation*}
    H^{G_\cO}_*(\tilde\cR_{\le\la_F})\ast H^{G_\cO}_*(\tilde\cR_{\le\mu_F})
    \subset H^{G_\cO}_*(\tilde\cR_{\le \la_F+\mu_F}).
\end{equation*}
\begin{NB2}
    This seems \emph{wrong}, as we need to use a descent for a $\tilde
    G_\cO$-bundle. Therefore we need to use $H^{\tilde
      G_\cO}(\tilde\cR)$.
\end{NB2}%
Therefore the associated graded $\gr H^{G_\cO}_*(\tilde\cR)$ is an
algebra graded by dominant coweights of $G_F$. The associated graded
is identified with
\(
    \bigoplus_{\la_F} H^{G_\cO}_*(\tilde\cR_{\la_F}).
\)
In particular, the degree $0$ part $H^{G_\cO}_*(\tilde\cR_0)$ is the
inverse image $\tilde\pi^{-1}([1]_{G_F})$ of the base point $[1]_{G_F}$
in $\Gr_{G_F}$. It is nothing but the original $\cR$.

\begin{Theorem}\label{thm:line}
    $H^{G_\cO}_*(\tilde\cR)$ is a filtered algebra with respect to the
    filtration $H^{G_\cO}_*(\tilde\cR) = \bigcup_{\la_F}
    H^{G_{\cO}}_*(\tilde\cR_{\le\la_F})$. The degree $0$ part is $\cA =
    H^{G_\cO}_*(\cR)$. The same are true for $\tilde G_\cO$,
    $G_\cO\rtimes\CC^\times$ and $\tilde
    G_\cO\rtimes\CC^\times$-equvariant homology gruops.
\end{Theorem}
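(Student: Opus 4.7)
The plan is to push the convolution product on $\tilde\cR$ forward along $\tilde\pi\colon\tilde\cR\to\Gr_{G_F}$ and thereby reduce the filtration compatibility to the semigroup property of $G_{F,\cO}$-orbit closures on $\Gr_{G_F}$, namely that under the convolution $\Gr_{G_F}\tilde\times\Gr_{G_F}\to\Gr_{G_F}$ the subset $\overline{\Gr}_{G_F}^{\laF}\tilde\times\overline{\Gr}_{G_F}^{\muF}$ maps into $\overline{\Gr}_{G_F}^{\le\laF+\muF}$ in the dominance order of coweights.

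First, I would recall that the convolution product on $H^{G_\cO}_*(\tilde\cR)$ is defined by a diagram of the same shape as the convolution diagram for $\cR$ in \cite{main}, say $\tilde\cR\times\tilde\cR\xleftarrow{\tilde p}p^{-1}(\tilde\cR\times\tilde\cR)\xrightarrow{\tilde q}q(p^{-1}(\tilde\cR\times\tilde\cR))\xrightarrow{\tilde m}\tilde\cR$. Composing with $\tilde\pi$ and with the projection $\Gr_{\tilde G}\to\Gr_{G_F}$, this diagram should sit over the convolution diagram for $\Gr_{G_F}$, so that $\tilde m$ projects onto $\bar m\colon\Gr_{G_F}\tilde\times\Gr_{G_F}\to\Gr_{G_F}$. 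By the semigroup property above, when we restrict to the preimages of $\overline{\Gr}_{G_F}^{\laF}$ and $\overline{\Gr}_{G_F}^{\muF}$, the output of $\tilde m$ is supported in $\tilde\pi^{-1}(\overline{\Gr}_{G_F}^{\le\laF+\muF})=\tilde\cR_{\le\laF+\muF}$; taking proper pushforward of Borel--Moore homology then yields
\begin{equation*}
H^{G_\cO}_*(\tilde\cR_{\le\laF})\ast H^{G_\cO}_*(\tilde\cR_{\le\muF})\subseteq H^{G_\cO}_*(\tilde\cR_{\le\laF+\muF}).
\end{equation*}
The degree $0$ piece is then tautological: $\overline{\Gr}_{G_F}^0=\{[1]_{G_F}\}$, so $\tilde\cR_0=\tilde\pi^{-1}([1]_{G_F})$ is the variety of triples for $G$ itself, yielding $\cA=H^{G_\cO}_*(\cR)$ as the lowest piece of the filtration. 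The $\tilde G_\cO$ and $\CC^\times$-equivariant versions follow by running the identical argument with enlarged equivariance groups, since the convolution diagrams and orbit stratifications are naturally equivariant for these larger groups.

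The main obstacle I anticipate lies in the compatibility of the $G_\cO$-descent step with the $\Gr_{G_F}$-stratification. The middle term of the convolution diagram is a quotient by $G_\cO$, whereas a clean descent giving a genuine map to $\Gr_{G_F}\tilde\times\Gr_{G_F}$ really wants a quotient by $\tilde G_\cO$: two $G_\cO$-equivalent points in $G_\cK\times\tilde\cR$ may have different images in the second factor of $\Gr_{G_F}$, so the projection to $\Gr_{G_F}\tilde\times\Gr_{G_F}$ is not manifestly well-defined after passing to the $G_\cO$-quotient. The natural fix is to establish the filtered algebra structure first at the level of $H^{\tilde G_\cO}_*(\tilde\cR)$, where the descent is genuine and the stratification argument goes through cleanly, and only afterwards deduce the $G_\cO$-equivariant statement via the forgetful functor; verifying that this forgetful step preserves the filtration is the technical heart of the proof.
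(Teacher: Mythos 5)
You have correctly identified the central obstruction, but your proposed fix does not repair it, and in fact this theorem was abandoned by the authors: in the source it sits inside a block excluded from the compiled text, preceded by internal notes reading ``The following construction contains a mistake, and hence does not work'' and ``we need to use a descent for a $\tilde G_\cO$-bundle. Therefore we need to use $H^{\tilde G_\cO}_*(\tilde\cR)$.'' The concrete reason your argument fails is this: since $G=\ker(\tilde G\to G_F)$, the induced homomorphism $G_\cK\to(G_F)_\cK$ is constant, so the $G_\cO$-descended convolution space $G_\cK\times_{G_\cO}\tilde\cR$ admits no map to $\Gr_{G_F}\tilde\times\Gr_{G_F}$ intertwining $m$ with $\bar m$ --- the semigroup property of $(G_F)_\cO$-orbit closures never enters. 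Tracing degrees: $m$ sends $[g_1,[\tilde g_2,s]]$ to $[g_1\tilde g_2,s]$ and $\xi(g_1\tilde g_2)=\xi(\tilde g_2)$, so the $\Gr_{G_F}$-degree of the output is that of the \emph{second} argument alone; and the first component of $p(g_1,[\tilde g_2,s])=([g_1,\tilde g_2 s],[\tilde g_2,s])$ lies in $\cT\subset\tilde\cT$, hence over $[1]_{G_F}$, so the restriction-with-support step sees only the restriction of the first argument to $\cR\subset\tilde\cR$. The inclusion $F_{\laF}\ast F_{\muF}\subset F_{\laF+\muF}$ thus holds only for a degenerate reason (the product actually lands in $F_{\muF}$), and the associated graded has trivial multiplication outside degree $0$, defeating the $\Proj$ construction this filtration was meant to support.

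Your suggested repair --- establish the $\tilde G_\cO$-equivariant version first and then restrict --- does not rescue this, because the convolution products on $H^{\tilde G_\cO}_*(\tilde\cR)$ and on $H^{G_\cO}_*(\tilde\cR)$ use different descent groups in the middle term of the diagram ($\tilde G_\cK\times_{\tilde G_\cO}\tilde\cR$ versus $G_\cK\times_{G_\cO}\tilde\cR$), so the restriction map in equivariant Borel--Moore homology is not a ring homomorphism between them. What the paper actually does instead is abandon the orbit-closure filtration of $H^{G_\cO}_*(\tilde\cR)$ and work sheaf-theoretically: it forms $\tilscA$ on $\Gr_{G_F}$, passes to $\scAfor$ by forgetting equivariance, and constructs a \emph{graded} multiplication $H^*(i_{\la}^!\scAfor)\otimes H^*(i_{\mu}^!\scAfor)\to H^*(i_{\la+\mu}^!\scAfor)$ directly on $!$-costalks at individual points of $\Gr_{G_F}$, via \eqref{eq:45}--\eqref{eq:47}. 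The key ingredient \eqref{eq:46} uses the closed embedding $\{\la\}\times\{\mu\}\hookrightarrow\Gr^2_{\la+\mu}$ rather than any semigroup property of orbit closures, and the degree-$0$ piece is $H^*(i_0^!\scAfor)\cong H^{G_\cO}_*(\cR)=\cA$ as required.
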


For a dominant coweight $\la_F$, we consider the direct sum of parts
with degrees in $\ZZ_{\ge 0}\la_F$. It is an algebra graded by
$\ZZ_{\ge 0}$. Its $\Proj(\bigoplus_{n\ge 0}
H^{G_\cO}_*(\tilde\cR_{n\la_F}))$ has a natural projective morphism to
$\cA$.  For $\tilde G_\cO$-equivariant homology groups, the same
construction gives a scheme $\Proj(\bigoplus_{n\ge 0} H^{\tilde
  G_\cO}_*(\tilde\cR_{n\la_F}))$ projective over the deformation of
$\cA$ parametrized by $H^*_{G_F}(\mathrm{pt})$.
\end{NB}

\begin{NB}
    In the quantized case, people usually constructed noncommutative
    sheaves on a (partial) resolution. Do we have such a construction
    ?
\end{NB}%

%\subsection{Via Wakimoto sheaves}\label{sec:via-wakimoto-sheaves}

As in \propref{prop:sheav-affine-grassm}, we consider a pushforward of
the dualizing sheaf $\DC_{\tilde\cR}$. Here we consider the dualizing
sheaf of the larger space $\tilde\cR$, and take the pushforward
$\tilscA\defeq \tilde\pi_*\DC_{\tilde\cR}[-2\dim\bN_\cO]$ to
$\Gr_{G_F}$. We consider it as an object in $D_{\tilde G}(\Gr_{G_F})$,
an appropriate Ind-completion of the $\tilde G_\cO$-equivariant derived
constructible category of $\Gr_{G_F}$. We also have
$Q_{\tilde\pi*}\DC_{\tilde\cR}[-2\dim\bN_\cO] = Q_{\id*}\tilscA$,
which is a $(G_F)_\cO$-equivariant object on $\Gr_{G_F}$. Here `$\id$'
is the identity of $\Gr_{G_F}$ and the general pushforward functor
$Q_{\id*},Q_{\tilde\pi*}$ changes the equivariance group from $\tilde G_\cO$ to
$(G_F)_\cO$. See \cite[\S6]{BL}.
\begin{NB}
    In practice, $\tilde\pi_*\DC_{\tilde\cR}$ and
    $Q_{\tilde\pi*}\DC_{\tilde\cR}$ are not so much different. The
    group homomorphism $\tilde G\to G_F$ induces $B\tilde G\to
    BG_F$. Therefore we have the pull-back homomorphism
    $H^*_{G_F}(\mathrm{pt})\to H^*_{\tilde G}(\mathrm{pt})$, which is
    nothing but $\CC[\g_F]\to \CC[\tilde\g]$. Thus $H^{\tilde
      G_\cO}_*(\cR)$ can be considered as an
    $H_{G_F}^*(\mathrm{pt})$-module. $H^*_{G_F}(Q_{\tilde\pi*}\DC_{\tilde\cR})$
    is $H^*_{\tilde G}(\tilde\pi_*\DC_{\tilde\cR}) = H^{\tilde
      G}_*(\cR)$, considered as an
    $H_{G_F}^*(\mathrm{pt})$-module. See \cite[\S13.5]{BL}.
\end{NB}%

In the same way as in \propref{prop:sheav-affine-grassm}, we have
natural homomorphisms
\begin{equation}\label{eq:35}
    % \begin{split}
    %     &
        \mathsf m\colon \tilscA \star\tilscA\to\tilscA,
%\\
%        &
        \qquad
        \mathsf m\colon Q_{\id*}\tilscA\star Q_{\id*}\tilscA\to
        Q_{\id*}\tilscA,
%    \end{split}
\end{equation}
that satisfy the unit and associativity properties. It also satisfies
the commutativity.

Let us give a small remark for the construction of the homomorphisms:
When we define the convolution product $\tilscA\star\tilscA$, we
use $(q^*)^{-1}$ for $\Gr_{G_F}$. For this, we only need the
$(G_F)_\cO$-equivariant structure, therefore we can replace the second
factor $\tilscA$ by $Q_{\id*}\tilscA$. However in the definition
of the first homomorphism $\mathsf m$, we need to go back to the space
$\tilde\cR$, hence we need the $\tilde G_\cO$-equivariant structure.
The second homomorphism $\mathsf m$ is induced from the first by
applying $Q_{\id*}$ and using the smooth base change.

\begin{NB}
Instead of repeating the argument in
\subsecref{sec:sheav-affine-grassm}, we can also construct
homomorphisms \eqref{eq:35}, using $\pi'\colon\Gr_{\tilde
  G}\to\Gr_{G_F}$. Note that $\tilde\pi = \pi'\circ\pi_{\tilde G}$,
where $\pi_{\tilde G}\colon\tilde\cR\to\Gr_{\tilde G}$ is the
projection for $\tilde G$. Then the construction in
\subsecref{sec:sheav-affine-grassm} applied for $\cR$ gives $\mathsf
m\colon \pi_{\tilde G*}\DC_{\tilde\cR}[-2\dim\bN_\cO]\star \pi_{\tilde
  G*}\DC_{\tilde\cR}[-2\dim\bN_\cO]\to \pi_{\tilde
  G*}\DC_{\tilde\cR}[-2\dim\bN_\cO]$.

Now we use a functoriality of the convolution product under the group
homomorphism:
Let $D_{\tilde G}(\Gr_{G_F})$ (resp.\ $D_{G_F}(\Gr_{G_F})$ denote an approproiate
Ind-completion of the
$\tilde G_\cO$ (resp.\ $(G_F)_\cO$) equivariant constructible derived category.
Let $A$, $B\in D_{\tilde G}(\Gr_{G_F})$. We have $(q^*)^{-1}
p^*(\pi^{\prime}\times\pi^{\prime})^*(A\boxtimes B) \cong \Pi^*
(q^*)^{-1} p^*(A\boxtimes B)$, where $\Pi\colon \tilde
G_\cK\times_{\tilde G_\cO} \Gr_{\tilde G} \to
(G_F)_\cK\times_{(G_F)_\cO}\Gr_{G_F}$, and we have used $p$, $q$ for
both $\Gr_{G_F}$ and $\Gr_{\tilde G}$ for brevity. Then $\Pi$ and $m$
factor through the fiber product $X$ of $\Gr_{\tilde G}$ and
$(G_F)_\cK\times_{(G_F)_\cO}\Gr_{G_F}$:
\begin{equation*}
    \xymatrix{
      \tilde G_\cK\times_{\tilde G_\cO} \Gr_{\tilde G}
      \ar@/_/[ddr]_{\Pi} \ar@/^/[drr]^m
      \ar[dr]^{\Pi''} \\
      & X \ar[d]^{\Pi'} \ar[r]_{m'}
      & \Gr_{\tilde G} \ar[d]^\pi \\
      & \Gr_{G_F}\times_{(G_F)_\cO} \Gr_{G_F} \ar[r]_-{m} & \Gr_{G_F} }
    % \begin{CD}
    %     \tilde G_\cK\times_{\tilde G_\cO} \Gr_{\tilde G} @>\Pi''>>
    %     X @>m'>> \Gr_{\tilde G}
    %     \\
    %     @. @V{\Pi'}VV @VV{\pi}V
    %     \\
    %     @. \Gr_{G_F}\times_{(G_F)_\cO} \Gr_{G_F} @>>m> \Gr_{G_F}
    % \end{CD}
\end{equation*}
Therefore
\begin{equation*}
    m_* \Pi^* (q^*)^{-1} p^* (A\boxtimes
    B)\cong m'_* \Pi''_* \Pi^{\prime\prime*}\Pi^{\prime*}(q^*)^{-1} p^*
    (A\boxtimes B).
\end{equation*}
Using $\id\to \Pi''_*\Pi^{\prime\prime*}$ and the base change,
\begin{NB2}
    (note $m'$, $m$ are proper, hence $m'_* = m'_!$, $m_* = m_!$)
\end{NB2}%
we get a homomorphism
\begin{equation}\label{eq:34}
    \pi^{\prime*}(A\star B)
    \to
    (\pi^{\prime*}A)\star (\pi^{\prime*}B).
\end{equation}
Now suppose $A = \pi'_* \tilde A$, $B = \pi'_*\tilde B$,
$C=\pi'_*\tilde C$. The natural morphism $\pi^{\prime*}\pi'_*\to\id$
gives us
\begin{equation*}
    \pi^{\prime*} (\pi'_*\tilde A \star\pi'_*\tilde B)
    \xrightarrow{\eqref{eq:34}}
    (\pi^{\prime*}\pi'_*\tilde A)\star (\pi^{\prime*}\pi'_*\tilde B)
    \to
    \tilde A\star \tilde B.
\end{equation*}
Thus a multiplication $\tilde A\star\tilde B\to \tilde C$ induces
$\pi'_*\tilde A \star\pi'_*\tilde B\to \pi'_*\tilde C$. Taking
$\tilde A = \tilde B = \tilde C = \pi_{\tilde
  G*}\DC_{\tilde\cR}[-2\dim\bN_\cO]$, and hence
$A=B=C=\pi'_*\DC_{\tilde\cR}[-2\dim\bN_\cO]$, we obtain the multiplication \eqref{eq:35}.
\end{NB}%

Let $\mathbf 1_{\Gr_{G_F}}$ be the skyscraper sheaf at the base point
in $\Gr_{G_F}$. As in \cite[\S7.2]{MR2053952} we have an algebra
structure on $\Ext^*_{D_{\tilde G}(\Gr_{G_F})}(\mathbf 1_{\Gr_{G_F}},
\tilscA)$: Let $x\in \Ext^i_{D_{\tilde G}(\Gr_{G_F})}(\mathbf
1_{\Gr_{G_F}}, \tilscA)$, $y\in\Ext^j_{D_{\tilde
    G}(\Gr_{G_F})}(\mathbf 1_{\Gr_{G_F}}, \tilscA)$.
\begin{NB}
    % Let $\tilscA = {\tilde\pi_*}\DC_{\tilde\cR}[-2\dim\bN_\cO]$ for
    % brevity.
    We consider the `derived morphism'
    $\star x\colon \tilscA = \tilscA\star \mathbf
    1_{\Gr_{G_F}}\to \tilscA\star\tilscA$. Then we define
    $y\cdot x$ as the composite $\mathsf m \circ (\star x)\circ y
    \colon \mathbf 1_{\Gr_{G_F}}\to \tilscA[i+j]$. But why the derived
    morphism $\star x$ is defined ? We need to check that
\[
p^*(\id_{\tilscA}\boxtimes x)\colon p^*(\tilscA\boxtimes\mathbf 1_{\Gr_{G_F}})\to
p^*(\tilscA\boxtimes \tilscA)[i]
\]
descends to
\[
(q^*)^{-1}p^*(A\boxtimes\mathbf 1_{\Gr_{G_F}}) \to (q^*)^{-1}p^*
(\tilscA\boxtimes\tilscA)[i].
\]
This is fine for $(G_F)_\cO$-equivariant complexes, but not clear for
non-equivariant complexes.

Alternatively ......
\end{NB}%
We consider $x\star y\in \Ext^{i+j}_{D_{\tilde G}(\Gr_{G_F})}(\mathbf
1_{\Gr_{G_F}}\star \mathbf 1_{\Gr_{G_F}}, \tilscA\star
\tilscA)$. We compose $\mathbf 1\colon \mathbf 1_{\Gr_{G_F}}\cong
\mathbf 1_{\Gr_{G_F}}\star \mathbf 1_{\Gr_{G_F}}$ and $\mathsf m\colon
\tilscA\star\tilscA\to\tilscA$, we get $\mathsf m(x\star
y)\mathbf 1\in \Ext^{i+j}_{D_{\tilde G}(\Gr_{G_F})}(\mathbf
1_{\Gr_{G_F}},\tilscA)$.

Note that ext-groups in $D_{G_F}$ and $D_{\tilde G_F}$ are isomorphic:
\[
   \Ext^*_{D_{G_F}(\Gr_{G_F})}(\mathbf 1_{\Gr_{G_F}},
Q_{\id*}\tilscA)
\cong
\Ext^*_{D_{\tilde G}(\Gr_{G_F})}(\mathbf 1_{\Gr_{G_F}},
\tilscA),
\]
where the right hand side is regarded as
$H^*_{G_F}(\mathrm{pt})$-module via $H^*_{G_F}(\mathrm{pt})\to
H^*_{\tilde G}(\mathrm{pt})$. See \cite[\S13.5]{BL}. Thus the
difference between $\tilscA$ and $Q_{\id*}\tilscA$ is not
essential, we omit $Q_{\id*}$ hereafter.

Since the fiber of $\tilde\pi\colon\tilde\cR\to\Gr_{G_F}$ at the base
point is our original $\cR$, we have a natural isomorphism
\begin{equation}
    \label{eq:33}
    \Ext^*_{D_{\tilde G}(\Gr_{G_F})}(\mathbf 1_{\Gr_{G_F}}, \tilscA)
    \cong H^{\tilde G_\cO}_*(\cR)
\end{equation}
of $H_{\tilde G}^*(\mathrm{pt})$-modules.

The definition of the multiplication on $\Ext^*_{D_{\tilde
    G}(\Gr_{G_F})}(\mathbf 1_{\Gr_{G_F}}, \tilscA)$ uses $\tilde G$
(or $G_F$) equivariance, as we use the descent $(q^*)^{-1}$.\footnote{We thank Roman
  Bezrukavnikov for a clarification of this point.
\begin{NB}Misha, please
  check.
\end{NB}%
} On the other hand, the multiplication on the right hand side
given in \propref{prop:deformation} descends to $H^{G_\cO}_*(\cR)$. In
fact, we will see that a simple modification of the definition gives a
multiplication on the left hand side with the group changed from $\tilde G$
%$\Ext^*_{D_G(\Gr_{G_F})}(\mathbf 1_{\Gr_{G_F}},
%\Res_{G_\cO,\tilde G_\cO}\tilscA)$
to $G$ in \ref{subsec:line-bundles-via}.

\begin{Lemma}\label{lem:isom}
    The isomorphism \eqref{eq:33} respects the multiplication. The
    same is true for $\tilde G_\cO\rtimes\CC^\times$-equivariant
    groups.
\end{Lemma}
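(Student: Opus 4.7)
The plan is to derive this compatibility from \propref{prop:sheav-affine-grassm}(4) applied to $\tilde\cR$, using proper base change at the base point of $\Gr_{G_F}$.

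First I would unravel \eqref{eq:33} as a base change isomorphism. Writing $i\colon\{\mathrm{pt}\}\hookrightarrow\Gr_{G_F}$ for the inclusion of the base point and $\jmath\colon\cR\hookrightarrow\tilde\cR$ for the inclusion of the fiber, proper base change yields $i^!\tilscA\cong (\tilde\pi|_\cR)_*\jmath^!\DC_{\tilde\cR}[-2\dim\bN_\cO]\cong (\tilde\pi|_\cR)_*\DC_\cR$ after matching shifts, and taking $\Ext^*(\mathbf 1_{\Gr_{G_F}}, -) = H^*(i^!-)$ gives $H^{\tilde G_\cO}_*(\cR)$.

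Second, I would observe that the multiplication $\mathsf m$ on $\tilscA$ constructed in \propref{prop:sheav-affine-grassm}(1) is obtained by taking $\tilde\pi_*$ of the natural convolution diagram \eqref{eq:40} for $\tilde\cR$ (now viewed over $\Gr_{G_F}$ via $\tilde\pi$). Base-changing this diagram along $i$ recovers the convolution diagram for $\cR$: the base point being the unit of $\Gr_{G_F}$, the induced torsors split canonically and $\tilde G_\cK\times_{\tilde G_\cO}\tilde\cR$ has fiber over $i$ isomorphic to $G_\cK\times_{G_\cO}\cR$, and similarly for the other fiber products appearing in \eqref{eq:40}. Consequently the diagram chase proving \propref{prop:sheav-affine-grassm}(4), applied to $\tilde\cR$ and then restricted via $i^!$, identifies the $\mathsf m$-induced product on $\Ext^*(\mathbf 1_{\Gr_{G_F}},\tilscA)$ with the convolution product $\ast$ on $H^{\tilde G_\cO}_*(\cR)$ from \ref{prop:deformation}.

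The technical obstacle is verifying that $i^!$ commutes through each step of that diagram chase, in particular through the descent $(\bar q^*)^{-1}$ used to define the convolution on $\Gr_{G_F}$; this is where the $\tilde G_\cO$-equivariance (rather than merely $(G_F)_\cO$-equivariance) is crucially used, so that the descent restricts cleanly to the analogous descent for $\cR$. This commutation holds because the relevant squares over the base point are Cartesian and the descent is along a pro-smooth morphism whose restriction is trivial over the unit; what remains is bookkeeping of the shifts in \eqref{eq:40}. The $\tilde G_\cO\rtimes\CC^\times$-equivariant case follows verbatim, since every operation appearing is loop-rotation equivariant.
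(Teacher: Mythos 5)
Your overall strategy — unravel \eqref{eq:33} as a base-change isomorphism, base-change the global convolution diagram along $i$, and compare with the diagram defining the convolution on equivariant homology — is exactly the paper's approach. However, you have misidentified the fibers, and the error is precisely at the point that makes the lemma nontrivial.

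The diagram you should be restricting is the version \eqref{eq:44} whose bottom row lives on $\Gr_{G_F}$ and whose middle term involves $\tilde G_\cK$ mapping to $(G_F)_\cK$ via $\xi$. Taking inverse images of $[1_{G_F}]\times[1_{G_F}]$, $(G_F)_\cO\times[1_{G_F}]$, $[1_{G_F}]\star[1_{G_F}]$, $[1_{G_F}]$ in the first row does \emph{not} give $G_\cK\times\cR$ and $G_\cK\times_{G_\cO}\cR$ as you claim. It gives $\cT\times\cR$, $\Iw\times\cR$, $\Iw\times_{\tilde G_\cO}\cR$, $\cT$, where $\Iw = \xi^{-1}((G_F)_\cO)$ is the group from \ref{subsec:flavor}, strictly larger than $G_\cK$ whenever $G_F\neq\{1\}$. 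In particular your claim that the fiber of $\tilde G_\cK\times\tilde\cR$ over $(G_F)_\cO\times[1_{G_F}]$ is $G_\cK\times\cR$ is false: that fiber is $\Iw\times\cR$, a strictly bigger ind-scheme. Even for the twisted product, where $\Iw\times_{\tilde G_\cO}\cR$ is abstractly the same scheme as $G_\cK\times_{G_\cO}\cR$, the relevant torsor structure (used for the descent $(\bar q^*)^{-1}$) is a $\tilde G_\cO$-torsor $\Iw\times\cR\to\Iw\times_{\tilde G_\cO}\cR$, not a $G_\cO$-torsor, and the equivariant homology you land in is $H^{\tilde G_\cO}_*$, not $H^{G_\cO}_*$. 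Following your claimed identification would produce the convolution product $\ast$ on $H^{G_\cO}_*(\cR)$ from diagram \eqref{eq:40}, which is not what \eqref{eq:33} asks you to match.

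What the paper does (and what you need to do) is recognize that restriction to the base point recovers precisely the diagram \eqref{eq:43} that defines the convolution product $\ast$ on $H^{\tilde G_\cO}_*(\cR)$ in \ref{subsec:flavor}, whose middle terms involve $\Iw$ and the $\tilde G_\cO$-quotient rather than $G_\cK$ and the $G_\cO$-quotient. Once you put $\Iw$ and $\tilde G_\cO$-equivariance in place, the diagram chase then does match term by term, and the commutation of $i^!$ with the descent works because the descent is along the $\tilde G_\cO$-torsor $\Iw\times\cR\to\Iw\times_{\tilde G_\cO}\cR$ — consistently with the point the paper already flags before the lemma, that the multiplication on $\Ext^*_{D_{\tilde G}(\Gr_{G_F})}(\mathbf 1_{\Gr_{G_F}},\tilscA)$ genuinely requires $\tilde G_\cO$- (or $(G_F)_\cO$-) equivariance. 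Your loop-rotation remark at the end is fine.
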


\begin{proof}
Let us consider a modification of the commutative diagram \eqref{eq:40}:
\begin{equation}\label{eq:44}
    \begin{CD}
        % \tilde \cR \times\tilde\cR @<\tilde p<< p^{-1}(\tilde\cR\times\tilde\cR)
        % % \{ (g_1, [g_2, s]) \mid g_1 g_2 s\in\bN_\cO\}
        % @>\tilde q>> q(p^{-1}(\tilde\cR\times\tilde\cR))
        % % \{ [g_1, [g_2, s]] \mid g_1 g_2 s\in\bN_\cO\}
        % @>\tilde m>> \tilde\cR
        % \\
        % @V{i\times\id_\cR}VV @V{i'}VV @VV{\bar i}V @VV{i}V
        % \\
        \tilde\cT\times\tilde\cR @<p<< \tilde G_\cK\times\tilde\cR
        @>q>> \tilde G_\cK\times_{\tilde G_\cO}\tilde\cR @>m>>
        \tilde\cT
        \\
        @V{\tilde\pi\times\tilde\pi}VV @V{\xi\times\tilde\pi}VV
        @VV{\bar\pi}V @VV{\tilde\pi}V
        \\
        \Gr_{G_F}\times\Gr_{G_F} @<<\bar p<
        (G_F)_\cK\times\Gr_{G_F} %@= G((z))\times\Gr_G
        @>>\bar q> \Gr_{G_F}\tilde\times\Gr_{G_F} @>>\bar m> \Gr_{G_F},
    \end{CD}
\end{equation}
where $\xi\colon\tilde G_\cK\to (G_F)_\cK$ is a morphism induced from
$\tilde G\to G_F$, and all other maps are given by replacing $G$,
$\cR$, ... by $\tilde G$, $\tilde\cR$, ..., and composing $\Gr_{\tilde
  G}\to\Gr_{G_F}$, etc. We omit the first row for brevity.

Let $[1_{G_F}]$ denote the base point in $\Gr_{G_F}$. We take the
inverse images of $[1_{G_F}]\times[1_{G_F}]$,
$(G_F)_\cO\times[1_{G_F}]$, $[1_{G_F}]\star[1_{G_F}]$, $[1_{G_F}]$ in
the first row. They are $\cT\times\cR$, $\Iw\times\cR$,
$\Iw\times_{\tilde G_\cO}\cR$, $\cT$ respectively. Here $\Iw =
\xi^{-1}((G_F)_\cO)$ is the group introduced in
\subsecref{subsec:flavor}. Thus we recover the diagram \eqref{eq:43}. Now the assertion is easy to check, and hence we omit the detail.
\begin{NB}
Let us omit shifts for brevity. In the same way as in the proof of
\propref{prop:sheav-affine-grassm}, we have homomorphisms
\begin{align}
    \label{eq:36}
    & \bar p^* (\tilde\pi\times\tilde\pi)_*\DC_{\tilde\cR\times\tilde\cR} \to
    (\xi\times\tilde\pi)_* i'_* \DC_{p^{-1}(\tilde\cR\times\tilde\cR)},
%        [2\dim\bN_\cO-2\dim G_\cO]),
    \\
    \label{eq:37}
    & (\bar q^*)^{-1} \bar p^* (\tilde\pi\times\tilde\pi)_*
    \DC_{\tilde\cR\times\tilde\cR} \to
    \bar\pi_*\bar i_*\DC_{q(p^{-1}(\tilde\cR\times\tilde\cR))}, \\
    \label{eq:38} &
    \tilscA\star\tilscA = \bar m_*(\bar q^*)^{-1} \bar p^*
    (\tilde\pi\times\tilde\pi)_*\DC_{\tilde\cR\times\tilde\cR} \to
    \tilde\pi_* i_* \tilde m_*
    \DC_{q(p^{-1}(\tilde\cR\times\tilde\cR))},% [2\dim\bN_\cO].
    \\
    \label{eq:39} & \tilde m_*\DC_{q(p^{-1}(\tilde\cR\times\tilde\cR))} \to
    \DC_{\tilde\cR}.
\end{align}
We have an obvious commutative diagram
\begin{equation*}
    \begin{CD}
        \Ext^*%_{D_{\tilde G_\cO\times\tilde G_\cO}(\Gr_{G_F}\times \Gr_{G_F})}
        (\CC_{[1_{{G_F}}]\times[1_{{G_F}}]},
        (\tilde\pi\times\tilde\pi)_* \DC_{\tilde\cR\times\tilde\cR} )
        @>{(\pi\times\pi)_*\eqref{eq:7}}>>
        \Ext^*%_{G_\cO\times G_\cO}
        (\CC_{[1_{{G_F}}]\times[1_{{G_F}}]},
        \bar p_* (\xi\times\tilde\pi)_*i'_*
        \DC_{p^{-1}(\tilde\cR\times\tilde\cR)} )
        \\
        @V{\bar p^*}VV @VV{\cong}V \\
        \Ext^*%_{G_\cO\times G_\cO}
        ( \CC_{(G_F)_\cO\times[1_{{G_F}}]},
        \bar p^*(\pi\times\pi)_* (\DC_{\tilde\cR\times\tilde\cR}) )
        @>>{\eqref{eq:36}}>
        \Ext^*%_{G_\cO\times G_\cO}
        (\CC_{(G_F)_\cO\times[1_{{G_F}}]},
        %\bar p_*
        (\xi\times\tilde\pi)_*i'_*\DC_{p^{-1}(\tilde\cR\times\tilde\cR)} )
    \end{CD}
\end{equation*}
from the adjunction.
\begin{NB2}
    $\bar p^*(\CC_{[1_{{G_F}}]\times[1_{{G_F}}]})
    = \CC_{(G_F)_\cO\times[1_{{G_F}}]}$.
\end{NB2}%
Next note that \eqref{eq:37} is obtained from
\eqref{eq:36} by applying $(\bar q^*)^{-1}$. Therefore we have a
commutative diagram
\begin{equation*}
    \begin{CD}
        \Ext^*%_{G_\cO\times G_\cO}
        ( \CC_{(G_F)_\cO}\boxtimes \mathbf 1_{\Gr_{G_F}},
        \bar p^*(\tilde\pi\times\tilde\pi)_* (\DC_{\tilde\cR\times\tilde\cR}) )
        @>\eqref{eq:36}>>
        \Ext^*%_{G_\cO\times G_\cO}
        (\CC_{(G_F)_\cO}\boxtimes \mathbf 1_{\Gr_{G_F}},
        (\xi\times\tilde\pi)_*i'_*\DC_{p^{-1}(\tilde\cR\times\tilde\cR)})
        \\
        @A{\bar q^*}A{\cong}A @A{\bar q^*}A{\cong}A
        \\
        \Ext^*%_{G_\cO\times G_\cO}
        ( \CC_{[1_{G_F}]\star[1_{G_F}]},
        (\bar q^*)^{-1}\bar p^*(\pi\times\pi)_* (\DC_{\tilde\cR\times\tilde\cR}) )
        @>>{\eqref{eq:37}}>
        \Ext^*%_{G_\cO}
        ( \CC_{[1_{G_F}]\star[1_{G_F}]},
        \bar\pi_*\bar i_*\DC_{q(p^{-1}(\tilde\cR\times\tilde\cR))}).
    \end{CD}
\end{equation*}
Since \eqref{eq:38} is $\bar m_*\eqref{eq:37}$, we further have a
commutative diagram
\begin{equation*}
    \begin{CD}
        \Ext^*%_{G_\cO\times G_\cO}
        ( \CC_{[1_{G_F}]\star[1_{G_F}]}, (\bar q^*)^{-1}\bar
        p^*(\pi\times\pi)_* (\DC_{\tilde\cR\times\tilde\cR}) )
        @>{\eqref{eq:37}}>> \Ext^*%_{G_\cO}
        ( \CC_{[1_{G_F}]\star[1_{G_F}]}, \bar\pi_*\bar
        i_*\DC_{q(p^{-1}(\tilde\cR\times\tilde\cR))}).
        \\
        @V{\bar m_*}VV @VV{\bar m_*}V
        \\
        \Ext^*%_{G_\cO}
        (\mathbf 1_{\Gr_{G_F}},%\star \mathbf 1_{\Gr_{G_F}},
        % \bar m_*(\bar q^*)^{-1} \bar p^*(\pi\times\pi)_*
        % (\DC_{\cR\times\cR})
        \tilscA\star\tilscA
        ) @>>\eqref{eq:38}>
        \Ext^*%_{G_\cO}
        (\mathbf 1_{\Gr_{G_F}},%\star \mathbf 1_{\Gr_{G_F}},
        \pi_* i_*
        \tilde m_* \DC_{q(p^{-1}(\tilde\cR\times\tilde\cR))}).
    \end{CD}
\end{equation*}
Here we have used $\bar m_*(\CC_{[1_{G_F}]\star [1_{G_F}]}) =
\mathbf 1_{\Gr_{G_F}},\star \mathbf 1_{\Gr_{G_F}} = \mathbf 1_{\Gr_{G_F}}$.

Finally \eqref{eq:39} induces the pushforward
\begin{equation*}
    \Ext^*%_{G_\cO}
    (\mathbf 1_{\Gr_{G_F}},
    \pi_* i_* \tilde m_*
    \DC_{q(p^{-1}(\tilde\cR\times\tilde\cR))})
    \to
    \Ext^*%_{G_\cO}
    (\mathbf 1_{\Gr_{G_F}},
    \tilde\pi_*i_* \DC_{\tilde\cR}=\tilscA).
\end{equation*}

Let us recall how \eqref{eq:33} is given.
Let $i_{1_{G_F}}\colon \{ [1_{G_F}]\}\to \Gr_{G_F}$ be the inclusion.
Then $\mathbf 1_{\Gr_{G_F}} = i_{1_{G_F}!}(\CC_{[1_{G_F}]})$.
We have
\(
    \Ext^*(\mathbf 1_{Gr_{G_F}},\tilde\pi_* i_*\DC_{\tilde\cR})
    = \Ext^*(\CC_{[1_{G_F}]}, i_{1_{G_F}}^!\tilde\pi_* i_*\DC_{\tilde\cR}).
\)
We have $i_{1_{G_F}}^!\tilde\pi_* i_*\DC_{\tilde\cR} \cong \tilde\pi_* i_*\DC_{\cR}$ by the base change and the observation
$(\tilde\pi\circ i)^{-1}([1_{G_F}]) = \cR$. Thus we have
\(
    \Ext^*(\mathbf 1_{Gr_{G_F}},\tilde\pi_* i_*\DC_{\tilde\cR})
    \cong H^{\tilde G_\cO}_*(\cR).
\)

This argument applies also Ext-groups in the right columns, and we get equivariant Borel-Moore homology groups appearing in the definition of the convolution product on $H^{\tilde G_\cO}_*(\cR)$ in \subsecref{subsec:flavor}. Now the assertion is clear.
\end{NB}%
\end{proof}

\begin{NB}
\begin{Remark}%\label{rem:ABG}
    For a quiver gauge theory of type $A_{N-1}$ with $\dim V =
    (N-1,N-2,\dots,1)$, $\dim W = (N,0,\dots,0)$ with $G_F = \GL(W) =
    \GL(N)$, $\tilscA$ is a complex on $\Gr_{\GL(N)}$. We also know
    that the Coulomb branch $\mathcal M_C$ is the nilpotent cone in
    $\gl(N)$.
    (We know that $\mathcal M_C$ is a transversal slice in the affine
    Grassmannian by \secref{QGT} for a quiver gauge theory of type
    ADE. And in this case the transversal slice in the affine
    Grassmannian is the nilpotent cone by \cite{Lu-Green}. See also
    \cite{MR1968260}.)
    As $\Ext^*_{D(\Gr_{\GL(N)})}(\mathbf 1_{\Gr_{GL(N)}},
    \Areg)$ gives also the nilpotent cone
    \cite[7.3.1]{MR2053952}, it is natural to conjecture that
    $\Areg = \tilscA$, i.e., $\tilscA$ is the
    $\GL(N)_\cO$-equivariant perverse sheaf on $\Gr_{\GL(N)}$
    corresponding to the regular representation $\CC[\GL(N)]$ under
    the geometric Satake equivalence.
\end{Remark}
\end{NB}

\subsection{An alternative construction of a regular sheaf}\label{subsec:ABG}

Consider a quiver gauge theory of type $A_{N-1}$ with
$\dim V = (N-1,N-2,\dots,1)$, $\dim W = (N,0,\dots,0)$ with
$G = \GL(V) = \prod_{i=1}^{N-1} \GL(i)$,
$\tilde G = \left(\GL(V)\times \GL(W)\right)/Z$, where
$Z\cong \CC^\times$ is the diagonal central subgroup. We have
$G_F = \PGL(W) = \PGL(N)$ and apply the above construction to define
$\tilscA$. It is a complex on $\Gr_{\PGL(N)}$. We also know that the
Coulomb branch $\cM_C$ of this quiver gauge theory is the nilpotent
cone in $\algsl(N)$.
(We know that $\mathcal M_C$ is a transversal slice in the affine
Grassmannian by \secref{QGT} for a quiver gauge theory of type
ADE. And in this case the transversal slice in the affine Grassmannian
is the nilpotent cone by \cite{Lu-Green}. See also \cite{MR1968260}.)
Recall $\Areg$ in \cref{rem:ABG-1}(2). We take $G=\PGL(N)$.
Then $\Ext^*_{D(\Gr_{\PGL(N)})}(\mathbf 1_{\Gr_{PGL(N)}}, \Areg)$ gives
also the nilpotent cone \cite[7.3.1]{MR2053952}.
This is not a coincidence. We have
\begin{Theorem}\label{thm:ABG}
  $\Areg$ and $\tilscA$ are isomorphic as ring objects in
  $D_{\PGL(N)}(\Gr_{\PGL(N)})$.
\end{Theorem}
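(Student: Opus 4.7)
The plan is to invoke the geometric Satake equivalence and identify $\tilscA$ with $\Areg$ by matching their Satake images as commutative algebras carrying a $G^\vee = \SL(N)$-action; both should correspond to the regular representation $\CC[\SL(N)]$.

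The first and most technical step is to show that $\tilscA$ is, up to a cohomological shift, a semisimple $\PGL(N)_\cO$-equivariant perverse sheaf on $\Gr_{\PGL(N)}$, so that geometric Satake applies. This requires analyzing the morphism $\tilde\pi\colon \tilde\cR \to \Gr_{\PGL(N)}$ and establishing an appropriate semismallness statement over each $G_\cO$-orbit. For this particular quiver with $\dim V=(N-1,\dots,1)$ and $\dim W=(N,0,\dots,0)$, the fibers $\tilde\cR^\lambda$ admit an explicit description via transversal slices in $\Gr_{\SL(N)}$ from \cite{MR3366026}, which should yield the required dimension estimates. The decomposition theorem then writes
\begin{equation*}
\tilscA \cong \bigoplus_\lambda M_\lambda \otimes_\CC \operatorname{IC}(\overline{\Gr}_{\PGL(N)}^\lambda)
\end{equation*}
up to a cohomological shift, for some multiplicity spaces $M_\lambda$.

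Next I would compute these multiplicities. By \eqref{eq:33}, combined with the identification (stated just before the theorem) of the Coulomb branch with the nilpotent cone $\mathcal N$ of $\algsl(N)$, we have $\Ext^*_{D_{\tilde G}(\Gr_{\PGL(N)})}(\mathbf 1_{\Gr_{\PGL(N)}}, \tilscA) \cong \CC[\mathcal N]$. The coweight grading on $H^{\tilde G_\cO}_*(\cR)$ coming from the $G_F$-action (cf.\ \ref{subsec:flavor}) matches the $\SL(N)$-isotypic decomposition of $\CC[\mathcal N]$, and dimension counting in each isotypic component forces $\dim M_\lambda = \dim V_{G^\vee}^\lambda$, the same multiplicity as in $\Areg$.

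Finally, to promote this sheaf-level isomorphism to one of ring objects, I would invoke \ref{sec:group_action}, which constructs an $\SL(N) = G_F^\vee$-action on the Coulomb branch by integrating Hamiltonian vector fields. Under the identifications above, this action should match the natural $G^\vee$-action on $\Areg$ coming from Satake; equivariance of $\mathsf m$ under both actions, combined with the coincidence of both Ext-algebras with $\CC[\mathcal N]$, will then force the ring-object isomorphism. The hardest part is Step 1, the perversity and semismallness statement for $\tilscA$; a secondary difficulty is matching the two $G^\vee$-actions precisely at the sheaf level, for which the explicit infinitesimal construction in \ref{sec:group_action} should suffice.
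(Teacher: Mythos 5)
Your proposal takes a genuinely different route from the paper's, and its first step contains a gap that I do not see how to repair.

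The central problem is that $\tilde\pi\colon\tilde\cR\to\Gr_{\PGL(N)}$ is not proper: its fibers $\tilde\cR^\lambda$ are infinite-dimensional ind-schemes (the map factors through $\Gr_{\tilde G}\to\Gr_{\PGL(N)}$, whose fibers are copies of $\Gr_{\GL(V)}$). So the decomposition theorem simply does not apply, and there is no semismallness statement that would make it apply. Moreover even if $\tilde\pi$ were proper, $\DC_{\tilde\cR}$ is not the IC sheaf of the (singular) space $\tilde\cR$, so the decomposition theorem in the usual form would not directly give semisimplicity of the pushforward. The paper circumvents all of this: it proves perversity and semisimplicity of $\tilscA$ \emph{a posteriori} by explicitly computing the costalks $i^!_\lambda\tilscA^{\on{for}}$ and showing they are isomorphic (as graded $\BC[\CN]$-modules) to the Andersen--Jantzen modules $\Gamma(\CN,\CJ_\lambda)$ (Theorems \ref{costalk nilpotent} and \ref{costalk graded}, built on the Kleinian-surface case \ref{costalk Klein} via a factorization/localization argument). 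From this it deduces via the modified monopole formula that $\tilscA$ lives in perverse degrees $\geq 0$ (Lemma \ref{perverse degree}), identifies ${}^p\!H^0(\tilscA)$ with $\Areg$, and then uses parity vanishing plus a Hilbert-series comparison (Brylinski vs.\ Lusztig) to force the cone of $\sigma\colon\Areg\to\tilscA$ to vanish.

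Your second step also overstates what can be read off from $\Ext^*(\mathbf 1,\tilscA)\cong\BC[\CN]$ alone: the isomorphism of Ext-algebras at the base point does not by itself determine the individual multiplicity spaces $M_\lambda$. What the paper actually uses is the full identification of each costalk $i^!_\lambda\tilscA^{\on{for}}$ with $\Gamma(\CN,\CJ_\lambda)$, graded, together with the graded characters of the costalks of the IC sheaves. Finally, for the ring-object upgrade, the paper does not argue by matching $G^\vee$-actions. Since both $\tilscA$ and $\Areg$ are by then known to be perverse and semisimple, it suffices to check that $\sigma$ induces an isomorphism of the Satake fiber functors as rings; the paper computes $H^\bullet(\Gr_{\GL(N)},\tilscA^{\mathrm{for}})$ directly by a Coulomb-branch argument (gauging $\GL(W)$ and invoking $\oZ^\alpha_{\PGL(N+1)}\simeq\GL(W)\times W$ from \cite{MR2443330}) and matches it with $\BC[\GL(N)]$. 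The $\SL(N)$-action from \ref{sec:group_action} is indeed used in the proof, but to establish the costalk identification (via equivariant descent from a codimension-one localization to all of $\CN$), not to compare the ring structures.
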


The proof will be given in \ref{monopole}.

\subsection{Line bundles via homology groups of fibers}\label{subsec:line-bundles-via}

We now return back to a general situation: we are given a commutative
ring object in $D_G(\Gr_G)$, i.e., we are given $\scA\in D_G(\Gr_G)$
with $1\colon \mathbf 1_{\Gr_G}\to \scA$,
$\mathsf m\colon \scA\star\scA\to\scA$ satisfying the unit and
associativity properties in \propref{prop:sheav-affine-grassm}(2) and
the commutativity as in \cref{thm:commutative}.
The object constructed in \subsecref{subsec:pushf-affine-grassm}, as
well as the object $\tilscA$ or $Q_{\id*}\tilscA$ in
\ref{subsec:affG_flavor} is an example when we regard $G_F$ as $G$. In
fact, the latter is our primary example.

Let $D(\Gr_{G})$ denote an appropriate Ind-completion of the constructible
derived category on $\Gr_{G}$ (without $G_\cO$-equivariance structure).
Let $\For\colon D_G(\Gr_G)\to D(\Gr_G)$ be the forgetful functor.

\begin{Remark}
In the setting of \ref{subsec:affG_flavor}, we could consider
$D_G(\Gr_{G_F})$, an appropriate Ind-completion of the $G_\cO$-equivariant
constructible derived category on $\Gr_{G_F}$. Note that $G_\cO$ acts
trivially on $\Gr_{G_F}$.
Let $\Res_{G_\cO,\tilde G_\cO}$ be the restriction functor $D_{\tilde
  G}(\Gr_{G_F})\to D_{G}(\Gr_{G_F})$ restricting the group action from
$\tilde G_\cO$ to $G_\cO$.
Then we could consider $\scAres = \Res_{G_\cO,\tilde G_\cO}
\tilscA\in D_G(\Gr_{G_F})$.
This allows us to consider $\Ext^*_{D_{G}(\Gr_{G_F})}(\mathbf
1_{\Gr_{G_F}},\scAres)$, but the difference between this Ext group and
$\Ext^*_{D(\Gr_{G_F})}(\mathbf 1_{\Gr_{G_F}},\For Q_{\id*}\tilscA)$
is not essential as we have remarked above. Therefore we do not keep
two groups $G$, $G_F$, and just consider the above situation for
brevity of the notation.
\end{Remark}

Let $\scAfor \defeq \For \scA$.
Note that $\scAfor\star \scAfor$ is not defined as we do not have
$(q^*)^{-1}$ for non $G_\cO$-equivariant objects. However we still
have $\For\mathsf m\colon \For(\scA\star \scA)\to \scAfor = \For\scA$.

Viewing a coweight $\la$ of $G$ as a point in $\Gr_{G}$, we denote the
embedding by $i_{\la}\colon \{\la\}\to \Gr_{G}$.
\begin{NB}
    Note that $i_{\la}$ is not $G_\cO$-equivariant (unless $G$
    is abelian), therefore $i_{\la}^! \scA$ is \emph{not} defined.
\end{NB}%
\begin{NB}
Viewing a coweight $\laF$ of $G_F$ as a point in $\Gr_{G_F}$, we
denote the embedding by $i_{\laF}\colon \{\laF\}\to \Gr_{G_F}$.
\begin{NB2}
    Note that $i_{\laF}$ is not $(G_F)_\cO$-equivariant (unless $G_F$
    is abelian), therefore $i_{\laF}^! \scA$ is \emph{not} defined.
\end{NB2}%
Let $\tilde\cT^{\laF}$ or $\tilde\cR^{\laF} = \tilde\pi^{-1}(\laF)$
be a fiber of \eqref{eq:52} at a coweight $\laF$ of $G_F$.
It is preserved under the action of $G_\cO$.
\begin{NB2}
    $\tilde\pi$ is equivariant under $\tilde G_\cO$ where $\tilde
    G_{\cO}$ acts on $\Gr_{G_F}$ through the homomorphism $\tilde
    G_{\cO}\to (G_F)_{\cO}$. Since $G_\cO$ is in the kernel of this
    homomorphism, it acts on a fiber.
\end{NB2}%
We have $H^*(i_{\laF}^! \scAres) \cong H^{G_\cO}_*(\tilde\cR^{\laF})$.
\end{NB}%

Recall $m\colon \Gr_{G}\star \Gr_{G}\to \Gr_{G}$. For a
coweight $\chi$, let $\Gr^2_{\chi} \defeq m^{-1}(\chi)$ and denote
the embedding $\Gr^2_{\chi}\to \Gr_{G}\star \Gr_{G}$ by
$j_{\chi}$.
We have the base change $i_\chi^! m_* = m_* j_\chi^!$.
\begin{NB}
Recall $\bar m\colon \Gr_{G_F}\star \Gr_{G_F}\to \Gr_{G_F}$. For a
coweight $\chi$, let $\Gr^2_{\chi} \defeq\bar m^{-1}(\chi)$ and denote
the embedding $\Gr^2_{\chi}\to \Gr_{G_F}\star \Gr_{G_F}$ by
$j_{\chi}$.
We have the base change $i_\chi^! \bar m_* = \bar m_* j_\chi^!$.
\end{NB}%

Recall $\scA\star\scA = m_* (q^*)^{-1} p^* (\scA\boxtimes\scA)$. Let
us set $\scA\tilde\boxtimes\scA = (q^*)^{-1}p^* (\scA\boxtimes\scA)$.
As the forgetful functor commutes with $m_*$, we have
$\For(\scA\star \scA) = m_* \For
(\scA\tilde\boxtimes\scA)$. We have
%\begin{multline*}
\begin{equation}\label{eq:45}
    m_* j_\chi^! \For
    (\scA\tilde\boxtimes\scA)
    %(\bar q^*)^{-1}\bar p^* (\tilscA\boxtimes\tilscA)
    = i_\chi^! m_* \For
    (\scA\tilde\boxtimes\scA)
    %(\bar q^*)^{-1}\bar p^*(\tilscA\boxtimes\tilscA)
    = i_\chi^! \For (\scA\star\scA)
%\\
    \xrightarrow{i_\chi^! \For\mathsf m}
    i_\chi^! \scAfor.
%\end{multline*}
\end{equation}
\begin{NB}
Recall $\tilscA\star\tilscA = \bar m_* (\bar q^*)^{-1}\bar p^*
(\tilscA\boxtimes\tilscA)$. Let us set
$\tilscA\overline\boxtimes\tilscA = (\bar q^*)^{-1}\bar p^*
(\tilscA\boxtimes\tilscA)$.
As the restriction functor commutes with $\bar m_*$, we have
$\Res_{G_\cO,\tilde G_\cO}(\tilscA\star \tilscA) = \bar m_*
\Res_{G_\cO,\tilde G_\cO}
(\tilscA\overline\boxtimes\tilscA)$. We have
%\begin{multline*}
\begin{equation}\label{eq:45NB}
    \bar m_* j_\chi^! \Res_{G_\cO,\tilde G_\cO}
    (\tilscA\overline\boxtimes\tilscA)
    %(\bar q^*)^{-1}\bar p^* (\tilscA\boxtimes\tilscA)
    = i_\chi^! \bar m_* \Res_{G_\cO,\tilde G_\cO}
    (\tilscA\overline\boxtimes\tilscA)
    %(\bar q^*)^{-1}\bar p^*(\tilscA\boxtimes\tilscA)
%\\
    \xrightarrow{i_\chi^! \Res_{G_\cO,\tilde G_\cO}\mathsf m}
    i_\chi^! \scAres.
%\end{multline*}
\end{equation}
\end{NB}%

\begin{Claim}
    The embedding $\{ \la\}\times \{\mu\} \to \Gr^2_{\la+\mu}$
    induces a natural homomorphism
    \begin{equation}\label{eq:46}
        H^*(i_\la^! \scAfor)\otimes
        H^*(i_\mu^! \scAfor)
        \to H^*(j_{\la+\mu}^!
        \For(\scA\tilde\boxtimes\scA)).
    \end{equation}
\end{Claim}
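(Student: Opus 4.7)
The plan is to identify the $!$-stalk of $\scAfor\tilde\boxtimes\scAfor$ at the point $[\lambda,\mu]\in\Gr^2_{\lambda+\mu}$ with $i_\lambda^!\scAfor\otimes i_\mu^!\scAfor$, and then apply the natural ``inclusion of a point'' map in hypercohomology.

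Choose a lift $\tilde\lambda\in G_\cK$ of $\lambda$. The point $(\tilde\lambda,\mu)\in G_\cK\times\Gr_G$ satisfies $p(\tilde\lambda,\mu)=(\lambda,\mu)$ and $q(\tilde\lambda,\mu)=[\lambda,\mu]$, and its image under $m$ is $\lambda+\mu$, so $[\lambda,\mu]\in\Gr^2_{\lambda+\mu}$. Denote by $k\colon\{[\lambda,\mu]\}\hookrightarrow\Gr^2_{\lambda+\mu}$ the inclusion of this point, and set $\ell=j_{\lambda+\mu}\circ k\colon\{[\lambda,\mu]\}\to\Gr_G\tilde\times\Gr_G$.

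Both $p$ and $q$ are $G_\cO$-torsors of the same (formal) relative dimension $d$, hence $p^!=p^*[2d]$ and $q^!=q^*[2d]$. The defining identity $q^*(\scA\tilde\boxtimes\scA)=p^*(\scA\boxtimes\scA)$ then yields, upon applying $i^!_{(\tilde\lambda,\mu)}$ to both sides, a canonical isomorphism
\[
\ell^!(\scA\tilde\boxtimes\scA)\cong i^!_{(\lambda,\mu)}(\scA\boxtimes\scA),
\]
the shifts from $p$ and $q$ cancelling. The Künneth formula for $!$-stalks on a product identifies the right-hand side with $i_\lambda^!\scA\otimes i_\mu^!\scA$, and all identifications commute with the forgetful functor $\For$.

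The remaining ingredient is the adjunction map $H^*(k^!\mathcal G)\to H^*(\Gr^2_{\lambda+\mu},\mathcal G)$ attached to the closed embedding of a point, coming from the unit $\mathbf 1_{\Gr^2_{\lambda+\mu}}\to k_*\mathbf 1_{\{[\lambda,\mu]\}}$ via $\Hom$-adjunction. Taking $\mathcal G=j_{\lambda+\mu}^!\For(\scA\tilde\boxtimes\scA)$ and using the functorial identity $k^!j_{\lambda+\mu}^!=\ell^!$, this produces the sought-for homomorphism \eqref{eq:46}. The only step requiring care is independence of the choice of lift $\tilde\lambda$, which is automatic from the $G_\cO$-equivariance built into $\scA\tilde\boxtimes\scA$; the infinite-dimensional nature of the $G_\cO$-torsors is harmless because the formal shifts coming from $p$ and $q$ agree and therefore cancel.
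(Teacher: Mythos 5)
Your proof is correct and takes essentially the same route as the paper's: lift $\la$ to $G_\cK$, identify the $!$-stalk of the twisted box product at $[\la,\mu]$ with the $!$-stalk of the box product at $(\la,\mu)$ by noting that the $G_\cO$-shifts from $p$ and $q$ cancel, and then invoke the counit adjunction for the proper inclusion of a point. One small precision worth keeping in mind: the only compatibility with $\For$ that is actually used (and available) is $\For q^* = q^*\For$ and $\For p^* = p^*\For$ along the $G_\cO$-equivariant maps $p$, $q$; the non-equivariant maps $\ell$, $j_{\la+\mu}$, $k_{\la,\mu}$ do \emph{not} commute with $\For$, which is exactly why the conclusion is phrased with $j_{\la+\mu}^!\For$ rather than $\For j_{\la+\mu}^!$, but your computation never needs the latter.
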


\begin{NB}
    The restriction functor commutes with $!$-pull back
    \cite[\S3.5]{BL}, but $j_{\la+\mu}$ is not
    $G_\cO$-equivariant. Hence we do not have
    \(
    j_{\la+\mu}^!\For
    (\scA\tilde\boxtimes\scA)
    = \For j_{\la+\mu}^!
    (\scA\tilde\boxtimes\scA).
  \)
\end{NB}%

\begin{NB}
\begin{Claim}
    The embedding $\{ \laF\}\times \{\muF\} \to \Gr^2_{\laF+\muF}$
    induces a natural homomorphism
    \begin{equation}\label{eq:46NB}
        H^*(i_\laF^! \scAres)\otimes
        H^*(i_\muF^! \scAres)
        \to H^*(j_{\laF+\muF}^!
        \Res_{G_\cO,\tilde G_\cO}(\tilscA\overline\boxtimes\tilscA)).
    \end{equation}
\end{Claim}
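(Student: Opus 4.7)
The plan is to build the map as a two-step composition
\begin{equation*}
H^*(i_\lambda^!\scAfor)\otimes H^*(i_\mu^!\scAfor)\xrightarrow{(1)} H^*(k_{\lambda,\mu}^!\For(\scA\tilde\boxtimes\scA))\xrightarrow{(2)} H^*(j_{\lambda+\mu}^!\For(\scA\tilde\boxtimes\scA))
\end{equation*}
that factors through the $!$-stalk at the distinguished twisted-product point $\lambda\star\mu=[z^\lambda,z^\mu]\in\Gr_G\tilde\times\Gr_G$, where $k_{\lambda,\mu}\colon\{\lambda\star\mu\}\hookrightarrow\Gr_G\tilde\times\Gr_G$ denotes its embedding. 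Since $m(\lambda\star\mu)=\lambda+\mu$, this point lies in $\Gr^2_{\lambda+\mu}$, giving a closed embedding $\ell\colon\{\lambda\star\mu\}\hookrightarrow\Gr^2_{\lambda+\mu}$ with $k_{\lambda,\mu}=j_{\lambda+\mu}\circ\ell$.

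Arrow~(2) will be the standard pushforward-with-supports morphism: the adjunction unit $\ell_!\ell^!\to\id$ on $\Gr^2_{\lambda+\mu}$, applied to $j_{\lambda+\mu}^!\For(\scA\tilde\boxtimes\scA)$ and followed by taking hypercohomology, yields the desired map. For arrow~(1) I would use the Cartesian square
\begin{equation*}
\begin{CD}
q^{-1}(\lambda\star\mu) @>{\tilde k}>> G_\cK\times\Gr_G \\
@VVV @VV{q}V \\
\{\lambda\star\mu\} @>>{k_{\lambda,\mu}}> \Gr_G\tilde\times\Gr_G
\end{CD}
\end{equation*}
in which $q^{-1}(\lambda\star\mu)$ is a single free $G_\cO$-orbit through $(z^\lambda,z^\mu)$. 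Combining smooth base change along the principal $G_\cO$-bundle $q$ with Künneth for the external product $\scA\boxtimes\scA$ on $\Gr_G\times\Gr_G$ then identifies, after forgetting equivariance and passing to hypercohomology, the $!$-stalk $k_{\lambda,\mu}^!\For(\scA\tilde\boxtimes\scA)$ with $(i_\lambda^!\scAfor)\otimes(i_\mu^!\scAfor)$; the shifts introduced by the descent functor $(q^*)^{-1}$ are absorbed by the pro-algebraic dimension of $G_\cO$ appearing in the torsor $q^{-1}(\lambda\star\mu)$.

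The main technical obstacle I anticipate is the careful bookkeeping of these shifts, together with the verification that the identification is independent of the chosen lift of $\lambda\star\mu$ to $G_\cK\times\Gr_G$; both points are routine given the $G_\cO$-equivariance of $\scA\boxtimes\scA$ and the standard descent formalism for principal bundles. Naturality in $(\lambda,\mu)$ then follows immediately from the functoriality of smooth base change, Künneth, and the closed-embedding adjunction. Note that this map is not expected to be an isomorphism, since $\Gr^2_{\lambda+\mu}$ generically contains $G_\cO$-orbits other than $\{\lambda\star\mu\}$ that contribute to the target.
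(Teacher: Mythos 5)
Your proposal is correct and follows essentially the same route as the paper: you lift $\lambda\star\mu$ to $(z^\lambda,z^\mu)\in G_\cK\times\Gr_G$, use the $G_\cO$-torsor structure of $q$ (with the shift $q^! = q^*[2\dim G_\cO]$ cancelling against $p^! = p^*[2\dim G_\cO]$) to identify the $!$-costalk of $\For(\scA\tilde\boxtimes\scA)$ at $\lambda\star\mu$ with $i_\lambda^!\scAfor\otimes i_\mu^!\scAfor$, and then apply the proper pushforward along the closed embedding $\ell$ into $\Gr^2_{\lambda+\mu}$ — exactly the paper's $k_{\la,\mu}$. The only slip is terminological: $\ell_!\ell^!\to\id$ is the \emph{counit}, not the unit, of the adjunction $\ell_!\dashv\ell^!$.
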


\begin{NB2}
    The restriction functor commutes with $!$-pull back
    \cite[\S3.5]{BL}, but $j_{\laF+\muF}$ is not
    $(G_F)_\cO$-equivariant. Hence we do not have
    \(
    j_{\laF+\muF}^!\Res_{G_\cO,\tilde G_\cO}
    (\tilscA\overline\boxtimes\tilscA)
    = \Res_{G_\cO,\tilde G_\cO}j_{\laF+\muF}^!
    (\tilscA\overline\boxtimes\tilscA).
  \)
\end{NB2}%
\end{NB}%

\begin{proof}
Let us regard $\la$ as an element in $G_\cK$ and denote the embedding
$\{\la\}\to G_\cK$ by $\tilde i_\la$. The morphism
\(
   q (\tilde i_\la \times i_\mu) \colon \{ \la\}\times \{\mu\}
 \to \Gr_G\tilde\times\Gr_G
\)
factors through $\Gr^2_{\la+\mu}$. Let us write the embedding
$k_{\la,\mu}\colon \{ \la\}\times \{\mu\} \to \Gr^2_{\la+\mu}$.

We note $k_{\la,\mu}^! j^!_{\la+\mu} = (\tilde i_\la\times
i_\mu)^! q^!  = (\tilde i_\la\times i_\mu)^! q^* [2\dim
G_\cO]$.  Since the forgetful functor commutes with pull back
homomorphisms \cite[\S3.4]{BL}, we get
\begin{equation}\label{eq:48}
    \begin{split}
        & k_{\la,\mu}^! j^!_{\la+\mu} \For %\Res_{G_\cO,\tilde G_\cO}
        (\scA\tilde\boxtimes\scA)
        \\
        =\; & (\tilde i_\la\times i_\mu)^! p^*
        (\scAfor\boxtimes \scAfor) [2\dim G_\cO]
        \\
        =\; & (\tilde i_\la\times i_\mu)^! p^!
        (\scAfor\boxtimes \scAfor) =
        (i_\la\times i_\mu)^!(\scAfor\boxtimes
        \scAfor).
    \end{split}
\end{equation}
Since $k_{\la,\mu}$ is proper, we have a homomorphism
$k_{\la,\mu*}k_{\la,\mu}^!  = k_{\la,\mu!}k_{\la,\mu}^! \to
\id$. Now the assertion is clear.
\end{proof}

Combining \eqref{eq:45} with $\chi = \la+\mu$ and \eqref{eq:46}, we
obtain a multiplication
\begin{equation}\label{eq:47}
%    H^{G_\cO}_*(\tilde\cR^\la)\otimes H^{G_\cO}_*(\tilde\cR^\mu) =
    H^*(i^!_\la\scAfor)\otimes H^*(i^!_\mu\scAfor)
    \to H^*(i^!_{\la+\mu}\scAfor)
%    = H^{G_\cO}_*(\tilde\cR^{\la+\mu})
    .
\end{equation}

\begin{NB}
\begin{proof}
Let us regard $\laF$ as an element in $(G_F)_\cK$ and denote the embedding
$\{\la_F\}\to (G_F)_\cK$ by $\tilde i_\laF$. The morphism
\(
   q (\tilde i_\laF \times i_\muF) \colon \{ \laF\}\times \{\muF\}
 \to \Gr_G\tilde\times\Gr_G
\)
factors through $\Gr^2_{\laF+\muF}$. Let us write the embedding
$k_{\laF,\muF}\colon \{ \laF\}\times \{\muF\} \to \Gr^2_{\laF+\muF}$.

We note $k_{\laF,\muF}^! j^!_{\laF+\muF} = (\tilde i_\laF\times
i_\muF)^! q^!  = (\tilde i_\laF\times i_\muF)^! q^* [2\dim
(G_F)_\cO]$.  Since the restriction functor commutes with pull back
homomorphisms \cite[\S3.4]{BL}, we get
\begin{equation}\label{eq:48NB}
    \begin{split}
        & k_{\laF,\muF}^! j^!_{\laF+\muF} \Res_{G_\cO,\tilde G_\cO}
        (\tilscA\overline\boxtimes\tilscA)
        \\
        =\; & (\tilde i_\laF\times i_\muF)^! \bar p^*
        (\scAres\boxtimes \scAres) [2\dim (G_F)_\cO]
        \\
        =\; & (\tilde i_\laF\times i_\muF)^!\bar p^!
        (\scAres\boxtimes \scAres) =
        (i_\laF\times i_\muF)^!(\scAres\boxtimes
        \scAres).
    \end{split}
\end{equation}
Since $k_{\laF,\muF}$ is proper, we have a homomorphism
$k_{\laF,\muF*}k_{\laF,\muF}^!  = k_{\laF,\muF!}k_{\laF,\muF}^! \to
\id$. Now the assertion is clear.
\end{proof}

Combining \eqref{eq:45NB} with $\chi = \laF+\muF$ and \eqref{eq:46NB}, we
obtain a multiplication
\begin{equation}\label{eq:47NB}
%    H^{G_\cO}_*(\tilde\cR^\laF)\otimes H^{G_\cO}_*(\tilde\cR^\muF) =
    H^*(i^!_\laF\scAres)\otimes H^*(i^!_\muF\scAres)
    \to H^*(i^!_{\laF+\muF}\scAres)
%    = H^{G_\cO}_*(\tilde\cR^{\laF+\muF})
    .
\end{equation}
\end{NB}%

\begin{Remarks}\label{rem:abelianGrassmann}
    (1) Note that the embedding $i_\la$ is
    $T_\cO$-equivariant. Therefore we can use the restriction functor
    $\Res_{T_\cO,G_\cO}$ from $G_\cO$ to $T_\cO$ instead of the
    forgetful functor $\For$. Then the same construction gives a
    multiplication
\begin{equation}\label{eq:54}
    H^*_{T_\cO}(i^!_\la\Res_{T_\cO,G_\cO}\scA)\otimes
    H^*_{T_\cO}(i^!_\mu \Res_{T_\cO,G_\cO}\scA)
    \to H^*_{T_\cO}(i^!_{\la+\mu}\Res_{T_\cO,G_\cO}\scA).
\end{equation}

(2) Suppose $G = T$. Then $\Gr_T = \bigsqcup_{\la\in Y} \{\la\}$,
hence $H^*_{T_\cO}(\Gr_T,\scA) = \bigoplus_{\la\in Y}
H^*_{T_\cO}(i_\la^!\scA)$. The multiplication explained after
\propref{prop:sheav-affine-grassm} is $Y$-graded, hence gives
$H^*_{T_\cO}(i_\la^!\scA)\otimes H^*_{T_\cO}(i_\mu^!\scA) \to
H^*_{T_\cO}(i_{\la+\mu}^!\scA)$. It is clear that this
multiplication is same as \eqref{eq:54}.
\begin{NB}
    We have $\scA = \bigoplus_{\la\in Y} \scA_\la$, and $\mathsf m$
    gives $\scA_\la\star\scA_\mu\to \scA_{\la+\mu}$. We also have
    $\Gr_T\star \Gr_T\cong \Gr_T\times \Gr_T$ by $[g_1, [g_2]]\mapsto
    ([g_1],[g_2])$. Then
    \eqref{eq:46} is just the embedding $H^*(\scA_\la)\otimes
    H^*(\scA_\mu)\to
    \bigoplus_{\la'+\mu'=\la+\mu}H^*(\scA_{\la'})\otimes
    H^*(\scA_{\mu'})$.
\end{NB}%
\end{Remarks}

Suppose $\la=\mu=0$. We have a commutative diagram
\begin{equation}\label{eq:49}
    \begin{CD}
        \Ext^*_{D_{G}(\Gr_{G})}(\mathbf 1_{\Gr_{G}},\scA)
        \otimes
        \Ext^*_{D_{G}(\Gr_{G})}(\mathbf 1_{\Gr_{G}},\scA)
        @>{\mathsf m(\bullet\star\bullet)\mathbf 1}>>
        \Ext^*_{D_{G}(\Gr_{G})}(\mathbf 1_{\Gr_{G}},\scA)
\\
        @V{\For}VV @VV{\For}V
\\
        H^*(i_0^!\scAfor)
        \otimes
        H^*(i_0^!\scAfor)
        @>>\eqref{eq:47}>
        H^*(i_0^!\scAfor)
    \end{CD}
\end{equation}
via the isomorphism $\Ext_{D(\Gr_{G})}(\mathbf
1_{\Gr_{G}},\scAfor)\cong H^*(i_0^!\scAfor)$.

\begin{NB}
    $i_\la$ is $G_\cO$-equivariant if $\la=0$, hence
    $i_\la^! \scAfor = \For i_\la^!
    \scA$.
\end{NB}%

In fact, the only place we need to check is the commutativity of
\begin{equation*}
    \begin{CD}
        \Ext^*_{D_{G}(\Gr_{G})}(\mathbf 1_{\Gr_{G}},\scA)
        \otimes
        \Ext^*_{D_{G}(\Gr_{G})}(\mathbf 1_{\Gr_{G}},\scA)
        @>{(q^*)^{-1}p^*}>>
        \Ext^*_{D_{G}(\Gr_{G}\tilde\times\Gr_{G})}
        (\CC_{[1_{G}]\star[1_{G}]},
        \scA\tilde\boxtimes\scA)
\\
        @V{\For}VV @VV{\For}V
\\
        H^*(i_0^!\scAfor)
        \otimes
        H^*(i_0^!\scAfor)
        @>>\eqref{eq:47}>
        H^*(j_0^!\For(\scA\tilde\boxtimes\scA)),
    \end{CD}
\end{equation*}
where the right vertical arrow is defined as the embedding of
$[1_{G}]\star[1_{G}]$ into $\Gr_G\tilde\times\Gr_G$ factors
through $\Gr^2_0$.
\begin{NB}
    $\Gr^2_0 = \bar m^{-1}(0) = \{ [g_1, [g_2]]\in \Gr_G\tilde\times\Gr_G
    \mid g_1 g_2\in G_\cO\}$.
\end{NB}%
This commutativity is clear from \eqref{eq:48}.

In the setting of the previous subsection,
%Since
the upper row of \eqref{eq:49} is the same as the multiplication
on $H^{\tilde G_\cO}_*(\cR)$ by \lemref{lem:isom},
hence
the lower row is also the same as $\ast$ on $H^{G_\cO}_*(\cR)$. In
this sense the multiplication in \eqref{eq:47} is a generalization of
$\ast$.

\begin{NB}
When $\laF=\muF=0$, we have $\tilde\cR^0=\cR$. We have a commutative
diagram
\begin{equation}\label{eq:49NB}
    \begin{CD}
        \Ext^*_{D_{\tilde G}(\Gr_{G_F})}(\mathbf 1_{\Gr_{G_F}},\tilscA)
        \otimes
        \Ext^*_{D_{\tilde G}(\Gr_{G_F})}(\mathbf 1_{\Gr_{G_F}},\tilscA)
        @>{\mathsf m(\bullet\star\bullet)\mathbf 1}>>
        \Ext^*_{D_{\tilde G}(\Gr_{G_F})}(\mathbf 1_{\Gr_{G_F}},\tilscA)
\\
        @V{\Res_{G_\cO,\tilde G_\cO}}VV @VV{\Res_{G_\cO,\tilde G_\cO}}V
\\
%        \Ext_{D_G(\Gr_{G_F})}(\mathbf 1_{\Gr_{G_F}},\scAres)
        H^*(i_0^!\scAres)
        \otimes
        H^*(i_0^!\scAres)
%        \Ext_{D_G(\Gr_{G_F})}(\mathbf 1_{\Gr_{G_F}},\scAres)
        @>>\eqref{eq:47}>
        H^*(i_0^!\scAres)
%        \Ext_{D_G(\Gr_{G_F})}(\mathbf 1_{\Gr_{G_F}},\scAres)
    \end{CD}
\end{equation}
via the isomorphism $\Ext_{D_G(\Gr_{G_F})}(\mathbf
1_{\Gr_{G_F}},\scAres)\cong H^*(i_0^!\scAres)$.

\begin{NB2}
    $i_\laF$ is $\tilde G_\cO$-equivariant if $\laF=0$, hence
    $i_\laF^! \scAres = \Res_{G_\cO,\tilde G_\cO} i_\laF^!
    \tilscA$.
\end{NB2}%

In fact, the only place we need to check is the commutativity of
\begin{equation*}
    \begin{CD}
        \Ext^*_{D_{\tilde G}(\Gr_{G_F})}(\mathbf 1_{\Gr_{G_F}},\tilscA)
        \otimes
        \Ext^*_{D_{\tilde G}(\Gr_{G_F})}(\mathbf 1_{\Gr_{G_F}},\tilscA)
        @>{(\bar q^*)^{-1}\bar p^*}>>
        \Ext^*_{D_{\tilde G}(\Gr_{G_F}\tilde\times\Gr_{G_F})}
        (\CC_{[1_{G_F}]\star[1_{G_F}]},
        \tilscA\overline\boxtimes\tilscA)
\\
        @V{\Res_{G_\cO,\tilde G_\cO}}VV @VV{\Res_{G_\cO,\tilde G_\cO}}V
\\
%        \Ext_{D_G(\Gr_{G_F})}(\mathbf 1_{\Gr_{G_F}},\scAres)
        H^*(i_0^!\scAres)
        \otimes
        H^*(i_0^!\scAres)
%        \Ext_{D_G(\Gr_{G_F})}(\mathbf 1_{\Gr_{G_F}},\scAres)
        @>>\eqref{eq:47NB}>
        H^*(j_0^!\Res_{G_\cO,\tilde G_\cO}(\tilscA\overline\boxtimes
        \tilscA)),
    \end{CD}
\end{equation*}
where the right vertical arrow is defined as the embedding of
$[1_{G_F}]\star[1_{G_F}]$ into $\Gr_{G_F}\tilde\times\Gr_{G_F}$ factors
through $\Gr^2_0$.
\begin{NB2}
    $\Gr^2_0 = \bar m^{-1}(0) = \{ [g_1, [g_2]]\in \Gr_{G_F}\tilde\times\Gr_{G_F}
    \mid g_1 g_2\in (G_F)_\cO\}$.
\end{NB2}%
This commutativity is clear from \eqref{eq:48NB}.

Since the upper row of \eqref{eq:49NB} is the same as the multiplication
on $H^{\tilde G_\cO}_*(\cR)$ by \lemref{lem:isom}, the lower row is
also the same as $\ast$ on $H^{G_\cO}_*(\cR)$. In this sense the
multiplication in \eqref{eq:47NB} is a generalization of $\ast$.

\end{NB}%

Thus $\bigoplus H^*(i_\la^!\scAfor)$ is an algebra graded by the
coweight lattice of $G$.
For $\la = 0$, we have a subalgebra $H^*(i_0^!\scAfor)$, which is
isomorphic to $H^{G_\cO}_*(\cR)$ in the setting of the previous subsection.
One can also take a direct sum over \emph{dominant} coweights $\la$
of $G$.

For a fixed coweight $\la$, we consider the direct sum of
$H^*(i_{n\la}^!\scAfor)$ with degrees $n\la$ ($n\in \ZZ_{\ge 0}$). It is an
algebra graded by $\ZZ_{\ge 0}$. Its $\Proj(\bigoplus_{n\ge 0}
H^*(i_{n\la}^!\scAfor))$ has a natural projective morphism to $\Spec
(H^*(i_0^!\scAfor))$. We have a natural line bundle $\shfO(1)$ on
$\Proj(\bigoplus_{n\ge 0} H^*(i_{n\la}^!\scAfor))$ such that
$H^*(i_{n\la}^!\scAfor)$ is identified with the space of sections of
$\shfO(n) = \shfO(1)^{\otimes n}$.
Under some circumstances we expect $\Proj(\bigoplus_{n\ge 0}
H^*(i_{n\la}^!\scAfor))$ is a %simultaneous
(partial) resolution of $\Spec (H^*(i_0^!\scAfor))$.
%$\mathcal M_C$.

In the example in \remref{rem:ABG-1}, $\Areg$ gives the
Springer resolution of the nilpotent cone $\mathcal N$ of $G^\vee$,
the Langlands dual group of $G$. See \cite[8.5.2]{MR2053952}.
% Grothendieck-Springer simultaneous resolution of $\mathfrak g_F^\vee$,
% the Langlands dual Lie algebra of $\mathfrak g_F$. See \cite[8.5.2,
% 8.7.9]{MR2053952}.

\begin{NB}
Thus $\bigoplus H^{G_\cO}_*(\tilde\cR^{\laF})$ is an algebra graded by
the coweight lattice of $G_F$.
For $\laF = 0$, we have a subalgebra $H^{G_\cO}_*(\tilde\cR^0)$,
which is isomorphic to $H^{\tilde G_\cO}_*(\cR)$.
One can also take a direct sum over \emph{dominant} coweights $\laF$
of $G_F$.

For a coweight $\laF$, we consider the direct sum of
$H^{G_\cO}_*(\tilde\cR^{\muF})$ with degrees $\muF\in \ZZ_{\ge
  0}\laF$. It is an algebra graded by $\ZZ_{\ge 0}$. Its
$\Proj(\bigoplus_{n\ge 0} H^{G_\cO}_*(\tilde\cR^{n\laF}))$ has a
natural projective morphism to $\Spec H^{\tilde G_\cO}_*(\cR)$,
deformation of $\mathcal M_c$. We have a natural line bundle
$\shfO(1)$ on $\Proj(\bigoplus_{n\ge 0}
H^{G_\cO}_*(\tilde\cR^{n\laF}))$ such that
$H^{G_\cO}_*(\tilde\cR^{n\laF})$ is identified with the space of
sections of $\shfO(n) = \shfO(1)^{\otimes n}$.
Under some circumstances we expect $\Proj(\bigoplus_{n\ge 0}
H^{G_\cO}_*(\tilde\cR^{n\laF}))$ is a %simultaneous
(partial) resolution
of %$\Spec H^{\tilde G_\cO}_*(\cR)$.
$\mathcal M_C$.

In the example in \remref{rem:ABG-1}(2), $\Areg$ gives the
Springer resolution of the nilpotent cone $\mathcal N$ of $\mathfrak
g_F^\vee$, the Langlands dual Lie algebra of $\mathfrak g_F$. See
\cite[8.5.2]{MR2053952}.
% Grothendieck-Springer simultaneous resolution of $\mathfrak g_F^\vee$,
% the Langlands dual Lie algebra of $\mathfrak g_F$. See \cite[8.5.2,
% 8.7.9]{MR2053952}.
\end{NB}%

See \cite[\S5.1]{2015arXiv150303676N} (and also \ref{rem:HLmonopole})
for a physical origin of this construction.

\begin{Remark}
In view of \remref{rem:abelianGrassmann}(2), the construction in
\subsecref{subsec:flav-symm-group2} and the above construction is the
same for $\tilscA$ in \subsecref{subsec:affG_flavor}.
Here the construction in \subsecref{subsec:flav-symm-group2} is as
follows: Let us suppose $G\vartriangleleft \tilde G$ as in
\subsecref{subsec:affG_flavor} and further assume $G_F = \tilde G/G$
is a torus. Let us write $T_F = G_F$. The Coulomb branch
$\cM_C(\tilde G,\bN)$ for the larger group $\tilde G$ has an action of
$\pi_1(T_F)^\wedge = T_F^\vee$, and \ref{prop:reduction} says that
$\cM(G,\bN)$ is the Hamiltonian reduction of $\cM(\tilde G,\bN)$ by
$T_F^\vee$. Let us denote the moment map by $\mu_{T_F^\vee}$. The
hamiltoian reduction more precisely means the affine algebro-geometric
quotient $\mu_{T_F^\vee}^{-1}(0)\dslash T_F^\vee$.
If we have a cocharacter $\lambda_F$ of $T_F$, we view it
as a character of $T_F^\vee$ and consider the GIT quotient
$\mu_{T_F^\vee}^{-1}(0)\dslash_{\lambda_F} T_F^\vee$.
\end{Remark}

\subsection{Wakimoto sheaves}

The original definition of the multiplication \eqref{eq:47} in
\cite{MR2053952} was given by Wakimoto sheaves, and the above
definition is taken from the proof of
\cite[Th.~8.5.2]{MR2053952}. Although it is unnecessary, let us review
the construction for the sake of the reader.

Let $I$ be the Iwahori subgroup of $G_\cK$ and let $\Fl_{G} = G_\cK/I$
be the affine flag variety.
We have a smooth proper morphism
$\varpi\colon \Fl_{G}\twoheadrightarrow\Gr_{G}$ of ind-schemes.
Let $\cW_{\la}$ be the Wakimoto sheaf on $\Fl_{G}$ for $G$
corresponding to a coweight $\la$. See \cite[\S8]{MR2053952} for the
definition (due to Mirkovi\'c). By \cite[\S8.4]{MR2053952}, we have a
`multiplication'
\begin{equation}\label{eq:28}
%    \begin{split}
%        &
        \mathsf E_{\la}\otimes \mathsf E_{\mu}\to \mathsf
        E_{\la+\mu},
%        \\
%        &
        \qquad\qquad \mathsf E_{\la} =
        \Ext^*_{D_{I}(\Gr_{G})}(\mathbf 1_{\Gr_{G}},
        \cW_{\la}\star\scA),
%    \end{split}
\end{equation}
where $\star$ is the convolution product on $I$-equivariant
complexes on $\Fl_{G}$ and $\Gr_{G}$: Let $x\in\mathsf E_{\la}$,
$y\in\mathsf E_{\mu}$. We consider the composite
\begin{multline*}
    y\cdot x
    \colon \mathsf 1_{\Gr_{G}}\xrightarrow{y} \cW_{\mu}\star\scA
    = \cW_{\mu}\star\mathsf 1_{\Gr_{G}}\star \scA
    \xrightarrow{\cW_{\mu}\star x\star\scA}
    \cW_{\mu}\star\cW_{\la}\star\scA\star\scA
    \\
    = \cW_{\la+\mu}\star\scA\star\scA
    \xrightarrow{\mathsf m}
    \cW_{\la+\mu}\star\scA.
\end{multline*}
Note that $\cW_{\mu}\star x$ is well-defined as $x$ is an
$I$-equivariant homomorphism, and hence $\cW_{\mu}\boxtimes x$
descends for the morphism $q$.

\begin{NB}
    More generally, for $M_1, M_2\in D_{I}(\Gr_{G})$, we consider
    \begin{equation*}
        \mathsf E(M_1, M_2)
        \defeq \bigoplus_{\la\in Y(G)^+}
        \Ext^*_{D_{I}(\Gr_{G})}(M_1,
        \cW_{\lambda}\star M_2\star\scA).
    \end{equation*}
    The above construction gives us $\mathsf E(M_1,M_2)\otimes \mathsf
    E(M_2,M_3) \to \mathsf E(M_1,M_3)$. Thus $\mathsf E(\mathbf
    1_{\Gr_{G}},M)$ is a module of $\mathsf E(\mathbf
    1_{\Gr_{G}},\mathbf 1_{\Gr_{G}})$.
\end{NB}

We have an isomorphism $\mathsf E_\la \cong
H^*_{T_\cO}(i_\la^!\Res_{T_\cO,G_\cO}\scA)$ (see \cite[(8.7.2)]{MR2053952}),
and the above multiplication is the same as \eqref{eq:54}.

\subsection{Gluing construction}\label{subsec:glue}

One of motivations of \cite{Cremonesi:2014kwa} extending the monopole
formula from the Hilbert series of the coordinate ring of the Coulomb
branch $\mathcal M_C$ to the character of the space of sections of a
line bundle (see \remref{rem:HLmonopole}) is to write down the Hilbert
series of a complicated Coulomb branch from simpler ones. We use the
machinery prepared in earlier subsections to introduce the
corresponding construction at the level of commutative ring objects in
$D_G(\Gr_G)$.

The setting in \cite{Cremonesi:2014kwa} is as follows. Suppose that we
have a finite collection $\{ (G_i,\bN_i)\}$ ($i=1,2,\dots$) of gauge
theories sharing the common flavor symmetry group, i.e., $\bN_i$ is a
representation of a larger group $\tilde G_i$ containing $G_i$ as a
normal subgroup with $G_F = \tilde G_i/G_i$, independent of $i$. Then
we define $G$ as the fiber product of $\tilde G_i$ over $G_F$, and
$\bN = \bigoplus \bN_i$. The monopole formula for the Hilbert series
of the Coulomb branch of $(G,\bN)$ is given by extended monopole
formula for $(G_i,\bN_i)$. See also \cite[\S5(i)]{2015arXiv150303676N}
for a review.

An example is a star shaped quiver gauge theory, which is the $3d$
mirror of the Sicilian theory of type $A_{N-1}$, reviewed in
\cite[\S3(iii)]{2015arXiv150303676N}. See \ref{fig:star}. We have
three copies of type $A_{N-1}$ quiver gauge theory with
$\dim V = (N-1,N-2,\dots,1)$, $\dim W = (N,0,\dots,0)$ as in
\subsecref{subsec:ABG}. We divide the group $\GL(V) = \prod \GL(V_i)$
by the diagonal central subgroup $Z$ and take it as the gauge
group. The common flavor symmetry group is $G_F = \PGL(N)$.

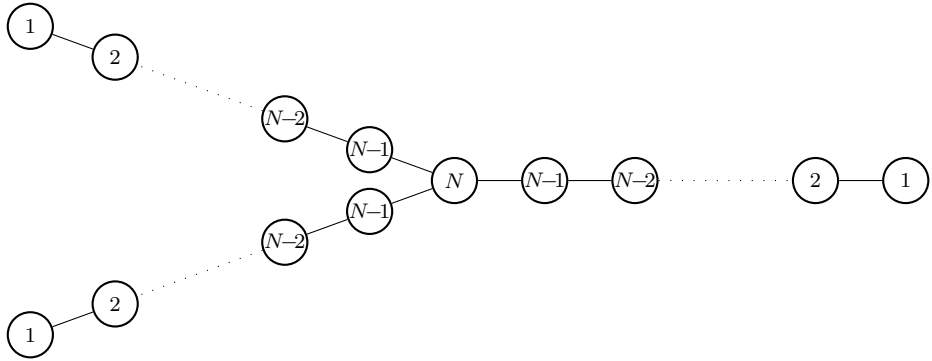
\begin{figure}[htbp]
    \centering
    \begin{tikzpicture}[scale=1.2,
circled/.style={circle,draw%=blue!50,fill=blue!20
,thick,inner sep=0pt,minimum size=.6cm},
squared/.style={rectangle,draw%=black!50,fill=black!20
,thick,inner sep=0pt,minimum size=6mm}
]
\node[circled] (v0) at ( 0,0)  {$\scriptstyle N$};
\node[circled] (v1) at (0:1)  {$\scriptstyle N\!-\!1$};
\draw [-] (v0) -- (v1);
\node[circled] (v2) at (0:2)  {$\scriptstyle N\!-\!2$};
\draw [-] (v1) -- (v2);
\node[circled] (vN-1) at (0:4) {$\scriptstyle 2$};
\draw [loosely dotted] (v2) -- (vN-1);
\node[circled] (vN) at (0:5) {$\scriptstyle 1$};
\draw [-] (vN-1) -- (vN);
\node[circled] (vs1) at (160:1)  {$\scriptstyle N\!-\!1$};
\draw [-] (v0) -- (vs1);
\node[circled] (vs2) at (160:2)  {$\scriptstyle N\!-\!2$};
\draw [-] (vs1) -- (vs2);
\node[circled] (vsN-1) at (160:4) {$\scriptstyle 2$};
\draw [loosely dotted] (vs2) -- (vsN-1);
\node[circled] (vsN) at (160:5) {$\scriptstyle 1$};
\draw [-] (vsN-1) -- (vsN);
\node[circled] (vn1) at (200:1)  {$\scriptstyle N\!-\!1$};
\draw [-] (v0) -- (vn1);
\node[circled] (vn2) at (200:2)  {$\scriptstyle N\!-\!2$};
\draw [-] (vn1) -- (vn2);
\node[circled] (vnN-1) at (200:4) {$\scriptstyle 2$};
\draw [loosely dotted] (vn2) -- (vnN-1);
\node[circled] (vnN) at (200:5) {$\scriptstyle 1$};
\draw [-] (vnN-1) -- (vnN);
\end{tikzpicture}
    \caption{A star shaped quiver gauge theory}
    \label{fig:star}
\end{figure}

The variety $\cR_{G,\bN}$ is the fiber product of $\cR_{\tilde
  G_i,\bN_i}$ over $\Gr_{G_F}$.
\begin{NB}
    $\cT_{G,\bN} = G_\cK\times_{G_\cO} \bN_\cO = \prod' (\tilde
    G_i)_\cK\times_{\prod' (\tilde G_i)_\cO} \bigoplus (\bN_i)_\cO$,
    where $\prod'$ is the fiber product over $G_F$, i.e., $\tilde
    G_1\times_{G_F} \tilde G_2 \times_{G_F} \tilde
    G_3\times\cdots$. Take its point $[(g_1,g_2,\cdots), (s_1\oplus
    s_2\oplus \cdots)]$. Then the isomorphism is given by sending it
    to $([g_1,s_1], [g_2,s_2],\cdots)$. In fact, as $g_1$, $g_2$, etc
    are sent to the same point in $G_F$, so the latter is a point in
    the fiber product. $\cR_{G,\bN}$ is given by the equation $g_i
    s_i\in\bN_i$ for all $i$. Therefore $\cR_{G,\bN}$ is the fiber
    product of $\cR_{G_i,\bN_i}$ over $\Gr_{G_F}$.
\end{NB}%
Let us denote the natural projections $\cR_{G,\bN} \to \Gr_{G_F}$ and
$\cR_{\tilde G_i,\bN_i}\to \Gr_{G_F}$ by $\pi$ and $\pi_i$
respectively. Then
\begin{equation*}
    \pi_*\DC_{\cR_{G,\bN}}[-2\dim\bN_\cO]
    = i_\Delta^!\left(
    \boxtimes \pi_{i*}\DC_{\cR_{\tilde G_i,\bN_i}}[-2\dim(\bN_i)_\cO]\right),
\end{equation*}
where $i_\Delta\colon \Gr_{G_F}\to \prod_i \Gr_{G_F}$ is the diagonal
embedding.
\begin{NB}
    Let us consider the cartesian square
    \begin{equation*}
        \begin{CD}
            X\times_Y X' @>{\tilde i}>> X\times X' \\ @V{\pi}VV @VV{p}V
            \\
            Y @>>{i_{\Delta Y}}> Y\times Y.
        \end{CD}
    \end{equation*}
    $\pi_* \DC_{X\times_Y X'} = \pi_* \tilde i^! \DC_{X\times X'}
    = i_{\Delta Y}^! p_* \DC_{X\times X'}$.
\end{NB}%
Note that $\pi_{i*}\DC_{\cR_{\tilde G_i,\bN_i}}[-2\dim(\bN_i)_\cO]$ is the commutative ring object in $D_{G_F}(\Gr_{G_F})$, considered in \subsecref{subsec:affG_flavor}.

Motivated by the above example, we consider the following setting. (We
use the convention in \subsecref{subsec:line-bundles-via}, i.e.,
replace $G_F$ by $G$.) Suppose that we have a finite collection $\{
\scA_i\}$ of commutative ring objects in $D_G(\Gr_G)$.
Let $i_\Delta\colon \Gr_G\to \prod_i \Gr_G$ be the diagonal
embedding. Then the following is clear:

\begin{Proposition}
\label{prop:!-product}
    $\scA \defeq i_\Delta^!\left(\boxtimes \scA_i\right)$ is a
    commutative ring object in $D_G(\Gr_G)$. In particular, we can
    consider the affine scheme
    \(
        \Spec H^*_{G_\cO}(\Gr_G, \scA).
    \)
\end{Proposition}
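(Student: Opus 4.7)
The plan is to exhibit $\scA$ as the $!$-pullback along the diagonal of a commutative ring object on $\prod_i \Gr_G$, and to transfer the ring structure via the fact that $i_\Delta^!$ is (lax) symmetric monoidal with respect to the coordinate-wise convolution. Set $\scA_{\mathrm{ext}} \defeq \boxtimes_i \scA_i$, viewed as an object in the appropriate equivariant derived category on $\prod_i \Gr_G$. Equipping $\prod_i \Gr_G$ with the coordinate-wise convolution $\star_{\mathrm{ext}}$ coming from the product of the morphisms $m\colon \Gr_G \tilde\times \Gr_G \to \Gr_G$, the external products $\mathsf m_{\mathrm{ext}} \defeq \boxtimes_i \mathsf m_i$ and $1_{\mathrm{ext}} \defeq \boxtimes_i 1_i$ endow $\scA_{\mathrm{ext}}$ with the structure of a commutative ring object in $(\prod_i \Gr_G, \star_{\mathrm{ext}})$, the unit, associativity, and commutativity axioms holding factor-by-factor by the hypothesis on each $\scA_i$.

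Next I would construct, for $\cF, \cG$ in the equivariant derived category of $\prod_i \Gr_G$, a natural morphism
\[
\alpha_{\cF, \cG} \colon (i_\Delta^! \cF) \star (i_\Delta^! \cG) \to i_\Delta^!(\cF \star_{\mathrm{ext}} \cG),
\]
built from Künneth (which gives $(i_\Delta^! \cF) \boxtimes (i_\Delta^! \cG) \cong (i_\Delta \times i_\Delta)^!(\cF \boxtimes \cG)$ for constructible sheaves) together with base change along the commutative square
\[
\begin{CD}
\Gr_G \tilde\times \Gr_G @>{i_\Delta \tilde\times i_\Delta}>> \prod_i (\Gr_G \tilde\times \Gr_G) \\
@VV{m}V @VV{\prod_i m}V \\
\Gr_G @>{i_\Delta}>> \prod_i \Gr_G
\end{CD}
\]
after applying the descent $(q^*)^{-1} p^*$ present in the definition of convolution. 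These morphisms assemble $i_\Delta^!$ into a lax monoidal functor to $(D_G(\Gr_G), \star)$. I then define the multiplication and unit on $\scA$ as the composites
\[
\scA \star \scA \xrightarrow{\alpha} i_\Delta^!(\scA_{\mathrm{ext}} \star_{\mathrm{ext}} \scA_{\mathrm{ext}}) \xrightarrow{i_\Delta^!(\mathsf m_{\mathrm{ext}})} \scA, \qquad \mathbf 1_{\Gr_G} \to i_\Delta^!(\mathbf 1_{\prod_i \Gr_G}) \xrightarrow{i_\Delta^!(1_{\mathrm{ext}})} \scA,
\]
so that the unit axiom and associativity for $\mathsf m$ follow formally from naturality of $\alpha$ combined with the corresponding axioms for $\scA_{\mathrm{ext}}$, which hold factor-by-factor.

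For commutativity of $\mathsf m$ I would verify that $\alpha$ intertwines the constraint $\Theta$ for $\star$ with the product constraint $\Theta_{\mathrm{ext}}$ for $\star_{\mathrm{ext}}$; this is manifest because the Beilinson--Drinfeld Grassmannian used to construct $\Theta$ (recalled in \ref{subsec:constraint}) factorizes compatibly with products of the group $G$, so that $\Theta_{\mathrm{ext}}$ is the product of the $\Theta$'s on each factor and is compatible with nearby cycles coordinate-wise. Then $\mathsf m \circ \Theta = \mathsf m$ reduces to $\mathsf m_{\mathrm{ext}} \circ \Theta_{\mathrm{ext}} = \mathsf m_{\mathrm{ext}}$, which in turn holds factor-by-factor. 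The main obstacle is the careful construction of $\alpha$ inside the Ind-completion $D_G$ and the verification that it is genuinely compatible with the braiding (and not merely lax monoidal in the associativity direction); once this formalism is in place the proposition is essentially automatic, and $\Spec H^*_{G_\cO}(\Gr_G, \scA)$ exists as an affine scheme because $H^*_{G_\cO}(\Gr_G, \scA)$ inherits a commutative algebra structure from $\mathsf m$ in the sense of \ref{prop:sheav-affine-grassm}(4).
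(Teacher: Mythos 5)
Your overall strategy — endow $\boxtimes_i\scA_i$ with the factor-wise ring structure and transfer it through a lax monoidal structure on $i_\Delta^!$ — is exactly the route the paper takes, and the Künneth step, the handling of the smooth $p$, $q$ stages of the convolution diagram, and the observation that commutativity reduces factor-by-factor are all sound. The one concrete gap is in how you produce the map
\begin{equation*}
(i_\Delta^!\cF)\star(i_\Delta^!\cG)\longrightarrow i_\Delta^!(\cF\star_{\mathrm{ext}}\cG)
\end{equation*}
at the $m$-stage: the square you draw, with $m$ on the left and $\prod_i m$ on the right and the diagonal embeddings across, is \emph{not} Cartesian. Its fiber product consists of tuples $([g_{1,j},[g_{2,j}]])_j$ together with $[h]\in\Gr_G$ satisfying only $g_{1,j}g_{2,j}\in hG_\cO$ for each $j$, which does not force the $g_{1,j}$ (nor the $g_{2,j}$) to coincide; so proper base change does not furnish the required morphism. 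What is actually available — and what the paper uses — is the \emph{lax monoidal} structure of proper pushforward with respect to $\otimes^!$, namely the natural transformation $m_*(A\otimes^!B)\to m_*A\otimes^!m_*B$ of \cite[(2.6.24), or the Verdier dual of (2.6.22)]{KaSha}, valid because $m$ is (ind-)proper. This is a genuinely non-invertible natural map, which is precisely why $i_\Delta^!$ is lax rather than strong monoidal; attempting to obtain it by base change along a non-Cartesian square would be an error. Once you replace the appeal to base change there by the Kashiwara--Schapira morphism (keeping your correct use of smoothness of $p$, $q$ for those stages), the argument agrees with the paper's, and your additional check that $\alpha$ intertwines $\Theta$ with $\Theta_{\mathrm{ext}}$ is a useful explicit verification of what the paper leaves implicit.
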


In fact, we have $\boxtimes\mathsf m\colon
(\boxtimes\scA_i)\star(\boxtimes\scA_i) = \boxtimes (\scA_i \star
\scA_i) \to \boxtimes \scA_i$ from $\mathsf m\colon
\scA_i\star\scA_i\to\scA_i$. Then we apply $i_\Delta^!$.
We claim that there is a natural homomorphism
\begin{equation}
  \label{eq:2.40.1}
i_\Delta^!(\boxtimes\scA_i)\star i_\Delta^!(\boxtimes\scA_i)
\to i_\Delta^! \left(\boxtimes (\scA_i\star\scA_i)\right),
\end{equation}
hence its composition with $i_\Delta^!(\boxtimes\mathsf m)$ gives the
desired multiplication homomorphism of $i_\Delta^!(\boxtimes\scA_i)$.
We prove the claim by comparing the convolution diagrams \eqref{eq:1}
for $\Gr_G$ and $\prod_i\Gr_G$. Since $p$, $q$ are smooth, $p^*$, $q^*$
commute with $i_\Delta^!$.
\begin{NB}
We change $p^*$, $q^*$ by $p^!$, $q^!$ up to shifts.

Here is an earlier attempt:

The commutative diagram for the multiplication $m$ factors as
\begin{equation*}
  \begin{CD}
    \Gr_G\tilde\times\Gr_G @>\alpha>> X @>m'>> \Gr_G \\
    @. @V{i'}VV @VV{i_\Delta}V \\
    @. \Gr_{\prod_i G}\tilde\times\Gr_{\prod_i G}
    @>>\prod_i m> \Gr_{\prod_i G} = \prod_i \Gr_G,
  \end{CD}
\end{equation*}
where $X$ is the fiber product. Let
\(
\boxtimes (\scA_i \tilde\boxtimes \scA_i)
\)
denote the complex on
\(
\Gr_{\prod_i G}\tilde\times\Gr_{\prod_i G}
\)
obtained in the course of the convolution product for $\prod_i G$. We
define the homomorphism as
\begin{equation*}
  \begin{split}
    & (m'\alpha)_* (i'\alpha)^! \boxtimes (\scA_i \tilde\boxtimes
    \scA_i)
    = m'_* \alpha_! \alpha^! i^{\prime !}
    \boxtimes (\scA_i \tilde\boxtimes
    \scA_i)
\\
& \longrightarrow m'_* i^{\prime !}  \boxtimes (\scA_i \tilde\boxtimes
    \scA_i)
    \cong
    i_\Delta^! (\prod_i m)_* \boxtimes (\scA_i \tilde\boxtimes \scA_i)
    = i_\Delta^! \boxtimes (\scA_i \star \scA_i)
  \end{split}
\end{equation*}
by the adjunction $\alpha_!\alpha^! \to \operatorname{id}$.
\end{NB}%
The last part of the convolution diagram for $G$ and $\prod_i G$ is
\begin{equation*}
  \begin{CD}
    \Gr_G\tilde\times\Gr_G @>m>>\Gr_G \\
    @V{i'_\Delta}VV @VV{i_\Delta}V \\
    \prod_i \Gr_G\tilde\times\Gr_G
    = \Gr_{\prod_i G}\tilde\times\Gr_{\prod_i G}
    @>>\prod_i m> \Gr_{\prod_i G} = \prod_i \Gr_G,
  \end{CD}
\end{equation*}
where we denote the diagonal embedding of the left column by
$i'_\Delta$ to distinguish it from the right column. Let
\(
\boxtimes (\scA_i \tilde\boxtimes \scA_i)
\)
denote the complex on
\(
\Gr_{\prod_i G}\tilde\times\Gr_{\prod_i G}
\)
obtained in the course of the convolution product for $\prod_i G$.
We define the homomorphism as
\begin{equation*}
    m_* i_\Delta^{\prime !}(\boxtimes (\scA_i \tilde\boxtimes \scA_i))
    = m_* \bigotimes^! (\scA_i \tilde\boxtimes \scA_i)
    \to 
    \bigotimes^! m_* (\scA_i \tilde\boxtimes \scA_i) =
    i_\Delta^{!}(\prod_i m)_* \boxtimes (\scA_i \tilde\boxtimes \scA_i))
\end{equation*}
by the natural homomorphism \cite[(2.6.24) or the dual of
(2.6.22)]{KaSha}.
\begin{NB}
    Note that $A\otimes^! B = \DD(\DD A\otimes \DD
    B)$.
    \cite[(2.6.22)]{KaSha} gives us have
    $f_* \DD A\otimes f_* \DD B\to f_*(\DD A\otimes \DD B)$. Applying
    $\DD$, we get
    $\DD f_* (\DD A\otimes \DD B) \to \DD (f_*\DD A\otimes f_* \DD B)$.
    The LHS is $f_! (A\otimes^! B)$, while the RHS is
    $f_! A\otimes^! f_! B$. In our application, $f = m$ is proper.
\end{NB}%

See \ref{sec:Sicilian} for an application of the gluing construction.

\section{Proof of commutativity}\label{sec:commute}

We denote $\Gr_G$ by $\Gr$ for brevity in this section. In this
section we closely follow \cite[\S5]{MV2}, \cite{MR1826370} and
\cite[\S5.3]{Beilinson-Drinfeld}.

\subsection{Commutativity constraint}\label{subsec:constraint}

Let us give a definition of the commutativity constraint $\Theta$.

Let us choose a smooth curve $X$. We define $\Gr_X$ the moduli space
of triples $(x,\scP,\varphi)$ of a point $x\in X$, a $G$-bundle $\scP$
on $X$ and its trivialization $\varphi$ over $X\setminus\{x\}$.
\begin{NB}
    We have $\Gr_X = \mathscr X\times_{\Aut(\cO)} \Gr$, where
    $\mathscr X$ is the $\Aut(\cO)$-bundle over $X$ parametrizing all
    choices of local coordinates. See \cite{Beilinson-Drinfeld}.
\end{NB}%
We also have a group scheme $G_{X,\cO}$, the global analog of $G_\cO$.
\begin{NB}
    It is $\mathscr X\times_{\Aut(\cO)} G_\cO$.
\end{NB}%

More generally, we introduce an ind-scheme $\Gr_{X^n}$ as the moduli
space of $(x_1,\dots,x_n,\scP,\varphi)$ of $n$ ordered points in $X$,
a $G$-bundle $\scP$ on $X$ and its trivialization $\varphi$ over
$X\setminus\bigcup \{x_i\}$. We also have $G_{X^n,\cO}$, which is the
moduli space of $(x_1,\dots,x_n,\scP,\kappa_{x_1,\dots,x_n})$ where
$(x_1,\dots,x_n)\in X^n$, $\scP$ the trivial $G$-bundle on $X$, and
$\kappa_{x_1,\dots,x_n}$ is a trivialization of $\scP$ on $\hat
X_{x_1,\dots,x_n}$.
\begin{NB}
Then $G_{X,\cO}$ acts on $\Gr_X$.

Also $G_{X,\cO}$ acts on $\widetilde{\Gr_X\times\Gr_X}$ in
\eqref{eq:60} below by changing the trivialization $\kappa$.  Here $X$
for $G_{X,\cO}$ is the second factor in $X\times X$.  This realizes
$p_X$ as a $G_{X,\cO}$-torsor. We can rewrite
$\widetilde{\Gr_X\times\Gr_X}$ as the moduli space of
$(x_1,x_2,\scP_1,\scP_2,\varphi_1,\varphi_2,\kappa)$ where $\varphi_i$
is the trivialization of $\scP_i$ on $\hat X_{x_i}\setminus x_i$. Then
we change both trivialization $\varphi_2$, $\kappa$ (at $\hat
X_{x_2}\setminus x_2$). This is the second action, and $q_X$ is the
associated $G_{X,\cO}$-torsor.
\end{NB}%

Then we define the convolution product of $\scA$, $\scB\in
D_{G_{X,\cO}}(\Gr_X)$ as before, using the global version of the
diagram \eqref{eq:1}:
\begin{equation}\label{eq:60}
    \begin{CD}
        \Gr_X\times\Gr_X @<p_X<<
        \widetilde{\Gr_X\times\Gr_X}
        @>q_X>> \Gr_X\tilde\times\Gr_X @>m_X>> \Gr_{X^2}.
    \end{CD}
\end{equation}
Here $\widetilde{\Gr_X\times\Gr_X}$ is the moduli space of
$(x_1,x_2,\scP_1,\varphi_1,\kappa,\scP_2,\varphi_2)$, a pair of points
$(x_1,x_2)\in X^2$, two $G$-bundles $\scP_1$, $\scP_2$ and their
trivializations $\varphi_i$ over $X\setminus \{x_i\}$ together with a
trivialization $\kappa$ of $\scP_1$ on the formal neighborhood of
$x_2$.
The twisted product $\Gr_X\tilde\times\Gr_X$ is the moduli space of
$(x_1,x_2,\scP_1,\varphi_1,\scP,\eta)$ as above, but
$\eta\colon \scP_1|_{X\setminus x_2}\cong \scP|_{X\setminus x_2}$ instead
of $\varphi_2$ and $\kappa$.
The morphism $q_X$ is given by defining $\scP$ as the gluing of
$\scP_1|_{X\setminus x_2}$ and $\scP_2|_{\hat X_{x_2}}$ by
$\varphi_2^{-1}\circ\kappa$ over $(X\setminus x_2)\cap \hat X_{x_2} =
\hat X_{x_2}\setminus x_2$.
(When $X=D$, the formal disk, $\scP$ and $\scP_2$ are
isomorphic. Hence this construction was omitted before.)
The definitions of morphisms $p_X$, $m_X$ are as before, and are
omitted. (See \cite[\S5]{MV2}.)
\begin{NB}
    \begin{gather*}
        (x_1,x_2,\scP_1,\varphi_1,\scP_2,\varphi_2) \overset{p_X}{\leftmapsto}
        (x_1,x_2,\scP_1,\varphi_1,\kappa,\scP_2,\varphi_2),
\\
        (x_1,x_2,\scP_1,\varphi_1,\kappa,\scP_2,\varphi_2)
        \overset{q_X}{\mapsto}
        (x_1,x_2,\scP_1,\varphi_1,\scP = \scP_1|_{X\setminus x_2}
        \underset{\varphi_2^{-1}\circ\kappa}\bigcup
        \scP_2|_{\hat X_{x_2}},\eta=\id|_{X\setminus x_2}),
\\
        (x_1,x_2,\scP_1,\varphi_1,\scP,\eta) \overset{m_X}{\mapsto}
        (x_1,x_2,\scP,\varphi_1\circ\eta^{-1}).
    \end{gather*}
\end{NB}%
Note that $p_X$ is a $G_{X,\cO}$-torsor by the action changing $\kappa$. The second projection $q_X$ is also a $G_{X,\cO}$-torsor by the action changing
$\kappa$ and $\varphi_2$ simultaneously.
\begin{NB}
    Suppose a point $(x_1,x_2,\scP_1,\varphi_1,\scP,\eta)$ together
    with a trivialization $\kappa\colon\scP_1|_{\hat X_{x_2}}\to \hat
    X_{x_2}\times G$ is given. We define a bundle $\scP_2$ defined
    over $\hat X_{x_2}$ with trivialization $\varphi_2$ over ${\hat
      X_{x_2}\setminus x_2}$ as 
    \begin{equation*}
        \scP_2 \defeq \scP|_{\hat X_{x_2}},\qquad
        \varphi_2 \colon \scP_2|_{\hat X_{x_2}\setminus x_2}
        = \scP_{\hat X_{x_2}\setminus x_2}\xrightarrow{\eta^{-1}}
        \scP_1|_{\hat X_{x_2}\setminus x_2} \xrightarrow{\kappa}
        \hat X_{x_2}\setminus x_2\times G,
    \end{equation*}
    i.e., $\varphi_2 = \kappa\circ\eta^{-1}$. This is an inverse
    construction of $q_X$.
\end{NB}%

The diagram \eqref{eq:60} gives a $G_{X^2,\cO}$-equivariant object
defined on $\Gr_{X^2}$ by $\scA_X\star_X\scB_X \defeq m_{X*}
(q_X^*)^{-1} p_X^*(\scA_X\boxtimes\scB_X)$ for $\scA_X$, $\scB_X\in
D_{G_{X,\cO}}(\Gr_X)$.

We take $X=\mathbb A^1$. We have $\Gr_X \cong X\times \Gr$ thanks to a
choice of a global coordinate on $\mathbb A^1$. In particular, we have
a projection $\tau\colon \Gr_X\to\Gr$.
For an object $\scA\in D_{G}(\Gr)$, we can attach
$\scA_X\in D_{G_{X,\cO}}(\Gr_X)$ by $\tau^*\scA[1]$.
\begin{NB}
    $=\tau^!\scA[-1]$.
\end{NB}%
In fact, we can do more generally if we use the $\Aut(\cO)$-bundle
over $X$ parametrizing all choices of local coordinates and consider
$\Aut(\cO)$-equivariant objects as in \cite{Beilinson-Drinfeld,MR1826370}.

Let $\Delta$ denote the diagonal in $X^2$ and $U$ denote the
complement $X^2\setminus\Delta$. The restrictions of $\Gr_{X^2}$ to
$\Delta$ and $U$ are isomorphic to $\Gr_X$ and $(\Gr_X\times
\Gr_X)|_U$ respectively. In fact, the restriction to $\Delta$ is obvious. For a given $(x_1,x_2,\scP,\varphi)$ with $x_1\neq x_2$, we define
$\scP_i$ by gluing $\scP_i|_{X\setminus x_i} = (X\setminus x_i)\times G$ and
$\scP_i|_{X\setminus x_{3-i}} = \scP|_{X\setminus x_{3-i}}$ by $\varphi$
on $X\setminus \{x_1,x_2\}$.
\begin{NB}
    Conversely from $(x_1,\scP_1,\varphi_1)$, $(x_2,\scP_2,\varphi_2)$
    with $x_1\neq x_2$, we define $\scP$ as the gluing of
    $\scP_1|_{X\setminus x_2}$ and $\scP_2|_{X\setminus x_1}$ by
    $\varphi_2^{-1}\circ\varphi_1$ over $X\setminus\{x_1,x_2\}$.
\end{NB}%
Hence we have the diagram
\begin{equation}\label{eq:65}
    \begin{CD}
        \Gr_X @>{\iota}>> \Gr_{X^2} @<{\jmath}<< (\Gr_X\times\Gr_X)|_U
        \\
        @VVV @VVV @VVV
        \\
        \Delta @>>> X^2 @<<< U.
    \end{CD}
\end{equation}

We consider the nearby cycle functor
\begin{equation*}
    \psi_{\Gr_{X^2}}\colon
    D_{(G_{X,\cO}\times G_{X,\cO})|_U}((\Gr_X\times \Gr_X)|_U)\to
    D_{G_{X,\cO}}(\Gr_X).
\end{equation*}
See \cite[\S8.6]{KaSha}, where we change the source
domain to objects defined on $(\Gr_X\times \Gr_X)|_U$, and shift by $-1$,
following the convention in \cite{MR1826370}.

Then an argument in \cite[Proposition 6]{MR1826370} shows there is a
natural isomorphism
\begin{equation}\label{eq:59}
    \psi_{\Gr_{X^2}}((\scA_X\boxtimes \scB_X)|_U)\cong (\scA\star \scB)_X.
\end{equation}
We have the isomorphism $(\scA_X\boxtimes\scB_X)|_U\cong
(\scB_X\boxtimes\scA_X)|_U$ exchanging the factors. Therefore together
with %\eqref{eq:61} and
\eqref{eq:59} it gives us an isomorphism
$\scA\star\scB \cong \scB\star\scA$.
This is the definition of the commutativity constraint $\Theta$ used
in \ref{thm:commutative}.

Let us briefly explain how \eqref{eq:59} is constructed. For a later
purpose, we give a slightly different explanation from \cite{MR1826370}.

\newcommand{\ps}{\operatorname{ps}}

By the definition of the nearby cycle functor, we have a natural
homomorphism
\begin{equation}\label{eq:61}
    \ps\colon \psi_{\Gr_{X^2}}((\scA_X\star_X \scB_X)|_U)
    \to \iota^! (\scA_X\star_X \scB_X)[1].
\end{equation}
It is the \emph{dual} of the specialization homomorphism. See
\cite[(8.6.7)]{KaSha}.
We restrict the diagram \eqref{eq:60} to the diagonal to see that
\begin{equation}\label{eq:73}
    (\scA\star\scB)_X \cong \iota^! (\scA_X\star_X\scB_X)[1].
\end{equation}
\begin{NB}
    Let us restrict \eqref{eq:60} to the diagonal. We obtain
\begin{equation*}
    \begin{CD}
        \Gr_X\times\Gr_X @<p_X<<
        \widetilde{\Gr_X\times\Gr_X}
        @>q_X>> \Gr_X\tilde\times\Gr_X @>m_X>> \Gr_{X^2}
        \\ @AAA @AAA @AAA @AAA \\
        X\times \Gr\times\Gr @<\id_X\times p<<
        X\times G_\cK\times \Gr
        @>\id_X \times q>> X \times \Gr\tilde\times\Gr
        @>\id_X\times m>> \Gr_{X} = X\times\Gr
    \end{CD}
\end{equation*}
Starting from
$i^!(\scA_X\star_X\scB_X) = i^! m_{X*}(\scA_X\tilde\boxtimes\scB_X)$,
we apply the base change, etc, to get the assertion.
\end{NB}%
Therefore we need to check

\begin{Claim}
\begin{subequations}
\begin{align}\label{eq:67}
    & \text{We have a natural isomorphism
      $(\scA_X\star_X\scB_X)|_U\cong
      (\scA_X\boxtimes\scB_X)|_U$.} \\
    \label{eq:72}
    & \text{$\ps$ in \eqref{eq:61} is an isomorphism.}
\end{align}
\end{subequations}
\end{Claim}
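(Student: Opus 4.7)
The plan is to verify \eqref{eq:67} and \eqref{eq:72} separately by analysing the geometry of the Beilinson--Drinfeld convolution diagram \eqref{eq:60} over the two strata $\Delta$ and $U$ of $X^2$.

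For \eqref{eq:67}, the key point is the \emph{factorization} property of the convolution product. When $x_1\ne x_2$, the point $x_1$ does not lie in the formal neighbourhood of $x_2$, so the trivialization $\varphi_1$ canonically provides the datum $\kappa$ of $\scP_1$ on $\hat X_{x_2}$. Thus the first thing I would do is observe that the restriction of $p_X$ over $U$ admits a canonical section, which makes $p_X|_U$ an isomorphism. An entirely parallel analysis shows that over $U$ the gluing defining $q_X$ reduces to reassembling the pair $(\scP_1,\scP_2)$ into a single bundle $\scP$ whose restriction to each trivialization locus is prescribed, and similarly $m_X|_U$ becomes an isomorphism onto $(\Gr_X\times\Gr_X)|_U$. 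Chaining these identifications yields $(\scA_X\star_X\scB_X)|_U\cong (\scA_X\boxtimes\scB_X)|_U$, as desired.

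For \eqref{eq:72}, I would exploit properness of $m_X$ together with a universal local acyclicity (ULA) property of the twisted external product relative to the base $X^2$. First I would verify that $\scA_X\tilde\boxtimes\scB_X$ on $\Gr_X\tilde\times\Gr_X$ is ULA with respect to the projection to $X^2$. This is inherited from the fact that $\scA_X=\tau^*\scA[1]$ and $\scB_X=\tau^*\scB[1]$ are pulled back via the smooth projection $\tau\colon\Gr_X\to\Gr$, combined with the smoothness of the two torsor maps $p_X$ and $q_X$. Because $m_X$ is ind-proper, the pushforward $\scA_X\star_X\scB_X = m_{X*}(\scA_X\tilde\boxtimes\scB_X)$ remains ULA over $X^2$; for a ULA family the dual specialization map from nearby cycles to $\iota^![1]$ is an isomorphism, which gives \eqref{eq:72}.

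Granting \eqref{eq:67} and \eqref{eq:72}, the composition
\[
\psi_{\Gr_{X^2}}\bigl((\scA_X\boxtimes\scB_X)|_U\bigr)
\;\overset{\eqref{eq:67}}{\cong}\;
\psi_{\Gr_{X^2}}\bigl((\scA_X\star_X\scB_X)|_U\bigr)
\;\overset{\ps}{\cong}\;
\iota^!(\scA_X\star_X\scB_X)[1]
\;\overset{\eqref{eq:73}}{\cong}\;
(\scA\star\scB)_X
\]
is the required isomorphism \eqref{eq:59}, and then exchanging the two factors over $U$ produces $\Theta$.

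The main obstacle is the careful verification of the ULA property, since $\Gr_{X^2}$ is only an ind-scheme and $m_X$ is only ind-proper. One has to work stratum-by-stratum along a suitable presentation of $\Gr_{X^2}$ as a filtered union of finite-dimensional closed subschemes, check ULA on each finite-dimensional piece (where it reduces, via the smoothness of $p_X,q_X$ and base change, to the statement that a constant sheaf along $X$ is ULA), and then assemble the results. Once this technical bookkeeping is in place, the argument reproduces that of \cite[Proposition~6]{MR1826370} and \cite[\S5]{MV2}.
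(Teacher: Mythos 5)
Your overall strategy matches the paper's, but there is a material slip in your justification of \eqref{eq:67}: $p_X$ restricted to $U$ does \emph{not} become an isomorphism. Even over $U$ it remains a $G_{X,\cO}$-torsor; the canonical trivialization of $\scP_1$ on $\hat X_{x_2}$ obtained by restricting $\varphi_1$ (valid since $x_1\ne x_2$) merely \emph{splits} it, identifying $\widetilde{\Gr_X\times\Gr_X}|_U$ with $G_{X,\cO}\times_X(\Gr_X\times\Gr_X)|_U$, with $p_U$ the projection and $q_U$ the action on the second factor. The isomorphism \eqref{eq:67} is not obtained by inverting $p_U$ and $q_U$ separately; rather, under this trivialization $(q_U^*)^{-1}p_U^*$ is the \emph{identity} on $G_{X,\cO}$-equivariant objects, and separately the factorization morphism $m_U$ is an isomorphism. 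This is exactly the content of the paper's diagram \eqref{eq:69}; with that correction your chain of identifications does give \eqref{eq:67}.

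Your treatment of \eqref{eq:72} is correct and amounts to the paper's argument repackaged in ULA language. The paper commutes nearby cycles and dual specializations through the very same maps (proper pushforward by $m_X$, smooth pullbacks by $q_X$ and $p_X$) and on the base $\Gr_X\times\Gr_X$ observes that $\scA_X\boxtimes\scB_X$ is pulled back from $\Gr\times\Gr$ along the $X^2$-direction, so its vanishing cycles vanish and $\ps$ is already an isomorphism there. Phrasing this as ``$\scA_X\boxtimes\scB_X$ is ULA over $X^2$, ULA is stable under smooth pullback and proper pushforward, and ULA forces the dual specialization to be an isomorphism'' is the same reasoning under a more conceptual heading; the stratum-by-stratum bookkeeping for ind-schemes that you flag is needed in either phrasing and carries through without difficulty.
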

% \begin{enumerate}
%   \renewcommand{\theenumi}{\roman{enumi}}%
%   \renewcommand{\labelenumi}{\textup{(\theenumi)}}%
%     \item We have a natural isomorphism $(\scA_X\star_X\scB_X)|_U\cong
%   (\scA_X\boxtimes\scB_X)|_U$.
%     \item $\ps$ above is an isomorphism.
% \end{enumerate}

\begin{proof}
Let us denote the restrictions of $p_X$, $q_X$, $m_X$ to inverse
images of $U$ by $p_U$, $q_U$, $m_U$ respectively.

Over $U$, we have a natural commutative diagram
\begin{equation}\label{eq:69}
    \begin{gathered}[m]
%    \footnotesize
    \xymatrix@C=20pt{
      (\Gr_X\times\Gr_X)|_U 
      & \widetilde{\Gr_X\times\Gr_X}|_U \ar[l]_-{p_U} \ar[r]^-{q_U}
      & \Gr_X\tilde\times\Gr_X|_U \ar[r]^-{m_U}_-\cong
      & \Gr_{X^2}|_U
      \\
      (\Gr_X\times\Gr_X)|_U \ar@{=}[u]
      & G_{X,\cO}\times_{X} (\Gr_X\times \Gr_X)|_U \ar[l] \ar[r] \ar[u]_-\cong
      & (\Gr_X\times\Gr_X)|_U \ar@{=}[r] \ar[u]_-\cong
      & (\Gr_X\times\Gr_X)|_U \ar[u]_-\cong^-\jmath
      }
    \end{gathered}
\end{equation}
\begin{NB}
    \begin{equation*}
        \begin{CD}
        (\Gr_X\times\Gr_X)|_U @<p_U<< \widetilde{\Gr_X\times\Gr_X}|_U
        @>q_U>> \Gr_X\tilde\times\Gr_X|_U @>m_U>\cong> \Gr_{X^2}|_U \\
        @| @A{\cong}AA @AA{\cong}A @A{\cong}A{\jmath}A \\
        (\Gr_X\times\Gr_X)|_U @<<<
        G_{X,\cO}\times_{X} (\Gr_X\times \Gr_X)|_U
        @>>> (\Gr_X\times\Gr_X)|_U @= (\Gr_X\times\Gr_X)|_U,
    \end{CD}
    \end{equation*}
\end{NB}%
where $\Gr_X\times\Gr_X\to X$ in the bottom middle term is through the
projection $X\times X\to X$ to the second factor.
Here the second vertical isomorphism is given by regarding
$\kappa$ as a trivialization of the trivial bundle over $\hat X_{x_2}$
via the trivialization $\varphi_1\colon \scP_1|_{\hat
  X_{x_2}}\xrightarrow{\cong} \hat X_{x_2}\times G$.
The third vertical isomorphism is given by considering $\eta$ as a
trivialization of $\scP$.
The lower left arrow is given by forgetting $G_{X,\cO}$. The lower
right arrow is given by the action of $G_{X,\cO}$ on the second factor
of $\Gr_X\times\Gr_X$.
\begin{NB}
    Let us check the assertions.

    Take $(x_1,x_2,\scP_1,\varphi_1,
    \kappa,\scP_2,\varphi_2)\in\widetilde{\Gr_X\times\Gr_X}$ with
    $x_1\neq x_2$. We view it as $\scP_i$, a $G$-bundle over $\hat
    X_{x_i}$, $\varphi_i$, a trivialization of $\scP_i$ on $\hat
    X_{x_i}\setminus \{x_i\}$, and $\kappa$ as a trivialization of the
    trivial bundle (through $\varphi_1$).

    In the same way, we consider
    $(x_1,x_2,\scP_1,\varphi_1,\scP,\eta)\in \Gr_X\tilde\times\Gr_X|_U$ as
    $\scP_1$, $\varphi_1$ as above, a $G$-bundle $\scP$ over $\hat
    X_{x_2}$, and a trivialization $\eta$ of $\scP$ over $\hat
    X_{x_2}\setminus x_2$.  Namely we trivialize $\scP_1|_{\hat
      X_{x_2}}$ via $\varphi_1$. It is clear that
    $(\Gr_X\tilde\times\Gr_X)|_U \cong (\Gr_X\times\Gr_X)|_U$.

    Now we look at the definition
    \begin{equation*}
        q_X(x_1,x_2,\scP_1,\varphi_1,\kappa,\scP_2,\varphi_2)
        = (x_1,x_2,\scP_1,\varphi_1,\scP = \scP_1|_{X\setminus x_2}
        \underset{\varphi_2^{-1}\circ\kappa}\cup
        \scP_2|_{\hat X_{x_2}},\eta=\id|_{X\setminus x_2})
    \end{equation*}
    again. We can replace $\scP$ by $\scP_2$, as we only care it over
    $\hat X_{x_2}$. Then the trivialization $\eta$ is given by
    $\kappa^{-1}\circ\varphi_2$.
    % Then we define $(\scP_2',\varphi_2')$ by
    % \begin{equation*}
    %     \scP_2' = (X\setminus x_2)\times G
    %     \underset{\eta\circ\varphi_1^{-1}}\cup
    %     \scP|_{X\setminus x_1}, \qquad
    %     \varphi_2' = \id_{X\setminus x_2}.
    % \end{equation*}
    % Plugging in the above definition, we have
    % \begin{equation*}
    %     \begin{split}
    %     \scP_2' & = (X\setminus x_2)\times G
    %     \underset{\varphi_1^{-1}}\cup
    %     \scP_1|_{X\setminus \{x_1,x_2\}}
    %     \underset{\varphi_2^{-1}\circ\kappa}\cup
    %     \scP_2|_{\hat X_{x_2}}, \qquad
    %     \varphi_2' = \id_{X\setminus x_2} \\
    %     & = (X\setminus x_2)\times G
    %     \underset{\varphi_2^{-1}\circ\kappa\circ\varphi_1^{-1}}\cup
    %     \scP_2|_{\hat X_{x_2}}, \qquad
    %     \varphi_2' = \id_{X\setminus x_2}
    %     \end{split}
    % \end{equation*}
\end{NB}%
Since we are considering equivariant objects, we have a canonical
isomorphism $(q_U^*)^{-1}p_U^*((\scA_X\boxtimes\scB_X)|_U) \cong
(\scA_X\boxtimes\scB_X)|_U$. We now apply $m_{U*}$ and observe that
$m_{U*}(q_U^*)^{-1}p_U^*((\scA_X\boxtimes\scB_X)|_U) =
(\scA_X\star_X\scB_X)|_U$. Thus we have checked (a).

Let us turn to the assertion (b). The idea is to consider nearby
cycle functors for four spaces in \eqref{eq:60}.

Let us start with $m_X$.
\begin{NB}
The restriction $m_U$ becomes an isomorphism.
\begin{NB2}
    We reconstruct $\scP_1$ as $\scP_1|_{X\setminus x_1} = (X\setminus
    x_1)\times G$, $\scP_1|_{X\setminus x_2} = \scP|_{X\setminus x_2}$
    with the gluing data $\varphi$.
\end{NB2}%
Therefore we can identify $(\Gr_X\tilde\times\Gr_X)|_U$ with $(\Gr_X\times
\Gr_X)|_U$ and consider
$\psi_{\Gr_X\tilde\times\Gr_X}((\scA_X\boxtimes\scB_X)|_U)$.
\end{NB}%
Since nearby cycle functors commute with proper morphisms, we have
\begin{equation*}
%    \label{eq:63}
   \psi_{\Gr_{X^2}}(m_{U*}(\scA_X\tilde\boxtimes \scB_X)|_U) \cong
   m_{\Delta*}\psi_{\Gr_X\tilde\times\Gr_X}((\scA_X\tilde\boxtimes\scB_X)|_U),
\end{equation*}
where $m_\Delta$ is the restriction of $m$ to $\Delta$.
\begin{NB}
    More precisely,
    \begin{equation*}
        \begin{split}
            \psi_{\Gr_{X^2}}(m_{U*}(\scA_X\tilde\boxtimes \scB_X)|_U)
            & \cong
            m_{\Delta*}\psi_{\Gr_X\tilde\times\Gr_X}((\scA_X\tilde\boxtimes\scB_X)|_U)
            \\
            & \cong
            m_{\Delta*} (q_\Delta^*)^{-1}
            \psi_{\widetilde{\Gr_X\times\Gr_X}}(p_U^*(\scA_X\boxtimes\scB_X)|_U)
            \\
            & \cong
            m_{\Delta*} (q_\Delta^*)^{-1} p_\Delta^*
            \psi_{\Gr_X\times\Gr_X}((\scA_X\boxtimes\scB_X)|_U).
        \end{split}
    \end{equation*}
\end{NB}

Next consider $p_X$ and $q_X$. They are both smooth
(\cite[p.114]{MV2}), and hence commute with nearby cycle
functors. Therefore
\begin{equation*}
    \begin{split}
%        \psi_{\Gr_{X^2}}(m_{U*}(\scA_X\tilde\boxtimes \scB_X)|_U) &
%        \cong
%        m_{\Delta*}
        \psi_{\Gr_X\tilde\times\Gr_X}((\scA_X\tilde\boxtimes\scB_X)|_U)
        & \cong %m_{\Delta*}
        (q_\Delta^*)^{-1}
        \psi_{\widetilde{\Gr_X\times\Gr_X}}(p_U^*(\scA_X\boxtimes\scB_X)|_U)
        \\
        & \cong %m_{\Delta*}
        (q_\Delta^*)^{-1} p_\Delta^*
        \psi_{\Gr_X\times\Gr_X}((\scA_X\boxtimes\scB_X)|_U),
    \end{split}
\end{equation*}
where $p_\Delta$, $q_\Delta$ are restrictions of $p_X$, $q_X$ to $\Delta$.
Hence
\begin{equation*}
%    \label{eq:64}
    \psi_{\Gr_{X^2}}((\scA_X\star_X\scB_X)|_U)
    \cong m_{\Delta*}(q_\Delta^*)^{-1} p_\Delta^*
    \psi_{\Gr_X\times\Gr_X}((\scA_X\boxtimes\scB_X)|_U).
\end{equation*}

Now $\Gr_X\times\Gr_X = X\times X\times \Gr\times\Gr$, hence
$\psi_{\Gr_X\times\Gr_X}((\scA_X\boxtimes\scB_X)|_U)$ is just
$(\scA\boxtimes \scB)_X$. More precisely, the isomorphism is given by
the dual specialization homomorphism
\[
\ps\colon \psi_{\Gr_X\times\Gr_X}((\scA_X\boxtimes\scB_X)|_U)
\xrightarrow{\cong} \iota^! (\scA_X\boxtimes\scB_X)[1] =
(\scA\boxtimes\scB)_X,
\]
thanks to vanishing of the vanishing cycle functor
$\varphi_{\Gr_X\times\Gr_X}(\scA_X\boxtimes\scB_X)$.
Thus
\[
   \psi_{\Gr_{X^2}}(\scA_X\star_X\scB_X)|_U)
   \xrightarrow[\cong]{m_{\Delta*} (q_\Delta^*)^{-1} p_\Delta^* \ps}
   m_{\Delta*} (q_\Delta^*)^{-1} p_\Delta^*((\scA\boxtimes \scB)_X).
\]
Notice that the restriction of \eqref{eq:60} to the diagonal is just
the product of $X$ and the diagram \eqref{eq:1}. Therefore the right
hand side is $(\scA\star\scB)_X$. Now one can check that dual
specialization homomorphisms commute with proper pushforward and
smooth pull-backs so that they are compatible with the commutation of
nearby cycle functors. (See the argument in the proof of
\lemref{lem:N=0case} below.)
Therefore $m_{\Delta*} (q_\Delta^*)^{-1} p_\Delta^* \ps$ is equal to
$\ps$ over $\Gr_{X^2}$. Thus (b) is checked.
\end{proof}

\subsection{Factorization version of $\mathcal R$}\label{subsec:KV}

We define a global version of the variety of triples $\mathcal R$ in
this subsection.

Let us assume that we are given a smooth connected curve $X$, an algebraic group $G$ and a representation $\mathbf N$ of $G$ and a finite set $I$.
Consider a functor $\mathrm{Schemes}/\mathbb C\to \mathrm{Sets}$
which sends a scheme $S$ to the following data:
1) A map $f\colon S\to X^I$. We shall think about $f$ as a collection of maps $f_i\colon S\to X$ for $i\in I$ and we denote by $\Gamma$ the union of graphs of $f_i$ -- this is a closed subscheme of $S\times X$.

2) A $G$-bundle $\scP$ on $S\times X$.

3) A trivilalization $\varphi$ of $\scP$ over $S\times X\backslash \Gamma$.

4) A section $s$ of the associated bundle $\scP_{\bN}$ over the formal neighbourhood of $\Gamma$ in $S\times X$ and a section $s'$ of the trivial $\mathbf N$ - bundle over the same formal neighbourhood which are equal on the ``formal punctured neighbourhod" (this makes sense because of 3). These notions (formal neighbourhood, formal punctured neighbourhood) are explained in \cite{MR2102701}.

Now we claim that this functor is representable by an ind-scheme.
Moreover, this ind-scheme has a natural closed embedding into $\Gr_{X^I}\underset{X^I}\times \mathcal J_{\mathbf N, X^I}$ where

a) $\Gr_{X^I}$ is the factorization (a.k.a. Beilinson-Drinfeld) Grassmannian over $X^I$

b)  $\mathcal J_{\mathbf N, X^I}$ is the Kapranov-Vasserot factorization version of the $\mathbf N$-jet space over $X^I$.

Indeed it is enough to construct this closed embedding (as a closed subfunctor of an ind-scheme is also an ind-scheme). But an $S$-point of $\Gr_{X^I,G,BD}$ is precisely the data of 1), 2), 3) and an $S$-point of $\mathcal J_{\mathbf N, X^I}$
is the data of 1), 2) and $s'$ from 4). Since $s$ is obviously uniquely determined by all the data and since the existence of $s$ is a closed condition on the other data we get the above closed embedding.

Let us denote the above ind-scheme by $\mathcal R_{X^I}$. Then obviously from 1) we get a morphism $\pi_{X^I}:\mathcal R_{X^I}\to X^I$ and
it is clear that the restriction of
$\mathcal R_{X^I}$ to the complement $U$ of all the diagonals in $X^I$ is naturally isomorphic to the similar restriction
of $(\mathcal R^{(1)})^I$.

On the other hand, assume that we are given a surjective morphism $I\to J$ of finite sets. Such a morphism defines a closed embedding
$X^J\hookrightarrow X^I$ (as a partial diagonal) and it follows that the restriction of $\mathcal R_{X^I}$ to $X^J$ is naturally isomorphic to
$\mathcal R^{(J)}$.

Similarly, we can define a factorization version of the bundle $\mathcal T$ over $\Gr$. By definition an $S$-point of $\mathcal T_{X^I}$
is a quadruple $(f,\scP,\varphi,s)$ as above 
(i.e.\ no $s'$).\footnote{Note that if we instead only choose $s'$ and do not 
choose $s$ then the resulting functor is represented by
$\Gr_{X^I}\underset{X^I}\times \mathcal J_{\mathbf N, X^I}$.}
We claim again that this functor is representable by an ind-scheme. For this it is enough to show that the morphism 
$\mathcal T_{X^I}\to \Gr_{X^I}$ (which corresponds to forgetting $s$) is representable. This can be done by a word-by-word repetition
of the proof of the fact that the factorization version of the jet scheme is representable by a scheme (cf.\ again Section 3 of \cite{MR2102701}).

In what follows we shall only need the above spaces when $I=\{1,2\}$.

\subsection{Definition of another multiplication}

We consider the space $\cR_{X^2}$, its dualizing complex
$\DC_{\cR_{X^2}}$ and the pushforward $\pi_*\DC_{\cR_{X^2}}$.  Its
restriction to $U$ is isomorphic to
$(\pi_*\DC_{\cR_X}\boxtimes\pi_*\DC_{\cR_X})|_U$ under $\jmath$ in
\eqref{eq:65}. We consider two dual specialization homomorphisms
\begin{equation}\label{eq:71}
%    \begin{CD}
        % \psi_{\Gr_{X^2}}(\pi_*\DC_{\cR_{X^2}}|_U)
        % @>\ps>> \iota^! \pi_* \DC_{\cR_{X^2}}[1]
        % \\
        % @V{\cong}VV @.
        % \\
        % \psi_{\Gr_{X^2}}((\pi_*\DC_{\cR_X}\star_X\pi_*\DC_{\cR_X})|_U)
        % @>\ps >\cong > \iota^! (\pi_*\DC_{\cR_X}\star_X\pi_*\DC_{\cR_X})[1],
%        \cong (\pi_*\DC_\cR\star\pi_*\DC_\cR)_X,
%    \end{CD}
    \begin{gathered}[m]
    \xymatrix@C=30pt{
        \psi_{\Gr_{X^2}}(\pi_*\DC_{\cR_{X^2}}|_U) \ar[r]^-\ps 
        & \iota^! \pi_* \DC_{\cR_{X^2}}[1]
        \\
        \psi_{\Gr_{X^2}}((\pi_*\DC_{\cR_X}\star_X\pi_*\DC_{\cR_X})|_U)
        \ar[r]^-\ps_-\cong \ar[u]^-\cong
        & \iota^! (\pi_*\DC_{\cR_X}\star_X\pi_*\DC_{\cR_X})[1]
        \ar@{.>}[u],
        }
%        \cong (\pi_*\DC_\cR\star\pi_*\DC_\cR)_X,
    \end{gathered}
\end{equation}
where $\iota\colon \Gr_X\to \Gr_{X^2}$ is the inclusion, and the
vertical arrow is given by 
\(
   \pi_*\DC_{\cR_{X^2}}|_U \cong
   (\pi_*\DC_{\cR_X}\boxtimes\pi_*\DC_{\cR_X})|_U
   \cong
   (\pi_*\DC_{\cR_X}\star_X\pi_*\DC_{\cR_X})|_U.
\)
(See \eqref{eq:67}.) 
The lower homomorphism is an isomorphism thanks to \eqref{eq:72}.
Note that $\pi_* \DC_{\cR_{X}}$ is $(\pi_*\DC_{\cR})_X[1]$. Therefore
the right bottom term is $(\pi_*\DC_{\cR}\star\pi_*\DC_\cR)_X[2]$ by
\eqref{eq:73}.
Note also $\iota^!\pi_*\DC_{\cR_{X^2}}[1] = \pi_*\DC_{\cR_X}[1]
= (\pi_*\DC_\cR)_X[2]$.
\begin{NB}
    \begin{equation*}
        \begin{CD}
            \cR_X @>\iota>> \cR_{X^2} \\
            @V{\pi}VV @VV{\pi}V \\
            \Gr_X @>>\iota> \Gr_{X^2}
        \end{CD}
    \end{equation*}
\end{NB}%
Therefore we obtain a homomorphism
\begin{equation}
    \label{eq:62}
    \mathsf m^\psi\colon
    \scA\star\scA \to \scA,
    \qquad \scA = \pi_*\DC_\cR[-2\dim\bN_\cO],
\end{equation}
by specializing the dotted arrow at a point in $X$.

The degree shift should be checked by going back to finite dimensional
approximation of $\cR$. We have shifts by $\dim
\bN_\cO/z^{d_1}\bN_\cO$ and $\dim \bN_\cO/z^{d_2}\bN_\cO$ for two
factors in
$\psi_{\Gr_{X^2}}((\pi_*\DC_{\cR_X}\boxtimes\pi_*\DC_{\cR_X})|_U)$. Then
we have a shift $\dim \bN_\cO/z^{d_1+d_2}\bN_\cO$ for
$\pi_*\DC_{\cR_X}$.

Now our goal is to check two properties:
\begin{enumerate}
  \renewcommand{\theenumi}{\roman{enumi}}%
  \renewcommand{\labelenumi}{\textup{(\theenumi)}}%
    \item\label{item:equal} $\mathsf m^\psi = \mathsf m$,
    \item $\mathsf m^\psi$ is invariant under the exchange of factors of
  $\scA_X\star\scA_X$. (More precisely, exchange after going back to
  $(\scA\boxtimes\scA)|_U$.)
\end{enumerate}

The property (ii) is clear as the diagram \eqref{eq:65} is invariant
under the exchange of two factors of $X^2=X\times X$.

We will check (i) for $\bN=0$ in the next subsection. We have a
difficulty to check (i) directly for general $\bN$, so we will argue
indirectly by reduction to the case $\bN=0$.

\begin{NB}
\begin{enumerate}
  \renewcommand{\theenumi}{\roman{enumi}}%
  \renewcommand{\labelenumi}{\textup{(\theenumi)}'}%
    \item\label{item:prime} the induced product $\mathsf m^\psi\colon
  H^{G_\cO}_*(\scA) \otimes H^{G_\cO}_*(\scA)\to H^{G_\cO}_*(\scA)$ is
  equal to the induced product by $\mathsf m$, and hence to the
  convolution product $\ast$ by \propref{prop:sheav-affine-grassm}(4).
\end{enumerate}
\end{NB}%

\subsection{The case \texorpdfstring{$\bN=0$}{N=0}}\label{subsec:N=0}

%We need preparation for general case. Therefore let us start with the case %$\bN=0$ first in order to illustrate the idea.

We first consider the case $\bN=0$.

We consider the dual specialization homomorphism for
$\DC_{\Gr_X\tilde\times\Gr_X}$:
\[
   \ps_\prime\colon
   \psi_{\Gr_X\tilde\times\Gr_X}((\DC_{\Gr_X}\boxtimes\DC_{\Gr_X}|_U)
%   \DC_{\Gr_X\tilde\times\Gr_X}|_U)
   \to \iota_{\prime}^! \DC_{\Gr_X\tilde\times\Gr_X}[1]
   = \DC_{X\times \Gr\tilde\times\Gr}[1]
\]
where $\iota_{\prime}\colon X\times \Gr\tilde\times\Gr =
(\Gr_X\tilde\times\Gr_X)|_\Delta\to \Gr_X\tilde\times\Gr_X$ is the
embedding from the definition of the nearby cycle functor.
Here we have used $\DC_{\Gr_X\tilde\times\Gr_X}|_U\cong
(\DC_{\Gr_X}\boxtimes\DC_{\Gr_X}|_U)$ from \eqref{eq:69}.

The following two assertions identify \eqref{eq:62} with the pull-back
of $\mathsf m$ under \eqref{eq:59}, hence we obtain the property (i)
for $\bN=0$.

\begin{Lemma}\label{lem:N=0case}
    \textup{(1)} $\ps$ is equal to the composition of
    $m_{\Delta*}\ps_{\prime}$ and the natural morphism $m_{\Delta*}
    m_{\Delta}^! = m_{\Delta!} m_{\Delta}^!  \to \id$.

    \textup{(2)} The homomorphism $\ps_{\prime}$ coincides with the
    homomorphism $\DC_{\Gr}\tilde\boxtimes\DC_{\Gr}\to
    \DC_{\Gr\tilde\times\Gr}$ constructed in \eqref{eq:58},
    pull-backed by $\Gr_X\tilde\times\Gr_X\to \Gr\tilde\times\Gr$.
    It is an isomorphism.
\end{Lemma}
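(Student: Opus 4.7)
My plan is to prove parts (1) and (2) separately, both via naturality of the dual specialization with respect to proper pushforward and smooth pullback.

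For part (1), I would exploit the standard compatibility of the nearby cycle functor with proper pushforward. The morphism $m_X\colon \Gr_X\tilde\times\Gr_X \to \Gr_{X^2}$ is proper, and by the top row of \eqref{eq:69} it is an isomorphism over $U$. Over $U$ one therefore has $\DC_{\Gr_{X^2}}|_U \cong (m_{X*}\DC_{\Gr_X\tilde\times\Gr_X})|_U$, and the adjunction counit $m_{X*}m_X^!\DC_{\Gr_{X^2}} \to \DC_{\Gr_{X^2}}$ is the identity there. Combining this with the compatibility $\psi_{\Gr_{X^2}}\circ m_{X*} \cong m_{\Delta*}\circ \psi_{\Gr_X\tilde\times\Gr_X}$ (see \cite[Prop.~8.6.8]{KaSha}) and the base change $\iota^!\circ m_{X*} \cong m_{\Delta*}\circ \iota_\prime^!$, one obtains a commutative diagram identifying $\ps$ with the composition of $m_{\Delta*}\ps_\prime$ and the adjunction $m_{\Delta*}m_\Delta^!\to\id$, up to the natural shift. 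The essential point is the naturality of the dual specialization map with respect to morphisms arising from proper direct images, which is built into the formalism of \cite[\S8.6]{KaSha}.

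For part (2), I would rely on the smoothness of $p_X$ and $q_X$, which are $G_{X,\cO}$-torsors. By \eqref{eq:69}, over $U$ the twisted product $\Gr_X\tilde\times\Gr_X|_U$ is identified with the direct product $(\Gr_X\times\Gr_X)|_U$ via the equivariant descent $(q_U^*)^{-1}\circ p_U^*$, and under this identification $\DC_{\Gr_X\tilde\times\Gr_X}|_U$ corresponds to $\DC_{(\Gr_X\times\Gr_X)|_U}$. Since smooth pullback commutes with nearby cycles and with the dual specialization map, $\ps_\prime$ is identified with the descent of the dual specialization for $\DC_{\Gr_X\times\Gr_X}$. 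But on $\Gr_X\times\Gr_X = X\times X\times \Gr\times \Gr$ the dualizing complex splits as an external tensor product, and because $X\times X$ is smooth with a smooth inclusion of the diagonal $\Delta$, the vanishing cycles of $\DC_{\Gr_X\times\Gr_X}$ vanish. Consequently the corresponding dual specialization is an isomorphism onto $\iota^!\DC_{\Gr_X\times\Gr_X}[1] = \DC_{X\times\Gr\times\Gr}[1]$. Descending back through $(q_\Delta^*)^{-1}p_\Delta^*$ shows $\ps_\prime$ is an isomorphism. Finally, for $\bN=0$ the construction \eqref{eq:58} reduces to exactly this descent identification of dualizing complexes, so the two maps coincide.

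The main obstacle is the bookkeeping of shifts and the verification that the naturality properties of $\ps$ interact correctly with the $G_{X,\cO}$-equivariance required for the descent $(q_X^*)^{-1}$. This amounts to a diagram chase, but one has to carefully match the conventions of \cite[(8.6.7)]{KaSha} for the dual specialization against the descent and proper pushforward isomorphisms being invoked. Once this is in place, parts (1) and (2) fit together to express $\mathsf m^\psi$ as the pull-back of $\mathsf m$ under \eqref{eq:59}, establishing property (i) of the previous subsection when $\bN=0$.
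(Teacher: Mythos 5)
Your proposal follows essentially the same route as the paper's proof: for (1) you invoke compatibility of nearby cycles / dual specialization with the proper map $m_X$ via base change and the adjunction counit, and for (2) you transport $\ps_\prime$ along the $G_{X,\cO}$-torsors $q_X$, $p_X$ to the split space $\Gr_X\times\Gr_X = X\times X\times\Gr\times\Gr$, where vanishing cycles of the dualizing complex vanish, and then descend the resulting isomorphism back, identifying it with the pulled-back \eqref{eq:58}. The paper carries out exactly this plan by exhibiting the explicit commutative diagrams (base change/adjunction for $m_X$ in part (1); the $q_\Delta^*$ and $p_\Delta^*$ squares in part (2), concluding because $p_\Delta^*\ps_{\prime\prime\prime}$ is an isomorphism), so you would just need to write out the diagram chase you have sketched.
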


\begin{proof}
    (1) Recall the definition of the nearby cycle functor and the dual
    specialization morphism (\cite[\S8.6]{KaSha}). We have $f\colon
    \Gr_{X^2}\to X^2 = \CC^2 \to \CC$, where the second map is
    $(x_1,x_2)\mapsto x_1-x_2$. We then consider $p\colon
    \tilde{\CC}^\times\to\CC$, the composition of the universal
    covering $\tilde\CC^\times\to\CC^\times$ and the inclusion
    $\CC^\times\to\CC$. We then pull back $p$ by $f$ to get
    $\tilde p\colon \smash[b]{\widetilde{\Gr}}_{X^2}^\times\to \Gr_{X^2}$.
    Then
    \begin{equation*}
        \psi_{\Gr_{X^2}}((\DC_{\Gr_X}\boxtimes\DC_{\Gr_X})|_U)
        = \iota^* \tilde p_* \tilde p^* \DC_{\Gr_{X^2}}[-1]
        \cong \iota^* \HHom(f^* p_!  \CC_{\tilde{\CC}^\times},
        \DC_{\Gr_{X^2}}[-1])
    \end{equation*}
    and $\ps$ is defined from $\CC_{\{0\}}\to p_!
    \CC_{\tilde{\CC}^\times}[2]$.

    Let us write the identification $(\Gr_X\tilde\times\Gr_X)|_U
    \xrightarrow{\cong} \Gr_{X^2}|_U\cong (\Gr_X\times\Gr_X)|_U$
    explicitly as $m_U$, the restriction of $m_X$ to $U$. The
    commutativity of
    \begin{equation*}
        \begin{CD}
            \psi_{\Gr_{X^2}}(m_{U*}\DC_{(\Gr_X\tilde\times\Gr_X)|_U})
            % (\DC_{\Gr_X}\boxtimes\DC_{\Gr_X}|_U))
            @>\ps>>
            \iota^! m_{X*} \DC_{\Gr_X\tilde\times\Gr_X}[1]
            \\ @V{\cong}VV @VV{\cong}V \\
            m_{\Delta*} \psi_{\Gr_X\tilde\times\Gr_X}
            (\DC_{(\Gr_X\tilde\times\Gr_X)|_U})
            @>>m_{\Delta*} \ps_{\prime}>
            m_{\Delta *} \iota_{\prime}^!\DC_{\Gr_X\tilde\times\Gr_X}[1]
        \end{CD}
    \end{equation*}
    is clear as both vertical arrows are given by base change and
    adjunction. This property has been already used in the
    construction of the commutativity constraint above.
    \begin{NB}
        We have
        \begin{equation*}
            \begin{CD}
                \smash[b]{\widetilde{\Gr_X\tilde\times\Gr_X}}^\times
                @>>>
                \smash[b]{\widetilde{\Gr}}_{X^2}^\times
                @>>> \tilde\CC^\times
                \\ @V{\tilde{p'}}VV @VV{\tilde p}V @VV{p}V \\
                \Gr_X\tilde\times\Gr_X @>>m_X> \Gr_{X^2} @>>f> \CC.
            \end{CD}
        \end{equation*}
        Both are cartesian products. Then the commutation of the
        proper pushfoward and the nearby cycle is given as
        \begin{equation*}
            \begin{split}
            & m_{\Delta*}\psi_{\Gr_X\tilde\times\Gr_X}(\DC_{(\Gr_X\tilde\times\Gr_X)|_U})
            \cong
            m_{\Delta*} \iota^*\HHom(m_X^* f^* p_!\CC_{\tilde\CC^\times},
            \DC_{\Gr_X\tilde\times\Gr_X}[-1])
            \\
            \cong \; & \iota^* m_{X*}\HHom(m_X^* f^* p_!\CC_{\tilde\CC^\times},
            \DC_{\Gr_X\tilde\times\Gr_X}[-1])
            \\
            \cong \; & \iota^* \HHom(f^* p_!\CC_{\tilde\CC^\times},
            m_{X*} \DC_{\Gr_X\tilde\times\Gr_X}[-1])
            = \psi_{\Gr_{X^2}}(m_{U*}\DC_{(\Gr_X\tilde\times\Gr_X)|_U}),
            \end{split}
        \end{equation*}
        where the second isomorphism is the base change ($m_{\Delta*}
        = m_{\Delta!}$, $m_{X*} = m_{X!}$), the third isomorphism is
        the adjunction. We replace $p_!\CC_{\tilde\CC^\times}$ by
        $\CC_{\{0\}}$.
    \end{NB}%
    Next the commutativity of
    \begin{equation*}
        \begin{CD}
            \psi_{\Gr_{X^2}}(m_{U*}\DC_{(\Gr_X\tilde\times\Gr_X)|_U})
              @>\ps>> \iota^! m_{X*} \DC_{\Gr_X\tilde\times\Gr_X}[1]
\\
            @V{\psi_{\Gr_{X^2}}(m_{U!}m_U^!\to\id)}V{\cong}V
            @VV{m_{X!}m_X^!\to\id}V \\
            \psi_{\Gr_{X^2}}((\DC_{\Gr_X}\boxtimes\DC_{\Gr_X})|_U)
            @>>\ps>
            \iota^! \DC_{\Gr_{X^2}}[1]
        \end{CD}
    \end{equation*}
    is also clear from the definition. Therefore we get the assertion.

    (2) \begin{NB}
This is probably an isomorphism:
$\DC_{\Gr}\tilde\boxtimes\DC_{\Gr} \cong \DC_{\Gr\tilde\times\Gr}$.
\begin{NB2}
        It seems easier than I expected. So I include the proof of an isomorphicity at the end.
    \end{NB2}

We replace $m_{?!}m_?^!\to \id$ by $\id\to p_{?*} p_?^*$, $\id\to
q_{?*} q_?^*$ ($? = U$, $\Delta$, $X$) in the argument in (1).
\end{NB}%
The following diagram is commutative:
\begin{equation*}
    \begin{CD}
        q_\Delta^* \psi_{\Gr_X\tilde\times\Gr_X}(\DC_{(\Gr_X\tilde\times\Gr_X)|_U})
        @>q_\Delta^*\ps_\prime>>
        q_\Delta^* \iota_\prime^! \DC_{\Gr_X\tilde\times\Gr_X}[1] \\
        @V{\cong}VV @VV{\cong}V \\
        \psi_{\widetilde{\Gr_X\times\Gr_X}}(q_U^*
        \DC_{(\Gr_X\tilde\times\Gr_X)|_U}) @>>\ps_{\prime\prime}>
        \iota_{\prime\prime}^! q_X^* \DC_{\Gr_X\tilde\times\Gr_X}[1],
    \end{CD}
\end{equation*}
where $\iota_{\prime\prime}\colon G_\cK\times \Gr_X\to
\widetilde{\Gr_X\times\Gr_X}$ is the inclusion given by
$\widetilde{\Gr_X\times\Gr_X}|_\Delta \cong G_\cK\times\Gr_X$, and
$\ps_{\prime\prime}$ is the dual specialization for
$\DC_{\widetilde{\Gr_X\times\Gr_X}} = q_X^*\DC_{\Gr_X\tilde\times\Gr_X}$.
In fact, the left vertical arrow is
given by the composition of
\begin{equation*}
    \begin{split}
        & q_\Delta^*\psi_{\Gr_X\tilde\times\Gr_X}(\DC_{\Gr_X\tilde\times\Gr_X}|_U) \cong q_\Delta^*
        \iota_{\prime}^*\HHom(m_X^* f^* p_!\CC_{\tilde\CC^\times},
        \DC_{\Gr_X\tilde\times\Gr_X}[-1])
        \\
        \cong \; & \iota_{\prime\prime}^* q_X^* \HHom(m_X^*
        f^* p_!\CC_{\tilde\CC^\times}, \DC_{\Gr_X\tilde\times\Gr_X}[-1]) \\
        \cong \; & \iota_{\prime\prime}^*\HHom(q_X^* m_X^* f^*
        p_!\CC_{\tilde\CC^\times}, q_X^* \DC_{\Gr_X\tilde\times\Gr_X}[-1]) \cong
        \psi_{\widetilde{\Gr_X\times\Gr_X}}( q_U^* \DC_{\Gr_X\tilde\times\Gr_X}|_U)
    \end{split}
\end{equation*}
where we have used \cite[Prop.~3.1.13]{KaSha} and $q_X^! = q_X^*[2\dim
G_\cO]$ for the third isomorphism. Now we apply $\CC_{\{0\}}\to
p_!\CC_{\tilde\CC^\times}[2]$.

In the same way, we have another commutative diagram
\begin{equation*}
    \begin{CD}
        \psi_{\widetilde{\Gr_X\times\Gr_X}}(p_U^*
        \DC_{(\Gr_X\times\Gr_X)|_U}) @>\ps_{\prime\prime}>>
        \iota_{\prime\prime}^! p_X^* \DC_{\Gr_X\times\Gr_X}[1] \\
        @V{\cong}VV @VV{\cong}V \\
        p_\Delta^*
        \psi_{\Gr_X\times\Gr_X}(\DC_{(\Gr_X\times\Gr_X)|_U})
        @>>p_\Delta^*\ps_{\prime\prime\prime}> p_\Delta^*
        \iota_{\prime\prime\prime}^! \DC_{\Gr_X\times\Gr_X}[1],
    \end{CD}
\end{equation*}
where $\iota_{\prime\prime\prime}\colon X\times\Gr\times\Gr\to
\Gr_X\times \Gr_X$ is the embedding given by
$\Gr_X\times\Gr_X|_\Delta\cong X\times \Gr\times\Gr$. Now the
assertion is proved. Note $p_\Delta^*\ps_{\prime\prime\prime}$ is an
isomorphism, hence so is $\ps_{\prime}$.
%\end{NB}
%
\begin{NB}
    For $\cR$, we use the pull-back with support, so the argument
    should be more delicate.
\end{NB}%
\end{proof}

\begin{Remark}
    We have a difficulty to generalize the argument in
    \subsecref{subsec:N=0} to $\bN\neq 0$ since we lack an
    $\cR$-version of $\Gr_X\tilde\times\Gr_X$, as a well-defined ind-scheme.
    \begin{NB}
        Misha's Message on Nov.~2, 2015:

        Consider the case $G={\mathbb C}^\times$, ${\mathbf N}$ is
        one-dimensional.  More precisely, $\Gr_X\star \Gr_X$ is the
        moduli space of ${\scP}_{\triv}\leftarrow{\scP}_1\to{\scP}$;
        the left arrow is $\varphi$ regular away from $x_1$, the right
        arrow is $\eta$ regular away from $x_2$. Then what are the
        sections and conditions on them? We are interested in the
        component where $\varphi$ has a first order pole, and $\eta$
        has a first order zero. Naively it seems that in the fiber of
        ${\mathbf N}_\cO$ over $x_1\ne x_2$ we impose 1 equation, but
        in the fiber over $x_1=x_2$ there are no equations. So the
        equations must be divisible by $x_1-x_2$, but there seems to
        be no modular reasons for this.
        
        \begin{NB2}
          Added on Oct.~28, 2017, after receiving Gus' note.
          
          This reasoning seems wrong, as we should impose 1 equation even if
          $x_1 = x_2$.
        \end{NB2}%
      \end{NB}%
      This difficulty will be overcome in \ref{gusgusgus} written by
      Gus Lonergan.
\end{Remark}

\subsection{Completion of the proof}

%It remains to check the property (i).

Let $\bz_{X^2}\colon \Gr_{X^2}\to \cT_{X^2}$ be the factorization
version of the embedding $\bz\colon \Gr\to\cT$ discussed in
\subsecref{sec:z}. It factors as $\bz_{X^2} = i\circ \tilde\bz_{X^2}$,
where $\tilde\bz_{X^2}\colon \Gr_{X^2}\to \cR_{X^2}$, and $i\colon
\cR_{X^2}\to\cT_{X^2}$ is the embedding. Since $\cT_{X^2}\to\Gr_{X^2}$
is a vector bundle, we have $\bz_{X^2}^*\DC_{\cT_{X^2}}\to
\DC_{\Gr_{X^2}}[2\dim\bN_\cO]$, and also
$\DC_{\cR_{X^2}}[-2\dim\bN_\cO]\to \tilde\bz_{X^2*}\DC_{\Gr_{X^2}}$ by
the pull-back with support. We apply $\pi_*$ to obtain
\begin{equation*}
    \bz_{X^2}^*\colon \pi_*\DC_{\cR_{X^2}}[-2\dim\bN_\cO] \to \DC_{\Gr_{X^2}}.
\end{equation*}
\begin{NB}
    as $\pi_* \tilde z_{X^2*} = \id$.
\end{NB}%
We now apply the nearby cycle functor $\psi_{\Gr_{X^2}}$ and the dual
specialization homomorphisms:
\begin{equation*}
    \begin{CD}
        \psi_{\Gr_{X^2}}(\pi_*(\DC_{\cR_{X^2}}[-2\dim\bN_\cO])|_U)
        @>\ps>> \pi_* \DC_{\cR_{X}}[-2\dim\bN_\cO+1]
\\
      @V{\bz_{X^2}^*}VV @VV{\bz^*}V
\\
        \psi_{\Gr_{X^2}}(\DC_{\Gr_{X^2}}|_U)
        @>>\ps> \DC_{\Gr_{X}}[1].
    \end{CD}
\end{equation*}
This is a commutative diagram by the argument in the proof of
\lemref{lem:N=0case}. Removing the unnecessary factor $X$, we get
\begin{equation}\label{eq:74}
    \begin{CD}
        % \pi_*\DC_{\cR}[-2\dim\bN_\cO]\star \pi_*\DC_{\cR}[-2\dim\bN_\cO]
        % @>\mathsf m^\psi_\cR>> \pi_*\DC_{\cR}[-2\dim\bN_\cO]
        \scA\star\scA @>\mathsf m^\psi_\cR>> \scA
\\
      @V{\bz^*\star \bz^* = \bz_{X^2}^*}VV @VV{\bz^*}V
\\
        \DC_{\Gr}\star\DC_{\Gr} @>>\mathsf m^\psi_\Gr>\DC_{\Gr},
    \end{CD}
\end{equation}
where we put the superscript $\psi$ to emphasize that the
multiplication is defined via nearby cycle functors. 
Since the restriction of $\bz^*_{X^2}$ to $U$ is $\bz^*\boxtimes \bz^*$, the
left vertical arrow is equal to $\bz^*\star \bz^*$.
\begin{NB}
    Added on Nov.\ 24 by H.N.
\end{NB}%
\begin{NB}
At this stage we have proved the following:
  
We have proved that $\mathsf m^\psi_\Gr$ is $\mathsf m_\Gr$ defined in
\propref{prop:sheav-affine-grassm} for $\bN=0$ in the previous
subsection. If we take cohomology groups, we have the induced
homomorphism of algebras $(H^*_{G_\cO}(\scA),\mathsf m^\psi_\cR) \to
(H^*_{G_\cO}(\DC_{\Gr}),\ast) = (H_*^{G_\cO}(\Gr), \ast)$. This is
injective by \lemref{lem:injective}. It is also an algebra
homomorphism with respect to the convolution product. Therefore
$\mathsf m^\psi_\cR%\colon H^*_{G_\cO}(\scA)\otimes
%H^*_{G_\cO}(\scA)\to H^*_{G_\cO}(\scA)
$ coincides with the convolution
product.
\begin{NB2}
    Let $\varTheta$ be the embedding $\varTheta\mathsf m^\psi_\cR(x,y)
    = \varTheta(x)\ast\varTheta(y)$. On the other hand we also know
    $\varTheta(x\ast y) = \varTheta(x)\ast\varTheta(y)$. Since
    $\varTheta$ is injective, we have $\mathsf m^\psi_\cR(x,y) = x\ast
    y$.
\end{NB2}%
\end{NB}

%\subsection{Up to torsion}\label{subsec:uptotorsion}

%Let us improve the above argument.
Let us view $\mathsf m$, $\mathsf
m^\psi$ as elements of
$\Ext^*_{D_{G}(\Gr)}(\scA\star\scA,\scA)$. It is a module over
$H^*_G(\mathrm{pt})$. We consider the restriction functor
$\Res_{T_\cO,G_\cO}$ from the $G_\cO$-equivariant derived category to
the $T_\cO$-equivariant one. Then we have
\begin{equation*}
    \Ext^*_{D_{G}(\Gr)}(\scA\star\scA,\scA)
    \to 
    \Ext^*_{D_{T}(\Gr)}(\Res_{T_\cO,G_\cO}(\scA\star\scA),
    \Res_{T_\cO,G_\cO}(\scA)),
\end{equation*}
and the latter is a module over $H^*_T(\mathrm{pt}) = \CC[\ft]$.

We have
\begin{enumerate}
  \renewcommand{\theenumi}{\roman{enumi}}%
  \renewcommand{\labelenumi}{\textup{(\theenumi)}'}%
    \item $\mathsf m^\psi$ and $\mathsf m$ are equal in
  $\Ext^*_{D_{T}(\Gr)}(\Res_{T_\cO,G_\cO}(\scA\star\scA),
  \Res_{T_\cO,G_\cO}(\scA))\otimes_{\CC[\ft]}\CC(\ft)$. Hence $\mathsf
  m$ is commutative up to an element which vanishes in
  $\Ext^*_{D_{T}(\Gr)}(\Res_{T_\cO,G_\cO}(\scA\star\scA),
  \Res_{T_\cO,G_\cO}(\scA))\otimes_{\CC[\ft]}\CC(\ft)$.
\end{enumerate}

\begin{NB}
This statement recovers (\ref{item:prime})', as $H^*_{G_\cO}(\scA)\to
H^*_{T_\cO}(\scA)\otimes_{\CC[\ft]}\CC(\ft)$ is injective by
\lemref{lem:flat}.
\begin{NB2}
    The restriction $\Res_{T_\cO,G_\cO}\mathsf m$ defines
    $H^*_{T_\cO}(\scA)\otimes H^*_{G_\cO}(\scA)\to H^*_{T_\cO}(\scA)$,
    which is nothing but the right $H^{G_\cO}_*(\cR)$-module structure
    on $H^{T_\cO}_*(\cR)$ discussed in \subsecref{sec:bimodule}.
\end{NB2}%
\end{NB}

This statement recovers (\ref{item:equal}), as
$\mathrm{Ext}^*_{D_G(\Gr_G)}(\scA\star\scA,\scA)$ is the Weyl group
invariant part of
$\mathrm{Ext}^*_{D_T(\Gr_G)}(\Res_{T_\cO,G_\cO}(\scA\star\scA),
\Res_{T_\cO,G_\cO}(\scA))$, and
$\mathrm{Ext}^*_{D_T(\Gr_G)}(\Res_{T_\cO,G_\cO}(\scA\star\scA),
\Res_{T_\cO,G_\cO}(\scA))$ is a free $\CC[\ft]$-module: More
generally, for $\CF,\CG\in D_G(\Gr)$,
$\Ext^*_{D_T(\Gr)}(\Res_{T_\cO,G_\cO}(\CF),\Res_{T_\cO,G_\cO}(\CG))$
is a free $\BC[\ft]$-module. Indeed, by devissage it reduces to the
case of irreducible perverse $\CF,\CG$ where it is well known, see
e.g.~\cite{ginsburg}.

Let us suppress $\Res_{T_\cO,G_\cO}$ hereafter. 

Let us consider the commutative diagram \eqref{eq:74}. We have the
corresponding diagram for $\mathsf m_\cR$, the multiplication
constructed in \propref{prop:sheav-affine-grassm}, where the lower arrow
is $\mathsf m_{\Gr}$, cf.\ \ref{lem:convolution}.
We compose $\bz^*$ to get
\begin{equation*}
    \Ext^*_{D_{G}(\Gr)}(\scA\star\scA,\scA)
    \xrightarrow{\bz^*}
    \Ext^*_{D_{G}(\Gr)}(\scA\star\scA,\DC_{\Gr}).
\end{equation*}
The commutativity of the diagram and $\mathsf m_{\Gr} = \mathsf
m^\psi_{\Gr}$ (\subsecref{subsec:N=0}) imply that $\bz^*\mathsf m^\psi_\cR$
and $\bz^*\mathsf m_\cR$ are equal in 
$\Ext^*_{D_{G}(\Gr)}(\scA\star\scA,\DC_{\Gr})$.
Therefore it is enough to check that
\begin{equation*}
%\begin{multline*}
    \Ext^*_{D_{T}(\Gr)}(%\Res_{T_\cO,G_\cO}(
    \scA\star\scA%)
    ,
    %\Res_{T_\cO,G_\cO}(
    \scA%)
    )\otimes_{\CC[\ft]}\CC(\ft) %\\
    \xrightarrow{\bz^*}
    \Ext^*_{D_{T}(\Gr)}(%\Res_{T_\cO,G_\cO}(
    \scA\star\scA%)
    ,
    %\Res_{T_\cO,G_\cO}(
    \DC_\Gr%)
    )\otimes_{\CC[\ft]}\CC(\ft)
%\end{multline*}
\end{equation*}
is an isomorphism. The argument is almost same as one in the proof of
\lemref{lem:injective}.

By the definition of $\bz^*$, it factors through
\[
   \Ext^*_{D_{T}(\Gr)}(\scA\star\scA, \pi_*\DC_{\cT}[-2\dim\bN_\cO])
   \otimes_{\CC[\ft]}\CC(\ft).
\]
Since $\cT\to\Gr$ is a vector bundle of rank $2\dim\bN_\cO$, we have
$\pi_*\DC_{\cT}[-2\dim\bN_\cO]\cong \DC_{\Gr}$.
\begin{NB}
    We have
    $\pi_*\DC_{\cT}[-2\dim\bN_\cO] \xrightarrow{z^*} \pi_* z_* z^* 
    \DC_{\cT}[-2\dim\bN_\cO] \cong \DC_{\Gr}$, which induces
    $H^{T_\cO}_*(\cT)\to H^{T_\cO}_*(\Gr)$.
    On the other hand, we have $\DC_{\Gr}\xrightarrow{\pi^*} \pi_*
    \pi^*\DC_{\Gr} \cong \pi_* \DC_{\cT}[-2\dim\bN_\cO]$, which
    induces $H^{T_\cO}_*(\Gr)\to H^{T_\cO}_*(\cT)$. They are inverse
    to each other.
\end{NB}%
Therefore it is enough to check that
\begin{equation*}
    \Ext^*_{D_{T}(\Gr)}(\scA\star\scA,
    \scA )\otimes_{\CC[\ft]}\CC(\ft) %\\
    \xrightarrow{i_*}
    \Ext^*_{D_{T}(\Gr)}(\scA\star\scA,
    \pi_*\DC_\cT)\otimes_{\CC[\ft]}\CC(\ft)
\end{equation*}
given by the closed embedding $i\colon\cR\to\cT$ is an
isomorphism. Let $j\colon\cT\setminus\cR\to \cT$ be the inclusion of
the complement. We have the distinguished triangle $i_!  i^!
\DC_\cT\to \DC_\cT \to j_* j^*\DC_\cT$. From the associated long exact
sequence, it is enough to show that 
\(
   \Ext^*_{D_{T}(\Gr)}(\scA\star\scA, \pi_* j_* j^*
   \DC_\cT)\otimes_{\CC[\ft]}\CC(\ft)
\) 
vanishes. But 
\(
\Ext^*_{D_{T}(\Gr)}(\scA\star\scA, \pi_* j_* j^*
   \DC_\cT)
   = \Ext^*_{D_{T}(\cT\setminus\cR)}(j^*\pi^* (\scA\star\scA),
   \DC_{\cT\setminus\cR})
\)
is an equivariant cohomology group over $\cT\setminus\cR$ which does
not contain $T$-fixed points by \lemref{lem:fixedpts}.
Therefore it is torsion and vanishes once we take a tensor product
with $\CC(\ft)$.

\begin{NB}
\renewenvironment{NB}{
\color{blue}{\bf NB2}. \footnotesize
}{}
\renewenvironment{NB2}{
\color{purple}{\bf NB3}. \footnotesize
}{}
\input{BD_temp}
\end{NB}

\section{Proof of \ref{thm:ABG}}
\label{monopole}
In this section we prove \ref{thm:ABG}.\footnote{The second
named author thanks Roman Bezrukavnikov for his numerous explanations about the
Andersen-Jantzen sheaves on Kleinian surfaces and nilpotent cones.}

During the proof, the $\CC^\times$-action on the Coulomb branch will
play an important role. The $\CC^\times$-action is given by the
homological grading, shifted according to the convention in
\ref{discrepancy}(2). Then the monopole formula in
\ref{prop:monopole_formula} is modified to
\begin{equation}\label{eq:modified}
    P_t^{\mathrm{mod}}(\cR)=
    \sum_{\lambda}t^{2\Delta(\lambda)}P_{G}(t;\lambda).
\end{equation}
As mentioned in \ref{discrepancy}(2), this modification is harmless as
the difference
$d_\lambda - 2\langle\rho,\lambda\rangle - \Delta(\lambda)$ depends
only on connected components of $\cR$. Nevertheless we will see that
this convention is a correct choice.

\subsection{Characters of global sections of line bundles on Kleinian surfaces}
\label{line Klein}
Recall that $\cS_N$ is the hypersurface in $\BA^3$ given by the equation
$zy=w^N$. It is the categorical quotient $\BA^2/\!\!/(\BZ/N\BZ)$ where
$\zeta\in\BZ/N\BZ$ takes $(u,v)\in\BA^2$ to $(\zeta u,\zeta^{-1}v)$.
We consider the following action of $\BC^\times\times\BC^\times$ on
%\begin{NB}
$\BA^2\colon (x,t)\cdot(u,v)=(t^{-1}x^{-1}u,t^{-1}xv)$.   This action descends
to $\cS_N$. 
% \end{NB}%
\begin{NB}
$\cS_N\colon (x,t)\cdot(w,y,z) = (t^{-2}w,t^{-N}x y,t^{-N}x^{-1}z)$.
\end{NB}%
The action of the second $\BC^\times$ %is descended from the action
$t\cdot(u,v)=(t^{-1}u,t^{-1}v)$ %on $\BA^2$, which
is a restriction of an $\SU(2)$-action on $\BA^2 = \RR^4$ rotating
hyper-K\"ahler structures. Hence it is natural in view of
\ref{discrepancy}.

We are interested in characters of certain
$\BC^\times\times\BC^\times$-equivariant sheaves on $\cS_N$. The tautological
characters of $\BC^\times\times\BC^\times$ will be denoted by $x$ and $t$.
We denote by $\pi\colon \wit\cS_N\to\cS_N$ the minimal resolution of $\cS_N$. 
The action of $\BC^\times\times\BC^\times$ lifts to $\wit\cS_N$. We recall the well
known facts about the $\BC^\times\times\BC^\times$-fixed points in $\wit\cS_N$.

We will denote these points by $p_0,\ldots,p_{N-1}$, so that the exceptional
divisor $E\subset\wit\cS_N$ consists of projective lines $E_1,\ldots,E_{N-1}$,
and $E_r$ contains $p_{r-1},p_r$. The character of the tangent space
$T_{p_r}\wit\cS_N$ is
\begin{NB}
$t^{N-2r-2}x + t^{2r-N}x^{-1}$.
\end{NB}%
%\begin{NB}
$t^{N-2r-2}x^{-N}+t^{2r-N}x^N$.     
%\end{NB}%
\begin{NB}
    Note $x$ is replaced by $x^{-1/N}$.
\end{NB}
The Picard group 
$\on{Pic}(\wit\cS_N)$ is canonically identified with the weight lattice of 
$\SL(N)$. Namely, $\wit\cS_N\subset T^*\CB$ is the preimage of a subregular 
(Slodowy)
slice in the Springer resolution $T^*\CB\to\CN$ of the nilpotent cone for 
$\SL(N)$. For an $\SL(N)$-weight $\lambda$ the corresponding line bundle
$\CL_\lambda$ on $\wit\cS_N$ is the restriction to $\wit\cS_N\subset T^*\CB$
of the pullback to $T^*\CB$ of the line bundle $\CO(\lambda)$ on the flag
variety $\CB$.
The line bundle $\CL_{\omega_i}$ corresponding to the fundamental
weight $\omega_i,\ 1\leq i\leq N-1$,
\begin{NB}
  I have swapped $i$ and $r$. HN.
\end{NB}%
admits a natural 
$\BC^\times\times\BC^\times$-equivariant structure such that the character of
its fiber at $p_r$ is $t^{N-i}x^{i-N}$
\begin{NB}
$t^{N-r}x^{r-N}$   
\end{NB}%
provided $0\leq r\leq i-1$, 
and $t^ix^i$
\begin{NB}
$t^rx^r$     
\end{NB}%
provided $i\leq r\leq N-1$. 
This is defined so that $\Gamma(\wit\cS_N,\CL_{\omega_i})$ is the
space of semi-invariants $\BC[\BA^2]^{\chi_i}$ where
$\chi_i(\zeta)=\zeta^i$.
Under the above identification,
$\CO_{\wit\cS_N}(-E_r)$ is nothing but $\CL_{\alpha_r}$ where $\alpha_r$ is the 
$r$-th simple root of $\SL(N)$.

We will write a dominant $\SL(N)$-weight $\lambda$ as a partition 
$\lambda_1\geq\lambda_2\geq\ldots\geq\lambda_N$ defined up to simultaneous
shift of all $\lambda_i$. In other words, 
$\lambda=\sum_{i=1}^{N-1}(\lambda_i-\lambda_{i+1})\omega_i$.
%\begin{NB}
Then $\CL_\lambda$ admits a natural $\BC^\times\times\BC^\times$-equivariant
structure (as a tensor product of fundamental line bundles) such that
the character of its fiber at $p_r,\ 0\leq r\leq N-1$, is
$t^{\sum_{i=1}^N|\lambda_i-\lambda_{r+1}|}x^{\sum_{i=1}^N(\lambda_i-\lambda_{r+1})}$.
If $\lambda$ is not necessarily dominant, we get the character
$t^{\sum_{i=1}^r(\lambda_i-\lambda_{r+1})+\sum_{i=r+2}^N(\lambda_{r+1}-\lambda_i)}
x^{\sum_{i=1}^N(\lambda_i-\lambda_{r+1})}$.
% \end{NB}
\begin{NB}
  We have
  \(
  \prod_{i\le r} (t^ix^i)^{\lambda_i-\lambda_{i+1}}
  \prod_{i > r} (t^{N-1-i}x^{i-N})^{\lambda_i - \lambda_{i+1}}
  =
  t^{\sum_{i\le r}(\lambda_i - \lambda_{r+1})+\sum_{i\ge r+2}(\lambda_{r+1}-\lambda_i)}
  x^{\sum_i (\lambda_i - \lambda_{r+1})}
  \).
\end{NB}%

\begin{Lemma}
\label{monopole Klein}
For dominant $\lambda$, the character of $\Gamma(\wit\cS_N,\CL_\lambda)$
equals 
\begin{NB}
\begin{equation*}
\sum_{m\in\BZ}
x^{m}t^{\sum_{i=1}^N|\lambda_i-m|}(1+t^2+t^4+\ldots).  
\end{equation*}
\end{NB}%
$$\sum_{m\in\BZ}x^{\sum_{i=1}^N(\lambda_i-m)}t^{\sum_{i=1}^N|\lambda_i-m|}(1+t^2+t^4+\ldots).$$
\end{Lemma}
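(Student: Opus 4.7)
The plan is to compute the character of $\Gamma(\wit\cS_N,\CL_\lambda)$ via equivariant Atiyah-Bott-Lefschetz localization at the $N$ isolated $\BC^\times\times\BC^\times$-fixed points $p_0,\dots,p_{N-1}$, and then to identify the resulting rational expression with the claimed sum over $m\in\BZ$.

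First I would verify that $H^{i}(\wit\cS_N,\CL_\lambda)=0$ for $i>0$ and dominant $\lambda$. Since $\CL_\lambda\cdot E_r=\lambda_r-\lambda_{r+1}\ge 0$ by dominance, the line bundle $\CL_\lambda$ is $\pi$-nef; because $\pi\colon\wit\cS_N\to\cS_N$ is a crepant resolution of a rational (Kleinian) surface singularity, Grauert--Riemenschneider vanishing gives $R^{i}\pi_*\CL_\lambda=0$ for $i>0$, and combined with the affineness of $\cS_N$ the higher cohomology of $\CL_\lambda$ on $\wit\cS_N$ vanishes. Consequently the $\BC^\times\times\BC^\times$-character of $\Gamma(\wit\cS_N,\CL_\lambda)$ agrees with the equivariant Euler characteristic, which by the Atiyah-Bott-Lefschetz fixed point formula equals
\begin{equation*}
    \sum_{r=0}^{N-1}\frac{t^{b(\lambda_{r+1})}x^{a(\lambda_{r+1})}}{(1-t^{2r+2-N}x^{N})(1-t^{N-2r}x^{-N})},
\end{equation*}
where I write $a(m)=\sum_i(\lambda_i-m)$, $b(m)=\sum_i|\lambda_i-m|$, and the denominator records the inverses $\alpha_r^{-1}=t^{2r+2-N}x^N$, $\beta_r^{-1}=t^{N-2r}x^{-N}$ of the tangent weights at $p_r$.

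The remaining step is a rational-function identity showing that this sum equals $\frac{1}{1-t^2}\sum_{m\in\BZ}x^{a(m)}t^{b(m)}$. The crucial input is the relation $\alpha_r^{-1}\beta_r^{-1}=t^2$ (reflecting that the anti-canonical weight at each $p_r$ is $t^{-2}$), which gives the partial-fraction decomposition
\begin{equation*}
    \frac{1-t^2}{(1-\alpha_r^{-1})(1-\beta_r^{-1})}=\frac{1}{1-\alpha_r^{-1}}+\frac{1}{1-\beta_r^{-1}}-1.
\end{equation*}
To match with the monomial sum, I would decompose $\BZ=\bigsqcup_{k=0}^{N}(\lambda_{k+1},\lambda_k]$ using the conventions $\lambda_0=+\infty$ and $\lambda_{N+1}=-\infty$; on each such interval $b(m)$ is affine in $m$ with slope $N-2k$, so $\sum_{m\in(\lambda_{k+1},\lambda_k]}x^{a(m)}t^{b(m)}$ is a geometric progression whose initial and terminal terms are the fiber characters $F_k=x^{a(\lambda_k)}t^{b(\lambda_k)}$ and $F_{k+1}=x^{a(\lambda_{k+1})}t^{b(\lambda_{k+1})}$. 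Summing these geometric series and regrouping by fixed point $p_r$---the boundary contributions at the interior breakpoints $m=\lambda_r$ telescope, while the two infinite tails $m>\lambda_1$ and $m\le\lambda_N$ produce the unbounded contributions at $p_0$ and $p_{N-1}$---reproduces the Atiyah-Bott expression term by term.

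The main obstacle is this combinatorial rearrangement. Each individual geometric sum is elementary, but since $\alpha_r^{-1},\beta_r^{-1}$ have mixed-sign $t$-exponents across different values of $r$, one must fix a common expansion chamber (for instance $|t|\ll 1$ with $x$ generic, so that both the monomial sum and each Atiyah-Bott contribution converge) to justify the rearrangement as an identity of formal series rather than merely of rational functions.
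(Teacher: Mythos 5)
Your proposal is correct and is essentially the paper's proof run in reverse. The paper expands $\frac{1}{1-t^2}\sum_{m\in\BZ}x^{\sum_i(\lambda_i-m)}t^{\sum_i|\lambda_i-m|}$ into geometric progressions on the intervals cut out by the $\lambda_i$, sums them, combines adjacent terms in pairs, and recognizes the result as the Lefschetz fixed-point contribution at each $p_{r-1}$; you start from the Lefschetz formula, apply the partial-fraction identity built on $\alpha_r^{-1}\beta_r^{-1}=t^2$, and telescope back to the monomial sum --- the underlying algebraic identity and the appeal to higher-cohomology vanishing for dominant $\lambda$ are the same. One point to tighten: Grauert--Riemenschneider vanishing as usually stated concerns $R^{>0}\pi_*$ of the canonical sheaf, so crepancy alone does not immediately give $R^{>0}\pi_*\CL_\lambda=0$; you should instead invoke the standard vanishing for $\pi$-nef line bundles on a minimal resolution of a rational surface singularity, or deduce it by restricting to the slice $\wit\cS_N\subset T^*\CB$ the vanishing for the Springer resolution that the paper uses in the proof of \ref{CM} via \cite[Theorem~5.2.1]{brion}.
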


\begin{proof}
We compute the above expression as
\begin{equation*}
  \begin{split}
    & \frac{x^{\sum_{i=1}^N\lambda_i}}{1-t^2}\Biggl(
      \sum_{m=\lambda_1}^\infty x^{-Nm} t^{-\sum_{i=1}^N (\lambda_i - m)}
      \begin{aligned}[t]
        & + \sum_{m=\lambda_2}^{\lambda_1-1} x^{-Nm}
        t^{\lambda_1-m  -\sum_{i=2}^N (\lambda_i - m)}
      + \cdots
      \\ & \qquad \cdots
      + \sum_{m=-\infty}^{\lambda_N-1} x^{-Nm} t^{\sum_{i=1}^N (\lambda_i - m)}
    \Biggr)
  \end{aligned}
\\
    =\; & \frac{x^{\sum_{i=1}^N\lambda_i}}{1-t^2}
    \begin{aligned}[t]
          \Biggl( 
          \frac{x^{-N\lambda_1} t^{-\sum_{i>1} (\lambda_i-\lambda_1)}}{1 - x^{-N}t^N}
      - \frac{x^{-N\lambda_1} t^{-\sum_{i>1} (\lambda_i-\lambda_1)}}{1 - x^{-N}t^{N-2}}
      + & \frac{x^{-N\lambda_2} t^{\lambda_1-\lambda_2-\sum_{i>2}(\lambda_i-\lambda_2)}}
      {1 - x^{-N}t^{N-2}} + \cdots \\
      & \quad
      \cdots - \frac{x^{-N\lambda_N} t^{\sum_{i<N} \lambda_i - \lambda_N}}
      {1 - x^{-N}t^{-N}}
    \Bigr).
    \end{aligned}
  \end{split}
\end{equation*}
We combine $(2r-1)$th and $(2r)$th terms ($1\le r\le N$) to get
\begin{gather*}
%\begin{equation*}
  \frac{x^{\sum_i (\lambda_i - \lambda_{r})}
    t^{\sum_{i<r} (\lambda_i - \lambda_{r})
      - \sum_{i>r}(\lambda_i - \lambda_r)}}
  {1-t^2}\left(\frac1{1-x^{-N}t^{N-2r+2}} - \frac1{1-x^{-N}t^{N-2r}}\right) \\
  =
    \frac{x^{\sum_i (\lambda_i-\lambda_{r})} t^{\sum_{i<r} (\lambda_i - \lambda_{r})
      - \sum_{i>r}(\lambda_i - \lambda_r)}}
  {(1-x^{-N}t^{N-2r+2})(1-x^{N}t^{2r-N})}.
\end{gather*}
%\end{equation*}
This is the contribution of $p_{r-1}$ to the Lefschetz fixed point
formula to the Euler characteristic of $\CL_\lambda$.  (The
denominator is $\Lambda_{-1} T^*_{p_{r-1}}\wit\cS_N$, and the
numerator is $(\CL_\lambda)_{p_{r-1}}$.)
Since $\lambda$ is dominant, higher cohomology vanishes. Hence this is
the character of $\Gamma(\wit\cS_N,\CL_\lambda)$.
\end{proof}

\begin{NB}
\begin{proof}
The statement is clear for $\lambda=0$ since we are computing the character
of $\BZ/N\BZ$-invariants in $\BC[\BA^2]$. More generally, for 
$\lambda=\omega_r=(1,\ldots,1,0,\ldots,0)\ (r$ 1's), the global sections
$\Gamma(\wit\cS_N,\CL_{\omega_r})$ is the space of semi-invariants 
$\BC[\BA^2]^{\chi_r}$ where $\chi_r(\zeta)=\zeta^r$, up to $x^{-r/N}$. The desired assertion 
follows since $|r-Nm|=r|1-m|+(N-r)|\!\!-m|$ for any $m\in\BZ$.
Now any dominant weight $\lambda$ is equal to 
$\bar{\lambda}+\sum_{i=1}^{N-1}a_i\alpha_i,\ a_i\in\BN,\ 
\bar{\lambda}\in\{0,\omega_1,\ldots,\omega_{N-1}\}$. More precisely, if we put 
$\omega_0:=0$, and $|\lambda|:=\lambda_1+\ldots+\lambda_N$, then
$\bar\lambda=\omega_{|\lambda|\pmod{N}}$.
So the proof proceeds by induction, with already established formula
for the character of $\Gamma(\wit\cS_N,\CL_{\omega_r})$ as the base.
Given a dominant $\lambda\not\in\{0,\omega_1,\ldots,\omega_{N-1}\}$, 
we can find either $1<r\leq N$ such that $\lambda_{r-1}>\lambda_r+1$, and
$\lambda'=(\lambda_1,\ldots,\lambda_{r-2},\lambda_{r-1}-1,\lambda_r+1,
\lambda_{r+1},\ldots,\lambda_N)$ is still dominant,  
or $1<r<s\leq N$ such that 
$\lambda_{r-1}>\lambda_r=\ldots=\lambda_{s-1}>\lambda_s$, and 
$$\lambda''=(\lambda_1,\ldots,\lambda_{r-2},\lambda_{r-1}-1,\lambda_r=\ldots=
\lambda_{s-1},\lambda_s+1,\lambda_{s+1},\ldots,\lambda_N)$$ is still dominant, 
and the monopole formula is known for $\Gamma(\wit\cS_N,\CL_{\lambda'})$ and
$\Gamma(\wit\cS_N,\CL_{\lambda''})$.
The difference of monopole formulas for $\Gamma(\wit\cS_N,\CL_{\lambda'})$ and
$\Gamma(\wit\cS_N,\CL_\lambda)$ is
\begin{equation*}
    \begin{split}
        & -\sum_{\lambda_r<m<\lambda_{r-1}} x^{m}t^{\sum_{i=1}^N|\lambda_i-m|}
        (1-t^{-2})(1+t^2+t^4+\ldots)
\\
   =\; & \sum_{\lambda_r<m<\lambda_{r-1}} x^{m}t^{-2+\sum_{i=1}^N|\lambda_i-m|}.
    \end{split}
\end{equation*}
\begin{NB2}
$\Gamma(\wit\cS_N,\CL_\lambda)$ is
$$-\sum_{\lambda_r<m<\lambda_{r-1}}x^{\sum_{i=1}^N(\lambda_i-m)}t^{\sum_{i=1}^N|\lambda_i-m|}
(1-t^{-2})(1+t^2+t^4+\ldots)=$$
$$=\sum_{\lambda_r<m<\lambda_{r-1}}x^{\sum_{i=1}^N(\lambda_i-m)}t^{-2+\sum_{i=1}^N|\lambda_i-m|}.$$
\end{NB2}
But this coincides with the character of 
$R\Gamma(\wit\cS_N,\CL_{\lambda'}/\CL_\lambda)$. More precisely, the above expression
\begin{NB2}
$$\sum_{\lambda_r<m<\lambda_{r-1}}x^{\sum_{i=1}^N(\lambda_i-m)}t^{-2+\sum_{i=1}^N|\lambda_i-m|}$$
\end{NB2}%
is the character of the global sections of the the line bundle
$\CL_{\lambda'}/\CL_\lambda$ on $E_r$.

Furthermore, the difference of monopole formulas for 
$\Gamma(\wit\cS_N,\CL_{\lambda''})$ and $\Gamma(\wit\cS_N,\CL_\lambda)$ is
\begin{equation*}
    \begin{split}
       & -\sum_{\lambda_s<m<\lambda_{r-1}} x^{m}t^{\sum_{i=1}^N|\lambda_i-m|}
        (1-t^{-2})(1+t^2+t^4+\ldots)
       \\ =\; & \sum_{\lambda_s<m<\lambda_{r-1}}x^{m}
       t^{-2+\sum_{i=1}^N|\lambda_i-m|}.
    \end{split}
\end{equation*}
\begin{NB2}
$$-\sum_{\lambda_s<m<\lambda_{r-1}}x^{\sum_{i=1}^N(\lambda_i-m)}t^{\sum_{i=1}^N|\lambda_i-m|}
(1-t^{-2})(1+t^2+t^4+\ldots)=$$
$$=\sum_{\lambda_s<m<\lambda_{r-1}}x^{\sum_{i=1}^N(\lambda_i-m)}t^{-2+\sum_{i=1}^N|\lambda_i-m|}.$$
\end{NB2}
But this coincides with the character of 
$R\Gamma(\wit\cS_N,\CL_{\lambda''}/\CL_\lambda)$. More precisely, 
this expression
\begin{NB2}
$$\sum_{\lambda_s<m<\lambda_{r-1}}x^{\sum_{i=1}^N(\lambda_i-m)}t^{-2+\sum_{i=1}^N|\lambda_i-m|}$$
\end{NB2}%
is the character of the global sections of the line bundle
$\CL_{\lambda''}/\CL_\lambda$ on $\bigcup_{r\leq i\leq s}E_i$.
\end{proof}
\end{NB}

\subsection{Pushforwards of line bundles on Kleinian surfaces}
\label{push Klein}
For dominant $\lambda$ we denote by $\CF_\lambda$ the torsion free sheaf
$R\pi_*\CL_\lambda=\pi_*\CL_\lambda$ on $\cS_N$. We also set
$\bar\lambda=\omega_{|\lambda|\pmod{N}}$ where $\omega_0:=0$.

\begin{Lemma}
\label{charac}
For dominant weight $\lambda$ let $\CF$ be a 
$\BC^\times\times\BC^\times$-equivariant torsion-free sheaf on $\cS_N$
such that the character of $\Gamma(\cS_N,\CF)$ coincides with the
character of $\Gamma(\cS_N,\CF_\lambda)$. Then 

\textup{(a)} The restriction $\CF|_{\cS_N^\circ}$ is a line bundle, isomorphic
to $\CF_{\bar\lambda}|_{\cS_N^\circ}$. Here $\cS_N^\circ :=\cS_N\setminus\{0\}$.
%(disregarding the $\BC^\times\times\BC^\times$-equivariant structure).

\textup{(b)} An isomorphism in \textup{(a)} is defined uniquely up to
multiplication by a scalar, even if one forgets the 
$\BC^\times\times\BC^\times$-equivariance.

\textup{(c)} The composition of isomorphisms
$\CF|_{\cS_N^\circ}\simeq\CF_{\bar\lambda}|_{\cS_N^\circ}\simeq\CF_\lambda|_{\cS_N^\circ}$
gives an isomorphism $\CF|_{\cS_N^\circ}\iso\CF_\lambda|_{\cS_N^\circ}$ which extends to
an isomorphism $\CF\iso\CF_\lambda$.
\end{Lemma}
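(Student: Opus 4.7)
The plan is as follows. For \textup{(a)}, first extract from the character equality that $\CF$ has generic rank $1$, since the $\BC^\times\times\BC^\times$-character of a finitely generated graded $\BC[\cS_N]$-module determines its rank, and $\CF_\lambda$ has rank $1$. On the smooth surface $\cS_N^\circ$ form the reflexive hull $(\CF|_{\cS_N^\circ})^{\vee\vee}$, which is automatically a line bundle (rank-$1$ reflexive sheaves on smooth surfaces are invertible). The canonical injection $\CF|_{\cS_N^\circ}\hookrightarrow(\CF|_{\cS_N^\circ})^{\vee\vee}$ has cokernel of finite length; being $\BC^\times\times\BC^\times$-equivariant, this cokernel is supported on the fixed locus inside $\cS_N^\circ$, which is empty because the only fixed point of the action on $\cS_N$ is $0\notin\cS_N^\circ$. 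Hence $\CF|_{\cS_N^\circ}$ is itself a line bundle. To identify its class in $\mathrm{Pic}(\cS_N^\circ)\cong\BZ/N\BZ$ (detected via the Galois cover $\BA^2\setminus\{0\}\to\cS_N^\circ$), read off the residue $\bmod\,N$ of the $\BC^\times_x$-weights on global sections: by \ref{monopole Klein} every such weight for $\CF_\lambda$ is congruent to $|\lambda|\pmod{N}$, and by character equality the same holds for $\CF$, matching the class of $\CF_{\bar\lambda}$.

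For \textup{(b)}, the set of isomorphisms between the two isomorphic line bundles $\CF|_{\cS_N^\circ}$ and $\CF_{\bar\lambda}|_{\cS_N^\circ}$ on $\cS_N^\circ$ is a torsor under the units $\Gamma(\cS_N^\circ,\CO^\times)$. Since $\cS_N$ is normal affine with $\cS_N\setminus\cS_N^\circ=\{0\}$ of codimension $2$, Hartogs gives $\Gamma(\cS_N^\circ,\CO)=\BC[\cS_N]$, whose units are just the scalars $\BC^\times$ (no nonconstant units exist on the irreducible affine variety $\cS_N$). Thus the isomorphism of \textup{(a)} is unique up to $\BC^\times$, no equivariance needed.

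For \textup{(c)}, compose the isomorphism of \textup{(a)} with any isomorphism $\CF_{\bar\lambda}|_{\cS_N^\circ}\simeq\CF_\lambda|_{\cS_N^\circ}$ (existing since both are line bundles of class $|\lambda|\bmod N$) to obtain $\phi\colon\CF|_{\cS_N^\circ}\iso\CF_\lambda|_{\cS_N^\circ}$. Writing $j\colon\cS_N^\circ\hookrightarrow\cS_N$ for the inclusion, the sheaf $\CF_\lambda=\pi_*\CL_\lambda$ is reflexive (hence $S_2$) as the pushforward of a line bundle under a proper birational morphism from a smooth surface, so $\CF_\lambda\cong j_*j^*\CF_\lambda$. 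Applying $j_*$ to $\phi$ yields a canonical map $\CF\to j_*j^*\CF\xrightarrow{j_*\phi}j_*j^*\CF_\lambda\cong\CF_\lambda$, and to see this is an isomorphism it suffices to show that $\CF\to j_*j^*\CF$ is an isomorphism, i.e.\ that $\CF$ is $S_2$ at $0$. For this, $\phi$ identifies $\Gamma(\cS_N^\circ,\CF|_{\cS_N^\circ})$ with $\Gamma(\cS_N,\CF_\lambda)$ (using the $S_2$ property of $\CF_\lambda$), which by hypothesis has the same character as $\Gamma(\cS_N,\CF)$. Thus the equivariant inclusion $\Gamma(\cS_N,\CF)\hookrightarrow\Gamma(\cS_N^\circ,\CF|_{\cS_N^\circ})$ is between two $\BC^\times\times\BC^\times$-representations of equal character with finite-dimensional weight spaces, hence must be an equality; as $\cS_N$ is affine, this upgrades to the sheaf isomorphism $\CF\iso j_*j^*\CF$.

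The main obstacle is the final step of \textup{(c)}: translating the purely numerical input (equality of characters) into the geometric property that $\CF$ is $S_2$ at the singular point. The reflexive-hull argument in \textup{(a)}, the unit computation in \textup{(b)}, and the extension via $j_*$ in \textup{(c)} are essentially formal once the setup is in place; the genuine content lies in the weight-space comparison that forces the equivariant inclusion of global sections to be an equality.
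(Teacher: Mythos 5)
Parts (a) and (b) essentially coincide with the paper's proof: your reflexive-hull plus connectedness argument is a slightly more elaborate version of the paper's direct claim that equivariance forces $\CF$ to be locally free on $\cS_N^\circ$, and your Hartogs/constant-units argument for (b) is equivalent to the paper's lift to $\BA^2\setminus\{0\}$.

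The genuine gap is in part (c), at the step where you assert that ``$\phi$ identifies $\Gamma(\cS_N^\circ,\CF|_{\cS_N^\circ})$ with $\Gamma(\cS_N,\CF_\lambda)$ \ldots which by hypothesis has the same character as $\Gamma(\cS_N,\CF)$,'' and then conclude that the equivariant inclusion $\Gamma(\cS_N,\CF)\hookrightarrow\Gamma(\cS_N^\circ,\CF|_{\cS_N^\circ})$ is between representations of equal character. But $\phi$ is only built as an isomorphism of \emph{plain} line bundles, composed from the non-equivariant isomorphisms of (a). Equivariantly, all one knows is $\CF|_{\cS_N^\circ}\cong\CF_\lambda|_{\cS_N^\circ}\otimes\chi$ for some a priori nontrivial character $\chi$ of $\BC^\times\times\BC^\times$, hence $\operatorname{char}\Gamma(\cS_N^\circ,\CF|_{\cS_N^\circ})=\chi\cdot\operatorname{char}\Gamma(\cS_N,\CF_\lambda)$. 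The equivariant inclusion then gives only the coefficientwise inequality $c\le\chi\cdot c$ (with $c$ the common character), and this does \emph{not} force $\chi=1$: already for $\lambda=0$ it holds with $\chi=t^{-2}$. That case is in fact excluded because the cokernel of the inclusion is finite-dimensional while $(t^{-2}-1)c$ is not, but your argument never invokes finite-dimensionality of the cokernel either. The point that forces $\chi=1$ is exactly what the paper supplies: after embedding $\CF\otimes\chi$ and $\CF_\lambda$ into $\CF_{\bar\lambda}=j_*(\CF_{\bar\lambda}|_{\cS_N^\circ})$, it uses that $\operatorname{char}\Gamma(\cS_N,\CF_{\bar\lambda})$ is multiplicity-free and that $\operatorname{char}\Gamma(\cS_N,\CF_\mu\otimes\chi)$ determines both $\mu$ and $\chi$. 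Your $S_2$/$j_*$ framework is perfectly reasonable and could be completed by carrying the twist $\chi$ through and re-running such a character analysis, but as written the step you identified as the ``genuine content'' is precisely the one that is missing.
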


\begin{proof}
An automorphism of a line bundle on $\cS_N^\circ$ is given by multiplication
by an invertible function on $\cS_N^\circ$. Any invertible function on
$\cS_N^\circ$ is constant. Indeed, it lifts to $\BA^2\setminus\{0\}$ where
all the invertible functions are constant. Hence uniqueness in \textup{(b)}.

A torsion free sheaf $\CF$ is locally free on the complement of $\cS_N$
to finitely many points. Due to the $\BC^\times\times\BC^\times$-equivariance,
$\CF$ is locally free on $\cS_N^\circ$. Let us
denote by $j\colon \cS_N^\circ\hookrightarrow\cS_N$ the open embedding.
Then $\CF\hookrightarrow j_*(\CF|_{\cS_N^\circ})$. Since we know
the character of $\Gamma(\cS_N,\CF)$, we conclude that $\CF$ is
generically of rank one, i.e.\ $\CF|_{\cS_N^\circ}$ is a line bundle.
Now $\on{Pic}(\cS_N^\circ)=\BZ/N\BZ$, and any line bundle on $\cS_N^\circ$
is isomorphic to $\CF_{\bar\mu}|_{\cS_N^\circ}$ for 
$\bar\mu\in\{0,\omega_1,\ldots,\omega_{N-1}\}$.
Thus $\CF\hookrightarrow j_*(\CF_{\bar\mu}|_{\cS_N^\circ})$ (if we disregard the
$\BC^\times\times\BC^\times$-equivariant structure). But any two
$\BC^\times\times\BC^\times$-equivariant structures on the line bundle
$\CF_{\bar\mu}|_{\cS_N^\circ}$ are isomorphic up to twist by a character $\chi$
of $\BC^\times\times\BC^\times$. So we have a 
$\BC^\times\times\BC^\times$-equivariant embedding
$\CF\otimes\chi\hookrightarrow j_*(\CF_{\bar\mu}|_{\cS_N^\circ})$.
We claim that $\bar\mu$ is congruent to $\lambda$ modulo the root lattice,
that is $\bar\mu=\omega_{|\lambda|\pmod{N}}=\bar\lambda$. 
Indeed, we take a sufficiently negative $m$ in the formula 
of~\lemref{monopole Klein} for the character of $\Gamma(\cS_N,\CF)$, so that
$\lambda_i-m>0$ for any $i$. Then $\sum_i(\lambda_i-m)=-Nm+\sum_i\lambda_i$,
and so $|\lambda|\pmod{N}$ is determined from the character of 
$\Gamma(\cS_N,\CF)$.

However, 
$\CF_{\bar{\lambda}}\iso j_*(\CF_{\bar{\lambda}}|_{\cS_N^\circ})$ (see~\lemref{CM} below 
and restrict to a subregular slice $\cS_N\subset\CN$). Thus we have 
$\CF\otimes\chi\hookrightarrow\CF_{\bar{\lambda}}\hookleftarrow\CF_\lambda$,
and we have to check that the images of $\CF\otimes\chi$ and $\CF_\lambda$ inside
$\CF_{\bar{\lambda}}$ coincide, and $\chi=1$. But the character of 
(global sections of) $\CF_{\bar{\lambda}}$ is multiplicity free, 
and the characters of $\CF_{\lambda_1}\otimes\chi_1,\ \CF_{\lambda_2}\otimes\chi_2$
coincide if and only if $\lambda_1=\lambda_2,\ \chi_1=\chi_2$,
so the equality of characters of
$\CF$ and $\CF_\lambda$ guarantees $\chi=1$ and the coincidence of the images
of $\CF$ and $\CF_\lambda$ in $\CF_{\bar\lambda}$.
\end{proof}

\subsection{Line bundles on Kleinian surfaces via homology groups of fibers}
\label{Klein via}
Recall the setup of~\ref{subsec:affG_flavor}
and~\ref{subsec:line-bundles-via}.  We consider the quiver gauge
theory of type $A_1$ with $\dim V=1,\ \dim W=N$ with
$G=\GL(V)=\BC^\times$, $\tilde G = \GL(V)\times \GL(W)/Z$,
$G_F=\PGL(W)=\PGL(N)$, and varieties of triples $\cR$, $\tilde\cR$ for
$(G,\bN)$, $(\tilde G,\bN)$ and the corresponding complex $\tilscA$
\begin{NB} or $\scA^{\on{for}}$?
\end{NB}%
on $\Gr_{\PGL(N)}$. See \subsecref{subsec:affG_flavor}.  We are
interested in its costalks at the points $\lambda\in\Gr_{\PGL(N)}$
where $\lambda=(\lambda_1\geq\lambda_2\geq\ldots\geq\lambda_N)$ is a
dominant coweight of $\PGL(N)$. According to~\eqref{eq:47}, the
costalk $i^!_\lambda\tilscA^{\on{for}}$ forms a module over the
algebra $i^!_0\tilscA^{\on{for}}$.  The algebra
$i^!_0\tilscA^{\on{for}}$ is nothing but the Coulomb branch
$H_*^{G_\CO}(\cR)\simeq\BC[\cS_N]$ where $\bN=\on{Hom}(W,V)$
by \ref{subsec:abel_examples}. 
The costalk $i^!_\lambda\tilscA^{\on{for}}$ is nothing but
$H_*^{G_\CO}(\tilde\cR^{\lambda})$ where $\tilde\cR^{\lambda}$
is the fiber of
$\tilde\pi\colon \tilde\cR \to\Gr_{G_F}=\Gr_{\PGL(N)}$ over $\lambda\in\Gr_{G_F}$,
see~\eqref{eq:52}.

\begin{Lemma}
\label{torsion free}
The $i^!_0\tilscA^{\on{for}}$-module $i^!_\lambda\tilscA^{\on{for}}$ is torsion free.
\end{Lemma}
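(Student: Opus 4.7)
The plan is to exploit the fact that $G=\BC^\times$ is abelian, so that everything is explicit. By the $\Gr_G=\BZ$-decomposition of the fiber one writes $\tilde\cR^\lambda=\bigsqcup_{n\in\BZ}\tilde\cR^{\lambda,n}$ according to the image in $\Gr_G$. In the abelian setup each $\cR^{n}$ (for the original gauge theory $(\BC^\times,\bN)$) is an (infinite-dimensional) affine space fibered over a point, and the same is true fiberwise for $\tilde\cR^{\lambda,n}$. Consequently each $H^{G_\CO}_*(\tilde\cR^{\lambda,n})$ is a free $\BC[\mathfrak t]=H^*_{G_\CO}(\mathrm{pt})$-module of rank one, generated by a fundamental class $\tilde r^{\lambda,n}$, and summing gives
\[
H^{G_\CO}_*(\tilde\cR^\lambda)=\bigoplus_{n\in\BZ}\BC[\mathfrak t]\cdot\tilde r^{\lambda,n},
\]
which is free (in particular torsion-free) over $\BC[\mathfrak t]=\BC[w]$.

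Next I would identify the $\BC[\cS_N]$-action after inverting the nonzero elements of $\BC[w]$. The Coulomb branch algebra $\BC[\cS_N]=\BC[w,y,z]/(yz-w^N)$ is generated, over $\BC[w]$, by $y$ (the class of $\cR^{+1}$) and $z$ (the class of $\cR^{-1}$), and after inverting $\BC[w]\setminus\{0\}$ it becomes the Laurent polynomial ring $\BC(w)[y^{\pm1}]$ (since $z=w^Ny^{-1}$ there). The convolution formulas of \ref{subsec:abel_examples} show that $y$ and $z$ raise and lower the index $n$ respectively, multiplying $\tilde r^{\lambda,n}$ by a nonzero element of $\BC[\mathfrak t]$; thus after the localization $H^{G_\CO}_*(\tilde\cR^\lambda)\otimes_{\BC[w]}\BC(w)$ is a free $\BC(w)[y^{\pm1}]$-module of rank one generated by (any) $\tilde r^{\lambda,n}$, and in particular is torsion-free over $\BC[\cS_N]\otimes_{\BC[w]}\BC(w)$.

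Finally I would conclude by the following elementary two-step criterion: a $\BC[\cS_N]$-module $M$ is torsion-free if and only if (i) $M$ is $\BC[w]$-torsion-free, and (ii) the localization $M\otimes_{\BC[w]}\BC(w)$ is torsion-free over $\BC[\cS_N]\otimes_{\BC[w]}\BC(w)=\BC(w)[y^{\pm1}]$. Indeed, if $f\in\BC[\cS_N]\setminus\{0\}$ kills $m\in M$, then it kills the image of $m$ in the localization, which by (ii) forces that image to vanish, and by (i) the localization map is injective, so $m=0$. Both (i) and (ii) have been established, yielding the lemma.

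The main obstacle is the explicit identification in the second paragraph: one must verify that the convolution product really gives the expected abelian Coulomb-branch formulas on the module $H^{G_\CO}_*(\tilde\cR^\lambda)$, i.e.\ that the multiplication by $y$ and $z$ shifts $\tilde r^{\lambda,n}$ to $\tilde r^{\lambda,n\pm1}$ up to an explicit nonzero scalar in $\BC[\mathfrak t]$. Granting the general abelian computation of \ref{subsec:abel_examples} and the analogous description of the fibers of $\tilde\pi$ over $\lambda\in\Gr_{G_F}$, this is a direct matter of comparing equivariant fundamental classes.
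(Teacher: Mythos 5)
Your proof is correct and takes essentially the same route as the paper: both rest on (i) freeness of $i^!_\lambda\tilscA^{\on{for}}$ over $H^*_G(\mathrm{pt})=\BC[w]$ and (ii) the fact that after localizing at the generic point of $\BC[w]$ the module becomes free of rank one over the localized Coulomb branch algebra. The paper invokes the equivariant Localization Theorem as a black box for step (ii); you simply unpack it with the explicit abelian convolution formulas (which the paper itself writes out just after this lemma).
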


\begin{proof}
Both $i^!_0\tilscA^{\on{for}}$ and $i^!_\lambda\tilscA^{\on{for}}$ are free
$H^*_G(\on{pt})$-modules. So if $i^!_\lambda\tilscA^{\on{for}}$ had torsion, then
it would still have torsion after the base change to $H^*_T(\on{pt})$ and
localization to the generic point of $H^*_T(\on{pt})$. However, this is 
impossible since after this localization, $i^!_\lambda\tilscA^{\on{for}}$ becomes
a free (rank 1) $i^!_0\tilscA^{\on{for}}$-module by the Localization Theorem.
\end{proof}

Recall that $H^{G_\cO}_*(\cR)$ has an additional grading
induced from $\pi_0(\Gr_G) = \pi_1(G) = \pi_1(\CC^\times) \cong \ZZ$
compatible with the convolution product (see \ref{sec:grading}). We
also have an additional grading on $H^{G_\cO}_*(\tilde\cR^{\lambda})$ compatible with the $H^{G_\cO}_*(\cR)$-module structure from $\pi_0(\Gr_{\tilde G}) = \pi_1(\tilde G)$ in the same way.
We choose $\pi_0(\Gr_{\tilde G})\to\ZZ$ so that the connected
component of $\tilde\cR^{\lambda}$ corresponding to the $m$-th
component of $\Gr_{G}$
%is assigned with
goes to $\sum_{i=1}^N (\lambda_i - m)$. This is well-defined as it is
invariant under simultaneous shift of all $\lambda_i$ and $m$.

\begin{Proposition}
\label{costalk Klein}
Under the identification 
$i^!_0\tilscA^{\on{for}}=H_*^{G_\CO}(\cR)\simeq\BC[\cS_N]$, the 
$i^!_0\tilscA^{\on{for}}$-module 
$i^!_\lambda\tilscA^{\on{for}}=H_*^{G_\CO}(\tilde\cR^{\lambda})$ is isomorphic
to the $\BC[\cS_N]$-module $\Gamma(\cS_N,\CF_\lambda)$. More precisely,

\textup{(a)} The localization of $i^!_\lambda\tilscA^{\on{for}}$ to $\cS_N^\circ$ 
is a line bundle isomorphic to $\CF_\lambda|_{\cS_N^\circ}$.

\textup{(b)} An isomorphism in \textup{(a)} is
defined uniquely up to multiplication by a scalar.

\textup{(c)} An isomorphism in \textup{(a)} extends to an isomorphism
$i^!_\lambda\tilscA^{\on{for}}\iso\Gamma(\cS_N,\CF_\lambda)$.
\end{Proposition}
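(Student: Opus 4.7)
The plan is to deduce all three parts of the proposition simultaneously from Lemma \ref{charac} applied to the $\BC^\times\times\BC^\times$-equivariant torsion-free coherent sheaf $\CF := i^!_\lambda\tilscA^{\on{for}} = H_*^{G_\CO}(\tilde\cR^{\lambda})$ on $\cS_N$. Torsion-freeness of $\CF$ is Lemma \ref{torsion free}; the two factors of the equivariance are the cohomological grading on $H_*^{G_\CO}$ normalised as in \eqref{eq:modified} for $t$, and the $\pi_0(\Gr_{\tilde G})$-grading specified just above the statement of the proposition for $x$, under which the component of $\tilde\cR^{\lambda}$ lying over $\Gr_G^m\subset\Gr_{\tilde G}$ receives $x$-weight $\sum_{i=1}^N(\lambda_i-m)$. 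These extend the $\BC^\times\times\BC^\times$-action on $\BC[\cS_N]=H_*^{G_\CO}(\cR)=i_0^!\tilscA^{\on{for}}$ fixed in \ref{line Klein}.

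Once this set-up is in place, the content of the proposition reduces to the single character identity
\[
\on{ch}_{\BC^\times\times\BC^\times}\!\bigl(H_*^{G_\CO}(\tilde\cR^{\lambda})\bigr)
= \on{ch}_{\BC^\times\times\BC^\times}\!\bigl(\Gamma(\cS_N,\CF_\lambda)\bigr).
\]
Granting this identity, Lemma \ref{charac}(a),(b),(c) immediately give parts (a), (b), (c) of the proposition. The only small adjustment is replacing $\bar\lambda$ by $\lambda$ in part (a), which is legitimate because $\lambda - \bar\lambda$ lies in the root lattice and hence $\CL_\lambda$ and $\CL_{\bar\lambda}$ agree on $\wit\cS_N\setminus E\simeq\cS_N^\circ$, so that $\CF_\lambda|_{\cS_N^\circ}\simeq\CF_{\bar\lambda}|_{\cS_N^\circ}$.

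To establish the character identity I would compute both sides explicitly. The right-hand side was computed in Lemma \ref{monopole Klein} as $\sum_{m\in\BZ} x^{\sum_i(\lambda_i-m)}\, t^{\sum_i|\lambda_i-m|}/(1-t^2)$. The left-hand side I compute by the flavored version of the monopole formula: $\tilde\cR^{\lambda}$ decomposes as a disjoint union over $m\in\BZ$ of components $\tilde\cR^{\lambda}_m$, each a bundle over the single point $\Gr_G^m$ with equivariant Poincar\'e polynomial $t^{2\Delta(m,\lambda)}P_G(t;m) = t^{\sum_i|\lambda_i-m|}(1-t^2)^{-1}$. Here $\Delta(m,\lambda) = \tfrac12\sum_i|\lambda_i-m|$ arises as the half-sum of $|\alpha(m,\lambda)|$ taken over the weights $\alpha = (1,-e_i)$ of $\bN=\Hom(W,V)$ under $G\times\GL(W)$, with no roots of $G=\BC^\times$ to subtract. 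Attaching the prescribed $x$-weight $x^{\sum_i(\lambda_i-m)}$ to the $m$-th summand and summing over $m$ reproduces the right-hand side term by term.

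The main (and essentially only) obstacle is this character identity, whose subtlety lies entirely in the bookkeeping of the two gradings: the $t$-grading relies on the shift convention \eqref{eq:modified}, without which the two sides would differ by a component-dependent power of $t$, while the $x$-grading relies on the particular normalisation of $\pi_0(\Gr_{\tilde G})\to\BZ$ chosen just above the proposition. Both conventions have been set up precisely so that the identity holds on the nose, and once it is verified, the proposition follows at once from Lemmas \ref{torsion free}, \ref{charac}, and \ref{monopole Klein}.
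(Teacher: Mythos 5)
Your proof is correct and follows the same route as the paper: compute the bigraded Hilbert series of $H_*^{G_\CO}(\tilde\cR^{\lambda})$ via the (modified) monopole formula, note torsion-freeness from Lemma \ref{torsion free}, match against the character of $\Gamma(\cS_N,\CF_\lambda)$ from Lemma \ref{monopole Klein}, and invoke Lemma \ref{charac}. Your added remark reconciling the appearance of $\CF_\lambda$ versus $\CF_{\bar\lambda}$ on $\cS_N^\circ$ is correct and is implicitly part (c) of Lemma \ref{charac}, so the argument as written is complete.
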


\begin{proof}
\begin{NB}
The connected components of the affine Grassmannian $\Gr_G=\Gr_{\BC^\times}$ are
canonically numbered by $\BZ$; this gives rise to a grading of
$H_*^{G_\CO}(\tilde\cR^{\lambda})$. More precisely, to a homology class
supported on the connected component of $\tilde\cR^{\lambda}$ living over
the $m$-th compoment of $\Gr_G$, we assign its $x$-degree 
$\deg_x=\sum_{i=1}^N(\lambda_i-m)$. When $\lambda=0$ the multiplication in
$i^!_0\tilscA^{\on{for}}=H_*^{G_\CO}(\cR)$ is compatible with the grading
by~\ref{sec:grading}. For arbitrary $\lambda$ the action of
$i^!_0\tilscA^{\on{for}}$ on $i^!_\lambda\tilscA^{\on{for}}$ is compatible with the
grading for evident reasons.

The homological degree of
$H_*^{G_\CO}(\tilde\cR^{\lambda})$ gives rise to another grading.
More precisely, to a homology class of degree $d$ having $x$-degree $\deg_x$,
we assign its $t$-degree $\deg_t=d-\deg_x$.

By the monopole formula of~\ref{prop:monopole_formula} 
and~\ref{discrepancy}, the Hilbert series of the bigraded module
$H_*^{G_\CO}(\tilde\cR^{\lambda})$ is
$\sum_{m\in\BZ}x^{\sum_{i=1}^N(\lambda_i-m)}t^{\sum_{i=1}^N|\lambda_i-m|}(1+t^2+t^4+\ldots).$
By~\lemref{torsion free}, $H_*^{G_\CO}(\tilde\cR^{\lambda})$ is a torsion-free
$H_*^{G_\CO}(\cR)$-module.
Comparing its Hilbert series with the formula of~\lemref{monopole Klein} and 
applying the criterion of~\lemref{charac} we obtain the desired result.
\end{NB}%
By the monopole formula of~\ref{prop:monopole_formula} 
with the convention~\ref{discrepancy}(2), the Hilbert series of the bigraded module
$H_*^{G_\CO}(\tilde\cR^{\lambda})$ is
$\sum_{m\in\BZ}x^{\sum_{i=1}^N (\lambda_i - m)}
t^{\sum_{i=1}^N|\lambda_i-m|}(1+t^2+t^4+\ldots).$
By~\lemref{torsion free}, $H_*^{G_\CO}(\tilde\cR^{\lambda})$ is a torsion-free
$H_*^{G_\CO}(\cR)$-module.
Comparing its Hilbert series with the formula of~\lemref{monopole Klein} and 
applying the criterion of~\lemref{charac} we obtain the desired result.
\end{proof}

Let us write down the isomorphism more concretely when $\lambda$ is
the $\iialp$-th fundamental coweight $\omega_\iialp$.

Recall $w$, $y$, $z$ are identified with elements in
$H^{G_\cO}_*(\cR)$ as follows (see~\ref{subsec:abel_examples}):
\begin{itemize}
\item $w$ is the generator of $H^*_G(\mathrm{pt})$.
\item $y$ is the fundamental class of the fiber $\pi^{-1}(1)$, where
  $\pi\colon \cR\to\Gr_G \simeq\ZZ$.
\item $z$ is the fundamental class of the fiber $\pi^{-1}(-1)$.
\end{itemize}

The space $\Gamma(\widetilde{\cS}_N,\CL_{\omega_\iialp})$ of sections of
the line bundle corresponding to $\omega_\iialp$ is identified with the
space of semi-invariants $\BC[\BA^2]^{\chi_\iialp}$ where
$\chi_\iialp(\zeta)=\zeta^\iialp$. It has a linear basis
\begin{equation*}
  u^\iialp z^m w^k, \quad v^{N-\iialp} y^m w^k \quad(m,k\in\ZZ_{\ge 0}),
\end{equation*}
where $w = uv$, $z=u^N$, $y=v^N$.
  
Let us consider a coweight $(m,\underbrace{1,\dots,1}_{\text{$\iialp$
    times}},\underbrace{0,\dots,0}_{\text{$N-\iialp$ times}})$ ($m\in\ZZ$)
of $\tilde G$, where the first $m$ is a coweight of $G$. Let $r^m$
denote the fundamental class of the corresponding fiber for the
projection $\tilde\cR\to\Gr_{\tilde G}$. Note that the pairing between
the coweight above and weights of $\Hom(W,V)$ are $m-1$,\dots, $m-1$
($\iialp$ times) and $m$, \dots, $m$ ($N-\iialp$ times). Thus we have $\iialp$
negative terms if $m=0$, $N-\iialp$ positive terms if $m=1$, all negative
or all positive otherwise. Therefore
\begin{equation*}
  y r^m =
  \begin{cases}
    r^{m+1} & \text{if $m > 0$},
    \\
    w^\iialp r^{m+1} & \text{if $m=0$},
    \begin{NB}
      y u^\iialp = v^N u^\iialp = v^{N-\iialp} w^\iialp,
    \end{NB}
    \\
    w^N r^{m+1} & \text{if $m < 0$},
    \begin{NB}
      y u^\iialp z^{-m} = v^N u^\iialp z^{-m} = w^N u^\iialp z^{-m-1}
    \end{NB}
  \end{cases}
  \quad
  z r^m =
  \begin{cases}
    r^{m-1} & \text{if $m \le 0$},
    \\
    w^{N-\iialp} r^{m-1} & \text{if $m=1$},
    \begin{NB}
      z v^{N-\iialp} = w^{N-\iialp} u^\iialp,
    \end{NB}
    \\
    w^N r^{m-1} & \text{if $m > 1$},
  \end{cases}
\end{equation*}
by \ref{sec:abelian}. (Note that we can replace $\tilde G$ by
$\GL(V)\times T(W)/Z$ where $T(W)\subset\GL(W)$ is a maximal torus of
$\GL(W)$ as in \ref{subsec:flav-symm-group2}. Hence we can use
computation in \ref{sec:abelian}.)
Now we get an isomorphism
$i_{\omega_\iialp}^!\scAfor\xrightarrow{\cong}
\CC[\BA^2]^{\chi_\iialp}$ of
$\CC[\cM_C] = \CC[y,z,w]/(yz=w^N)$-modules by setting
\begin{equation*}
  w^{k} r^m =
  \begin{cases}
    v^{N-\iialp} y^{m-1} w^{k} & \text{if $m > 0$},\\
    u^\iialp z^{-m} w^{k} & \text{if $m \le 0$}.
  \end{cases}
\end{equation*}
\begin{NB}
  In particular,
  \begin{itemize}
  \item $u^\iialp$ is the fundamental class of the fiber of the above
    coweight with $m=0$.
  \item $v^{N-\iialp}$ is the fundamental class of the fiber of the above
    coweight with $m=1$.
  \end{itemize}
\end{NB}%

\begin{NB}
  Consider $l\omega_\iialp = (\underbrace{l,\dots,l}_{\text{$\iialp$
      times}}, \underbrace{0,\dots,0}_{\text{$N-\iialp$ times}})$. We
  denote the fundamental class of the fiber over
  $(m,l\omega_\iialp) = (m,l\omega_\iialp =
  (\underbrace{l,\dots,l}_{\text{$\iialp$ times}},
  \underbrace{0,\dots,0}_{\text{$N-\iialp$ times}})$ by ${}^lr^m$. We
  would like to express it in products of ${}^1 r^{m'} =$ (previous
  $r^{m'}$) with various $m'$. Suppose $0\le m\le l$.
  \begin{equation*}
    \underbrace{{}^1 r^0 \cdots {}^1 r^0}_{\text{$l-m$ times}}
    \underbrace{{}^1 r^1 \cdots {}^1 r^1}_{\text{$m$ times}}
    = {}^l r^m,
  \end{equation*}
  as pairings of weights of $\Hom(W,V)$ and coweights
  $(0,\omega_\iialp)$, $(1,\omega_\iialp)$ is $-1$, $0$ ($\iialp$
  times) and $0$, $1$ ($N-\iialp$ times).
  \begin{NB2}
    Hence ${}^l r^m$ is identified with $u^{(l-m)\iialp} v^{(N-\iialp)m}$.
  \end{NB2}%
  For $m < 0$ or $m > l$, we multiply ${}^l r^0$ or ${}^l r^l$ with
  $y$ or $z$ to get ${}^l r^m$ respectively. In particular,
  $\bigoplus_{l\ge 0} i^!_{l\omega_\iialp}\scAfor$ is generated by
  $i^!_{\omega_\iialp}\scAfor$ as a
  $i^!_0\scAfor = \CC[\cS_N]$-algebra. Thus we have a line bundle
  $\shfO(1)$ over
  $\Proj(\bigoplus_{l\ge 0}i^!_{l\omega_\iialp}\scAfor)$ and a
  morphism
  $\widetilde{\cS}_N\to \Proj(\bigoplus_{l\ge 0}
  i^!_{l\omega_\iialp}\scAfor)$ such that $\cL_{\omega_\iialp}$ is the
  pull-back of $\shfO(1)$.

  This defines a natural homomorphism
  \begin{equation*}
      i^!_{l\omega_\iialp}\scAfor \to 
      \Gamma(\Proj(\bigoplus_{l\ge 0}i^!_{l\omega_\iialp}\scAfor), \shfO(l))
      \cong \Gamma(\widetilde{\cS}_N,\cL_{\omega_\iialp}^{\otimes l}).
  \end{equation*}

  \begin{NB2}
      Why this is an isomorphism ? It seems that this is injective by
      a trivial reason, probably as
      $\bigoplus_{l\ge 0} i^!_{l\omega_\iialp}\scAfor$ is an integral
      domain. Then we compare characters.
  \end{NB2}
\end{NB}%

\subsection{Andersen-Jantzen sheaves on a nilpotent cone}
\label{AJ}
We denote by $\CN$ the nilpotent cone of $\algsl_N$.
\begin{NB}
  It was originally ${\mathfrak{gl}}_N$. It is hard to distinguish
  $\gl_N$ from $\algsl_N$, but we only have line bundles
  $\shfO(\lambda)$ for a dominant weight $\lambda$ of $\SL(N)$ = a
  dominant coweight of $\PGL(N)$, not of $\GL(N)$. Also
  $i_\lambda^!\tilscA^{\mathrm{for}}$ does not have a natural
  $\GL(N)$-equivariant structure. We have an equivariant
  structure, where the maximal torus is identified with the Pontryagin
  dual of $\pi_1(\tilde G)$. Here
  \begin{equation*}
    \pi_1(\tilde G) \cong 
    \underbrace{\ZZ\oplus\cdots\oplus\ZZ}_{\text{$N$ times}}/
    (N,N-1,\cdots,2,1)\ZZ \cong \ZZ^{N-1},
  \end{equation*}
  where the isomorphism is given by
  $[\lambda_1,\dots,\lambda_N] \mapsto
  (\lambda_1-N\lambda_N,\dots,\lambda_{N-1}-2\lambda_N)$. We have an
  exact sequence
  \begin{equation*}
    0 \to \pi_1(G)\cong\ZZ^{N-1} \to \pi_1(\tilde G) \cong \ZZ^{N-1}
    \to \pi_1(\PGL(N)) \cong \ZZ/N\ZZ \to 0,
  \end{equation*}
  where the first inclusion is given by $\lambda_1 = 0$, and the last
  projection is $[\lambda_1,\dots,\lambda_N]\mapsto \lambda_1\bmod N$.

  Now the Pontryagin dual of $\pi_1(G)$ is identified with a maximal
  torus of $\PGL(N)$ acting on $\CN$, while that of $\pi_1(\tilde G)$
  is a maximal torus of $\SL(N)$, which acts on line
  bundles. ($\PGL(N)$-action cannot be lifted to line bundles in
  general.)
\end{NB}%
We denote by
$\CB$ the flag variety of $\algsl_N$, and by $T^*\CB$ its cotangent
bundle. We denote by $\pi\colon T^*\CB\to\CN$ the Springer resolution.
We denote by $j\colon\BO_\reg\hookrightarrow\CN$ the embedding of the 
regular nilpotent orbit.
For a dominant weight $\lambda=(\lambda_1\geq\ldots\geq\lambda_N)$ we denote
by $\CO(\lambda)$ the line bundle on $T^*\CB$ obtained by the pullback of
the corresponding line bundle on $\CB$. It is known that 
$\CJ_\lambda:=\pi_*\CO(\lambda)=R\pi_*\CO(\lambda)$ is a torsion-free sheaf
on $\CN$ (an {\em Andersen-Jantzen sheaf}, 
see e.g.~\cite[Theorem~5.2.1]{brion}).

\begin{Lemma}
\label{CM}
For $\bar{\lambda}\in\{0,\omega_1,\ldots,\omega_{N-1}\}$ we have
$\CJ_{\bar\lambda}=j_*(\CJ_{\bar\lambda}|_{\BO_\reg})$.
\end{Lemma}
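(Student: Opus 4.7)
The assertion is a codimension-$2$ extension property for the torsion-free sheaf $\CJ_{\bar\lambda}$. Observe first that the complement $\CN \setminus \BO_\reg$ has codimension exactly $2$ in $\CN$: its open dense stratum is the subregular orbit $\BO_{\mathrm{sub}}$ of codimension $2$, and all other non-regular nilpotent orbits lie in $\overline{\BO_{\mathrm{sub}}}$. For a torsion-free coherent sheaf $\CF$ of rank one on a normal variety $Y$, the condition $\CF \iso j_*(\CF|_U)$ for an open $U \subset Y$ with $\codim(Y \setminus U) \geq 2$ is equivalent to reflexivity of $\CF$; since $\CN$ is Cohen--Macaulay, reflexivity of the torsion-free sheaf $\CJ_{\bar\lambda}$ is in turn equivalent to Serre's condition $S_2$. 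So the plan reduces to verifying that $\CJ_{\bar\lambda}$ is $S_2$.

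For this I would prove Cohen--Macaulayness of $\CJ_{\bar\lambda}$. The line bundle $\CO(\bar\lambda)$ on $T^*\CB$ is pulled back along the vector bundle projection $T^*\CB \to \CB$ from a globally generated (hence nef) line bundle on $\CB$. A relative Grauert--Riemenschneider / Kempf vanishing argument -- using that $\omega_{T^*\CB} \cong \CO_{T^*\CB}$ because $T^*\CB$ is symplectic -- yields $R^i\pi_*\CO(\bar\lambda) = 0$ for $i > 0$. Combined with the smoothness of $T^*\CB$ and Grothendieck duality applied to the proper map $\pi$, this gives Cohen--Macaulayness of $\pi_*\CO(\bar\lambda) = \CJ_{\bar\lambda}$, and hence the required $S_2$ property.

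The main obstacle is the Cohen--Macaulayness / vanishing step; the desired vanishing and the resulting Cohen--Macaulayness are classical for Andersen--Jantzen sheaves in type $A$ at minuscule (and zero) weights, but one must either cite this or give the vanishing-plus-duality argument sketched above. An independent sanity check, more in line with the surrounding material, is to restrict everything to the transverse Kleinian slice $\cS_N$: $\CJ_{\bar\lambda}|_{\cS_N}$ is identified with $\CF_{\bar\lambda}$, and by the analogous (surface) Grauert--Riemenschneider vanishing $R^1\pi_*\CL_{\bar\lambda} = 0$ on the minimal resolution $\wit\cS_N \to \cS_N$, the sheaf $\CF_{\bar\lambda}$ is Cohen--Macaulay on the normal surface $\cS_N$, confirming the conclusion of the lemma at the codim-$2$ generic points of the singular locus and thus, together with $G$-equivariance, on all of $\CN$.
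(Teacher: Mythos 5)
Your reduction in the first paragraph — identifying the lemma as an extension-across-codimension-$2$ statement, reducing it to reflexivity/$S_2$, and proposing to establish this via Cohen--Macaulayness of $\CJ_{\bar\lambda}$ — is the right framework and is essentially what the paper's one-line proof implicitly does. The gap is in the vanishing argument.

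Grauert--Riemenschneider (or Kempf vanishing), applied with $\omega_{T^*\CB}\cong\CO_{T^*\CB}$ and the nefness of the pullback of a dominant weight bundle, does give $R^i\pi_*\CO(\bar\lambda)=0$ for $i>0$. But this is not the input needed for Cohen--Macaulayness. Since $T^*\CB$ and $\CN$ are Gorenstein of the same dimension with trivial dualizing sheaves, Grothendieck duality identifies the Grothendieck--Serre dual of $\mathbf{R}\pi_*\CO(\bar\lambda)$ with $\mathbf{R}\pi_*\CO(-\bar\lambda)$. A coherent sheaf on a Gorenstein scheme is maximal Cohen--Macaulay if and only if its Grothendieck--Serre dual is concentrated in a single degree, so $\CJ_{\bar\lambda}$ is Cohen--Macaulay if and only if $R^i\pi_*\CO(-\bar\lambda)=0$ for $i>0$. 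The line bundle $\CO(-\bar\lambda)$ is the pullback of an \emph{anti}-dominant weight bundle, which is not nef, so neither Grauert--Riemenschneider nor Kempf vanishing applies. Indeed, for general dominant $\lambda$ the sheaf $\CJ_\lambda$ is \emph{not} Cohen--Macaulay; the restriction to $\bar\lambda\in\{0,\omega_1,\ldots,\omega_{N-1}\}$ (i.e.\ to minuscule weights in type $A$) is essential, and the needed vanishing $R^{>0}\pi_*\CO(-\bar\lambda)=0$ is a nontrivial theorem — this is exactly what the paper cites from Brion's Theorem~5.2.1. Your argument as written would ``prove'' Cohen--Macaulayness for every dominant $\lambda$, which is false; the step ``combined with Grothendieck duality, this gives Cohen--Macaulayness'' does not follow from vanishing for $\CO(\bar\lambda)$.

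The same confusion affects the surface sanity check at the end: on the Kleinian slice, Cohen--Macaulayness of $\CF_{\bar\lambda}=\pi_*\CL_{\bar\lambda}$ is equivalent (by duality on the surface, where $S_2$ equals CM) to $R^1\pi_*\CL_{-\bar\lambda}=0$, not to $R^1\pi_*\CL_{\bar\lambda}=0$. Again the weight has to be turned around, and the needed vanishing for the anti-dominant bundle holds only for minuscule $\bar\lambda$, not in general. To close the gap, either cite the vanishing $R^{>0}\pi_*\CO(-\bar\lambda)=0$ for $\bar\lambda$ minuscule directly (as the paper does), or supply a genuine proof of it; the Grauert--Riemenschneider argument you give is for the wrong line bundle.
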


\begin{proof}
We have to check that $\CJ_{\bar\lambda}$ is Cohen-Macaulay. It follows from the
fact that its Grothendieck-Serre dual $R\pi_*(\CO(-\bar{\lambda}))$ has no 
higher cohomology by~\cite[Theorem~5.2.1]{brion}.
\end{proof}

Recall that according to~\cite{Lu-Green}, $\CN$ is isomorphic to the 
transversal slice $\ol\CW{}^{N\omega_1}_0$ in the affine Grassmannian 
$\Gr_{\GL(N)}$. Recall the factorization morphism $\varPi:=\pi_{N\omega_1^*}\circ 
s^{N\omega_1}_\mu\colon \CN=\ol\CW{}^{N\omega_1}_0\to\BA^{N\omega_1^*}$ 
of~\ref{semismall} (it is also called the 
{\em Gelfand-Tsetlin integrable system}).

\begin{Lemma}
\label{ssmall}
%$\CJ_\lambda$ is flat over $\BA^{N\omega_1^*}$.
The morphism $\varPi\circ\pi\colon T^*\CB\to\BA^{N\omega_1^*}$ is flat.
\end{Lemma}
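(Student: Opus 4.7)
The plan is to invoke Matsumura's miracle-flatness criterion: any morphism from a Cohen--Macaulay scheme to a regular scheme whose scheme-theoretic fibers are all equidimensional of the expected relative dimension is automatically flat. Since $T^*\CB$ is smooth (hence Cohen--Macaulay) and $\BA^{N\omega_1^*}$ is regular, it suffices to show that every fiber of $\varPi\circ\pi$ has pure dimension equal to $\dim T^*\CB-\dim\BA^{N\omega_1^*}$. First I would fix the numerics: the generic fiber is a Lagrangian torus for the Gelfand--Tsetlin integrable system pulled back from $\CN$ to $T^*\CB$, of dimension $\dim\CB$, forcing $\dim\BA^{N\omega_1^*}=\dim\CB$ and making $\dim\CB$ the expected fiber dimension.

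For equidimensionality at an arbitrary $c\in\BA^{N\omega_1^*}$, I would stratify
\[
(\varPi\circ\pi)^{-1}(c)\;=\;\bigsqcup_\lambda \pi^{-1}\bigl(\BO_\lambda\cap\varPi^{-1}(c)\bigr)
\]
by Jordan type $\lambda$. The stratum over $\BO_\lambda$ has dimension
$\dim\bigl(\BO_\lambda\cap\varPi^{-1}(c)\bigr)+\dim\pi^{-1}(x_\lambda)$ for any $x_\lambda\in\BO_\lambda$. By Spaltenstein's theorem the Springer fiber $\pi^{-1}(x_\lambda)$ is equidimensional of dimension $\dim\CB-\tfrac12\dim\BO_\lambda$. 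The Mirkovi\'c--Vybornov identification of $\varPi$ with the Gelfand--Tsetlin integrable system implies that its level sets cut out isotropic subvarieties of each symplectic leaf $\BO_\lambda$, giving the bound $\dim\bigl(\BO_\lambda\cap\varPi^{-1}(c)\bigr)\le\tfrac12\dim\BO_\lambda$. Summing, every nonempty stratum contributes at most $\dim\CB$; upper semicontinuity of fiber dimension then forces equality, so each fiber is equidimensional of dimension $\dim\CB$, and Matsumura's criterion yields flatness.

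The main obstacle is the isotropy bound $\dim\bigl(\BO_\lambda\cap\varPi^{-1}(c)\bigr)\le\tfrac12\dim\BO_\lambda$ on \emph{every} nilpotent orbit, not just on $\BO_\reg$ (where it is automatic from the birationality of $\pi$ and the integrable-system property on the smooth locus). Equivalently, one must know that the Gelfand--Tsetlin Hamiltonians restrict to a maximal Poisson-commuting family on each coadjoint orbit of $\algsl_N$; this is classical, but a self-contained verification would proceed inside the Mirkovi\'c--Vybornov coordinate description of $\varPi$ adapted to the orbit stratification. All other ingredients (Spaltenstein, Matsumura, the Lusztig slice identification $\CN\simeq\ol\CW{}^{N\omega_1}_0$ recalled just before the statement, and the semismallness of $\pi$) are off-the-shelf.
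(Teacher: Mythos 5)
Your framework — miracle flatness on the smooth source $T^*\CB$, reducing to equidimensionality of fibers at the expected value $\dim\CB = N(N-1)/2$ — is exactly the paper's first reduction. But your route to the dimension bound, stratifying by Jordan type and combining Spaltenstein with an asserted isotropy bound, has a genuine gap precisely at the point you flag as the ``main obstacle.''

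The assertion that $\dim\bigl(\BO_\lambda\cap\varPi^{-1}(c)\bigr)\le\tfrac12\dim\BO_\lambda$ for \emph{every} $c$ and every nilpotent orbit $\BO_\lambda$ does \emph{not} follow from the Gelfand--Tsetlin functions forming a maximal Poisson-commuting family on each leaf. Poisson commutativity makes the regular part of a level set coisotropic (not isotropic), and ``maximality'' controls only the generic level set; special level sets of a commuting family can jump in dimension. (Minimal example: on $\BC^4$ with coordinates $(p_1,q_1,p_2,q_2)$, the pair $f_1=p_1$, $f_2=p_1q_2$ Poisson-commutes and is generically independent, yet $\{f_1=f_2=0\}=\{p_1=0\}$ is a $3$-fold.) So ``classical integrability on each orbit'' does not give you the uniform bound you need, and this is not a gap that can be waved away: the dimension estimate on \emph{all} level sets is the entire content of the lemma, not a background fact.

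The paper supplies exactly this missing ingredient by a different mechanism. It identifies $\pi\colon T^*\CB\to\CN=\ol\CW{}^{N\omega_1}_0$ with the iterated convolution morphism $\mathbf m\colon \Gr^{\omega_1}_{\GL(N)}\wit\times\cdots\wit\times\Gr^{\omega_1}_{\GL(N)}\to\Gr_{\GL(N)}$ restricted to the slice, observes that $\mathbf m$ and its composition with the convolution morphism $\bq$ underlying $\varPi$ are semismall, and then reruns the dimension estimate from the proof of the cited result on $\varPi$ in this enlarged setting. Semismallness of convolution morphisms on the affine Grassmannian is a genuine theorem (Lusztig, Ginzburg, Mirkovi\'c--Vilonen) and is what furnishes, on the nose, the bound that your proposal leaves unproved. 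If you want to salvage the stratified approach, you would need to prove the per-orbit bound by some independent means — e.g.\ by analyzing the Mirkovi\'c--Vybornov coordinates orbit by orbit, which is essentially reproving a semismallness statement — so you would not actually be getting around that input.
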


\begin{proof}
%It suffices to prove that $\varPi\circ\pi\colon T^*\CB\to\BA^{N\omega_1^*}$ is 
%flat. To this end 
It suffices to prove that all the fibers of $\varPi\circ\pi$
have the same dimension $N(N-1)/2$. We recall the proof of~\ref{semismall}.
There the dimension estimate on the fibers of $\varPi$ followed from the
semismallness of the convolution morphism $\bq$. Under the identification
$\CN=\ol\CW{}^{N\omega_1}_0$, the Springer resolution $\pi\colon T^*\CB\to\CN$
corresponds to the iterated convolution morphism ${\mathbf m}\colon 
\Gr^{\omega_1}_{\GL(N)}\wit\times\ldots\wit\times\Gr^{\omega_1}_{\GL(N)}\to\Gr_{\GL(N)}$
restricted to the slice $\ol\CW{}^{N\omega_1}_0\subset\Gr_{\GL(N)}$. 
Now the convolution morphism ${\mathbf m}$ is semismall, and moreover, its 
composition with $\bq$ is semismall as well, so the proof of~\ref{semismall}
goes through in the present situation as well.
\end{proof}  

\subsection{Andersen-Jantzen sheaves via homology groups of fibers}
\label{AJ via}
We change the setup of~\ref{Klein via} to that of~\ref{subsec:ABG}. 
According to~\eqref{eq:47}, the costalk
$i^!_\lambda\tilscA^{\on{for}}$ forms a module over the algebra $i^!_0\tilscA^{\on{for}}$.
The algebra $i^!_0\tilscA^{\on{for}}$ is nothing but the Coulomb branch
$H_*^{G_\CO}(\cR)\simeq\BC[\CN]$.
\begin{NB}
where $\bN=\on{Hom}(W,V_1)\oplus
\on{Hom}(V_1,V_2)\oplus\ldots\oplus\on{Hom}(V_{N-2},V_{N-1})$. 
\end{NB}%
The costalk $i^!_\lambda\tilscA^{\on{for}}$ is nothing but 
$H_*^{G_\CO}(\tilde\cR^{\lambda})$ where 
$\tilde\pi\colon \tilde\cR\to\Gr_{G_F}=\Gr_{\PGL(N)}$
and $\tilde\cR^{\lambda} = \tilde\pi^{-1}(\lambda)$, 
see~\eqref{eq:52}.

We have the $L^{\on{bal}}=\PGL(N)$-action on the Coulomb branch
$H^{G_\cO}_*(\cR)$ by~\ref{prop:Integrable} and~\ref{ex:psln}. 
By \ref{ex:stab} it coincides with the standard action on $\CN$.

\begin{Theorem}
\label{costalk nilpotent}
Under the identification 
$i^!_0\tilscA^{\on{for}}=H_*^{G_\CO}(\cR)\simeq\BC[\CN]$, the 
$i^!_0\tilscA^{\on{for}}$-module 
$i^!_\lambda\tilscA^{\on{for}}=H_*^{G_\CO}(\tilde\cR^{\lambda})$ is isomorphic
to the $\BC[\CN]$-module $\Gamma(\CN,\CJ_\lambda)$.
\end{Theorem}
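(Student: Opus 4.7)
The plan is to parallel the proof of \ref{costalk Klein}, with the Kleinian surface replaced by the nilpotent cone $\CN$ and the smooth locus $\cS_N^\circ$ replaced by the regular nilpotent orbit $\BO_\reg$.

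First, I will apply the argument of \ref{torsion free} (which does not use the Kleinian setting) to conclude that $i^!_\lambda\tilscA^{\on{for}}$ is a torsion-free $\BC[\CN]$-module, of generic rank one by the Localization Theorem. Since $\CN$ carries the $\PGL(N)$-action of \ref{prop:Integrable}, \ref{ex:psln} which coincides with the standard action by \ref{ex:stab}, and $\BO_\reg$ is the unique open $\PGL(N)$-orbit in $\CN$, the restriction $i^!_\lambda\tilscA^{\on{for}}|_{\BO_\reg}$ will be a $\PGL(N)\times\BC^\times$-equivariant line bundle. Similarly $\CJ_\lambda|_{\BO_\reg}$ is a $\PGL(N)\times\BC^\times$-equivariant line bundle, the fiber of $\pi\colon T^*\CB \to \CN$ over any regular element being a single flag.

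Next I will compare bigraded characters to identify these two line bundles. The Hilbert series of $i^!_\lambda\tilscA^{\on{for}}$ is given by the monopole formula \eqref{eq:modified} with the convention of \ref{discrepancy}(2), while the character of $\Gamma(\CN, \CJ_\lambda) = \Gamma(T^*\CB, \CO(\lambda))$ is computable via Lefschetz fixed-point localization for the $T\times\BC^\times$-action on $T^*\CB$, summing over the $|\Weyl|$ torus-fixed flags; the two expressions agree by a form of the Weyl character formula. Since a $\PGL(N)\times\BC^\times$-equivariant line bundle on the homogeneous space $\BO_\reg$ is determined up to scalar by the character of its fiber at a base point, this matching will yield an isomorphism $i^!_\lambda\tilscA^{\on{for}}|_{\BO_\reg} \cong \CJ_\lambda|_{\BO_\reg}$.

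Finally, I will extend the isomorphism to $\CN$ by showing that both modules equal $j_*$ of their restrictions to $\BO_\reg$, where $j\colon \BO_\reg \hookrightarrow \CN$ is the open embedding. For $\CJ_\lambda$ this is Cohen-Macaulayness, which follows from \cite[Theorem~5.2.1]{brion} just as in the proof of \ref{CM}. For $i^!_\lambda\tilscA^{\on{for}}$ the analogous statement must be obtained by restricting $\tilscA^{\on{for}}$ along the subregular transversal slice $\cS_N \subset \CN$ of \cite{Lu-Green} and its variants arising from the Gelfand-Tsetlin integrable system $\varPi$ (flat by \ref{ssmall}); these reduce the calculation of costalks at non-regular points of $\CN$ to the Kleinian case \ref{costalk Klein}, where the $j_*$-extension is built into the criterion \ref{charac}. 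The hard part will be precisely this last step: carefully transporting \ref{costalk Klein} along the Gelfand-Tsetlin slice structure to deduce that $i^!_\lambda\tilscA^{\on{for}}$ is $j_*$-extended from $\BO_\reg$.
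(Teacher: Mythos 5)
Your overall blueprint — torsion-freeness, equivariant identification of line bundles on $\BO_\reg$, then extension across the complement — is the right shape, and the final remark about transporting \ref{costalk Klein} along the Gelfand--Tsetlin slices is indeed the crux. But there are two substantive problems.

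\textbf{Wrong equivariance group.} You want to identify $i^!_\lambda\tilscA^{\on{for}}|_{\BO_\reg}$ with $\CJ_\lambda|_{\BO_\reg}$ as $\PGL(N)\times\BC^\times$-equivariant line bundles. But $\CJ_\lambda|_{\BO_\reg}$ is generally \emph{not} $\PGL(N)$-equivariant: $\CO(\lambda)$ on $\CB$ is $\SL(N)$-equivariant, and descends to a $\PGL(N)$-equivariant bundle only when $\lambda$ lies in the root lattice. Indeed $\on{Pic}(\BO_\reg)\cong\BZ/N\BZ$, while the group of $\PGL(N)$-equivariant line bundles on the homogeneous space $\BO_\reg$ is trivial (the stabilizer being connected unipotent). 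The right group is $\tilde L^{\on{bal}}=\SL(N)$, which is exactly what \ref{prop:Integrable}, \ref{ex:psln} furnish. With $\SL(N)$-equivariance, the Picard class $\bar\mu$ of $i^!_\lambda\tilscA^{\on{for}}|_{\BO_\reg}$ is pinned down not by fiber characters but by the central character of the $\SL(N)$-module $i^!_\lambda\tilscA^{\on{for}}$, which coincides with that of $V^{\bar\lambda}$; this gives $\bar\mu=\bar\lambda$ without any character computation.

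\textbf{The character-matching step is circular.} You propose to verify that the monopole Hilbert series of $i^!_\lambda\tilscA^{\on{for}}$ agrees with the Lefschetz-fixed-point character of $\Gamma(T^*\CB,\CO(\lambda))$ and to deduce the isomorphism from this. But the identity in question --- equality of the sum \eqref{eq:mod} over dominant coweights of $\prod_i\GL(i)$ with Brylinski's Weyl-sum formula for the Hall--Littlewood character --- is precisely what the theorem, together with \ref{costalk graded}, \emph{establishes}. The authors themselves remark (after \ref{perverse degree}) that they do not know an independent combinatorial proof of this identity and raised it on mathoverflow. In the Kleinian case \ref{monopole Klein} the combinatorics does collapse to something directly verifiable because $\Weyl$ has order $2$, but for $N>2$ it does not. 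So you cannot take the character match as an input here. The paper's route via central characters and the two embeddings $i^!_\lambda\tilscA^{\on{for}}\hookrightarrow\Gamma(\CN,\CJ_{\bar\lambda})\hookleftarrow\Gamma(\CN,\CJ_\lambda)$ sidesteps the need for any a priori character computation; the graded statement \ref{costalk graded} then follows afterward.

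The remaining structure of your argument --- using $\varPi$ and \ref{ssmall} to check the isomorphism extends at generic points of the coordinate hyperplanes, reducing via $\SL(N)$-equivariance and faithfully flat descent to the Kleinian slice where \ref{costalk Klein} applies --- is essentially correct, although note that the paper extends the \emph{rational isomorphism} between the two submodules of $\Gamma(\CN,\CJ_{\bar\lambda})$ rather than directly proving $j_*$-extendedness of $i^!_\lambda\tilscA^{\on{for}}$.
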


\begin{proof}
The $\BC[\CN]$-module $i^!_\lambda\tilscA^{\on{for}}$ is torsion free generically 
of rank 1, see~\lemref{torsion free}.
By \ref{prop:Integrable} and~\ref{ex:psln}, we have an action of
$\tilde{L}^{\on{bal}}=\SL(N)$ on $i^!_\lambda\tilscA^{\on{for}}$.
The $i^!_0\tilscA^{\on{for}}$-module $i^!_\lambda\tilscA^{\on{for}}$
is $\SL(N)$-equivariant (under the natural projection $\SL(N)\to\PGL(N)$).
Hence, the restriction of the associated coherent sheaf 
$(i^!_\lambda\tilscA^{\on{for}})_{\on{loc}}$ to
$\BO_\reg\subset\CN$ is a line bundle. Now $\on{Pic}(\BO_\reg)=\BZ/N\BZ$, 
and any line bundle on $\BO_\reg$ is isomorphic to $\CJ_{\bar\mu}|_{\BO_\reg}$
for $\bar\mu\in\{0,\omega_1,\ldots,\omega_{N-1}\}$. Thus we obtain an
embedding $i^!_\lambda\tilscA^{\on{for}}\hookrightarrow
\Gamma(\CN,j_*(\CJ_{\bar\mu}|_{\BO_\reg}))=\Gamma(\CN,\CJ_{\bar\mu})$.
We claim that $\bar\mu=\omega_{|\lambda|\pmod{N}}=\bar\lambda$.
Indeed, $\SL(N)$-module $i^!_\lambda\tilscA^{\on{for}}$ has the same central 
character as $V^{\bar\lambda}$. 

Thus we obtain an embedding  
$i^!_\lambda\tilscA^{\on{for}}\hookrightarrow\Gamma(\CN,\CJ_{\bar\lambda})$.
Similarly, we have an embedding 
$\Gamma(\CN,\CJ_\lambda)\hookrightarrow\Gamma(\CN,\CJ_{\bar\lambda})$.
In other words, denoting $i^!_\lambda\tilscA^{\on{for}}|_{\BO_\reg}$ the
restriction of $(i^!_\lambda\tilscA^{\on{for}})_{\on{loc}}$ to $\BO_\reg$,
we obtain an isomorphism of line bundles
$i^!_\lambda\tilscA^{\on{for}}|_{\BO_\reg}\simeq\CJ_\lambda|_{\BO_\reg}$. Note that
this isomorphism is defined uniquely up to a scalar multiplication since the 
automorphism group of any
line bundle on $\BO_\reg$ is $\Gamma(\BO_\reg,\CO^\times)=\BC^\times$.
Indeed, an invertible function on $\BO_\reg$ extends to a regular function on
$\CN$ due to normality of $\CN$. This extended function is still invertible
since otherwise its zero divisor would intersect $\BO_\reg$. Its lift to
$T^*\CB$ is invertible and hence constant on each fiber of $T^*\CB\to\CB$.
So it is lifted from $\CB$ and hence constant.

We will show that the above isomorphism extends to $\CN$.
To this end we use the factorization morphism 
$\varPi\colon \CN\to\BA^{N\omega_1^*}=\ft(V)/\Weyl$
as in~\ref{prop:flat} and~\ref{rem:conditions},
 where $\ft(V)$ is a Cartan subalgebra of $\fg=\gl(V)$,
and $\Weyl$ is the Weyl group of $(\gl(V),\ft(V))$. 
The condition $\varPi_*\CJ_\lambda=\varPi_*\pi_*\CO(\lambda)\iso 
j_*\varPi_*\pi_*\CO(\lambda)|_{T^*\CB^\bullet}=j_*\varPi_*\CJ_\lambda|_{\CN^\bullet}$
of~\ref{rem:conditions} is satisfied since the complement of $T^*\CB^\bullet$
in $T^*\CB$ is of codimension 2 by~\ref{ssmall}.
%Since both $i^!_\lambda\tilscA^{\on{for}}$ and $\Gamma(\CN,\CJ_\lambda)$ 
%are flat $\BC[\ft(V)/\Weyl]$-modules, 
So it suffices to check the regularity of our
rational isomorphism after the base
change $\ft(V)\to\ft(V)/\Weyl$ and localizations
at general points of the root hyperplanes. Moreover, since we already 
%know that the localization of $i^!_\lambda\tilscA^{\on{for}}$ to $\BO_\reg$ 
%coincides with $\CJ_\lambda|_{\BO_\reg}$, 
know that our isomorphism is regular at $\BO_\reg$,
it remains to check the regularity at the localizations at general points
of the coordinate hyperplanes $w_{1,r}=0,\ r=1,\ldots,N-1$, cf.\ the proof
of \ref{Coulomb_quivar}. By an application of the Localization Theorem, just
as in {\em loc.~cit.}, the comparison reduces to~\propref{costalk Klein}.
Namely, let $t$ be a general point of the hyperplane $w_{1,r}=0$, and let
$x$ be a point of the subregular nilpotent orbit above $t$. Then there is 
a slice $\cS_N\subset\CN$ through $x$ such that the isomorphism of
$\CJ_\lambda|_{\BO_\reg}$ and $i^!_\lambda\tilscA^{\on{for}}|_{\BO_\reg}$ restricted
%$(i^!_\lambda\tilscA^{\on{for}})_{\on{loc}}|_{\BO_\reg}$
to $\cS_N$ extends to the localization $(\cS_N)_t$ (by~\propref{costalk Klein}).
Due to the $\SL(N)$-equivariance, the pullback of the above isomorphism to
$\SL(N)\times\cS_N^\circ\stackrel{\on{act}}{\longrightarrow}\BO_\reg$ extends to
$(\SL(N)\times\cS_N)_t$. By the faithfully flat descent, the above isomorphism
extends to $\CN_t$, and hence to the whole of $\CN$.
\end{proof}

\subsection{Modified homological grading}
\label{Delta}

Let us write down the modified monopole formula \eqref{eq:modified} in
our case explicitly. (This appeared first in
\cite[(3.9)]{Cremonesi:2014kwa}.) It is
\begin{NB}
Following~\ref{discrepancy}, we modify the homological grading of
$i^!_\lambda\tilscA^{\on{for}}=H_*^{G_\CO}(\tilde\cR^{\lambda})=
\bigoplus_{\vec\Lambda}H_*^{G_\CO}(\tilde\cR^{\lambda}_{\vec\Lambda})$
(the direct sum over the connected components of $\Gr_{\GL(V)}$
numbered by the $(N-1)$-tuples of integers
$\vec\Lambda=(\Lambda^1,\ldots,\Lambda^{N-1})$).  Namely, by
subtracting from the homological degree a discrepancy
$${\mathfrak d}_{\vec\Lambda}:=(N-1)|\lambda|-2\Lambda^1-\ldots-2\Lambda^{N-1}$$ 
on each connected component, we obtain the 
following Hilbert series $P_t^{\on{mod}}(i^!_\lambda\tilscA^{\on{for}})$ of 
$i^!_\lambda\tilscA^{\on{for}}$ with modified homological 
grading~\cite[(3.9)]{Cremonesi:2014kwa}:
\end{NB}%
\begin{equation}
\label{eq:mod}
P_t^{\on{mod}}(i^!_\lambda\tilscA^{\on{for}})=
\sum_{\vec\lambda}t^{2\Delta(\vec\lambda)}P_{\GL(V)}(t,\vec\lambda)
\end{equation}
(the sum over the dominant coweights 
$\vec\lambda=(\lambda^1,\ldots,\lambda^{N-1})$ of 
$\GL(N-1)\times\ldots\times\GL(1)$), where
$$2\Delta(\lambda,\lambda^1,\ldots,\lambda^{N-1}):=
\sum_{j=1}^{N-1}\sum_{i,i'}|\lambda_i^{j-1}-\lambda_{i'}^j|-
2\sum_{j=1}^{N-1}\sum_{i<i'}|\lambda_i^j-\lambda_{i'}^j|,$$
and we set for convenience $\lambda^0:=\lambda$.
We also set $n(\lambda)=\sum_{i=1}^N(i-1)\lambda_i$. Then $\dim\Gr_{\PGL(N)}^\lambda
=\langle2\rho^\vee_{\PGL(N)},\lambda\rangle=(N-1)|\lambda|-2n(\lambda)$.

\begin{Lemma}
\label{perverse degree}
$i^!_\lambda\tilscA^{\on{for}}$ lives in (modified) degrees $\geq\dim\Gr_{\PGL(N)}^\lambda$,
and its component of this degree has the same dimension as the irreducible 
$\SL(N)$-module $V^\lambda$.
\end{Lemma}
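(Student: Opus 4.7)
The plan is to reduce the statement to a purely combinatorial inequality about Gelfand--Tsetlin patterns. By~\eqref{eq:mod}, each factor $P_{\GL(V)}(t,\vec\lambda)$ is the Hilbert series of a polynomial ring (a product of factors $\prod(1-t^{2d})^{-1}$) and hence has constant term~$1$. Thus the minimum degree appearing in $P_t^{\on{mod}}(i^!_\lambda\tilscA^{\on{for}})$ equals $\min_{\vec\lambda}2\Delta(\vec\lambda)$, and the dimension of the corresponding graded piece equals the number of $\vec\lambda$ attaining this minimum. Since $\dim\Gr_{\PGL(N)}^\lambda=(N-1)|\lambda|-2n(\lambda)=\sum_{i<i'}(\lambda_i-\lambda_{i'})$, both conclusions of the Lemma will follow once I show that $2\Delta(\vec\lambda)\geq\dim\Gr_{\PGL(N)}^\lambda$ with equality exactly when $(\lambda,\lambda^1,\dots,\lambda^{N-1})$ forms a Gelfand--Tsetlin pattern with top row $\lambda$.

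First I would prove the following row-by-row inequality: for weakly decreasing sequences $a=(a_1\geq\cdots\geq a_k)$ and $b=(b_1\geq\cdots\geq b_{k-1})$,
\begin{equation*}
\sum_{i,i'}|a_i-b_{i'}|\;\geq\;\sum_{i<i'}(a_i-a_{i'})+\sum_{i<i'}(b_i-b_{i'}),
\end{equation*}
with equality if and only if $a_i\geq b_i\geq a_{i+1}$ for all $i$. I would use the integral representation $|u-v|=\int|\mathbf 1[u\geq x]-\mathbf 1[v\geq x]|\,dx$. Setting $\alpha(x)=\#\{i:a_i\geq x\}$ and $\beta(x)=\#\{i:b_i\geq x\}$, both sides become integrals of quadratic polynomials in $\alpha,\beta$; the pointwise difference of integrands simplifies to $(\alpha(x)-\beta(x))(\alpha(x)-\beta(x)-1)$, which is nonnegative on integer values, with equality iff $\alpha-\beta\in\{0,1\}$. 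Since $\alpha-\beta$ equals $0$ for $x\gg 0$ and $1$ for $x\ll 0$, and jumps at each $a_i,b_i$ by plus-or-minus its multiplicity, staying in $\{0,1\}$ throughout is precisely the interlacing condition.

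Summing this inequality over $j=1,\dots,N-1$, and using that $\lambda^{N-1}$ has a single entry so its intra-row sum vanishes, telescopes to
\begin{equation*}
\sum_{j=1}^{N-1}\sum_{i,i'}|\lambda^{j-1}_i-\lambda^j_{i'}|\;\geq\;\sum_{i<i'}(\lambda_i-\lambda_{i'})+2\sum_{j=1}^{N-1}\sum_{i<i'}(\lambda^j_i-\lambda^j_{i'}),
\end{equation*}
which rearranges exactly to $2\Delta(\vec\lambda)\geq\dim\Gr_{\PGL(N)}^\lambda$. Equality in every row is exactly the interlacing condition $\lambda^{j-1}_i\geq\lambda^j_i\geq\lambda^{j-1}_{i+1}$, i.e.\ $(\lambda,\lambda^1,\dots,\lambda^{N-1})$ is a Gelfand--Tsetlin pattern with top row $\lambda$; the classical Gelfand--Tsetlin theorem then identifies the number of such patterns with $\dim V^\lambda$.

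The hard part is the per-row inequality together with its equality characterization; everything else is straightforward bookkeeping. An alternative route would be to invoke~\ref{costalk nilpotent} to identify $i^!_\lambda\tilscA^{\on{for}}\cong\Gamma(T^*\CB,\CO(\lambda))$ and to extract $H^0(\CB,\CO(\lambda))=V^\lambda$ as the fiberwise degree-zero piece via Borel--Weil; but this would require carefully matching the modified grading of~\ref{discrepancy} with the natural cotangent-fiber grading on $T^*\CB$, which I expect to be the main obstacle for the geometric route, so I would favor the combinatorial argument above.
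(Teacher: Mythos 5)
Your proposal follows the same route as the paper: both reduce the claim to showing that $2\Delta(\vec\lambda)\geq\dim\Gr^\lambda_{\PGL(N)}$ with equality exactly for Gelfand--Tsetlin patterns of shape $\lambda$, then invoke the Gelfand--Tsetlin count of $\dim V^\lambda$. The only difference is that the paper dispatches the combinatorial inequality with ``one checks'' and a citation to a mathoverflow answer, whereas you supply a clean self-contained proof via the row-by-row estimate and the level-set identity $(\alpha-\beta)(\alpha-\beta-1)\geq 0$; this fills a genuine gap in exposition and your telescoping and equality analysis are both correct.
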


\begin{proof}
We have to compute $$t^{2n(\lambda)-(N-1)|\lambda|}\sum_{\vec\lambda}
t^{2\Delta(\vec\lambda)}P_{\GL(V)}(t,\vec\lambda)|_{t=0}=t^{2n(\lambda)-(N-1)|\lambda|}
\sum_{\vec\lambda}t^{2\Delta(\vec\lambda)}|_{t=0}.$$ One checks that this is the sum of 1's over
the set of $(N-1)$-tuples $\vec\lambda$ which interlace, i.e.\
$\lambda_i^j\geq\lambda_i^{j+1}\geq\lambda^j_{i+1},\ 0\leq j\leq N-2,\ 
1\leq i\leq N-j-1$ (recall that $\lambda^0=\lambda$).\footnote{We learned this observation in \cite{268987}.}
%\begin{NB} Is it evident? Well known? A reference? Thanks to Gorin?
%\end{NB}
In other words, this is the cardinality of the set of Gelfand-Tsetlin patterns
of shape $\lambda$, that is $\dim V^\lambda$.
\end{proof}

%From now on we consider $i^!_\lambda\tilscA^{\on{for}}$ only with its modified grading.

\begin{Remark}
  Characters of $\Gamma(\CN,\CJ_\lambda)$ are given by Hall-Littlewood
  polynomials by computation of Euler characteristic
  \cite{MR593631,Brylinski} and the vanishing theorem
  \cite{MR1223221}. Therefore \eqref{eq:mod} gives a combinatorial
  expression of Hall-Littlewood polynomials. We asked several people
  (including mathoverflow \cite{268865}) whether it is known or
  not. But we could not find earlier appearance. In view of the
  argument in the special case $t=0$ in \ref{perverse degree}, there
  should be a purely combinatorial proof.
\end{Remark}

\subsection{Modified grading of Andersen-Jantzen modules}
\label{delta}
We have the dilatation action of $\BC^\times$ on $T^*\CB$ and the natural 
$\BC^\times$-equivariant structure on $\CO(\lambda)$; hence a grading on
$\Gamma(T^*\CB,\CO(\lambda))=\Gamma(\CN,\CJ_\lambda)$ starting in degree 0
with $\Gamma(\CB,\CO(\lambda))=(V^\lambda)^\vee$.
We modify the grading by doubling all the degrees and shifting it by
$(N-1)|\lambda|-2n(\lambda)$. From now on we consider $\Gamma(\CN,\CJ_\lambda)$
with this modified grading only.

\begin{Theorem}
\label{costalk graded}
The isomorphism of $\BC[\CN]$-modules 
$i^!_\lambda\tilscA^{\on{for}}\simeq\Gamma(\CN,\CJ_\lambda)$ of~\thmref{costalk nilpotent}
is a graded isomorphism.
\end{Theorem}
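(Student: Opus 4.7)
The plan is to upgrade the ungraded isomorphism $\phi\colon i^!_\lambda\tilscA^{\on{for}}\iso \Gamma(\CN,\CJ_\lambda)$ of~\thmref{costalk nilpotent} to a graded isomorphism. Observe first that both sides carry $\BC^\times$-actions compatible with the dilation on $\BC[\CN]$: on the sheaf side via the doubled and shifted dilation on $T^*\CB$ as in~\ref{delta}; on the homology side via loop rotation together with the shift convention of~\ref{discrepancy}(2). The ring isomorphism $i^!_0\tilscA^{\on{for}}\simeq\BC[\CN]$ is already graded, so it suffices to show $\phi$ itself is homogeneous of degree zero.

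I would first restrict $\phi$ to the regular orbit $\BO_\reg$. There both sides are $\BC^\times$-equivariant line bundles, and by the uniqueness argument already used in the proof of~\thmref{costalk nilpotent} the automorphism group of any line bundle on $\BO_\reg$ is just $\BC^\times$ (invertible regular functions on $\BO_\reg$ are constants, since they extend to invertible functions on $\CN$, a conical affine variety). Hence $\phi(t\cdot x)=t^c\phi(x)$ on $\BO_\reg$ for a single integer $c$, which measures the discrepancy between the two graded structures.

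To pin down $c$ I would compare lowest-degree components. By~\lemref{perverse degree}, $i^!_\lambda\tilscA^{\on{for}}$ lives in modified degrees $\geq\dim\Gr^\lambda_{\PGL(N)}=(N-1)|\lambda|-2n(\lambda)$ with its minimal component of dimension $\dim V^\lambda$. On the other side, $\Gamma(\CN,\CJ_\lambda)\supset\Gamma(\CB,\CO(\lambda))\cong (V^\lambda)^\vee$ sits in natural degree $0$ by Borel--Weil, hence in modified degree $\dim\Gr^\lambda_{\PGL(N)}$ after the doubling and shift of~\ref{delta}, with the same dimension $\dim V^\lambda$. Since $\phi$ is an ungraded isomorphism, these lowest pieces must be matched, forcing $c=0$.

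Finally, once $\phi|_{\BO_\reg}$ is known to be graded, the extension argument of~\thmref{costalk nilpotent} via the Gelfand--Tsetlin factorization morphism $\varPi$ and reduction to the subregular Kleinian slice (\propref{costalk Klein}) goes through verbatim in the graded setting, since a $\BC^\times$-equivariant rational map that extends regularly as a morphism extends $\BC^\times$-equivariantly. The main obstacle is precisely the compatibility of the two $\BC^\times$-actions on $\BC[\CN]$: the loop-rotation grading shifted as in~\ref{discrepancy}(2) on the Coulomb-branch side, versus the doubled dilation on $T^*\CB$ on the geometric side. Ensuring this compatibility is the very purpose of the convention adopted in~\ref{discrepancy}(2), and once granted, the remaining steps are formal.
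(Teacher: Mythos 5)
Your argument is correct and follows essentially the same route as the paper: uniqueness of the $\BC^\times$-equivariant structure on a line bundle over $\BO_\reg$ (up to a character, since invertible functions there are constant) shows $\phi$ is homogeneous of some degree $c$, and comparing the starting degrees $(N-1)|\lambda|-2n(\lambda)$ on both sides (via \lemref{perverse degree} and the Borel--Weil term) forces $c=0$. Your final paragraph on re-running the Gelfand--Tsetlin/Kleinian-slice extension is unnecessary: once $\phi|_{\BO_\reg}$ is graded of degree $0$, gradedness of $\phi$ on all of $\CN$ is automatic from torsion-freeness of both modules, and note that the paper's $\lambda=0$ base case (which you invoke without citation) is \ref{compare_degrees_slice}.
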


\begin{proof}
For $\lambda=0$ the claim is nothing but~\ref{compare_degrees_slice}.
Clearly, $\Gamma(\CN,\CJ_\lambda)$ is a graded $\SL(N)\ltimes\BC[\CN]$-module; 
$i^!_\lambda\tilscA^{\on{for}}$ is also a graded $\SL(N)\ltimes\BC[\CN]$-module by
construction of~\ref{sec:group_action} (see~\ref{ex:psln}). Both embeddings 
$i^!_\lambda\tilscA^{\on{for}}\hookrightarrow\Gamma(\CN,\CJ_{\bar\lambda})$ and
$\Gamma(\CN,\CJ_\lambda)\hookrightarrow\Gamma(\CN,\CJ_{\bar\lambda})$ are compatible
with the gradings up to a shift since the structure of a $\SL(N)$-equivariant
line bundle on $\CJ_{\bar\lambda}|_{\BO_\reg}$ extends to a 
$\SL(N)\times\BC^\times$-equivariant structure uniquely up to tensoring with
a character of $\BC^\times$. The above shifts match because both the grading
of $i^!_\lambda\tilscA^{\on{for}}$ and the grading of $\Gamma(\CN,\CJ_\lambda)$ start
in the same degree $(N-1)|\lambda|-2n(\lambda)$.
\end{proof}

\subsection{The regular sheaf}
\label{ABG}
By~\lemref{perverse degree}, $\tilscA\in{} ^p\!D^{\geq0}_{\PGL(N)}(\Gr_{\PGL(N)})$,
\begin{NB}
We need some abstract nonsense about Ind.
\end{NB}
and $^p\!H^0(\tilscA)\simeq
\bigoplus_\lambda(V^\lambda)^\vee\otimes\on{IC}(\ol\Gr{}^\lambda_{\PGL(N)})=:\Areg$.

\begin{Theorem}
\label{Areg}
The natural morphism $\sigma\colon \Areg={} ^p\!H^0(\tilscA)\to\tilscA$ 
is an isomorphism of ring objects.
\end{Theorem}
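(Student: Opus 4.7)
By \ref{perverse degree}, $\tilscA\in {}^pD^{\geq0}_{\PGL(N)}(\Gr_{\PGL(N)})$ with ${}^pH^0(\tilscA)\simeq\Areg$, so $\sigma$ is the canonical morphism out of the zeroth perverse cohomology. The plan is to prove first that $\sigma$ is an isomorphism of objects, and then separately that it intertwines the ring structures.

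For the first step, set $C:=\on{Cone}(\sigma)\in {}^pD^{\geq1}$. Since ${}^pH^0(\sigma)=\on{id}_{\Areg}$, the long exact sequence of perverse cohomology gives ${}^pH^i(C)\simeq{}^pH^i(\tilscA)$ for $i\geq1$, so it suffices to show that $\tilscA$ is perverse. I will do this by comparing graded Poincar\'e series of $!$-costalks at every dominant coweight $\lambda$ of $\PGL(N)$. By \ref{costalk graded}, $i_\lambda^!\tilscA^{\on{for}}\simeq\Gamma(\CN,\CJ_\lambda)$ as graded $\BC[\CN]$-modules, with graded character equal to the Hall--Littlewood polynomial $P_t^{\on{mod}}$ of \ref{Delta}. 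The graded character of $i_\lambda^!\Areg$ is also the Hall--Littlewood polynomial, via the identification $\Areg\simeq\BC[G^\vee]$ and the Andersen--Jantzen theorem \cite{MR2053952}. Combining this character equality with the standard parity vanishing for $\on{IC}$-sheaves on $\Gr_{\PGL(N)}$ (which forces the connecting maps in the long exact sequence of costalks attached to the triangle $\Areg\to\tilscA\to C\to\Areg[1]$ to vanish, so that $i_\lambda^!C$ is determined additively by the other two terms), one concludes $i_\lambda^!C=0$ for every $\lambda$, hence $C=0$ and $\sigma$ is an isomorphism.

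For compatibility with the ring structures, we must show $\sigma\circ\mathbf m=\mathsf m\circ(\sigma\star\sigma)$, where $\mathbf m$ denotes the geometric Satake multiplication on $\Areg$ and $\mathsf m$ the multiplication of \ref{prop:sheav-affine-grassm}. Using the isomorphism just proved, $\mathsf m$ transports via $\sigma$ to a commutative ring-object structure $\mathbf m'$ on $\Areg$, and one must prove $\mathbf m'=\mathbf m$. Taking $\Ext^*_{D_{\PGL(N)}(\Gr_{\PGL(N)})}(\mathbf 1_{\Gr_{\PGL(N)}}, -)$ yields two algebra structures on $H^{G_\cO}_*(\cR)=\BC[\CN]$: the Coulomb branch convolution product by \ref{prop:sheav-affine-grassm}(4) and \ref{costalk nilpotent} for $\mathsf m$, and the ABG algebra structure of \cite{MR2053952} for $\mathbf m$; both recover the same standard ring $\BC[\CN]$. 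The main obstacle is to promote this equality on $\Ext^*(\mathbf 1_{\Gr_{\PGL(N)}},-)$-algebras to equality of the full ring-object structures. For this I invoke rigidity: both $\mathbf m$ and $\mathbf m'$ are $\SL(N)=G^\vee$-equivariant (using the group action of \ref{sec:group_action}, identified in \ref{ex:psln} with the Langlands dual action on $\Areg$), and via geometric Satake a commutative $G^\vee$-equivariant ring-object structure on $\Areg$ with fixed unit is determined by its image on $\Ext^*_{D(\Gr_{\PGL(N)})}(\mathbf 1_{\Gr_{\PGL(N)}}, \Areg)$, as follows from the characterization of $\Areg$ as $\BC[G^\vee]$ in \cite{MR2053952}.
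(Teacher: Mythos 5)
Your Part 1 (that $\sigma$ is an isomorphism of objects) is essentially the paper's own argument: compare the costalk Hilbert series using \ref{costalk graded}, then apply parity vanishing to rule out a nontrivial cone. The paper phrases it slightly differently — it establishes that the perverse truncation $\tau_{>0}\tilscA$ inherits parity vanishing and then observes that a non-isomorphism $\sigma$ would force $\sigma_\lambda$ to have both kernel and cokernel, producing a costalk of $\on{Cone}(\sigma)$ of the wrong parity — but the ingredients and the logic coincide with yours. One small caveat: your assertion that parity vanishing ``forces the connecting maps [\,\ldots] to vanish'' is not quite how the argument works, since a priori you do not know that $C$ itself has parity vanishing; the paper's formulation via $\tau_{>0}\tilscA$ avoids this.

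Your Part 2 is where the argument departs from the paper, and where there is a genuine gap. You reduce the comparison of ring structures to a rigidity claim: that a commutative $G^\vee$-equivariant ring-object structure on $\Areg$ with fixed unit is determined by the induced algebra on $\Ext^*_{D(\Gr_{\PGL(N)})}(\mathbf 1_{\Gr_{\PGL(N)}},\Areg)$. As stated this needs a real proof, and it is far from obvious. Under geometric Satake, the space of candidate multiplication morphisms is $\Hom_{G^\vee\times G^\vee}\bigl(\CC[G^\vee]\otimes\CC[G^\vee],\CC[G^\vee]\bigr)$ (left Satake equivariance and right diagonal equivariance for the extra $G^\vee$-action of \ref{sec:group_action}). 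Decomposing by Peter--Weyl, the $(\lambda,\mu)\to\nu$ component of this $\Hom$ space has dimension equal to the square of the multiplicity of $V^\nu$ in $V^\lambda\otimes V^\mu$, which is typically $>1$; imposing commutativity and the unit does not obviously collapse this, so you cannot simply cite ``characterization of $\Areg$ as $\BC[G^\vee]$'' and conclude. Moreover, even with the rigidity available, you would still need to show that the algebra structure on $\Ext^*(\mathbf 1,\tilscA)$ induced by $\mathsf m$ and the one induced by $\mathbf m$ agree \emph{under $H^\bullet(\sigma)$}, not merely that they produce abstractly isomorphic copies of $\BC[\CN]$; that compatibility is not addressed.

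The paper instead reduces (via faithfulness of the fiber functor on the perverse Satake category) to showing that $H^\bullet(\sigma)$ is a ring isomorphism, and then explicitly computes $H^\bullet(\Gr_{\GL(N)},\tilscA^{\on{for}})$. The key ingredients are: passing to $\GL(N)$; identifying $H^\bullet_{\GL(N)}(\Gr_{\GL(N)},\tilscA)$ with the Coulomb branch of the enlarged quiver gauge theory in which $\GL(W)$ is also gauged; invoking \ref{pestun} to identify this Coulomb branch with $\BC[\oZ^\alpha_{\PGL(N+1)}]$; and then using the theorem of \cite{MR2443330} that $\oZ^\alpha_{\PGL(N+1)}\simeq\GL(W)\times W$ with the projection to $\Spec H^\bullet_{\GL(N)}(\on{pt})$ being the projection to $W$, so the zero fiber is $\GL(N)$. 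This concrete Zastava/Pestun computation is the main input and is missing from your argument.
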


\begin{proof}
First we prove that $\sigma$ is an isomorphism disregarding the ring structure.
We have to check $\tau_{>0}\tilscA=\on{Cone}(\sigma)=0$. 
Note that all the costalks of $\on{IC}(\ol\Gr{}^\lambda_{\PGL(N)})$ live in the 
degrees of the same parity as $|\lambda|$, see~\cite{Lus-ast}. 
We will call this phenomenon
{\em parity vanishing}. The parity vanishing for $\tilscA$ also holds true
(on a given connected component of $\Gr_{\PGL(N)}$, all the costalks of
$\tilscA^{\on{for}}$ live in the same parity as all the costalks of any IC sheaf on 
this component, see~\eqref{eq:mod}). This implies that 
$^p\!H^{\on{odd}}(\tilscA)=0$, and hence
$^p\!H^{\on{odd}}(\tau_{>0}\tilscA)=0$. Now the Hilbert series of
$i^!_\lambda\tilscA^{\on{for}}$ and $i^!_\lambda\Areg$ coincide by~\thmref{costalk graded}
and the comparison of~\cite{Brylinski} and~\cite{Lus-ast}.
Hence if $\sigma$ were not an isomorphism, its costalk $\sigma_\lambda$ would
have both kernel and cokernel for some $\lambda$. Thus, $\on{Cone}(\sigma)$
would have a costalk of wrong parity at $\lambda$. This would contradict
the parity vanishing for $\tau_{>0}\tilscA=\on{Cone}(\sigma)$.
We conclude that $\sigma$ is an isomorphism.

Now we compare the ring structures. Since both $\tilscA$ and $\Areg$
are perverse, it suffices to check that the fiber functor $H^\bullet(\sigma)$
induces an isomorphism of the rings $H^\bullet(\Gr_{\PGL(N)},\Areg)$
and $H^\bullet(\Gr_{\PGL(N)},\tilscA^{\on{for}})$. 
It is enough to check the assertion for $\GL(N)$ instead of $\PGL(N)$,
as $\Gr_{\GL(N)}$ is the union of copies of $\Gr_{\PGL(N)}$.
We have 
$H^\bullet(\Gr_{\GL(N)},\Areg)\simeq\BC[\GL(N)]$ by geometric Satake equivalence. On the other hand, the cohomology
$H^\bullet(\Gr_{\GL(N)},\tilscA^{\on{for}})$ is the quotient of the equivariant cohomology
$H^\bullet_{\GL(N)}(\Gr_{\GL(N)},\tilscA)$ modulo the augmentation ideal of
$H^\bullet_{\GL(N)}(\on{pt})$. And $H^\bullet_{\GL(N)}(\Gr_{\GL(N)},\tilscA)=
H_*^{(\GL(V)\times\GL(W))_\CO}(\cR_{\GL(V)\times\GL(W),\bN})$ where $(\GL(V)\times\GL(W),\bN)$ is the quiver gauge theory obtained from $(\GL(V),\bN)$ by turning $\GL(W)$ to a gauge group. By~\ref{pestun}, 
$H_*^{(\GL(V)\times\GL(W))_\CO}(\cR_{\GL(V)\times\GL(W),\bN})\simeq\BC[\oZ^\alpha_{\PGL(N+1)}]$
where $\alpha=N\alpha_1+(N-1)\alpha_2+\ldots+\alpha_N$.  
By~\cite[Theorem~1]{MR2443330}, $\oZ^\alpha_{\PGL(N+1)}\simeq\GL(W)\times W$,
and its projection to $\on{Spec}H^\bullet_{\GL(N)}(\on{pt})$ is nothing but
the projection of $\GL(W)\times W$ to $W$. Hence the zero fiber of this 
projection is isomorphic to $\GL(W)=\GL(N)$.
\end{proof}

%%% Local Variables:
%%% mode: latex
%%% TeX-master: "affine_pre"
%%% End:

\section{Mirrors of Sicilian theories}\label{sec:Sicilian}

In the first half of this section, we study examples of Coulomb
branches $\cM_C$ of star shaped quiver gauge theories as
in~\ref{fig:star}. As explained at the end of Introduction, they are
conjectural Higgs branches of Sicilian theories.

Let us briefly review \cite{MR2985331} on expected properties of Higgs
branches of Sicilian theories. It is conjectured that there exists a
functor from the category of $2$-bordisms to a category $\mathrm{HS}$
of holomorphic symplectic varieties with Hamiltonian group
actions. For the latter, objects are complex algebraic semisimple
groups. A homomorphism from $G$ to $G'$ is a holomorphic symplectic
variety $X$ with a $\CC^\times$-action scaling the symplectic form
with weight $2$ together with hamiltonian $G\times G'$ action
commuting with the $\CC^\times$-action. For $X\in \Hom(G',G)$,
$Y\in\Hom(G,G'')$, their composition $Y\circ X\in\Hom(G',G'')$ is
given by the symplectic reduction of $Y\times X$ by the diagonal
$G$-action. The identity $\in\Hom(G,G)$ is the cotangent bundle $T^*G$
with the left and right multiplication of $G$.

Let us fix a complex semisimple group $G$. Physicists associate a $3d$
Sicilian theory to $G$ and a Riemann surface with boundary, and
consider its Higgs branch. It depends only on the topology of the
Riemann surface, and gives a functor as above. We associate $S^1$ with
$G$, and a cylinder with $T^*G$. Since $T^*G$ is the identity in
$\mathrm{HS}$, it is one of requirements.

Physical argument shows that the variety associated with a disk is
$G\times S$, where $S$ is the Kostant slice to the regular nilpotent
orbit.

Let $W \equiv W_G$ be the variety associated with $S^2$ three disks
removed. This is a fundamental piece as other varieties are obtained
by reductions of products of its copies. It has an action of
$\mathfrak S_3\rtimes G^3$. It is expected that
\begin{itemize}
\item $W = \CC^2\otimes\CC^2\otimes\CC^2$ if $G = \SL(2)$.
  \item $W$ is the minimal nilpotent orbit of $E_6$ if $G = \SL(3)$.
\end{itemize}
For other groups, $W$ is unknown.

Recently Ginzburg and Kazhdan \cite{GK} construct a functor, and check
most of properties, in particular show that the gluing of Riemann
surfaces corresponds to the hamiltonian reduction with respect to the
diagonal action. Via a result of \cite{MR3366026} their symplectic
variety associated with $S^2$ minus $b$ disks is defined as
\begin{equation*}
    W^b \defeq
    \Spec H^*_{G_\cO}(\Gr_{G}, i_\Delta^!(\boxtimes_{k=1}^b(\Areg)_k)),
\end{equation*}
where $(\Areg)_k$ is a copy of the regular sheaf on $\Gr_G$. Here the
complex symplectic group taken as the object of the target category is
$G^\vee$, the Langlands dual group. (E.g., $b=2$ gives $T^*G^\vee$.)

By \ref{thm:ABG} together with \ref{subsec:glue} we immediately get the
following:
\begin{Theorem}\label{thm:GK}
  The symplectic variety $W^b$ of Ginzburg-Kazhdan for $G^\vee=\SL(N)$
  is isomorphic to the Coulomb branch of the star shaped quiver gauge
  theory in \ref{fig:star} with $b$ legs instead of $3$.
\end{Theorem}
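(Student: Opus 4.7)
The plan is to assemble two ingredients already established in the paper: the gluing formula for pushforwards to $\Gr_{G_F}$ from \ref{subsec:glue}, and the identification of the regular sheaf with the pushforward coming from a single $A_{N-1}$ leg provided by \ref{thm:ABG}. Given these two results the theorem should follow by direct matching of definitions, with essentially no new work required.

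First I would spell out the star shaped theory in the setting of \ref{subsec:glue}. Let $(\tilde G_k,\bN_k)$ be the $k$-th leg of type $A_{N-1}$ as in \ref{subsec:ABG}, all sharing the common flavor symmetry $G_F=\PGL(N)$, and let $(G,\bN)$ be the star shaped quiver gauge theory obtained by the fiber product construction over $G_F$. The computation recorded at the end of \ref{subsec:glue} then gives a canonical isomorphism
\[
\pi_*\DC_{\cR_{G,\bN}}[-2\dim\bN_\cO]\ \cong\ i_\Delta^!\bigl(\boxtimes_{k=1}^{b}\tilscA_k\bigr)
\]
of commutative ring objects on $\Gr_{G_F}$, where each $\tilscA_k$ is the ring object of \ref{subsec:affG_flavor} associated to the $k$-th leg and $i_\Delta\colon \Gr_{G_F}\hookrightarrow\prod_{k=1}^{b}\Gr_{G_F}$ is the diagonal embedding.

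Next I would apply \ref{thm:ABG} leg by leg, which asserts $\tilscA_k\cong\Areg$ as commutative ring objects in $D_{\PGL(N)}(\Gr_{\PGL(N)})$. Since $\boxtimes$ and $i_\Delta^!$ are manifestly functorial, substituting these isomorphisms into the formula above produces
\[
\pi_*\DC_{\cR_{G,\bN}}[-2\dim\bN_\cO]\ \cong\ i_\Delta^!\bigl(\boxtimes_{k=1}^{b}(\Areg)_k\bigr).
\]
Taking $\Spec$ of the appropriate equivariant hypercohomology of both sides and unwinding definitions gives $\cM_C(G,\bN)\cong W^b$, once one identifies $G_F=\PGL(N)$ with the Langlands dual of $G^\vee=\SL(N)$ in the Ginzburg--Kazhdan convention.

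Since all the substantive content sits in \ref{thm:ABG}, no step of the present argument should be difficult; the only point demanding a bit of care is the compatibility of equivariance groups in the hypercohomology used to define $W^b$ (namely, $G$ acts on $\Gr_{G_F}$ through its quotient $G_F$, so that the $G_\cO$-equivariant hypercohomology on the gauge side and the $(G_F)_\cO$-equivariant one on the Ginzburg--Kazhdan side agree after applying Leray to $\pi\colon \cR_{G,\bN}\to \Gr_{G_F}$). It is also worth remarking that the isomorphism so constructed automatically intertwines the natural $(G^\vee)^b\rtimes\mathfrak{S}_b$ actions on the two sides: the $G^\vee$-factors come from the Hamiltonian action built in \ref{sec:group_action} on each leg (equivalently, from the $G^\vee$-action on each $\Areg$), and the symmetric group acts by permuting the legs.
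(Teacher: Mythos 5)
Your proposal is correct and follows the paper's argument exactly. The paper itself gives the proof as a one-line deduction — ``By \ref{thm:ABG} together with \ref{subsec:glue} we immediately get the following'' — and your write-up is precisely the correct unpacking of that citation: the Coulomb branch of the star-shaped quiver is computed from the fiber-product description of $\cR_{G,\bN}$ over $\Gr_{G_F}$ via the displayed formula $\pi_*\DC_{\cR_{G,\bN}}[-2\dim\bN_\cO]=i_\Delta^!\bigl(\boxtimes_k\pi_{k*}\DC_{\cR_{\tilde G_k,\bN_k}}[-2\dim(\bN_k)_\cO]\bigr)$ at the end of \ref{subsec:glue}, and then \ref{thm:ABG} replaces each factor $\tilscA_k$ with $\Areg$, matching the Ginzburg--Kazhdan definition of $W^b$. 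Your parenthetical remarks on the equivariance bookkeeping (pushing to $\Gr_{G_F}$ does not change the equivariant Borel--Moore homology of $\cR_{G,\bN}$) and on the $(G^\vee)^b\rtimes\mathfrak{S}_b$-action are accurate and correspond to the paper's post-theorem remarks about dividing by the diagonal central $\CC^\times$.
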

More precisely, as we divide $\GL(V)\times\GL(W)$ by $\CC^\times$ in
\ref{subsec:ABG}, we also divide $\GL(V)$ for the star shaped quiver
gauge theory also by the diagonal central subgroup $Z\cong \CC^\times$.
\begin{NB}
  Consider the case of three legs.
  Take $\tilde G = \left(\prod_{i=1}^N \GL(i)\right)/\CC^\times$,
  $G = \prod_{i=1}^{N-1} \GL(i)$ so that $G_F = \tilde G/G =
  \PGL(N)$. Then
  $\PGL(V) = \tilde G\times_{\PGL(N)}\tilde G\times_{\PGL(N)}\tilde G$.
\end{NB}%
If we replace the central $\GL(N)$ by $\SL(N)$ instead of taking the
quotient by $Z$, we get $W$ for $G^\vee = \PGL(N)$.\footnote{We thank
  Yuji Tachikawa for an explanation of this procedure.}
\begin{NB}
    $\dim \mathcal M_C = 3N(N-1) + 2(N-1) = 3N^2 - N - 2$. On the
    other hand, the expected dimension \cite[(3.14)]{MR2985331} is
    $3\dim G - \rank G = 3 N^2 - 3 - (N-1) = 3N^2 - N - 2$.
\end{NB}%

We could consider the Coulomb branch for more general quiver gauge
theory associated with a Riemann surface with boundary as in
\cite[3(iii) Figure~5]{2015arXiv150303676N} (namely we have $b$ legs,
as well as $g$ loops at the central vertex), which is the Higgs branch
of a $3d$ Sicilian theory, obtained by compactifying $6d$
$\mathcal N=(2,0)$ theory of type $A$ by
$S^1\times(\text{punctured Riemann surface})$. Ginzburg-Kazdhan
construction is also generalized. See \ref{cylinder G}. We conjecture
that \thmref{thm:GK} is generalized.

\begin{Conjecture}\label{conj:genGK}
    Let
    $W^{g,b} \defeq \Spec H^*_{G_\CO}(\Gr_G,\scA^b\otimes^!\scB^g)$ as
    in \ref{cylinder G} for $G^\vee = \SL(N)$. It is isomorphic to the
    Coulomb branch of the gauge theory associated with the quiver
    \cite[3(iii) Figure~5]{2015arXiv150303676N}.
\end{Conjecture}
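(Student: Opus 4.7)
The plan is to proceed in parallel with \thmref{thm:GK} by extending the gluing construction \ref{subsec:glue} to accommodate the $g$ self-loops at the central vertex. First, observe that the gauge theory in question has gauge group obtained by taking the central $\GL(N)$ at the center of the star together with the leg data $\GL(V) = \prod_{j=1}^{b}\prod_{i=1}^{N-1}\GL(V_i^{(j)})$, modulo the diagonal central $\CC^\times$, and with matter representation given by $b$ copies of the cotangent-type leg representation plus $g$ copies of $\mathrm{ad}(\algsl_N)$. Each loop at the central vertex corresponds to an additional adjoint field, and hence contributes, via \propref{prop:sheav-affine-grassm} applied to $(G_F, \bN_{\mathrm{loop}}) = (\PGL(N), \mathrm{ad}(\algsl_N))$, a commutative ring object $\scB_{\mathrm{loop}} \defeq \pi_*\DC_{\cR_{\PGL(N),\,\mathrm{ad}}}[-2\dim\mathrm{ad}_\cO]$ in $D_{\PGL(N)}(\Gr_{\PGL(N)})$.

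Second, I would show that the variety of triples $\cR_{G,\bN}$ for the whole star-with-loops quiver factorizes as the fiber product over $\Gr_{\PGL(N)}$ of the varieties of triples associated with each leg and each loop. This is the same reasoning as in the discussion preceding \propref{prop:!-product}: the gauge group is a fiber product over $\PGL(N)$, the matter splits as a direct sum across legs and loops, and the triples condition $g \cdot s \in \bN$ decouples across summands. Consequently, base change along the diagonal embedding $i_\Delta\colon \Gr_{\PGL(N)} \to \prod \Gr_{\PGL(N)}$ yields
\[
\pi_*\DC_{\cR_{G,\bN}}[-2\dim\bN_\cO] \;\cong\; i_\Delta^!\bigl(\boxtimes_{k=1}^{b}\tilscA_{\mathrm{leg}} \;\boxtimes\; \boxtimes_{l=1}^{g}\scB_{\mathrm{loop}}\bigr).
\]
Applying \thmref{thm:ABG} to replace each $\tilscA_{\mathrm{leg}}$ by $\Areg$, and taking $G_\cO$-equivariant hypercohomology, we obtain
$\cM_C \cong \Spec H^*_{G_\cO}(\Gr_G,\,\Areg^{\otimes^! b} \otimes^! \scB_{\mathrm{loop}}^{\otimes^! g})$.

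Third, the conjecture will follow once we identify $\scB_{\mathrm{loop}}$ with the handle ring object $\scB$ used by Ginzburg-Kazhdan in \ref{cylinder G}. The expected characterization is that $\scB$ corresponds to the "trace" of the cylinder ring object (which in turn corresponds to $T^*G^\vee$), and its hypercohomology should recover the function ring on the adjoint quotient, matching the Coulomb branch of pure $\PGL(N)$ with one adjoint hypermultiplet.

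The main obstacle will be this last identification $\scB_{\mathrm{loop}} \cong \scB$. Unlike the leg case, where \thmref{thm:ABG} could be checked via the explicit realization of the nilpotent cone as a transversal slice (\cite{Lu-Green}), combined with character comparison using the Gelfand-Tsetlin integrable system and parity vanishing, there is no such direct geometric model available for the loop theory. A plausible approach is to establish a recursive relation: compactify one leg of an $S^2$ with $b+1$ punctures by gluing two of its punctures together, producing a torus with $b-1$ punctures on the topological side, and use functoriality of the gluing construction together with \thmref{thm:GK} to deduce that $\scB$ arises as the $!$-restriction of $\Areg \otimes^! \Areg$ to a diagonally embedded copy of $\Gr_{\PGL(N)}$ upon which diagonal Hamiltonian reduction is taken. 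A parallel computation on the gauge theory side, using the fact that gluing two legs creates an adjoint loop (this is a standard move in the physics literature on $3d$ mirror symmetry for Sicilian theories, see \cite[\S3(iii)]{2015arXiv150303676N}), should then complete the identification. Verifying that these two reduction procedures agree at the level of ring objects, rather than merely at the level of Coulomb branches, is where the real technical work lies.
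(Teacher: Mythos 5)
The paper does not prove this statement: it is stated and labeled as a \textbf{Conjecture}, and immediately after, the paper reduces it to precisely the identification you arrive at, namely that the handle object $\scB = \scA^2\tslash\Delta_{G^\vee}$ should be isomorphic, \emph{as a ring object}, to the pushforward object $\scA = \pi_*\DC_\cR[-2\dim\bN_\cO]$ for $(G,\bN) = (\PGL(N),\mathfrak{pgl}(N))$. Your first two steps (factorizing $\cR_{G,\bN}$ over $\Gr_{\PGL(N)}$ via fiber products, applying \ref{subsec:glue} and \ref{thm:ABG} to replace each leg object by $\Areg$) match the paper's reduction and are sound. So far, so good.

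The issue is that you present the last step as if it were a closable gap via ``recursive gluing'' plus a character comparison, but this does not in fact close anything. Recall that $\scB$ is \emph{defined} in \ref{cylinder G} as the diagonal Hamiltonian reduction of $\scA^2 = \Areg\otimes^!\Areg$. Your proposed move --- glue two punctures of $S^2$ with $b+1$ punctures to produce a torus with $b-1$ punctures, then use \ref{thm:GK} plus ``functoriality of gluing'' to produce $\scB$ as a diagonal reduction of $\Areg\otimes^!\Areg$ --- is a restatement of the definition of $\scB$, not an independent construction. And the parallel claim on the gauge-theory side, that gluing two legs at the quiver level creates an adjoint loop, is exactly the $3d$-mirror heuristic that the conjecture is trying to promote to a theorem: it is input, not output. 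You flag the gap yourself (``where the real technical work lies''), but then the proposal contains no candidate mechanism for the required ring-object isomorphism $\scB_{\mathrm{loop}}\cong\scB$. In the genuine leg case (\ref{thm:ABG}) the paper could lean on the nilpotent cone realization as a transversal slice, parity vanishing, and a Gelfand--Tsetlin character count; no analogue of that toolkit is offered for the handle.

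For calibration: what the paper \emph{does} verify is the weaker statement that the two candidate ring objects have the same affinization, namely $\Spec H^*_{G_\cO}(\Gr_G,\scB) \cong (T^\vee\times\ft)/W$ (via \ref{Prop:iso}, Losev's multiplicative analogue of the Ginzburg isomorphism), which coincides with the Coulomb branch for the adjoint representation. That is an equality of affine Poisson varieties, not of ring objects in $D_{G}(\Gr_G)$, and it is precisely the gap between the two that keeps the statement a Conjecture. Your proposal should be framed accordingly: the reduction is correct and the weaker spectral-level agreement is available, but a proof of $\scB_{\mathrm{loop}}\cong\scB$ as ring objects remains open, and the suggested recursion does not supply one.
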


By \ref{subsec:glue} it is enough to show that the complex
$\scB = \scB^{g=1}$ introduced in \ref{cylinder G} is isomorphic to
the object $\scA = \pi_*\DC_\cR[-2\dim\bN_\cO]$ associated with
$(G,\bN) = (\PGL(N),\mathfrak{pgl}(N))$. We conjecture that this is
true for general $G$ and its adjoint representation $\bN=\mathfrak{g}$.
Note that we prove that
$\Spec H^*_{G_\cO}(\Gr_G,\scB) = (T^\vee\times\ft)/W$ (using Losev's
result in~\ref{subsec:Loseu}), which coincides with the Coulomb branch for the
adjoint representation.  (See
\ref{subsec:prev}\oldref{Coulomb2-subsub:adjoint}.)

The remainder of this section is as follows.
In the first five subsections, we study examples of $\cM_C$, in
particular check two cases $\SL(2)$, $\SL(3)$ above. These are
basically applications of \cite{blowup} and \ref{sec:group_action},
and we will not use the sheaf $\scA$.
In the subsequent five subsections, we show the gluing property and
also $W^{b=2} = T^*G^\vee$. They were shown in \cite{GK}, but we give
proofs for completeness. They are direct consequences of
\cite{MR2422266}.
In \ref{subsec:glu_ham,subsec:gluHiggs} we explain similarities
between the gluing property and hamiltonian reduction.

Let us use the following notation as in \cite{blowup}. Let $Q$ be a
quiver with sets $I$, $\Omega$ of vertices and arrows respectively. We
take an $I$-graded vector space $V = \bigoplus V_i$ with dimension
vector $\alpha = (\dim V_i)_{i\in I}$. We set
$\GL(V) = \prod \GL(V_i)$,
$\bN \equiv \bN^\alpha = \bigoplus \Hom(V_i,V_j)$, where the sum is
over the arrows $i\xrightarrow{h} j\in\Omega$. We also take the
diagonal central subgroup $Z=\CC^\times\subset\GL(V)$ and set
$\PGL(V) = \GL(V)/Z$. We consider $\cR\equiv \cR_{\PGL(V),\bN}$ and
$\mathcal M_C = \Spec(H^{\PGL(V)_\cO}_*(\cR_{\PGL(V),\bN}))$.

\begin{Remark}
  Consider the regular sheaf $\Areg$ on $\Gr_G$. In type $A$, it
  arises as the ring object associated with a quiver gauge theory by
  \thmref{thm:ABG}. Using $\grpSp/\SO$ quiver as in
  \cite[App.A.2]{2015arXiv150303676N}, we can conjecture that $\Areg$
  for classical groups is constructed in a similar way, once we can
  generalize our definition to the case when $\bM$ is not necessarily
  of cotangent type. For exceptional groups, we do not expect that
  $\Areg$ appears in this way, as argued in
  \cite[\S3(i)]{2015arXiv150303676N}.
  Nevertheless it is expected that $\Areg$ arises from the $3d$
  $\mathcal N=4$ quantum field theory $T(G)$, which was introduced in
  Gaiotto-Witten \cite{MR2610576}. This theory is not a usual gauge
  theory nor a lagrangian theory for an exceptional group, hence is
  difficult to understand from a mathematical point of view. But it
  has a $G\times G^\vee$-symmetry, and its Higgs/Coulomb branches are
  nilpotent cones $\mathcal N$ and $\mathcal N^\vee$ of $G$ and
  $G^\vee$ respectively. The Sicilian theory $S_{G^\vee}(g,b)$
  associated with $b$ punctured genus $g$ Riemann surface $C$
  considered above is constructed from $T(G)$ by `gauging' 
  quantum field theories up to $3d$ mirror:
  \begin{equation}\label{eq:fund}
    S_{G^\vee}({g,b}) \overset{\text{3d mirror}}{\longleftrightarrow}
    T[G]^{b}\times\Hyp(\fg\oplus\fg^*)^g 
    \tslabar G_{\mathrm{diag}},
  \end{equation}
  where we use the notation $\tslabar$ for the gauging in
  \cite{Tach-review}. (See also \cite{2017arXiv171209456T}.) This
  observation was given in \cite{Benini:2010uu}. Note that we ignore
  the parameter $\tau$ in \cite[\S2.6]{Tach-review}. The deformation
  parameter, which corresponds to the complex structure of $C$, is not
  relevant to Higgs branches as complex symplectic varieties. Hence we
  can safely write $S_{G^\vee}(g,b)$ instead of $S_{G^\vee}(C)$, and
  understand that the Higgs branch of $S_{G^\vee}(g,b)$ is the Coulomb
  branch of the right hand side.
  A similarity between \eqref{eq:fund} and the definition
  $W^{g,b} = \Spec H^*_{G_\CO}(\Gr_G,\scA^b\otimes^!\scB^g)$ is
  clear. We identify $T[G]$ with $\scA$, $\Hyp(\fg\oplus\fg^*)$ with
  $\scB$, and $\tslabar G_{\mathrm{diag}}$ with taking
  $H^*_{G_\cO}(\Gr_G,\bullet)$ after the $!$-restriction to the
  diagonal subgroup. See \ref{subsec:glu_ham} for a further discussion.
  We thank Davide Gaiotto and Yuji Tachikawa for this remark.
\end{Remark}

\subsection{Cylinder}\label{subsec:cylinder}

Consider the two legs star shaped quiver gauge theory instead of three
legs in~\ref{fig:star}. It is a quiver gauge theory of type $A_{2N-1}$ with
$\dim V = (1,2,\dots,N-1,N,N-1,\dots,2,1)$. We first consider the
Coulomb branch for $\GL(V)$. By \subsecref{Co_bra}, $\mathcal
M_C(\GL(V),\bN)$ is the moduli space $\oZ^\alpha_{\PGL(2N)}$ of based maps from
$\CP^1$ to a flag variety of type $A_{2N-1}$ with degree $\alpha =
\dim V$. By \cite[Th.~7.2]{MR2443330}, it is isomorphic to
$T^*\GL(N)$.
\begin{NB}
  At this stage, there is no difference between $\SL(2N)$ and
  $\PGL(2N)$, but the stabilizer is for $\PGL(2N)$.

  Note also the maximal torus is
  $\pi_1(\GL(V))^\wedge = (\CC^\times)^{Q_0}$, which is naturally
  identified with the weight lattice of $\PGL(2N)$, when we consider
  the action on $\oW^\alpha_0$.
\end{NB}%

Note that $\alpha=2\omega_N$.  By~\ref{Cartan_grading}, we have an
isomorphism $\oZ^\alpha_{\PGL(2N)}\iso S_\alpha\cap\oW^\alpha_0$ and
the natural action of
$\on{Stab}_{\PGL(2N)}(\alpha)=\PGL(N,N):=
\GL(N)\times\GL(N)/\CC^\times$ on $\oZ^\alpha_{\SL(2N)}$, where
$\CC^\times$ is the diagonal central subgroup of $\GL(N)\times\GL(N)$.
It coincides with the natural action of $\GL(N)\times\GL(N)$ on
$T^*\GL(N)$ through the quotient homomorphism
$\GL(N)\times\GL(N)\twoheadrightarrow\PGL(N,N)$. By~\ref{ex:stab} this
action coincides with the one given in~\ref{sec:group_action}.
More precisely the $\PGL(N,N)$ action on $\oW^\alpha_0$ coincides with
the one given in \ref{sec:group_action}, and the embedding
$S_\alpha\cap\oW^\alpha_0\to \oW^\alpha_0$ is equivariant for both
actions, as it is given by \ref{open_piece} as Coulomb branches.

By \ref{Ober} $H^{\GL(V)_\cO}_*(\cR_{\GL(V),\bN})\twoheadrightarrow
H^{\GL(V)_\cO}_*(\cR_{\PGL(V),\bN})$ is nothing but the restriction to
the level set $F_\alpha^{-1}(1)$, where $F_\alpha$ is the boundary
function (see \subsecref{zastava}). In this particular case, we have
$F_\alpha = c\det$ for $c\in\CC^\times$: all the invertible regular
function on $\oZ^\alpha$ are of the form $c F_\alpha^k$, $k\in\ZZ$,
$c\in\CC^\times$ \cite[Lemma~5.4]{bdf}. Now by degree reasons, $\det =
c F_\alpha$.
\begin{NB}
    Misha, please check. Is it possible to remove the ambiguity of $c$
    ?
\end{NB}%
Therefore $H^{\GL(V)_\cO}_*(\cR_{\PGL(V),\bN})) \cong
\gl(N)\times\SL(N)$.  Moreover,
$H^{\PGL(V)_\cO}_*(\cR_{\PGL(V),\bN})) \to
H^{\GL(V)_\cO}_*(\cR_{\PGL(V),\bN}))$ is nothing but the projection
$\gl(N)\times\SL(N)\to \operatorname{Lie}\PGL(N)\times\SL(N)$.
\begin{NB}
    $H^*_{\PGL(V)}(\mathrm{pt}) = \CC[\operatorname{Lie}\PGL(V)]$,
    $H^*_{\GL(V)}(\mathrm{pt}) = \CC[\operatorname{Lie}\GL(V)]$. The
    projection is giving by the pull-back with respect to the
    projection $\operatorname{Lie}\GL(V)\twoheadrightarrow
    \operatorname{Lie}\PGL(V)$.
  \end{NB}%
  Identifying $\operatorname{Lie}\PGL(N)$ with $\algsl(N)^*$ via the
  Killing form, we get $\mathcal M_C(\PGL(V),\bN) \cong
  T^*\SL(N)$.
  This is the symplectic variety associated with a cylinder as expected.

  Let us check how the action in \ref{sec:group_action} is affected by
  the replacement $\GL(V)\to\PGL(V)$. The semisimple Lie algebra
  $\fl^{\mathrm{bal}}_{\mathrm{ss}}$ remains the same: the variety
  $\cR_{\PGL(V),\bN}$ is obtained from $\cR_{\GL(V),\bN}$ by
  identifying isomorphic connected components. Therefore the
  construction of \ref{sec:quantization} applies. On the other hand
  $\pi_1(\GL(V))\cong \bigoplus_{i\in Q_0} \ZZ \alpha_i$ is replaced
  by
  $\pi_1(\PGL(V))\cong \pi_1(\GL(V))/ \ZZ(\sum_{i\in Q_0} \dim
  V_i\, \alpha_i)$. The root datum is
  $R^{\mathrm{bal}}\subset \pi_1(\PGL(V))$,
  $R^{\mathrm{bal} \vee}\subset\pi_1(\PGL(V))^\vee$.
  \begin{NB}
    We identify $\pi_1(\PGL(V))^\vee$ with
    $\{ \lambda \in \Hom(\bigoplus_{i\in Q_0} \ZZ\alpha_i, \ZZ) \mid
    \langle\lambda,\sum_{i\in Q_0} \dim V_i\, \alpha_i\rangle = 0\}$.
    We have
    $R^{\mathrm{bal}\vee} = \{ \alpha_1^\vee, \dots,
    \alpha_{N-1}^\vee, \alpha_{N+1}^\vee,\dots,
    \alpha_{2N-1}^\vee\}$. Since the middle one is omitted, we still
    have $R^{\mathrm{bal}\vee} \subset \pi_1(\PGL(V))^\vee$.
  \end{NB}%
  Thus $\PGL(N,N)$ is replaced by its subgroup
  $\PGL(N,N)' \defeq \{[g_1,g_2]\mid \det g_1 = \det g_2\}$.
  \begin{NB}
    Really ? At least the action on $T^*\SL(N)$ by
    $(g_1,g_2)\cdot (g,\beta) = (g_1 g g_2^{-1},
    \operatorname{Ad}(g_1)\beta)$ is well-defined.
  \end{NB}%
  We have $\SL(N)\times\SL(N)\twoheadrightarrow \PGL(N,N)'$
  \begin{NB}
    as $[g_1,g_2] = [\det g_1^{-1/N} g_1, \det g_1^{-1/N} g_2]
    = [\det g_1^{-1/N}g_1, \det g_2^{-1/N}g_2]$
  \end{NB}%
  with kernel $\ZZ/N\ZZ$, the diagonal central subgroup. The standard
  action on $T^*\SL(N)$ coincides with the one given in
  \ref{sec:group_action}.

  On the other hand, if we replace the central $\GL(N)$ by $\SL(N)$,
  the corresponding Coulomb branch is the hamiltonian reduction of
  $T^*\GL(N)$ with respect to the $\CC^\times$-action corresponding to
  $\pi_1(\GL(N))\cong\ZZ$. (See \ref{prop:reduction}.) In this case
  $\CC^\times$-action is the scalar multiplication on $T^*\GL(N)$
  (\ref{cartan_grading}), hence the reduction is $T^*\PGL(N)$ as
  expected.

\subsection{Disk}

The variety for the disk is calculated as for the cylinder. We
consider a quiver gauge theory of type $A$ with
$\dim V = (1,2,\dots,N)$. As we remarked in the proof of \ref{Areg},
the Coulomb branch is $\oZ^\alpha_{\PGL(N+1)}\simeq\GL(N)\times\CC^N$, where
$\CC^N$ is identified with the Kostant slice for $\GL(N)$, and
$\alpha=(N+1)\omega_N$.
\begin{NB}
    The action of $\GL(N)$ should be identified with the standard
    one. We should deduce it from \ref{ex:stab}, and Misha will write
    down the explanation.

    Again I was careless. We have $\SL(N)$ action from
    \ref{sec:group_action}. However we have
    $\pi_1(\GL(V))^\wedge = (\CC^\times)^N$, which is strictly larger
    than the maximal torus of $\SL(N)$.
\end{NB}%
By~\ref{Cartan_grading}, we have an isomorphism
$\oZ^\alpha_{\PGL(N+1)}\iso S_\alpha\cap\oW^\alpha_0$ and the natural action
of $\on{Stab}_{\PGL(N+1)}(\alpha)=\GL(N)$ on $\oZ^\alpha_{\PGL(N+1)}$ coinciding with
the natural action of $\GL(N)$ on $\GL(N)\times\CC^N$ (trivial on $\CC^N$ and by
left shifts on $\GL(N)$). By~\ref{ex:stab} this action coincides with the one
given in~\ref{sec:group_action}.

The modification to cases $\SL(N)$, $\PGL(N)$ are similar to the above.

\subsection{$S^2$ with three punctures for $\SL(2)$}
\label{D4}

We next consider the Higgs branch of the Sicilian theory of type
$\SL(2)$ associated with $S^2$ with three punctures. The mirror quiver
gauge theory is of type $D_4$.

We consider the $D_4$ quiver with the central vertex $1$ and other
vertices $2,3,4$. We orient the edges from the central vertex.
We take $V_1=\BC^2,\ V_2=V_3=V_4=\BC$. The diagonal
central subgroup $Z=\BC^\times\subset\GL(V)$ acts trivially on
$\bN=\bigoplus_{i=2}^4\Hom(V_1,V_i)$, so the action of
$\GL(V)$ factors through $\PGL(V):=\GL(V)/Z$. We will prove
$\CM_C(\PGL(V),\bN)\simeq\BA^8$.

According to~\ref{pestun}, $\cM_C(\GL(V),\bN)\simeq\oZ^\alpha$, the moduli
space of degree $\alpha$ based maps from $\BP^1$ to the flag variety $\CB$
of the simply connected group $G=\operatorname{Spin}(8)$ of type $D_4$.
Here $\alpha=2\alpha_1+\alpha_2+\alpha_3+\alpha_4$ is the highest coroot.
Note that $\alpha=\omega_1$ is a fundamental coweight.
We also consider the transversal slice
$s^\alpha_0\colon \oW^\alpha_{G,0}\to Z^\alpha$ (see~\ref{nondom}; note that
$-w_0=\on{Id}$ for our $G$). It is the moduli space of the data
$(\scP_{\on{triv}}\stackrel{\sigma}{\longrightarrow}\scP)$ where $\scP_{\on{triv}}$
is the trivialized $G$-bundle on $\BP^1$, and $\sigma$ is an isomorphism
on $\BP^1\setminus\{0\}$ with a trivial $G$-bundle $\scP$ possessing a
degree $\alpha$ pole at $0\in\BP^1$. We consider an open subset
$U\subset\oW^\alpha_{G,0}$ formed by the data
$(\scP_{\on{triv}}\stackrel{\sigma}{\longrightarrow}\scP)$ such that
the transformation of the (unique) degree 0 complete flag in $\scP$ with
value $B_-$ at $\infty\in\BP^1$ viewed as a generalized $B$-structure in
$\scP_{\on{triv}}$ acquires no defect at $0\in\BP^1$. We have
$s^\alpha_0\colon U\iso\oZ^\alpha$. We also have another open subset
$U'\subset\oW^\alpha_{G,0}$ formed by the data
$(\scP_{\on{triv}}\stackrel{\sigma}{\longrightarrow}\scP)$ such that
the transformation of the (unique) degree 0 complete flag in $\scP_{\on{triv}}$
with value $B_-$ at $\infty\in\BP^1$ viewed as a generalized $B$-structure in
$\scP$ acquires no defect at $0\in\BP^1$. This open subset $U'$ is nothing
but the intersection of $\oW^\alpha_{G,0}$ with the semiinfinite orbit
$T_{-\alpha}\subset\Gr_G$. Since the trivialization of $\scP$ at $\infty\in\BP^1$
(arising from $\sigma$) uniquely extends to the trivialization of $\scP$
over the whole of $\BP^1$, we obtain an involution
$\iota\colon\oW^\alpha_{G,0}\iso\oW^\alpha_{G,0}$ reversing the roles of
$\scP$ and $\scP_{\on{triv}}$ and replacing $\sigma$ by $\sigma^{-1}$.
We have $\iota\colon U\iso U'$. Thus we obtain an isomorphism
$s^\alpha_0\circ\iota\colon U'\iso\oZ^\alpha$.

Since $\alpha$ is the highest coroot,
$\oW^\alpha_{G,0}$ is isomorphic to the minimal nilpotent orbit closure
$\CN_{\min}=0\sqcup\BO_{\on{min}}\subset\fg$,
see~\cite[4.5.12, page 182]{Beilinson-Drinfeld}
or~\cite[Lemma~2.10]{mov} for a published account.
The projectivization of $\CN_{\min}$ is
the partial flag variety $\CB_\alpha=G/P_{234}$: the quotient with respect
to a submaximal parabolic subgroup. Thus we have a $\BC^\times$-bundle
$p\colon\BO_{\on{min}}\to\CB_\alpha$.
The big Bruhat cell (the open $B_-$-orbit) $C\subset\CB_\alpha$ is the
free orbit of the unipotent radical $U^-_{234}$ of $P^-_{234}$. Via the
exponential map, $U^-_{234}\simeq\fu^-_{234}$, the nilpotent radical of the
Lie algebra of $P^-_{234}$. For the Levi subgroup $L_{234}\subset P^-_{234}$
we have $[L_{234},L_{234}]\simeq\SL(2)_2\times\SL(2)_3\times\SL(2)_4$, and
$\fu^-_{234}$ as a $[L_{234},L_{234}]$-module is isomorphic to
$\BC^2_2\otimes\BC^2_3\otimes\BC^2_4\oplus
\BC^1_2\otimes\BC^1_3\otimes\BC^1_4$.
Note that the center $Z(\SL(2)_2\times\SL(2)_3\times\SL(2)_4)=(\BZ/2\BZ)^3$
has a natural projection onto $\BZ/2\BZ$ (the sum of coordinates). The kernel
$K$ of this projection, as a subgroup of $\on{Spin}(8)$, coincides with the
center $Z(\on{Spin}(8))$. The action of $\SL(2)_2\times\SL(2)_3\times\SL(2)_4$
on $\fu^-_{234}$ factors through the action of
$\ol{L}_{234}:=(\SL(2)_2\times\SL(2)_3\times\SL(2)_4)/K$.

Finally, we have $U'=p^{-1}(C)$. Thus we obtain a projection
(a $\BC^\times$-bundle)
$p\circ(s^\alpha_0\circ\iota)^{-1}\colon\oZ^\alpha\to C$. This action of
$\BC^\times$ is nothing but the composition of the natural $T$-action
(Cartan torus $T=B\cap B_-$) with the cocharacter
$\alpha_1\colon \BC^\times\to T$.
The boundary equation $F_\alpha\colon \oZ^\alpha\to\BC^\times$ has weight 1
with respect to
$\BC^\times\stackrel{\alpha_1}{\longrightarrow}T$~\cite[Proposition~4.4]{bf14}.
It follows that
$(p\circ(s^\alpha_0\circ\iota)^{-1},F_\alpha)\colon\oZ^\alpha\iso C\times\BC^\times$.
The action of $\ol{L}_{234}$ on $\oZ^\alpha=C\times\BC^\times$ is via the above
action on $C$. Note that $\ol{L}_{234}$ is nothing but
$[L^{\on{bal}},L^{\on{bal}}]\subset\on{PSO}(8)$ of~\ref{sec:group_action}, and the
action of $\ol{L}_{234}$ on $\CM_C(\GL(V),\bN)$ coincides with the action
constructed in~\ref{sec:group_action} by~\ref{ex:stab}.

Now the surjection 
\begin{equation*}\BC[\oZ^\alpha]=\BC[\CM_C(\GL(V),\bN)]=
H_*^{\GL(V)_\CO}(\CR_{\GL(V),\bN})
\twoheadrightarrow H_*^{\GL(V)_\CO}(\CR_{\PGL(V),\bN})
\end{equation*} 
is nothing but the
restriction to the level set $F_\alpha^{-1}(1)$ (see~\ref{Ober}), hence
\linebreak[4]$\Spec H_*^{\GL(V)_\CO}(\CR_{\PGL(V),\bN})\simeq C$.
Furthermore, the embedding 
\begin{equation*}\BC[\CM_C(\PGL(V),\bN)]=
H_*^{\PGL(V)_\CO}(\CR_{\PGL(V),\bN})\hookrightarrow H_*^{\GL(V)_\CO}(\CR_{\PGL(V),\bN})=
\BC[F_\alpha^{-1}(1)]
\end{equation*} 
is nothing but the embedding of the ring of functions
invariant with respect to the translations action of $\BG_a$ on $\oZ^\alpha$.
Here we view $\BG_a$ as a subgroup of automorphisms of $\BP^1$ preserving
$\infty\in\BP^1$; its action on $\oZ^\alpha$ preserves the boundary equation
$F^\alpha$ and its level set $F_\alpha^{-1}(1)$. In terms of the
identification $F_\alpha^{-1}(1)\simeq C\simeq\fu^-_{234}\simeq
\BC^2_2\otimes\BC^2_3\otimes\BC^2_4\oplus\BC$, the action of $\BG_a$ is
nothing but the action of the last summand $\BC$, and hence $\CM_C(\PGL(V),\bN)=
F_\alpha^{-1}(1)/\BG_a\simeq\BC^2_2\otimes\BC^2_3\otimes\BC^2_4$.

% \begin{NB}
%     The action of $\SL(2)^3$ should be identified with the standard
%     one. We should deduce it from \ref{ex:stab}, and Misha will write
%     down the explanation.

%     May 28: Misha, I am not sure for the following. The trick for the
%     next subsection gives $L^{\mathrm{bal}}$ as a stabilizer subgroup
%     in $\PGL(4)$ as far as my \ref{ex:stab} is correct.
% \end{NB}%

The above action of $\ol{L}_{234}=[L^{\on{bal}},L^{\on{bal}}]$ on $\CM_C(\GL(V),\bN)$
induces its action on $\CM_C(\PGL(V),\bN)$.
One can also see that $\ol{L}_{234}$ is the reductive group
corresponding to the root datum
$R^{\mathrm{bal}}\subset\pi_1(\PGL(V))$,
$R^{\mathrm{bal}\vee}\subset\pi_1(\PGL(V))^\vee$ via
$\pi_1(\GL(V))\twoheadrightarrow\pi_1(\PGL(V))$ as in
\ref{subsec:cylinder}.

\begin{Remark}
    Let us give another argument, which the third named author was taught by
    Amihay Hanany.

    Let us consider functions $E_1^{(1)}$, $F_1^{(1)}$ for the middle
    vertex $1$ by \ref{rem:deghalf}.
    \begin{NB}
        Recall coweights are $(1,0)$, $(0,-1)$ for the vertex $1$, and
        $0$ otherwise.
    \end{NB}%
    Since $\langle \alpha,\alpha_1\rangle = 1$, we have the action of
    $\BG_a^2$ by integrating hamiltonian vector fields
    $H_{E_1^{(1)}}$, $H_{F_1^{(1)}}$.
    We combine it with the action of
    $\SL(2)_2\times\SL(2)_3\times\SL(2)_4$. Let us consider the Lie
    subalgebra of $\CC[\cM_C(\PGL(V),\bN)]$ generated by $E_i^{(1)}$,
    $F_i^{(1)}$ ($i=1,2,3,4$). Viewing $(\PGL(V),\bN)$ as a framed
    quiver gauge theory of type $A_3$ with $\dim V =
    \begin{smallmatrix}
        1 & 2 & 1
    \end{smallmatrix}$,
    $\dim W =
    \begin{smallmatrix}
        0 & 1 & 0
    \end{smallmatrix}$,
    we see that $\mu = \dim W - C\dim V$ satisfies the condition
    $\langle \mu,\alpha\rangle \ge -1$ for any positive root
    $\alpha$. Hence elements in the Lie subalgebra have either degree
    $1$, $1/2$, or $0$, and the degree $0$ part consists of constant
    functions by \ref{rem:Delta-grading}. Since the Poisson bracket is
    of degree $-1$, $\{f, g\}$ is a constant if $f$, $g$ are of degree
    $1/2$.

    Commutator relations in \secref{sec:quantization} imply that
    $E_1^{(1)}$ (resp.\ $F_1^{(1)}$) is a lowest (resp.\ highest)
    weight vector in the tensor product
    $\CC^2_2\otimes\CC^2_3\otimes\CC^2_4$ of vector representations.
    \begin{NB}
        $\{ F_2^{(1)}, E_1^{(1)}\} = 0$,
        $\{ H_2^{(1)}, E_1^{(1)}\} = -\frac12 (H_2^{(0)} E_1^{(1)} +
        E_1^{(1)} H_2^{(0)}) = - E_1^{(1)}$ and
        $\{ E_2^{(1)}, F_1^{(1)}\} = 0$,
        $\{H_2^{(1)}, F_1^{(1)}\} = \frac12 (H_2^{(0)} F_1^{(1)} +
        F_1^{(1)} H_2^{(0)}) = F_1^{(1)}$.
    \end{NB}%
    \begin{NB}
        Note also that $F_1^{(1)}$ is a highest weight, as $
\begin{smallmatrix}
       & 1 & \\
    1 & (1,0) & 1
\end{smallmatrix}
= \begin{smallmatrix}
       & 0 & \\
    0 & (0,-1) & 0
\end{smallmatrix}
$ as coweight of $\PGL(V)$.
\end{NB}%
Hence we have a factorization
$\CM_C(\PGL(V),\bN)\cong \BA^8\times \cM_C'$ by \ref{rem:deghalf}.
\begin{NB}
    Note that
    $\{ \varphi^*(E_1^{(1)}), \varphi^*(F_1^{(1)})\} = \varphi^* 1 =
    1$
    for $\varphi\in \SL(2)_2\times\SL(2)_3\times\SL(2)_4$. Therefore
    we have a non-degenerate pairing on the degree $1/2$ space.
\end{NB}%
But $\cM'_C$ must be a point as $\cM_C(\PGL(V),\bN)$ is
$8$-dimensional.  (Both $E_1^{(1)}$ and $F_1^{(1)}$ live in the
\emph{same} representation, as $\cM_C(\PGL(V),\bN)$ would have a
factor $\BA^{16}$ otherwise.)

The same argument shows that the Coulomb branch $\cM_C(\PGL(V),\bN)$ for
$\dim V =
\begin{smallmatrix}
    &&&& 1 &&&&
    \\
    1 & 2 & \dots & N-1 & N & N-1 & \dots & 2 & 1
\end{smallmatrix}
$ is $\Hom(\CC^N_l, \CC_r^N)\oplus \Hom(\CC^N_r,\CC^N_l)$, where we
have an $\SL(N)_l\times\SL(N)_r$-action from the balanced vertices in
the left and right legs, and $\CC^N_l$, $\CC^N_r$ are its vector
representations. See \cite[(4.6)]{MR2985331}.
\end{Remark}

\subsection{$S^2$ with three punctures for $\SL(3)$}
\label{subsec:E6}

We next consider type $\SL(3)$. The mirror quiver gauge theory is of
type affine $E_6$. We start with a simple observation. Let us denote
by $0$ (resp.\ $6$) a special vertex (resp.\ a vertex adjacent to $0$)
of the affine quiver of type $E_6$. This choice breaks $\mathfrak S_3$
symmetry of the quiver. We have an isomorphism
$\prod_{i\neq 0}\GL(V_i) \cong \PGL(V) = \GL(V)/Z$.  Therefore we can
view $(\PGL(V),\bN)$ as a framed quiver gauge theory of finite type
$E_6$ with $\dim V =
\begin{smallmatrix}
    &   & 2 & & \\
    1 & 2 & 3 & 2 & 1
\end{smallmatrix}
$, $\dim W =
\begin{smallmatrix}
    && 1 && \\
    0 & 0 & 0 & 0 & 0
\end{smallmatrix}$.
Therefore its Coulomb branch is $\oW^{\varpi_6}_{G,0}$ by
\ref{Coulomb_quivar}, where $G$ is the group $E_6$ of adjoint type.
By~\cite[4.5.12, page 182]{Beilinson-Drinfeld}
(see~\cite[Lemma~2.10]{mov} for a published account),
this is isomorphic to the closure $\CN_{\min}$ of the minimal nilpotent orbit
$\BO_{\min}$. We have the action of $G$ by \ref{prop:Integrable},
which is identified with the standard one by \ref{ex:stab}.

The action of $\SL(3)^2$ corresponding to two legs not containing the
chosen special vertex $0$ is coming from the standard inclusion
$\SL(3)^2\subset E_6$. The remaining $\SL(3)$ action for the leg
containing $0$ is given as follows.

First let us note that the Lie algebra $\fl$ of degree $1$ elements in
$\CC[\cM_C]$ is $\mathfrak{e}_6$, as we already know
$\cM_C = \CN_{\min}$.
\begin{NB}
  We have $\cM_C\subset \mathfrak{e}_6^*$ by the moment map. There are
  clearly degree $1$ functions on $\mathfrak{e}_6^*$ are just linear
  ones.

  Note also, all vertices are balanced, hence
  $\fl^{\mathrm{bal}} = \mathfrak{e}_6 = \fl$.
\end{NB}%

Returning back to the original gauge group $\PGL(V) = \GL(V)/Z$, we
have degree $1$ elements $E_0^{(1)}$, $F_0^{(1)}$, $H_0^{(1)}\in\fl$
corresponding to the special vertex $0$ by \ref{lem:balanc-vert-quiv}.
The variety of triples $\cR_{\PGL(V),\bN}$ is obtained from
$\cR_{\GL(V),\bN}$ by identifying isomorphic connected
components. Therefore the construction of \ref{sec:quantization}
applies.
The computation of Poisson brackets $\{ H_i^{(1)}, E_0^{(1)}\}$,
$\{ H_i^{(1)}, F_0^{(1)}\}$ remains unchanged by the replacement
$\GL(V)\to\PGL(V)$, hence we conclude that $E_0^{(1)}$, $F_0^{(1)}$
are root vectors corresponding to the highest weight of
$\fl=\mathfrak{e}_6$. It also follows that $E_0^{(1)}$, $F_0^{(1)}$
together with $E_6^{(1)}$, $F_6^{(1)}$ generate an additional
$\algsl(3)$, and $\SL(3)$.

We have the $\mathfrak S_3$-action on $\cM_C$ induced by permutation
of three legs. From the above consideration, it is clear that it
corresponds to $\mathfrak S_3$ of automorphisms of $\mathfrak e_6$
exchanging root subspaces corresponging to the highest weight and two
remaining special vertices. (See \cite[Th.~8.6]{Kac} for the detail of
the construction of automorphisms.)

\subsection{Torus with one puncture for $\SL(3)$}

We consider the Higgs branch of the Sicilian theory of type $\SL(3)$
associated with a torus with one puncture.
According to \ref{conj:genGK} the mirror quiver gauge theory is
$1\to 2\to 3\righttoleftarrow$, where numbers are dimensions (and we
use them also for indices of vertices). Note that we have an edge loop
at the vertex $3$. Let us denote the Coulomb branch of this quiver
gauge theory by $\cM$. The following result is informed to the
third-named author by Amihay Hanany. (It is based on an earlier
observation in \cite[\S2.1]{2012JHEP...05..145G},
\cite[(3.3.2)]{Cremonesi:2014vla}.)

\begin{Proposition}
\label{prop:g2}
$\cM$ is isomorphic to the subregular orbit closure
$\ol{\mathbb O}_{\on{subreg}}\subset\fg_2$.
\end{Proposition}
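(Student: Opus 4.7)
The strategy has three parts: match Hilbert series via the monopole formula, construct a hidden $G_2$-action using \ref{sec:group_action}, and conclude by dimension and equivariance.

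First, I would apply the modified monopole formula \eqref{eq:modified} to $(G,\bN)$ with $G=\GL(V)/Z$, $\dim V=(1,2,3)$, and $\bN=\Hom(V_1,V_2)\oplus\Hom(V_2,V_3)\oplus\Hom(V_3,V_3)$. The key computation is that the loop at vertex $3$ contributes the weights $\epsilon^{(3)}_a-\epsilon^{(3)}_b$ to $\bN$, whose half-sums in $2\Delta(\lambda)$ satisfy
\[
\tfrac{1}{2}\sum_{a,b}|\lambda_3^{(a)}-\lambda_3^{(b)}|
=\sum_{a<b}|\lambda_3^{(a)}-\lambda_3^{(b)}|,
\]
and therefore exactly cancel the $\GL(V_3)$ positive-root contribution in $P_{\GL(V)}(t;\lambda)$. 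After this cancellation the $\GL(V_3)$-summation degenerates into an unweighted sum over dominant cocharacters. The remaining expression should be matched with the character of $\BC[\ol{\mathbb O}_{\on{subreg}}]\subset\BC[\fg_2]$, which is computable via Kostant's theorem together with the Brylinski--Kostant filtration (or equivalently via the $t$-analog of weight multiplicities for $\fg_2$). In particular one verifies $\dim\cM=10=\dim\ol{\mathbb O}_{\on{subreg}}$.

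Next, construct a $G_2$-action on $\cM$. One checks $\mu_1=\mu_2=0$ (the loop sits at vertex $3$ and does not affect these), so vertices $1$ and $2$ are balanced in the sense of \ref{lem:balanc-vert-quiv} and form an $A_2$ subdiagram. By \ref{prop:Integrable} this yields an $\SL(3)$-action on $\cM$. The short-root generators of $\fg_2$ should arise from degree-$1$ dressed monopole operators attached to the minuscule cocharacters of $\GL(V_3)$: six such operators organise into two $\SL(3)$-triplets transforming as $\mathbf{3}\oplus\bar{\mathbf{3}}$. Since $\fg_2=\algsl(3)\oplus\mathbf{3}\oplus\bar{\mathbf{3}}$ as an $\algsl(3)$-module, computing their Poisson brackets using \ref{sec:quantization} and verifying that they close onto the $\fg_2$ bracket should realise the full action. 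The resulting $G_2$-equivariant moment map $\cM\to\fg_2^*$ then has a non-zero $G_2$-invariant closed image of dimension $10$; combined with the Hilbert-series match of the first step, this forces the image to equal $\ol{\mathbb O}_{\on{subreg}}$ and the map to be an isomorphism.

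\textbf{Main obstacle.} The delicate step is producing the six short-root generators explicitly and verifying the $\fg_2$ Serre relations from the Poisson brackets, since these operators are not of the balanced-vertex type covered by \ref{lem:balanc-vert-quiv} and involve dressing by the adjoint matter from the loop. A cleaner alternative, if the explicit construction proves awkward, is to bypass step two altogether: invoke Namikawa's uniqueness theorem for affine conic symplectic singularities to deduce $\cM\simeq\ol{\mathbb O}_{\on{subreg}}$ purely from the matching of Hilbert series and Poisson gradings established in the first step, since $\ol{\mathbb O}_{\on{subreg}}\subset\fg_2$ is rigid among symplectic singularities of its invariants.
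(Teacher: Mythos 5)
Your plan has a genuine gap at its crux, and the proposed shortcut around it is not available.

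The central difficulty is not the construction of a $\fg_2$-action (the paper does this more directly than you suggest: $E_3^{(1)}$, $F_3^{(1)}$ are of degree $1$ by \eqref{eq:Deltadeg} even though vertex $3$ is not balanced, and a single explicit Poisson-bracket computation $\{E_3^{(1)},F_3^{(1)}\}=4H_2^{(1)}+2H_1^{(1)}$, as in \cite[Lemma~6.8]{2016arXiv160800875K}, pins down the $\fg_2$ bracket — the remaining short-root vectors are produced by the $\algsl(3)$-action rather than built as six separate dressed monopoles). The difficulty is establishing that the resulting comoment map $\operatorname{Sym}(\fg_2)\to\BC[\cM]$ is \emph{surjective}, equivalently that $\cM\to\fg_2^*$ is a closed embedding, equivalently that $\BC[\cM]$ is generated in degree $1$. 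You assert without argument that the image is $10$-dimensional, but a Hilbert-series match of $\BC[\cM]$ with $\BC[\ol{\mathbb O}_{\on{subreg}}]$ does not deliver this: it only shows $\fg_2=\BC[\cM]_{\deg=1}$ and does not rule out that the subalgebra generated by $\fg_2$ is properly contained in $\BC[\cM]$, in which case the image of the moment map could have smaller dimension. The paper's mechanism for excluding this is essential and has no analog in your proposal: one writes down the operators $E_i^{(1)},F_i^{(1)}$ as explicit rational functions in the localized variables $w_{j,r},u_{j,r}$ (via \ref{th:PhiBar}), compares them against the corresponding operators in the affine $D_4$ quiver gauge theory whose Coulomb branch is the known $\CN_{\on{min}}\subset\so_8$, and uses the fact that $\CN_{\on{min}}\hookrightarrow\so_8$ to deduce that $\cM\to\fg_2$ is generically an embedding. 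That comparison is the heart of the proof.

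The proposed clean alternative via Namikawa's uniqueness theorem is also unavailable. That theorem requires knowing that $\cM$ has symplectic singularities, which is \emph{not} known for Coulomb branches — the paper itself flags this explicitly in the Remark in \ref{sec:group_action} ("we do not know $\cM_C$ has symplectic singularities, and the $\CC^\times$-action is not conical in general"). While conicality can be checked here by the monopole formula, symplectic singularities cannot, so the uniqueness theorem cannot be invoked. Finally, even once the moment map is known to be generically an embedding, there remains the issue of which connected étale cover of $\mathbb O_{\on{subreg}}$ the open $G_2$-orbit in $\cM$ is; the paper settles this by identifying the Galois group of the universal cover with $S_3$ and computing the degree-$1$ part (which is $\fg_2=\so_8^{S_3}$), forcing the cover to be trivial, before finally applying normality of $\ol{\mathbb O}_{\on{subreg}}$. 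Your sketch elides this entirely.
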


\begin{proof}
Let us first construct the action of $G_2$.

We consider operators $E_i^{(1)}$, $F_i^{(1)}$, $H_i^{(1)}$
($i=1,2,3$) as in \ref{lem:balanc-vert-quiv}. The vertices $1$, $2$
are balanced, while $3$ is \emph{not}. But we still have
$\deg E_3^{(1)}$, $F_3^{(1)} = 1$ by \eqref{eq:Deltadeg}. Let us
consider the Lie subalgebra $\mathfrak{g}$ of $\CC[\cM]$ generated by
these operators.
By \ref{subsec:balanced} $E_i^{(1)}$, $F_i^{(1)}$, $H_i^{(1)}$
($i=1,2$) define the Lie algebra $\algsl(3)$, and the corresponding
hamiltonian vector fields are integrated to an $\SL(3)$-action on
$\cM$.

The proof of commutation relations
$\{ E_i^{(1)}, F_3^{(1)}\} = 0 = \{ F_i^{(1)}, E_3^{(1)}\}$,
$\{ H_1^{(1)}, E_3^{(1)}\} = 0 = \{ H_1^{(1)}, F_3^{(1)}\}$,
$\{ H_2^{(1)}, E_3^{(1)}\} = -E_3^{(1)}$,
$\{ H_2^{(1)}, F_3^{(1)}\} = F_3^{(1)}$ in \ref{sec:quantization} remains
to work even though $3$ has an edge loop. Similarly
to~\cite[Lemma~6.8]{2016arXiv160800875K} we calculate
$\{E_3^{(1)},F_3^{(1)}\}=2(3w_{2,1}+3w_{2,2}-2w_{3,1}-2w_{3,2}-2w_{3,3})$.
Since $H_1^{(1)}=2w_{1,1}-w_{2,1}-w_{2,2}$ and
$H_2^{(1)}=2w_{2,1}+2w_{2,2}-w_{1,1}-w_{3,1}-w_{3,2}-w_{3,3}$, we conclude that
$\{E_3^{(1)},F_3^{(1)}\}=4H_2^{(1)}+2H_1^{(1)}$. Hence $\fg$ is simple of type
$G_2$ with Cartan subalgebra $\fh$ spanned by $H_1^{(1)}$ and $H_2^{(1)}$, and
$\algsl(3)$ is spanned by $\fh$ and the long roots. Note that $E_3^{(1)}$ and
$F_3^{(1)}$ generate the fundamental representations $V$ and $V^\vee$ of
$\algsl(3)$, and $\fg=\algsl(3)\oplus V\oplus V^\vee$.

More concretely,
$F^{(1)}_3=u_{3,1}+u_{3,2}+u_{3,3}$, and
$E^{(1)}_3=(w_{3,1}-w_{2,1})(w_{3,1}-w_{2,2})u_{3,1}^{-1}+(w_{3,2}-w_{2,1})(w_{3,2}-
w_{2,2})u_{3,2}^{-1}+(w_{3,3}-w_{2,1})(w_{3,3}-w_{2,2})u_{3,3}^{-1}$, and
$E^{(1)}_2=(w_{2,1}-w_{1,1})(w_{2,2}-w_{2,1})^{-1}u_{2,1}^{-1}+(w_{2,2}-w_{1,1})
(w_{2,1}-w_{2,2})^{-1}u_{2,2}^{-1}$, and
$F^{(1)}_2=(w_{2,1}-w_{3,1})(w_{2,1}-w_{3,2})(w_{2,1}-w_{3,3})(w_{2,2}-w_{2,1})^{-1}u_{2,1}+
(w_{2,2}-w_{3,1})(w_{2,2}-w_{3,2})(w_{2,2}-w_{3,3})(w_{2,1}-w_{2,2})^{-1}u_{2,2}$, while
$F^{(1)}_1=(w_{1,1}-w_{2,1})(w_{1,1}-w_{2,2})u_{1,1}$, and $E^{(1)}_1=u_{1,1}^{-1}$.

Now consider an auxiliary quiver gauge theory of type $D_4$ with 1-dimensional
framing at the middle vertex numbered by 2, and the outer vertices numbered by
$(3,1),(3,2),(3,3)$. We take $\dim V''_{3,1}=\dim V''_{3,2}=\dim V''_{3,3}=1,\
\dim V''_2=2$ (and $\dim W''_2=1$).
We know that $\cM_C(\GL(V''),\bN'')=\CN_{\on{min}}$
is the closure of the minimal orbit in $\mathfrak{so}_8$. On the other hand,
we consider a quiver gauge theory of the affine type $D_4$ with the extra
vertex numbered by 1, $\dim V_1=1$, and all the other dimensions
as before, but no framing. We denote the corresponding graded vector space by
$V'=V''\oplus V_1$, and the corresponding representation of $\GL(V')$ by $\bN'$.
Then $\cM_C(\GL(V''),\bN'')=\cM_C(\PGL(V'),\bN')$.
We have an embedding $\BC[\cM_C(\PGL(V'),\bN')]\hookrightarrow
\BC(w_{1,1},w_{2,1},w_{2,2},w_{3,1},w_{3,2},w_{3,3},u_{1,1},u_{2,1},u_{2,2},u_{3,1},
u_{3,2},u_{3,3})^{S_2}$ where the symmetric group $S_2$ acts by permuting
$(w_{2,1},u_{2,1})$ and $(w_{2,2},u_{2,2})$. Also we have an embedding
$\BC[\cM]\hookrightarrow
\BC(w_{1,1},w_{2,1},w_{2,2},w_{3,1},w_{3,2},w_{3,3},u_{1,1},u_{2,1},u_{2,2},u_{3,1},
u_{3,2},u_{3,3})^{S_2\times S_3}$ where the symmetric group $S_3$ acts by permuting
$(w_{3,1},u_{3,1}),\ (w_{3,2},u_{3,2})$, and $(w_{3,3},u_{3,3})$.
By inspection of~\eqref{eq:82},~\eqref{eq:83},~\ref{th:PhiBar} we check
$F^{(1)}_2={}'\!F^{(1)}_2,\ E^{(1)}_2={}'\!E^{(1)}_2,\
F^{(1)}_3={}'\!F^{(1)}_{3,1}+{}'\!F^{(1)}_{3,2}+{}'\!F^{(1)}_{3,3},\
E^{(1)}_3={}'\!E^{(1)}_{3,1}+{}'\!E^{(1)}_{3,2}+{}'\!E^{(1)}_{3,3}$ where $'\!E,{}'\!F$
refer to the generators of $\mathfrak{so}_8$ in $\BC[\cM_C(\PGL(V'),\bN')]$,
while $E,F$ refer to the generators of $\fg_2$ in the previous paragraph.
Since the projection $\cM_C(\PGL(V'),\bN')=\CN_{\on{min}}\to\mathfrak{so}_8$
is an embedding, we conclude that the projection
$\cM\to\fg_2$ is generically an embedding. Hence the differential of the
$G_2$-action on $\cM$ is generically surjective, so $\cM$ has an open
$G_2$-orbit $\widetilde{\mathbb O}\subset\cM$ which is a nonramified cover of
its image adjoint orbit ${\mathbb O}\subset\fg_2$.

Now the monopole formula for $\cM$ gives degrees in $\BN$. Indeed,
the contribution of a dominant coweight $\lambda=(\lambda_{1,1},
\lambda_{2,1}\geq\lambda_{2,2},\lambda_{3,1}\geq\lambda_{3,2}\geq\lambda_{3,3})$ of
$(\GL(1)\times\GL(2)\times\GL(3))/Z$ equals
$\Delta(\lambda)=\lambda_{2,2}-\lambda_{2,1}+
\frac{1}{2}|\lambda_{2,1}-\lambda_{1,1}|+\frac{1}{2}|\lambda_{2,2}-\lambda_{1,1}|+
\frac{1}{2}\sum_{r,s}|\lambda_{2,r}-\lambda_{3,s}|$ which is easily seen to be
nonnegative and integral. We conclude that $\cM$ is conical, and hence its
image $\overline{\mathbb O}\subset\fg_2$ is conical as well. It follows that
${\mathbb O}$ is a nilpotent orbit. But $\fg_2$ has a unique 10-dimensional
nilpotent orbit: the subregular one. Hence
${\mathbb O}={\mathbb O}_{\on{subreg}}$.

Now we have to identify the cover $\widetilde{\mathbb O}\to{\mathbb O}$.
It is known that the universal cover of ${\mathbb O}_{\on{subreg}}$ is an open
piece of the minimal nilpotent orbit
${\mathbb O}_{\on{min}}\subset\mathfrak{so}_8$, see e.g.~\cite{brko}, and the
Galois group of this cover is $S_3$. Moreover, the degree 1 functions on the
universal cover constitute the Lie algebra $\mathfrak{so}_8$. It follows that
if the cover $\widetilde{\mathbb O}\to{\mathbb O}$ corresponds to a subgroup
$\pi_1\subset S_3$, then the degree 1 functions on $\widetilde{\mathbb O}$
constitute $\mathfrak{so}_8^{\pi_1}$. Since we know that the degree 1
functions on $\cM$ constitute $\fg_2=\mathfrak{so}_8^{S_3}$, we conclude that
$\pi_1=S_3$, so that $\widetilde{\mathbb O}\iso{\mathbb O}_{\on{subreg}}$.
Finally, the normality property of the orbit closure
$\ol{\mathbb O}_{\on{subreg}}$ guarantees that $\cM\iso\ol{\mathbb O}_{\on{subreg}}$.
\end{proof}

 Note that a torus with one puncture is obtained from $S^2$ with
three punctures by gluing two punctures. We have computed the Higgs
branch associated with the latter in \ref{subsec:E6}. The Higgs branch
is the closure $\CN_{\min}(\mathfrak{e}_6)$ of the minimal nilpotent orbit of
$\mathfrak{e}_6$.
Therefore the Higgs branch $\cM$ for a torus with one puncture is the
Hamiltonian reduction $\CN_{\min}(\mathfrak{e}_6)\tslash \Delta_{\SL(3)}$ with
respect to the diagonal $\SL(3)$ in $\SL(3)\times\SL(3)$ corresponding
to two legs which are glued. Therefore we have an action of the
centralizer of $\Delta_{\SL(3)}$ in $E_6$, which is $G_2$. (See e.g.,
\cite[\S3.2]{MR2342006} and the references therein.)
Combining with \ref{prop:g2}, we should have
$\CN_{\min}(\mathfrak{e}_6)\tslash \Delta_{\SL(3)}\cong \ol{\mathbb
  O}_{\on{subreg}}(\mathfrak{g}_2)$, the closure of the subregular
nilpotent orbit of $\mathfrak{g}_2$. We do not have a proof of this
statement, though it might be known to an expert.
\begin{NB}
  e.g., Gordan Savin.
\end{NB}%

\subsection{Recollections on derived Satake equivalence}
\label{derived Satake}
We consider a reductive group $G$ with Langlands dual group
$G^\vee$ and its Lie algebra $\gvee$. We have a commutative ring object
$\Areg=\bigoplus_\lambda\on{IC}(\ol\Gr{}^\lambda)\otimes(V^\lambda)^*\in
D_G(\Gr_G)$.

Let $e,h,f\in\gvee$ be a principal $\algsl_2$-triple such that $f$ is lower
triangular, and $e$ is upper triangular. We consider the Kostant slice
$e+\fz(f)$ to the regular nilpotent orbit. Let $\Sigma$ be the image of
$e+\fz(f)$ under a $G^\vee$-invariant isomorphism $\gvee\simeq(\gvee)^*$.
let $\Upsilon$ be the image of $e+\fb^{\!\scriptscriptstyle\vee}_-$ (Borel subalgebra)
under a $G^\vee$-invariant isomorphism $\gvee\simeq(\gvee)^*$.
We have canonical isomorphisms $\Sigma=\ft/\Weyl=\Upsilon/U^\vee_-$ (unipotent
subgroup) by the compositions $\Sigma\hookrightarrow(\gvee)^*\twoheadrightarrow
(\gvee)^*\dslash G^\vee\cong\ft/\Weyl$ and
$\Sigma\hookrightarrow\Upsilon\twoheadrightarrow\Upsilon/U^\vee_-$.

According to~\cite[Theorem~5]{MR2422266}, there is an equivalence of
monoidal triangulated categories $\Psi\colon
D^{G^\vee}(\on{Sym}^{[]}(\gvee))\to D_{G}(\Gr_{G})$. Recall that
$D_G(\Gr_G)$ stands for the Ind-completion of the bounded derived equivariant
constructible category on $\Gr_G$. Accordingly,
$D^{G^\vee}(\on{Sym}^{[]}(\gvee))$ stands for the Ind-completion of the
triangulated category $D^{G^\vee}_{\on{perf}}(\on{Sym}^{[]}(\gvee))$ formed by the
$G^\vee$-equivariant perfect dg-modules over
$\on{Sym}^{[]}(\gvee)$: the graded symmetric algebra of $\gvee$ where any
element of $\gvee$ is assigned degree 2 (with trivial differential).
The monoidal structure on $D_{G}(\Gr_{G})$ is given by the convolution $\star$,
and the monoidal structure on $D^{G^\vee}(\on{Sym}^{[]}(\gvee))$ is
$M_1,M_2\mapsto M_1\otimes_{\on{Sym}^{[]}(\gvee)}M_2$. The algebra
$\BC[\Sigma]=\on{Sym}(\gvee)^{G^\vee}$ acts on
$\Ext^*_{D^{G^\vee}(\on{Sym}^{[]}(\gvee))}(M_1,M_2)$, and this action is compatible with
the action of $\BC[\Sigma]=H^*_G(\on{pt})$ on
$\Ext^*_{D_G(\Gr_G)}(\Psi(M_1),\Psi(M_2))$. 
Since $D^{G^\vee}(\on{Sym}^{[]}(\gvee))$ is the homotopy category of a
dg-category, we have
$\on{RHom}_{D^{G^\vee}(\on{Sym}^{[]}(\gvee))}\colon
D^{G^\vee}(\on{Sym}^{[]}(\gvee))\times
D^{G^\vee}(\on{Sym}^{[]}(\gvee))\to D(\on{Vect})$.
\begin{NB}\linelabel{line:dg}
  HN is not sure how this should be explained. But it will be used
  later, hence should be explained at some point. See also
  l.\lineref{line:dg2} for his complain.
\end{NB}%

The functor $\Psi^{-1}$ is uniquely characterized by the property that
$\Psi^{-1}(\on{IC}(\ol\Gr{}^\lambda))=\on{Sym}^{[]}(\gvee)\otimes V^\lambda$.
For $\CF\in D_G(\Gr_G)$ we have~\cite[Theorem~2]{MR2422266}:
\begin{equation}
\label{eq:coho}
H^*_{G_\CO}(\Gr_G,\CF)=\kappa^l(\Psi^{-1}(\CF)):=
H^*(\Psi^{-1}(\CF)\otimes_{\on{Sym}^{[]}_{\on{new}}(\gvee)}\BC[\Sigma]^{[]})
\end{equation}
in the following sense. The eigenvalues of $-h$ from the above
$\algsl_2$-triple define a grading of $\gvee$, hence a grading of $(\gvee)^*$
and a grading of $\on{Sym}^{[]}(\gvee)$. Thus $\gvee,\ (\gvee)^*$ and
$\on{Sym}^{[]}(\gvee)$ acquire a bigrading such that the total degree of
$e$ is zero. We consider $\on{Sym}^{[]}(\gvee)$ with the new grading given by
the total degree and denote it $\on{Sym}^{[]}_{\on{new}}(\gvee)$. Now the
projection $\on{Sym}(\gvee)\to\BC[\Sigma]$ is compatible with the new grading,
and induces a grading on $\BC[\Sigma]$ denoted $\BC[\Sigma]^{[]}$. Finally,
we consider both $\on{Sym}^{[]}_{\on{new}}(\gvee)$ and $\BC[\Sigma]^{[]}$ as
dg-algebras with trivial differential (and zero components of odd degrees).
Note that $\Psi^{-1}(\CF)$ is still a dg-module over
$\on{Sym}^{[]}_{\on{new}}(\gvee)$ due to its $G^\vee$-equivariance.
\begin{NB}
  The $H^*_G(\on{pt})=\BC[\Sigma]^{[]}$-module structure in the LHS
  of~\eqref{eq:coho}
  is compatible with the $\BC[\Sigma]^{[]}$-module structure in the RHS.
  The RHS of~\eqref{eq:coho} is the cohomology of a dg-module over
  $\BC[\Sigma]^{[]}$ (note that the LHS is a graded module over
  $H_G(\on{pt})=\BC[\Sigma]^{[]}$).  According
  to~\cite[Section~2.3]{MR2422266}, this cohomology also has a
  structure of a graded module over the ring of functions on the
  tangent bundle of $\Sigma$. The ring of functions on the tangent
  bundle of $\Sigma$ is identified with $H^*_{G_\cO}(\mathrm{Gr}_G)$.
\end{NB}%

\linelabel{line:duality}
Let ${\mathbf D}$ stand for the duality
$M\mapsto\on{RHom}_{\on{Sym}^{[]}(\gvee)}(M,\on{Sym}^{[]}(\gvee))$ in
$D^{G^\vee}_{}(\on{Sym}^{[]}(\gvee))$. Let ${\mathbb D}$ stand for the Verdier
duality in $D_G(\Gr_G)$. We denote by ${\mathfrak C}_{G^\vee}$
the autoequivalence of $D^{G^\vee}_{}(\on{Sym}^{[]}(\gvee))$ induced by
the canonical outer automorphism of $G^\vee$ interchanging conjugacy classes of
$g$ and $g^{-1}$ (the Chevalley involution). We denote by ${\mathcal C}_G$
the autoequivalence of $D_G(\Gr_G)$ induced by
$g\mapsto g^{-1},\ G((z))\to G((z))$. Then
${\mathcal C}_G\circ\Psi=\Psi\circ{\mathfrak C}_{G^\vee}$.
According to~\cite[Lemma~14]{MR2422266},
we have $\Psi\circ{\mathfrak C}_{G^\vee}\circ{\mathbf D}={\mathbb D}\circ\Psi$
and $\Psi\circ{\mathbf D}={\mathcal C}_G\circ{\mathbb D}\circ\Psi$.
\begin{NB}
Clearly, ${\mathbf D}\BC[T^*G]^{[]}=\BC[T^*G]^{[]}$, while
${\mathbb D}\Areg={\mathcal C}_G\Areg$.

We define $\Phi:={\mathfrak C}_{G^\vee}\circ\Psi^{-1}\colon D_G(\Gr_G)\to
D^{G^\vee}_{}(\on{Sym}^{[]}(\gvee))$. We have $\Phi(\Areg)=\BC[T^*G]^{[]}$.
\end{NB}%

The following lemma is well known. (See
\cite[\S2.4]{1995alg.geom.11007G}. Also the proof of
\cite[Lemma~14]{MR2422266} depends on it.) Let us give its proof
for completeness. Recall ${\mathbf 1}_{\Gr_G}$ denotes the skyscraper
sheaf at the base point in $\Gr_G$.

\begin{NB}
  Does this also follows from the corresponding statement in
  $D^{G^\vee}_{}(\on{Sym}^{[]}(\gvee))$ ? Namely $\mathbf D$ is the
  rigidity in $D^{G^\vee}_{}(\on{Sym}^{[]}(\gvee))$, i.e.\
  \[
    \on{RHom}_{D^{G^\vee}_{}(\on{Sym}^{[]}(\gvee))}(\on{Sym}^{[]}(\gvee),
    M_1\otimes_{\on{Sym}^{[]}(\gvee)} M_2) \cong
    \on{RHom}_{D^{G^\vee}_{}(\on{Sym}^{[]}(\gvee))}(\mathbf DM_1,M_2).
  \]
  MF: yes it does. However, the proof of 
$\Psi\circ{\mathfrak C}_{G^\vee}\circ{\mathbf D}={\mathbb D}\circ\Psi$
in~\cite[Lemma~14]{MR2422266} uses the rigidity in $D_G(\Gr_G)$.
\end{NB}%

\begin{Lemma}
\label{lem:rigidity}
${\mathcal C}_G\circ\BD$ is the rigidity for $(D_G(\Gr_G),\star)$. That is,
for any $\CF_1,\CF_2\in D_G(\Gr_G)$ we have a canonical isomorphism
$\on{RHom}_{D_G(\Gr_G)}({\mathbf 1}_{\Gr_G},\CF_1\star\CF_2)\iso
\on{RHom}_{D_G(\Gr_G)}(\linebreak[3]{\mathcal C}_G\circ\BD\CF_1,\CF_2)$.
\end{Lemma}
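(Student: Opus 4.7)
The plan is to unfold $\mathrm{RHom}(\mathbf 1_{\Gr_G},\CF_1\star\CF_2)$ directly using the convolution diagram~\eqref{eq:1} and then apply Verdier duality in the form of the standard tensor--Hom adjunction. Writing $\mathbf 1_{\Gr_G}=i_{[1]!}\CC$ with $i_{[1]}\colon\{[1]\}\hookrightarrow\Gr_G$ the base point inclusion, the left-hand side is the costalk
\[
\mathrm{RHom}(\mathbf 1_{\Gr_G},\CF_1\star\CF_2)\cong\mathrm{RHom}(\CC,i_{[1]}^!(\CF_1\star\CF_2)),
\]
of $\CF_1\star\CF_2=\bar m_*(\bar q^*)^{-1}\bar p^*(\CF_1\boxtimes\CF_2)$ at $[1]$.

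First I would apply base change to $i_{[1]}^!\bar m_*$, using that $\bar m$ is ind-proper, and reduce to computing global sections of the restriction to the fiber $\bar m^{-1}([1])\subset\Gr_G\tilde\times\Gr_G$. The key geometric observation is that
\[
\bar m^{-1}([1])=\{[g_1,[g_2]] : g_1g_2\in G_\cO\}\ \iso\ \Gr_G, \qquad [g]\mapsto[g,[g^{-1}]];
\]
under this identification, the two natural projections to the factors of $\Gr_G\tilde\times\Gr_G$ become, respectively, the identity $\mathrm{id}_{\Gr_G}$ and the inversion map $\iota\colon [g]\mapsto[g^{-1}]$. This $\iota$ is precisely the geometric source of the autoequivalence $\mathcal C_G$.

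Second, using the smoothness of $\bar p$ and $\bar q$ (which allows us to freely exchange $*$ and $!$ up to canceling shifts) together with the $G_\cO\times G_\cO$-equivariance of $\CF_1\boxtimes\CF_2$, the restriction of $(\bar q^*)^{-1}\bar p^*(\CF_1\boxtimes\CF_2)$ to this fiber is naturally identified with $\CF_1\otimes^!\iota^!\CF_2\cong\CF_1\otimes^!\mathcal C_G\CF_2$ (using that $\iota$ is an involutive isomorphism, so $\iota^!=\iota^*=\mathcal C_G$). Substituting and applying the standard rigidity on $\Gr_G$ for $\otimes^!$, namely the Verdier-duality identity $\mathrm{RHom}(\CC,\CF\otimes^!\CG)\iso\mathrm{RHom}(\BD\CF,\CG)$, with $\CG=\mathcal C_G\CF_2$, yields
\[
\mathrm{RHom}(\BD\CF_1,\mathcal C_G\CF_2)\iso\mathrm{RHom}(\mathcal C_G\BD\CF_1,\CF_2),
\]
the last step using that $\mathcal C_G$ is an autoequivalence with $\mathcal C_G^2\cong\mathrm{id}$.

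The main technical obstacle will be making precise, at the level of ind-schemes, the identification of the fiber $\bar m^{-1}([1])$ with $\Gr_G$ and its equivariance: the inversion $g\mapsto g^{-1}$ interchanges the left and right $G_\cO$-actions on $G_\cK$, and it is exactly careful tracking of this interchange through the descent along $\bar q$ and the pullback along $\bar p$ that forces the appearance of $\mathcal C_G$ (rather than the identity) in the rigidity formula. Alternatively, one can deduce the statement from derived Satake as already invoked in~\ref{derived Satake}: since $\Psi$ is monoidal and $\mathbf D$ is the rigidity on $D^{G^\vee}(\mathrm{Sym}^{[]}(\gvee))$, the conjugated functor $\Psi\circ\mathbf D\circ\Psi^{-1}$ is rigidity on $(D_G(\Gr_G),\star)$, and by the identity $\Psi\circ\mathbf D=\mathcal C_G\circ\BD\circ\Psi$ of~\cite[Lemma~14]{MR2422266} this functor equals $\mathcal C_G\circ\BD$.
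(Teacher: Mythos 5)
Your primary approach matches the paper's: restrict to the costalk at the base point, identify the fiber $\bar m^{-1}([1])$ with $\Gr_G$ via inversion, and apply Verdier rigidity for $\otimes^!$. This is exactly the paper's opening sketch, stated for an abstract group $\mathsf H$ with the antidiagonal $\nabla\colon h\mapsto(h^{-1},h)$ playing the role of your inversion. Where the paper diverges is in how it handles the technical obstacle you correctly flag, namely tracking how $g\mapsto g^{-1}$ interchanges the left and right $G_\cO$-actions through $(\bar q^*)^{-1}\bar p^*$. The paper makes the sketch rigorous by passing to the double quotient stack $\cX=G_\cO\backslash\Gr_G$ and invoking the six-functor formalism on Artin stacks: there the inversion $\fri\colon(\scP_1,\scP_2,\eta)\mapsto(\scP_2,\scP_1,\eta^{-1})$ is an honest involution of $\cX$, the convolution is written directly as $\sfm_*(\on{pr}_1^*\CF_1\boxtimes\on{pr}_2^*\CF_2)$, and the antidiagonal $\nabla=(\fri,\on{id})$ is a closed embedding, so the $G_\cO$-descent bookkeeping simply disappears. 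If you insist on staying on $\Gr_G$ with the $\bar p,\bar q,\bar m$ diagram, you will have to reproduce that bookkeeping by hand, which is the extra work the paper avoids.

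Your fallback via derived Satake, however, is circular as stated. The paper notes, right before the lemma, that the proof of \cite[Lemma~14]{MR2422266}, i.e.\ the identity $\Psi\circ\mathbf D={\mathcal C}_G\circ\BD\circ\Psi$, itself relies on the rigidity of $(D_G(\Gr_G),\star)$. Knowing that $\mathbf D$ is rigidity on $D^{G^\vee}(\on{Sym}^{[]}(\gvee))$ and that $\Psi$ is monoidal does give you that $\Psi\circ\mathbf D\circ\Psi^{-1}$ is the rigidity on $(D_G(\Gr_G),\star)$; but identifying this conjugate with ${\mathcal C}_G\circ\BD$ is precisely the content of \cite[Lemma~14]{MR2422266}, which you cannot take as given without first having the lemma you are proving.
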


\begin{proof}
For any group ${\mathsf H}$, the convolution operation 
$\CF_1\star\CF_2={\mathsf m}_*(\CF_1\boxtimes\CF_2)$ on
$D({\mathsf H})$ has rigidity $\CF\mapsto{\mathcal C}_{\mathsf H}\circ\BD\CF$ 
where ${\mathcal C}_{\mathsf H}$
is induced by the automorphism $h\mapsto h^{-1},\ {\mathsf H}\to {\mathsf H}$. 
Namely,
$\on{RHom}({\mathbf 1}_{\mathsf H},\CF_1\star\CF_2)=i_e^!(\CF_1\star\CF_2)=
\on{RHom}(\CC_{\mathsf H},\nabla^!(\CF_1\boxtimes\CF_2))=
\on{RHom}(\CC_{\mathsf H},{\mathcal C}_{\mathsf H}\CF_1\otimes^!\CF_2)=
\on{RHom}({\mathcal C}_{\mathsf H}\circ\BD\CF_1,\CF_2)$, where
$\nabla\colon {\mathsf H}\hookrightarrow{\mathsf H}\times{\mathsf H}$
is the antidiagonal embedding $h\mapsto (h^{-1},h)$.

We apply this to the category of $G_\CO$-left-right equivariant sheaves
on ${\mathsf H}=G_{\mathcal K}$.

More formally, let us use the six operations for constructible derived 
categories on Artin stacks. There is a reference~\cite{lasols} for 
$\ol{\mathbb Q}_l$-coefficients. We choose an isomorphism 
$\ol{\mathbb Q}_l\cong\BC$ and use it for complex coefficients. Our stack
is ${\mathcal X}:=G_\CO\backslash\Gr_G$. It is the moduli stack of pairs 
$\scP_1,\scP_2$ of $G$-bundles on the formal disc $D$ equipped with an 
isomorphism $\eta\colon \scP_1|_{D^*}\iso\scP_2|_{D^*}$. There is an involution
$\fri\colon \cX\to\cX$ induced by the inversion $g\mapsto g^{-1}$ of 
$G_{\mathcal K}$. In modular terms, 
$\fri(\scP_1,\scP_2,\eta)=(\scP_2,\scP_1,\eta^{-1})$. 
Recall that $\Gr_G$ is the moduli space of $G$-bundles $\scP$ on $D$ equipped
with an isomorphism $\sigma\colon \scP_{\on{triv}}|_{D^*}\iso\scP|_{D^*}$.
We have a projection $\on{pr}_2\colon \Gr_G\to\cX$ sending $(\scP,\sigma)$ to
$(\scP_{\on{triv}},\scP,\sigma)$. Similarly, we define $_G\!\on{rG}$ as the moduli
space of $G$-bundles $\scP$ on $D$ equipped with an isomorphism
$\tau\colon \scP|_{D^*}\iso\scP_{\on{triv}}|_{D^*}$. We have a projection
$\on{pr}_1\colon _G\!\on{rG}\to\cX$ sending $(\scP,\tau)$ to
$(\scP,\scP_{\on{triv}},\tau)$. We have isomorphisms 
$\fri\colon \Gr_G\iso{}_G\!\on{rG}$ and $\fri\colon _G\!\on{rG}\iso\Gr_G$
sending $\sigma$ to $\tau=\sigma^{-1}$ and $\tau$ to $\sigma=\tau^{-1}$.
Obviously, $\fri\on{pr}_1=\on{pr}_2\fri$ and $\fri\on{pr}_2=\on{pr}_1\fri$.
There is a morphism
$\sfm\colon _G\!\on{rG}\times\Gr_G\to\cX$ induced by the multiplication in 
$G_{\mathcal K}$. In modular terms, 
$\sfm(\scP_1,\tau;\scP_2,\sigma)=(\scP_1,\scP_2,\sigma\circ\tau)$.
The convolution on $D(\cX)$ is defined as 
$\CF_1\star\CF_2:=\sfm_*(\on{pr}_1^*\CF_1\boxtimes\on{pr}_2^*\CF_2)$.
The unit object ${\mathbf 1}$ is $\BC_{G_\CO\backslash\Gr^0_G}$. We claim that the
rigidity is $\fri^*\circ\BD$. Indeed, 
$\on{RHom}_\cX({\mathbf 1},\CF_1\star\CF_2)=i_0^!(\CF_1\star\CF_2)=
\on{RHom}_{G_\CO\backslash\Gr_G}(\BC_{\Gr_G},\nabla^!(\on{pr_1}^*\CF_1\boxtimes\on{pr}_2^*\CF_2))=
\on{RHom}_{G_\CO\backslash\Gr_G}(\BC_{\Gr_G},\fri^*\on{pr}_1^*\CF_1\otimes^!\on{pr}_2^*\CF_2)=
\on{RHom}_{G_\CO\backslash\Gr_G}(\fri^*\circ\BD\on{pr}_1^*\CF_1,\on{pr}_2^*\CF_2)=
\on{RHom}_\cX(\fri^*\circ\BD\CF_1,\CF_2)$, where 
$\nabla\colon \Gr_G\to{}_G\!\on{rG}\times\Gr_G$ is $(\fri,\on{id})$.
\end{proof}

\subsection{Regular sheaf and derived Satake equivalence}
Under the equivalence $\Psi^{-1}$, the ring object $\Areg\in D_{G}(\Gr_{G})$
corresponds to the $G^\vee$-equivariant free
$\on{Sym}^{[]}(\gvee)$-module $\BC[G^\vee]\otimes\on{Sym}^{[]}(\gvee)$
which will be denoted $\BC[T^*G^\vee]^{[]}$ for short. The $G^\vee$-action comes
from the {\em left} action of $G^\vee$ on
$T^*G^\vee=G^\vee\times(\gvee)^*,\ g_1(g_2,\xi)=(g_1g_2,\xi)$. And the action
of $\on{Sym}^{[]}(\gvee)$ on $\BC[T^*G^\vee]^{[]}$ comes from the morphism
$\mu_l\colon T^*G^\vee\to(\gvee)^*,\ (g,\xi)\mapsto\on{Ad}_g\xi$
(the moment map of the left action).
Recall that $\Areg$ is equipped with an action
of $G^\vee$. Under the equivalence $\Psi^{-1}$, this action goes to the
action on $\BC[T^*G^\vee]^{[]}$ coming from the {\em right} action of $G^\vee$ on
$T^*G^\vee=G^\vee\times(\gvee)^*,\ g_1\cdot(g_2,\xi)=
(g_2g_1^{-1},\on{Ad}_{g_1}\xi)$. For this reason the action of $G^\vee$ on $\Areg$
will be called the {\em right} action. \linelabel{lne:1488} 
The moment map of the right $G^\vee$-action on $T^*G^\vee$ is
$\mu_r\colon T^*G^\vee\to(\gvee)^*,\ (g,\xi)\mapsto\xi$.

Also note that $\on{RHom}_{D_G(\Gr_G)}(\Areg,\Areg)$ is a formal dg-algebra (since e.g.\
$\BC[T^*G^\vee]^{[]}$ is a free $\on{Sym}^{[]}(\gvee)$-module), so $\mu_r$
gives rise to a $G^\vee$-equivariant morphism of dg-algebras
\begin{equation}
\label{eq:right-g-action}
  \on{Sym}^{[]}(\gvee)\to\on{RHom}_{D_G(\Gr_G)}(\Areg,\Areg).
\end{equation}
  Altogether we have the action of $G^\vee\ltimes\on{Sym}^{[]}(\gvee)$ on $\Areg$
that will be called the {\em right} action.

\begin{Remark}
\label{rem:ext-rhom}
For $\CF_1,\CF_2\in D_G(\Gr_G)$ we distinguish
$\Ext^*_{D_G(\Gr_G)}(\CF_1,\CF_2)$ and
$\on{RHom}_{D_G(\Gr_G)}\linebreak[2](\CF_1,\CF_2)$. They are
isomorphic in $D(\on{Vect})$, the derived category of vector spaces,
which is equivalent to $\on{Vect}^{\on{gr}}$, the category of graded
vector spaces. But when we consider additional structures, such as a
dg-algebra structure or a structure of a dg-module over
$G^\vee\ltimes\on{Sym}^{[]}(\gvee)$, they are not isomorphic. We thus
understand
$\Ext^*_{D_G(\Gr_G)}(\CF_1,\CF_2)=H^*(\on{RHom}_{D_G(\Gr_G)}(\CF_1,\CF_2))$.
\begin{NB}
  Let $\cC$ be an abelian category. Then
  $\on{RHom}_\cC\colon D^-(\cC)^\circ\times D^+(\cC)\to
  D^+(\on{Mod}\ZZ)$ is defined as a derived functor of
  $\Hom_\cC(\bullet,\bullet)$.  See \cite[1.10.11]{KaSha}.
\end{NB}%
\end{Remark}

\begin{Definition}
\label{def:dg-action}
The morphism~\eqref{eq:right-g-action}
$\on{Sym}^{[]}(\gvee)\to\on{RHom}_{D_G(\Gr_G)}(\Areg,\Areg)$ induces,
for any $\CF\in D_G(\Gr_G)$, the composed morphism 
\begin{equation*}\on{Sym}^{[]}(\gvee)\to
\on{RHom}_{D_G(\Gr_G)}(\Areg,\Areg)\to\on{RHom}_{D_G(\Gr_G)}(\Areg\otimes^!\CF,
\Areg\otimes^!\CF)
\end{equation*} 
of dg-algebras. Also, the morphism~\eqref{eq:right-g-action} induces,
for any $\CF\in D_G(\Gr_G)$, the composed morphism
\begin{multline*}\on{Sym}^{[]}(\gvee)\otimes\on{RHom}_{D_G(\Gr_G)}(\BC_{\Gr_G},\Areg\otimes^!\CF)\\
\to\on{RHom}_{D_G(\Gr_G)}(\Areg,\Areg)\otimes
\on{RHom}_{D_G(\Gr_G)}(\BC_{\Gr_G},\Areg\otimes^!\CF)
\to\on{RHom}_{D_G(\Gr_G)}(\BC_{\Gr_G},\Areg\otimes^!\CF)
\end{multline*} 
of complexes of vector 
spaces. This morphism is $G^\vee$-equivariant for the $G^\vee$-action on 
$\on{RHom}_{D_G(\Gr_G)}(\BC_{\Gr_G},\Areg\otimes^!\CF)$ induced by the right 
$G^\vee$-action on $\Areg$. Thus, the complex 
$\on{RHom}_{D_G(\Gr_G)}(\BC_{\Gr_G},\Areg\otimes^!\CF)$ acquires
the structure of an object of $D^{G^\vee}(\on{Sym}^{[]}(\gvee))$, and 
$\on{Ext}^*_{D_G(\Gr_G)}(\BC_{\Gr_G},\Areg\otimes^!\bullet)$ gets upgraded to the  
functor 
\begin{equation*}\on{RHom}_{D_G(\Gr_G)}(\BC_{\Gr_G},\Areg\otimes^!\bullet)\colon 
D_G(\Gr_G)\to D^{G^\vee}(\on{Sym}^{[]}(\gvee)).
\end{equation*}

Similarly, 
%the morphism~\eqref{eq:right-g-action} induces,
%for any $\CF\in D_G(\Gr_G)$, the composed morphism
%$$\on{Sym}^{[]}(\gvee)\otimes\on{RHom}(\Areg,\CF)\to
%\on{RHom}(\Areg,\Areg)\otimes\on{RHom}(\Areg,\CF)
%\to\on{RHom}(\Areg,\CF)$$ of complexes of vector spaces.
%This morphism is $G^\vee$-equivariant for the $G^\vee$-action on 
%$\on{RHom}(\Areg,\CF)$ induced by the right $G^\vee$-action on $\Areg$.
%In other words, 
$\on{RHom}_{D_G(\Gr_G)}(\BD\Areg,\CF), \on{RHom}_{D_G(\Gr_G)}({\mathbf 1}_{\Gr_G},\BD\Areg\star\CF), \on{RHom}_{D_G(\Gr_G)}(\Areg,\CF)$, etc.\ all acquire the structures of objects
of $D^{G^\vee}(\on{Sym}^{[]}(\gvee))$, and $\on{Ext}^*_{D_G(\Gr_G)}(\Areg,\bullet)$ 
gets upgraded to the functor 
$\on{RHom}_{D_G(\Gr_G)}(\Areg,\bullet)\colon D_G(\Gr_G)\to D^{G^\vee}(\on{Sym}^{[]}(\gvee))$.
\end{Definition}

\begin{Lemma}%[formality]
\label{rem:formality}
From~\ref{def:dg-action} we obtain an action of 
$(G^\vee)^b\ltimes\on{Sym}^{[]}(\gvee)^{\otimes b}$ on \linebreak[4]
$\on{RHom}_{D_G(\Gr_G)}(\BC_{\Gr_G},\Areg^{\otimes^!b})$. The resulting dg-module
over $(G^\vee)^b\ltimes\on{Sym}^{[]}(\gvee)^{\otimes b}$ is formal.
\end{Lemma}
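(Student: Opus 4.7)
The plan is to first construct the action, then establish formality, using derived geometric Satake as the main tool for the second step.

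For the action, I would iterate \ref{def:dg-action} once for each of the $b$ tensor factors of $\Areg^{\otimes^! b}$. Fix an index $1\leq k\leq b$ and rewrite $\Areg^{\otimes^! b}\cong \Areg\otimes^! \CF_k$ with $\CF_k := \Areg^{\otimes^!(b-1)}$, the remaining $b-1$ factors. Applying \ref{def:dg-action} to this presentation produces a $G^\vee\ltimes\on{Sym}^{[]}(\gvee)$-action on $\on{RHom}_{D_G(\Gr_G)}(\BC_{\Gr_G},\Areg^{\otimes^! b})$ coming from the right action on the $k$-th factor. Doing this for each $k$ yields a $(G^\vee)^b\ltimes \on{Sym}^{[]}(\gvee)^{\otimes b}$-action, and the $b$ factor actions commute on the nose because they come from commuting right actions on the external product $\Areg^{\boxtimes b}$ on $\prod_k \Gr_G$ before the diagonal $!$-pullback $i_\Delta^!$; this commutativity is preserved by $i_\Delta^!$ since it is a functorial operation.

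For formality, the key input is that $\on{RHom}_{D_G(\Gr_G)}(\Areg,\Areg)$ is a formal dg-algebra, as recorded in \ref{derived Satake}: it is quasi-isomorphic to $\BC[T^*G^\vee]^{[]}$, a free graded module over $\on{Sym}^{[]}(\gvee)$ with zero differential. To reduce the $b$-fold case to this, I would use the rigidity adjunction of \ref{lem:rigidity}, together with the self-duality ${\mathbb D}\Areg={\mathcal C}_G\Areg$ and the identity $\CF_1\otimes^!\CF_2={\mathbb D}({\mathbb D}\CF_1\otimes {\mathbb D}\CF_2)$, to rewrite
\[
\on{RHom}_{D_G(\Gr_G)}(\BC_{\Gr_G},\Areg^{\otimes^! b})
\]
as an iterated $\on{RHom}$ involving a single copy of $\Areg$ in the second argument and Verdier-dualized $\Areg$'s collected into the first argument. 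Under $\Psi^{-1}$, each $\Areg$ becomes the free $\on{Sym}^{[]}(\gvee)$-module $\BC[T^*G^\vee]^{[]}$ with zero differential, so the resulting dg-module becomes quasi-isomorphic to its underlying graded module. The commuting action of $(G^\vee)^b\ltimes\on{Sym}^{[]}(\gvee)^{\otimes b}$ survives the identifications, since each factor acts through its own free tensor factor.

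The main obstacle, I expect, is the bookkeeping in the rewriting step: although $\Psi$ is monoidal for $\star$ and not for $\otimes^!$, the dualities of \ref{lem:rigidity} and the above identity must be combined carefully so that the dg-algebra action of the full product $\on{Sym}^{[]}(\gvee)^{\otimes b}$ survives at the formal level, rather than only a diagonal or quotient thereof. If this duality bookkeeping becomes unwieldy, an alternative route is a weight argument: in the new grading of \ref{derived Satake}, all generators of $\on{Sym}^{[]}(\gvee)^{\otimes b}$ have even degree $2$, and the cohomology of $\on{RHom}(\BC_{\Gr_G},\Areg^{\otimes^! b})$ is concentrated in degrees of uniform parity by the parity vanishing argument used in the proof of \ref{Areg}; this parity concentration together with purity of the weights rules out nontrivial higher Massey products and yields formality directly, uniformly in $b$.
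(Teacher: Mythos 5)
Your construction of the action is fine and matches the paper's intent: apply \ref{def:dg-action} once per tensor factor, using the commutativity of the $b$ right-module structures on $\Areg^{\boxtimes b}$ before taking $i_\Delta^!$.

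The main formality argument, however, has a genuine gap. After one application of the rigidity/Satake identification of \ref{0.1}(c) you obtain
\[
\on{RHom}_{D_G(\Gr_G)}(\BC_{\Gr_G},\Areg^{\otimes^! b})\;\cong\;\Phi(\Areg^{\otimes^!(b-1)})=\mathfrak C_{G^\vee}\Psi^{-1}(\Areg^{\otimes^!(b-1)}),
\]
and you are now in $D^{G^\vee}(\on{Sym}^{[]}(\gvee))$, not in $D_G(\Gr_G)$, so you cannot iterate the rewriting to peel off further $\Areg$'s. The remaining object $\Psi^{-1}(\Areg^{\otimes^!(b-1)})$ is \emph{not} a tensor power of the free module $\BC[T^*G^\vee]^{[]}$: the equivalence $\Psi$ is monoidal for $\star$ (which corresponds to $\otimes_{\on{Sym}^{[]}(\gvee)}$), not for $\otimes^!$, so freeness of $\Psi^{-1}(\Areg)$ does not propagate through the $\otimes^!$-power. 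Indeed already for $b=1$ you land on $\mathfrak C_{G^\vee}\kappa^r(\BC[T^*G^\vee]^{[]})$, which the paper explicitly notes is no longer a free $\on{Sym}^{[]}(\gvee)$-module; its formality has to be argued separately, and for $b\geq 3$ the object $\Psi^{-1}(\Areg^{\otimes^!(b-1)})$ has no a priori simple description from which formality is visible.

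Your fallback route is the correct idea and is essentially what the paper does, but you have conflated two different things: \emph{parity vanishing} (a statement about cohomological degrees over $\BC$, used in the proof of \ref{Areg} to identify $\tilscA$ with $\Areg$) and \emph{weight purity} (Frobenius eigenvalues over $\ol{\BF}_q$, which is what actually kills higher operations). The paper's proof runs as follows: pass to $\ol{\BF}_q$ via \cite[Proposition~5]{MR2422266}; observe that $\Areg$, being a direct sum of IC sheaves, is pointwise pure, hence so is $\Areg^{\otimes^!b}$; use the Cousin spectral sequence for the Schubert stratification to conclude that $H^*_{G_\cO}(\Gr_G,\Areg^{\otimes^!b})$ is pure; note that $\Ext^*(\Areg,\Areg)$ is also pure; and then invoke the formality argument of \cite[Section~6.5]{MR2422266} for the dg-algebra $\on{RHom}(\Areg,\Areg)^{\otimes b}$ and the dg-module over it. Parity in the cohomological degree is not what makes this work; the Frobenius weight structure is.
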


\begin{proof}
Using~\cite[Proposition~5]{MR2422266} we reformulate the claim for
$D_G(\Gr_G)$ replaced by $D_{G_{\ol{\mathbb F}_q}}(\Gr_{G,\ol{\mathbb F}_q})$. 
Then $\Areg$ is pointwise pure (meaning that all its costalks are pure with
respect to the Frobenius action). Hence $\Areg^{\otimes^!b}$ is also pointwise
pure. Then the Cousin spectral sequence for the Schubert stratification of
$\Gr_G$ shows that $H^*_{G_{\CO,\ol{\mathbb F}_q}}(\Gr_{G,\ol{\mathbb F}_q},\Areg^{\otimes^!b})$
is pure. Also, $\Ext^*_{D_{G_{\ol{\mathbb F}_q}}(\Gr_{G,\ol{\mathbb F}_q})}(\Areg,\Areg)$ is 
pure. Now the argument 
of~\cite[Section~6.5]{MR2422266} proves that the dg-algebra
$\on{RHom}_{D_G(\Gr_G)}(\Areg,\Areg)$ is formal as well as its $b$-th tensor 
power, and the dg-module $\on{RHom}_{D_G(\Gr_G)}(\BC_{\Gr_G},\Areg^{\otimes^!b})$ over
$\on{RHom}_{D_G(\Gr_G)}(\Areg,\Areg)^{\otimes b}$ is formal.  
\end{proof}

The Kostant-Whittaker (hamiltonian) reduction of $T^*G^\vee$ with respect
to the right action is
$T^*G^\vee\stslash_{U^\vee_-,\psi}:=\mu_r^{-1}(\Upsilon)/U^\vee_-$.
(We use $\stslash$ for a hamiltonian reduction in order to avoid a conflict with $\dslash$ for a GIT quotient.)
At the level of dg-modules, $\kappa^r(\BC[T^*G^\vee]^{[]}):=
(\BC[\mu_r^{-1}(\Upsilon)]^{[]})^{U^\vee_-}=\BC[\mu_r^{-1}(\Sigma)]^{[]}:=
\BC[T^*G^\vee]^{[]}\otimes_{\on{Sym}^{[]}_{\on{new}}(\gvee)}\BC[\Sigma]^{[]}$,
tensor product with respect to the action of the right copy of
$\on{Sym}^{[]}_{\on{new}}(\gvee)$.\footnote{Passing to cohomology, we obtain the
usual hamiltonian reduction: $H^*(\kappa^r(\BC[T^*G^\vee]^{[]}))=
\BC[T^*G^\vee]\otimes_{\on{Sym}(\gvee)}\BC[\Sigma]=:\kappa^r(\BC[T^*G^\vee])$. We use
the same notation $\kappa^r$ for the hamiltonian reduction of the usual
modules and of dg-modules. It is clear from the context which one is used in
what follows.}
\begin{NB}
This is not a free $\on{Sym}^{[]}(\gvee)$-module anymore. Accordingly, the
dualizing sheaf $\DC_{\Gr_{G}}$ is not a direct sum of IC sheaves.
\end{NB}
(We have an isomorphism $U^\vee_-\times \Sigma\iso\Upsilon$ given
by the action of $U^\vee_-$ on $\Upsilon$. Hence
$\CC[\Upsilon]^{U^\vee_-} \cong \CC[\Sigma]$. Moreover, for any
$U^\vee_-$-equivariant sheaf $\CF$ on $\Upsilon$, we have
$\CF^{U^\vee_-}=\CF|_\Sigma$. Similarly, for a $U^\vee_-$-equivariant dg-module $M$
over $\BC[\Upsilon]^{[]}$ we have
$M^{U^\vee_-}=M\otimes_{\BC[\Upsilon]^{[]}}\BC[\Sigma]^{[]}$.)
\begin{NB}
  HN asked why $\kappa^r(\CC[T^*G^\vee]^{[]})$ is an object of
  $D^{G^\vee}_{\mathrm{perf}}(\mathrm{Sym}^{[]}(\mathfrak g^\vee))$,
  before it is explained that we actually need to take the
  ind-completion.

  I guess I do not understand the question: $\BC[T^*G^\vee]^{[]}\in
  D^{G^\vee\times G^\vee}_{}(\on{Sym}^{[]}(\gvee\oplus\gvee))$. We perform
  the right reduction. The result is formal (the differential is zero), so
  it is a graded $G^\vee\rtimes\on{Sym}^{[]}(\gvee)$-module (with respect to
  the residual left structure). However, it is not free anymore. So we
  replace it by a free resolution in $D^{G^\vee}_{}(\on{Sym}^{[]}(\gvee))$.

  We had some discussion on Oct.20, 21, 2017.
\end{NB}%
This is an object of $D^{G^\vee}_{}(\on{Sym}^{[]}(\gvee))$ (with
respect to the residual {\em left} action of $G^\vee$) corresponding
under the equivalence $\Psi^{-1}$ to the dualizing complex
$\DC_{\Gr_{G}}$~\cite[Proposition~4]{MR2422266}.  (In
fact,~\cite[Proposition~4]{MR2422266} is proved for the extra
equivariance under the loop rotations.) Instead of
$\DC_{\Gr_{G}}=\Psi(\kappa^r(\Psi^{-1}\Areg))$, we will write
$\DC_{\Gr_{G}}=\kappa^r(\Areg)$ for short.

\begin{NB}
\begin{Lemma}
\label{0.2}
The functor $\on{Ext}(\Areg,\CF):=
\bigoplus_\lambda\on{Ext}_G(\on{IC}(\ol\Gr{}^\lambda)\otimes(V^\lambda)^*,\CF),\
D_G(\Gr_G)\to D(\on{Vect})$ is equal to the composition of $\Psi^{-1}$ and the
forgetful functor
$\on{Forg}\colon D^{G^\vee}_{}(\on{Sym}^{[]}(\gvee))\to D(\on{Vect})$.
In particular, the algebra $\on{Sym}^{[]}(\gvee)$ acts on $\on{Ext}(\Areg,\CF)$.
\end{Lemma}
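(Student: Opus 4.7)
The plan is to use the derived Satake equivalence $\Psi$ recalled in~\ref{derived Satake} to translate the Ext computation into the dg-module category $D^{G^\vee}(\on{Sym}^{[]}(\gvee))$, where it reduces to Peter-Weyl decomposition of an ind-$G^\vee$-module. First I would unfold the definition of $\Areg=\bigoplus_\lambda\on{IC}(\ol\Gr{}^\lambda)\otimes(V^\lambda)^*$ and apply $\Psi^{-1}$ term by term. Since $\Psi^{-1}(\on{IC}(\ol\Gr{}^\lambda))=\on{Sym}^{[]}(\gvee)\otimes V^\lambda$ and $\Psi$ is an equivalence of (ind-completed) triangulated categories, the tensor-Hom adjunction for the finite-dimensional space $(V^\lambda)^*$ gives
\[
\on{Ext}_G\!\bigl(\on{IC}(\ol\Gr{}^\lambda)\otimes(V^\lambda)^*,\CF\bigr)\;\cong\;V^\lambda\otimes\on{Ext}_{D^{G^\vee}(\on{Sym}^{[]}(\gvee))}\!\bigl(\on{Sym}^{[]}(\gvee)\otimes V^\lambda,\Psi^{-1}\CF\bigr).
\]

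Second, because $\on{Sym}^{[]}(\gvee)\otimes V^\lambda$ is a free (hence $K$-projective) dg-module over $\on{Sym}^{[]}(\gvee)$, any morphism in the dg-category from it to $\Psi^{-1}\CF$ is determined by its restriction to $V^\lambda$; combined with reductivity of $G^\vee$ (which makes $G^\vee$-invariants exact and hence underived), this yields
\[
\on{RHom}_{D^{G^\vee}(\on{Sym}^{[]}(\gvee))}\!\bigl(\on{Sym}^{[]}(\gvee)\otimes V^\lambda,\Psi^{-1}\CF\bigr)\;=\;\on{Hom}_{G^\vee}\!\bigl(V^\lambda,\on{Forg}(\Psi^{-1}\CF)\bigr),
\]
where the right-hand side denotes the multiplicity complex of $V^\lambda$ inside the underlying complex of vector spaces.

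Third, I would sum over dominant $\lambda$ and invoke Peter-Weyl: as an ind-representation of the reductive group $G^\vee$, the underlying complex decomposes as
\[
\on{Forg}(\Psi^{-1}\CF)\;=\;\bigoplus_\lambda V^\lambda\otimes\on{Hom}_{G^\vee}\!\bigl(V^\lambda,\on{Forg}(\Psi^{-1}\CF)\bigr).
\]
Comparing with the sum assembled from the first two steps yields $\on{Ext}(\Areg,\CF)\cong\on{Forg}(\Psi^{-1}\CF)$, functorially in $\CF$. The ``in particular'' clause is then automatic, since $\Psi^{-1}\CF$ is by definition an object of $D^{G^\vee}(\on{Sym}^{[]}(\gvee))$, so its underlying complex carries a residual $\on{Sym}^{[]}(\gvee)$-action which transports to an action on $\on{Ext}(\Areg,\CF)$; one should check that it agrees with the right-action of~\eqref{eq:right-g-action} via the tautological identification $\Psi^{-1}\Areg=\BC[T^*G^\vee]^{[]}$.

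The main obstacle is essentially bookkeeping rather than substance: one must keep careful track of the contragredient conventions ($V^\lambda$ versus $(V^\lambda)^*$) so that the outer factor $V^\lambda$ emerging from the Ext computation matches the Peter-Weyl isotypic factor, and one must ensure the Peter-Weyl decomposition is invoked at the right categorical level (as a decomposition in $D(\on{Vect})$, not merely in cohomology). Both points are routine given the explicit form of $\Areg$ and the reductivity of $G^\vee$.
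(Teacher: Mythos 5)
Your proof is correct and is essentially the paper's own argument, made explicit: both rest on the Peter--Weyl decomposition
$\on{Forg}M=\bigoplus_\lambda V^\lambda\otimes\on{Hom}_{G^\vee\ltimes\on{Sym}^{[]}(\gvee)}(\on{Sym}^{[]}(\gvee)\otimes V^\lambda,M)$
applied to $M=\Psi^{-1}\CF$, together with $\Psi^{-1}(\on{IC}(\ol\Gr{}^\lambda))=\on{Sym}^{[]}(\gvee)\otimes V^\lambda$. You have usefully spelled out the $K$-projectivity of the free module $\on{Sym}^{[]}(\gvee)\otimes V^\lambda$, which justifies replacing $\on{RHom}$ by the underived $\on{Hom}$ and is left implicit in the paper's one-line proof.
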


\begin{proof}
For a $G\ltimes\on{Sym}^{[]}(\gvee)$-module $M$ we have
\begin{equation*}
\on{Forg}M=\bigoplus_\lambda\on{Hom}_{G\ltimes\on{Sym}^{[]}(\gvee)}
(\on{Sym}^{[]}(\gvee)\otimes V^\lambda\otimes(V^\lambda)^*,M),
\end{equation*}
where the third factor $(V^\lambda)^*$ is
understood as the underlying vector space (disregarding its $G$-module
structure). The lemma follows.
\end{proof}

Following~\cite[Lemma~14]{MR2422266}, we will denote by ${\mathfrak C}_{G^\vee}$
the autoequivalence of $D^{G^\vee}_{}(\on{Sym}^{[]}(\gvee))$ induced by
the canonical outer automorphism of $G^\vee$ interchanging conjugacy classes of
$g$ and $g^{-1}$ (the Chevalley involution). Then we have
\begin{equation*}
\on{Forg}\circ{\mathfrak C}_{G^\vee}\circ\Psi^{-1}(\CF)=
H^*_{G_\CO}(\Gr_G,\Areg\otimes^!\CF).
\end{equation*}
Indeed, $H^*_{G_\CO}(\Gr_G,\Areg\otimes^!\CF)=
\begin{NB2}
H^*(\Gr_G,\underline{\on{RHom}}({\mathbb D}\Areg,\CF))=
\end{NB2}%
\on{RHom}_{D_G(\Gr_G)}({\mathbb D}\Areg,\CF)=\on{RHom}_{D_G(\Gr_G)}({\mathbb D}\CF,\Areg)$ where
${\mathbb D}$ stands for the Verdier duality in $D_G(\Gr_G)$. On the other hand,
we know that 
\begin{equation*}
M=\on{RHom}_{D^{G^\vee}(\on{Sym}^{[]}(\gvee))}(\BC[T^*G^\vee],M)=
\on{RHom}_{D^{G^\vee}(\on{Sym}^{[]}(\gvee))}({\mathbf D}M,\BC[T^*G^\vee])
\end{equation*} 
where
${\mathbf D}$ stands for the duality
$M\mapsto\on{RHom}_{\on{Sym}^{[]}(\gvee)}(M,\on{Sym}^{[]}(\gvee))$ in
$D^{G^\vee}_{}(\on{Sym}^{[]}(\gvee))$. But ${\mathbb D}$ and ${\mathbf D}$
differ by ${\mathfrak C}_{G^\vee}$ according to~\cite[Lemma~14]{MR2422266}.

In particular, it follows that the RHS acquires a natural action of
$\on{Sym}^{[]}(\gvee)$ for any $\CF\in D_G(\Gr_G)$.
\end{NB}%

Under the dualities $\mathbf D$, $\DD$ we have
${\mathbf D}\BC[T^*G^\vee]^{[]}=\BC[T^*G^\vee]^{[]}$, while
${\mathbb D}\Areg={\mathcal C}_G\Areg$.

We define $\Phi:={\mathfrak C}_{G^\vee}\circ\Psi^{-1}\colon D_G(\Gr_G)\to
D^{G^\vee}_{}(\on{Sym}^{[]}(\gvee))$. We have $\Phi(\Areg)=\BC[T^*G^\vee]^{[]}$.

\begin{Lemma}
  \label{0.1}
  \textup{(a)} Let us define a functor 
$\on{Ext}^*_{D_G(\Gr_G)}(\Areg,\bullet)\colon D_G(\Gr_G)\to D(\on{Vect})=
\on{Vect}^{\on{gr}}$ by
  \begin{equation*}\on{Ext}^*_{D_G(\Gr_G)}(\Areg,\CF):=
  \bigoplus_\lambda\on{Ext}^*_{D_G(\Gr_G)}(\on{IC}(\ol\Gr{}^\lambda)\otimes(V^\lambda)^*,\CF).
\end{equation*}
It is canonically isomorphic to $\on{Ext}^*_{D_G(\Gr_G)}(\mathbf 1_{\Gr_G},
      \Areg\star\bullet)$ by the rigidity together with
      $\cC_G\DD\Areg = \Areg$ (see \ref{lem:rigidity}).
  Then both $\on{Ext}^*_{D_G(\Gr_G)}(\Areg,\bullet)$ and
  $\on{Ext}^*_{D_G(\Gr_G)}(\mathbf 1_{\Gr_G},%\on{IC}(\ol\Gr{}^0),
  \Areg\star\bullet)$ 
  are canonically isomorphic to the composition $\on{Forg}\circ\Psi^{-1}$,
  where $\on{Forg}$ is the forgetful functor 
  $D^{G^\vee}_{}(\on{Sym}^{[]}(\gvee))\to D(\on{Vect})=\on{Vect}^{\on{gr}}$.
  
Their upgrades
$\on{RHom}_{D_G(\Gr_G)}(\Areg,\bullet)$ and
$\on{RHom}_{D_G(\Gr_G)}(\mathbf 1_{\Gr_G},%\on{IC}(\ol\Gr{}^0),
\Areg\star\bullet)\colon 
  D_G(\Gr_G)\to D^{G^\vee}(\on{Sym}^{[]}(\gvee))$
(see~\ref{def:dg-action}) are canonically isomorphic to $\Psi^{-1}$.

%Moreover, $G^\vee\ltimes\on{Sym}^{[]}(\gvee)$-action on $\Psi^{-1}(\CF)$
%is identified with the one on $\on{RHom}_{D_G(\Gr_G)}(\Areg,\CF)$ induced 
%by the right action on $\Areg$.

%\textup{(b)} The functor $\CF\mapsto i_0^!({\mathbb D}\Areg\star\CF),\
%D_G(\Gr_G)\to D(\on{Vect})$, is equal to the composition $\on{Forg}\circ\Phi$.

\textup{(b)} For $\CF_1,\CF_2\in D_G(\Gr_G)$, there are canonical isomorphisms
$H^*_{G_\cO}(\Gr_G,\cF_1\otimes^! \cF_2) = \on{Ext}^*_{D_G(\Gr_G)}(\BC_{\Gr_G},\cF_1\otimes^!\cF_2)
\cong\on{Ext}^*_{D_G(\Gr_G)}(\BD\CF_1,\CF_2)\cong
    \on{Ext}^*_{D_G(\Gr_G)}(\mathbf 1_{\Gr_G},{\mathcal C}_G\cF_1\star\cF_2)$
\begin{NB}
$=i_0^!({\mathcal C}_G\cF_1\star\cF_2)$\end{NB}% 
in $D(\on{Vect})=\on{Vect}^{\on{gr}}$.

\textup{(c)} Let us define a functor 
$\on{Ext}^*_{D_G(\Gr_G)}(\BD\Areg,\bullet)\colon D_G(\Gr_G)\to D(\on{Vect})=
\on{Vect}^{\on{gr}}$ by
  \begin{equation*}\on{Ext}^*_{D_G(\Gr_G)}(\BD\Areg,\CF):=
  \bigoplus_\lambda\on{Ext}^*_{D_G(\Gr_G)}(\on{IC}(\ol\Gr{}^\lambda)
\otimes V^\lambda,\CF).
\end{equation*}
It is canonically isomorphic to $\on{Ext}^*_{D_G(\Gr_G)}(\mathbf 1_{\Gr_G},
      \cC_G\Areg\star\bullet)$ by the rigidity together with
      $\cC_G\Areg = \BD\Areg$ (see \ref{lem:rigidity}).
We have canonical isomorphisms
\begin{multline*}
\on{Forg}\circ\Phi(\bullet)\iso
\on{Ext}^*_{D_G(\Gr_G)}(\mathbf 1_{\Gr_G},\cC_G\Areg\star\bullet)
\underset{\mathrm{(b)}}{\overset{\sim}{\longrightarrow}}
\Ext^*_{D_G(\Gr_G)}(\BD\Areg,\bullet)\\
\underset{\mathrm{(b)}}{\overset{\sim}{\longrightarrow}}
\on{Ext}^*_{D_G(\Gr_G)}(\BC_{\Gr_G},\Areg\otimes^!\bullet)
\begin{NB}
  = H^*_{G_\CO}(\Gr_G,\Areg\otimes^!\bullet) 
\end{NB}%
\end{multline*}
of functors $D_G(\Gr_G)\to D(\on{Vect})=\on{Vect}^{\on{gr}}$.

The upgraded functors (see~\ref{def:dg-action})
\begin{multline*}
%\begin{equation*}
\Phi\iso\on{RHom}_{D_G(\Gr_G)}({\mathbf 1}_{\Gr_G},\cC_G\Areg\star\bullet)\\\iso
\on{RHom}_{D_G(\Gr_G)}(\BD\Areg,\bullet)\iso
\on{RHom}_{D_G(\Gr_G)}(\BC_{\Gr_G},\Areg\otimes^!\bullet)
\end{multline*}
%\end{equation*}
are isomorphic as functors
from $D_G(\Gr_G)$ to $D^{G^\vee}(\on{Sym}^{[]}(\gvee))$.
\end{Lemma}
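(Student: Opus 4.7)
The plan is to use the derived Satake equivalence $\Psi$ in an essential way for part (a), then deduce parts (b) and (c) from part (a) together with \ref{lem:rigidity} and standard manipulations with Verdier duality. I would proceed as follows.

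First I would prove part (a) by exploiting that, by construction, $\Psi^{-1}(\on{IC}(\ol\Gr{}^\lambda)) = \on{Sym}^{[]}(\gvee)\otimes V^\lambda$ is a free $\on{Sym}^{[]}(\gvee)$-module generated by the $G^\vee$-representation $V^\lambda$. For any $M\in D^{G^\vee}(\on{Sym}^{[]}(\gvee))$ the standard adjunction gives $\on{RHom}_{D^{G^\vee}(\on{Sym}^{[]}(\gvee))}(\on{Sym}^{[]}(\gvee)\otimes V^\lambda, M) \cong (M\otimes(V^\lambda)^*)^{G^\vee}$, and the Peter--Weyl decomposition of $\on{Forg}(M)$ as $\bigoplus_\lambda (M\otimes(V^\lambda)^*)^{G^\vee}\otimes V^\lambda$ identifies $\on{Ext}^*_{D_G(\Gr_G)}(\Areg,\CF)$ with $\on{Forg}\Psi^{-1}(\CF)$. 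The identification with $\on{Ext}^*(\mathbf 1_{\Gr_G},\Areg\star\bullet)$ then follows from \ref{lem:rigidity} applied to $\CG_1=\Areg$, once the elementary identity $\cC_G\BD\Areg=\Areg$ is checked termwise: $\BD\on{IC}(\ol\Gr{}^\lambda)=\on{IC}(\ol\Gr{}^\lambda)$, while $\cC_G$ acts on the index by $\lambda\mapsto-w_0\lambda$ and on multiplicity spaces by $V^\lambda\leftrightarrow(V^\lambda)^*$, so the two effects cancel.

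For part (b), the central isomorphism $\on{Ext}^*(\BD\CF_1,\CF_2)\cong\on{Ext}^*(\mathbf 1_{\Gr_G},\cC_G\CF_1\star\CF_2)$ is exactly \ref{lem:rigidity} applied with $\CG_1=\cC_G\CF_1$, so that $\cC_G\BD\CG_1=\cC_G\BD\cC_G\CF_1=\BD\CF_1$ (using that $\BD$ commutes with $\cC_G$, as both arise from anti-automorphisms respecting group inversion). The remaining isomorphism $\on{Ext}^*(\BD\CF_1,\CF_2)\cong H^*(\Gr_G,\CF_1\otimes^!\CF_2)$ is the global-sections version of the standard identity $\RHHom(\BD\CF_1,\CF_2)\cong\CF_1\otimes^!\CF_2$, derived from $\RHHom(\CF,\CG)=\BD(\CF\otimes\BD\CG)$ and $\BD(\CA\otimes\CB)=\BD\CA\otimes^!\BD\CB$.

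Part (c) follows by assembling parts (a) and (b). Using $\cC_G\circ\Psi=\Psi\circ\mathfrak C_{G^\vee}$ I rewrite $\Phi=\mathfrak C_{G^\vee}\circ\Psi^{-1}=\Psi^{-1}\circ\cC_G$; since $\mathfrak C_{G^\vee}$ only relabels the $G^\vee$-action on the underlying complex, $\on{Forg}\circ\Phi(\CF)=\on{Forg}\circ\Psi^{-1}(\cC_G\CF)$, which by part (a) equals $\on{Ext}^*(\mathbf 1_{\Gr_G},\Areg\star\cC_G\CF)$. Applying the anti-monoidal involution $\cC_G$ (which fixes $\mathbf 1_{\Gr_G}$) together with the commutativity constraint of $\star$ recovers $\on{Ext}^*(\mathbf 1_{\Gr_G},\cC_G\Areg\star\CF)$, giving the first isomorphism of (c); the remaining two come from part (b) with $\CF_1=\Areg$, using $\cC_G\Areg=\BD\Areg$.

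For the upgrade to $D^{G^\vee}(\on{Sym}^{[]}(\gvee))$-valued functors, I would argue that every isomorphism constructed above is natural in $\Areg$ (or its Verdier/Chevalley avatar), and hence automatically intertwines the right $G^\vee\ltimes\on{Sym}^{[]}(\gvee)$-action of \ref{def:dg-action}. Under $\Psi^{-1}$ this right action corresponds to the standard $(G^\vee,\on{Sym}^{[]}(\gvee))$-bimodule structure on $\BC[T^*G^\vee]^{[]}$, with the left factor matching the codomain structure of $\Psi^{-1}$ itself; the Chevalley twist appearing in $\Phi=\mathfrak C_{G^\vee}\circ\Psi^{-1}$ accounts precisely for the left/right swap produced by rigidity. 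The hard part, and the one that requires the most care, will be exactly this bookkeeping: tracking through the rigidity isomorphism, the commutativity constraint, and the Chevalley twist to confirm that the four candidate $G^\vee\ltimes\on{Sym}^{[]}(\gvee)$-actions genuinely agree on the nose (and not merely up to an auto-equivalence such as a $w_0$-twist), for which the formality statement \ref{rem:formality} is likely to be an important input.
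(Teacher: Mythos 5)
Your proof of the $\on{Ext}^*$-level isomorphisms in (a)--(c) is correct and essentially follows the paper's route. The Peter--Weyl decomposition you use in (a) is just an unraveling of the paper's matrix-coefficient isomorphism $\varphi_M$: representability of $\on{Forg}$ by the bimodule $\CC[T^*G^\vee]^{[]}$ and the expansion $\CC[G^\vee]=\bigoplus_\lambda V^\lambda\otimes(V^\lambda)^*$ are the same datum. For the first isomorphism in (c) you go through anti-monoidality of $\cC_G$ plus the commutativity constraint of $\star$, whereas the paper chains (a) through $\BD\Areg=\cC_G\Areg$ and (b); both routes land in the same place. (One small nit: your justification that $\BD$ commutes with $\cC_G$ ``as both arise from anti-automorphisms'' is off -- the correct reason is simply that $\cC_G$ is $\fri^*$ for an isomorphism $\fri$, and Verdier duality commutes with pullback along isomorphisms; the conclusion is fine.)

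The genuine gap is exactly where you flag it, and your proposed resolution does not close it. The upgrade requires that the isomorphism of \emph{complexes} $\Psi^{-1}(\cF)\iso\on{RHom}_{D_G(\Gr_G)}(\mathbf 1_{\Gr_G},\Areg\star\cF)$ respect the $G^\vee\ltimes\on{Sym}^{[]}(\gvee)$-action of \ref{def:dg-action}, i.e.\ the residual right structure on $\Areg$. ``Naturality in $\Areg$'' is too weak to deliver this, because it does not say how the right action is carried across the Satake equivalence. The paper's closing of the gap has two specific ingredients: first, $\varphi_M\colon M\iso\on{RHom}_{D^{G^\vee}(\on{Sym}^{[]}(\gvee))}(\on{Sym}^{[]}(\gvee),\CC[T^*G^\vee]^{[]}\otimes M)$ is an isomorphism at the dg level where the target visibly carries the required action via the residual right structure on $\CC[T^*G^\vee]^{[]}$; second, the derived Satake equivalence $\Psi^{-1}$ is \emph{constructed through dg-categories} compatibly with the formal dg-algebra map $\on{Sym}^{[]}(\gvee)\to\on{RHom}(\Areg,\Areg)$, so this structure transports. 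A termwise $\on{Ext}^*$ computation followed by an appeal to naturality never produces the second point, which is a structural statement about how $\Psi$ was built. Finally, \ref{rem:formality} is not what this lemma uses -- it is invoked later, in \ref{2:0 W}; the relevant input here is the dg-categorical nature of $\Psi$, not formality.
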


\begin{proof}
  \textup{(a)}
  \begin{NB}
For a $G\ltimes\on{Sym}^{[]}(\gvee)$-module $M$ we have
\begin{equation*}
\on{Forg}M=\bigoplus_\lambda\on{Hom}_{G\ltimes\on{Sym}^{[]}(\gvee)}(\on{Sym}^{[]}(\gvee)\otimes
V^\lambda\otimes(V^\lambda)^*,M),
\end{equation*} 
where the third factor $(V^\lambda)^*$ is
understood as the underlying vector space (disregarding its $G$-module
structure).

%$\Psi^{-1}({\mathbb D}\Areg)=\bigoplus_\lambda V^\lambda\otimes
%V^\lambda\otimes\on{Sym}^{[]}(\gvee)$ where the second factor $V^\lambda$ is
%understood as the underlying vector space (disregarding its $G$-module
%structure).
\end{NB}%
We consider $\CC[T^*G^\vee]^{[]}$ as a
$G^\vee\times G^\vee$-equivariant
$(\on{Sym}^{[]}(\gvee), \on{Sym}^{[]}(\gvee))$ bimodule by the left
and right action. We have the canonical matrix coefficient morphisms
\begin{equation}\label{eq:phiM}
  M \xrightarrow[\varphi_M]{\cong}
  \on{RHom}_{D^{G^\vee}(\on{Sym}^{[]}(\gvee))}(\on{Sym}^{[]}(\gvee),\CC[T^*G^\vee]^{[]}\otimes M),
\end{equation}
where $G^\vee\ltimes(\on{Sym}^{[]}(\gvee))$ acts on $\CC[T^*G^\vee]^{[]}$ by
the left action, and the right hand side is regarded as an object in
$D^{G^\vee}(\on{Sym}^{[]}(\gvee))$ by the residual right action on
$\CC[T^*G^\vee]^{[]}$. Here $\varphi_M$ is defined as follows: Given a
$G^\vee$-module $M$, we have ${\boldsymbol\varphi}_M\colon M\otimes M^*\to\CC[G^\vee]$ by
${\boldsymbol\varphi}_M(m\otimes m^*)(g):=\langle gm,m^*\rangle$. It is a morphism of
$G^\vee\times G^\vee$-modules. By swapping $M^*$ to the target, ${\boldsymbol\varphi}_M$ can be
viewed as a morphism $\varphi_M\colon M\to\Hom_{G^\vee}(\BC,\CC[G^\vee]\otimes M)$.
The morphism $\varphi_M$ is an isomorphism. One can think about it as the 
usual fiber functor
$\on{Forg}$ on $\on{Rep}(G^\vee)$ being represented by $\CC[G^\vee]$.
\begin{NB}
  Let $\pi(g)\in\End(M)$ denote the action of $g\in G^\vee$, i.e.\
  $g\cdot m = \pi(g) m$. Then $\pi(\bullet)$ is an $\End(M)$-valued
  function on $G^\vee$. Therefore $\pi(\bullet)m$ for $m\in M$ is an
  $M$-valued function on $G^\vee$. Since
  $g(\pi(g^{-1}\bullet)m) = \pi(\bullet)m$, it is contained in
  $\Hom_{G^\vee}(\CC,\CC[G^\vee]\otimes M)$.
\end{NB}%
Its inverse $\varphi_M^{-1}$ is given by the evaluation
$\CC[G^\vee]\to\CC$ at $g=1$.
We consider this over $\on{Sym}^{[]}(\gvee)$ to get
\eqref{eq:phiM}. We now apply the derived Satake equivalence. We get
\begin{multline*}
  \Psi^{-1}(\cF)
  \underset{\varphi_{\Psi^{-1}(\cF)}}{\overset{\sim}{\longrightarrow}}
  \on{RHom}_{D^{G^\vee}(\on{Sym}^{[]}(\gvee))}(\on{Sym}^{[]}(\gvee),
  \BC[T^*G^\vee]^{[]}\otimes\Psi^{-1}(\CF))\\
  \iso \on{RHom}_{D_G(\Gr_G)}(\mathbf 1_{\Gr_G}, \Areg\otimes\CF).
\end{multline*}
%The first isomorphism is the rigidity plus $\Areg=\cC_G\circ\BD\Areg$.
The second isomorphism holds since the construction of $\Psi^{-1}$ actually
passes through dg-categories (as opposed to being defined at the level of
derived categories),
\begin{NB}\linelabel{line:dg2}
  HN understand this explanation as follows: A dg-category has $\Hom$
  as differential graded modules, i.e., $\ZZ$-graded modules with
  differentials. Since we are inverting quasi-isomorphisms,
  $D^{G^\vee}(\on{Sym}^{[]}(\gvee))$ is not literally a dg-category,
  but it is obtained from the dg-category of dg-modules of
  $\on{Sym}^{[]}(\gvee)$ by taking $H^0(\Hom)$. Then \cite{MR2422266}
  constructed two dg-algebras $A^\bullet$, $B^\bullet$ which are
  quasi-isomorphic and yield $D_G(\Gr_G)$ and
  $D^{G^\vee}(\on{Sym}^{[]}(\gvee))$ by $H^0(\Hom)$ respectively. Note
  that dg-categories $\cC_{\text{dg}}(A^\bullet)$,
  $\cC_{\text{dg}}(B^\bullet)$ of dg $A^\bullet$-modules, dg
  $B^\bullet$-modules are not equivalent as dg-categories (as
  $A^\bullet$, $B^\bullet$ are only quasi-isomorphic), but we regard
  $\on{RHom}$ as objects in $D(\on{Vect})$, hence the claim is true.

  Maybe this and also l.\lineref{line:dg} should be explained a little
  more carefully.
\end{NB}%
and is compatible with the action of the formal dg-algebra
$\on{Sym}^{[]}(\gvee)
\begin{NB}
\to\on{RHom}_{D_G(\Gr_G)}(\Areg,\Areg)\cong
\on{RHom}_{D^{G^\vee}(\on{Sym}^{[]}(\gvee))}(\BC[T^*G^\vee],\BC[T^*G^\vee])
\end{NB}%
$. We have an isomorphism
$\on{RHom}_{D_G(\Gr_G)}(\Areg,\bullet)\cong
\on{RHom}_{D_G(\Gr_G)}(\mathbf 1_{\Gr_G},\Areg\star\bullet)$ by the
rigidity plus $\Areg=\cC_G\circ\BD\Areg$.

\begin{NB}
  An older version:
  
We have the canonical matrix coefficient morphisms
\begin{multline}\label{eq:phiMNB}
%\begin{equation}
  \on{RHom}_{D^{G^\vee}(\on{Sym}^{[]}(\gvee))}(\CC[T^*G]^{[]}, M)
  \xrightarrow[\phi_M]{\cong} M \\
  \xrightarrow[\varphi_M]{\cong}
  \on{RHom}_{D^{G^\vee}(\on{Sym}^{[]}(\gvee))}(\on{Sym}^{[]}(\gvee),\CC[T^*G]^{[]}\otimes M),
%\end{equation}
\end{multline}
where $G^\vee\ltimes(\on{Sym}^{[]}(\gvee))$ acts on $\CC[T^*G]^{[]}$ by
the left action, and both LHS and RHS are regarded as objects in
$D^{G^\vee}(\on{Sym}^{[]}(\gvee))$ by the residual right action on
$\CC[T^*G]^{[]}$. Here $\varphi_M,\phi_M$ are defined as follows: Given a
$G^\vee$-module $M$, we have ${\boldsymbol\varphi}_M\colon M\otimes M^*\to\CC[G^\vee]$ by
${\boldsymbol\varphi}_M(m\otimes m^*)(g):=\langle gm,m^*\rangle$. It is a morphism of
$G^\vee\times G^\vee$-modules. By swapping $M^*$ to the RHS, ${\boldsymbol\varphi}_M$ can be
viewed as a morphism $\varphi_M\colon M\to\Hom_{G^\vee}(\BC,\CC[G^\vee]\otimes M)$.
Alternatively, swapping $\CC[G^\vee]$ to the LHS and $M^*$ to the RHS, ${\boldsymbol\varphi}_M$ 
can be viewed as a morphism $\phi_M\colon\Hom_{G^\vee}(\CC[G^\vee],M)\to M$. The
morphisms $\varphi_M,\phi_M$ are isomorphisms. One can think about it as the 
usual fiber functor
$\on{Forg}$ on $\on{Rep}(G^\vee)$ being represented by $\CC[G^\vee]$.
\begin{NB2}
  Let $\pi(g)\in\End(M)$ denote the action of $g\in G^\vee$, i.e.\
  $g\cdot m = \pi(g) m$. Then $\pi(\bullet)$ is an $\End(M)$-valued
  function on $G^\vee$. If we have
  $\varphi\in\Hom_{G^\vee}(\CC[G^\vee],M)$, we consider
  $\varphi(\pi(\bullet))\in\End(M)\otimes M$. Then $\phi_M(\varphi)$
  is given by $\varphi(\pi(\bullet))$ contracting $\End(M)\otimes M$
  to $M$. On the other hand, $\varphi_M$ is given by
  $m\mapsto (1\to (g\to g\cdot m))$. In particular, $\varphi_M^{-1}$
  is given by the evaluation $\CC[G^\vee]\to\CC$ at $g=1$.

  The isomorphism of rigidity $\bD\BC[G^\vee]\cong\BC[G^\vee]$ is given by 
  ${\mathbf 1}_{G^\vee}=\BC\stackrel{\iota_{G^\vee}}{\longrightarrow}
  \BC[G^\vee]\otimes\BC[G^\vee]
  \stackrel{\tau_{G^\vee}}{\longrightarrow}\BC={\mathbf 1}_{G^\vee}$, 
 where $\iota_{G^\vee}(c)=c$, and $\tau_{G^\vee}(f_1\otimes f_2)=f_1(e)\cdot f_2(e)$.
  Thus the rigidity provides an isomorphism 
  $\Hom_{G^\vee}(\BC[G^\vee],M)\iso\Hom_{G^\vee}(\BC,\BC[G^\vee]\otimes M)$ equal to
  $\varphi_M\circ\phi_M$.
\end{NB2}%
We consider this over $\on{Sym}^{[]}(\gvee)$ to get
\eqref{eq:phiMNB}. We now apply the derived Satake equivalence.
\begin{NB2}
  Please explain why the upgraded functor is isomorphic. Indeed,
the first isomorphism below holds since the construction of $\Psi^{-1}$ actually
passes through dg-categories (as opposed to being defined at the level of
derived categories), and is compatible with the action of formal dg-algebra
$\on{Sym}^{[]}(\gvee)\to\on{RHom}_{D_G(\Gr_G)}(\Areg,\Areg)\cong
\on{RHom}_{D^{G^\vee}(\on{Sym}^{[]}(\gvee))}(\BC[T^*G^\vee],\BC[T^*G^\vee])$.
\end{NB2}%
We get
\begin{multline*}
 \on{RHom}_{D_G(\Gr_G)}(\mathbf 1_{\Gr_G},\Areg\star\CF)\iso 
\on{RHom}_{D_G(\Gr_G)}(\Areg,\CF)\iso
\on{RHom}_{D^{G^\vee}(\on{Sym}^{[]}(\gvee))}(\Psi^{-1}(\Areg),\Psi^{-1}(\CF))\\
 =\on{RHom}_{D^{G^\vee}(\on{Sym}^{[]}(\gvee))}(\BC[T^*G^\vee]^{[]},\Psi^{-1}(\CF))\iso
\Psi^{-1}(\CF).
\end{multline*}
The first isomorphism is the rigidity plus $\Areg=\cC_G\circ\BD\Areg$.
The second isomorphism holds since the construction of $\Psi^{-1}$ actually
passes through dg-categories (as opposed to being defined at the level of
derived categories), and is compatible with the action of the formal dg-algebra
$\on{Sym}^{[]}(\gvee)
\begin{NB2}
\to\on{RHom}_{D_G(\Gr_G)}(\Areg,\Areg)\cong
\on{RHom}_{D^{G^\vee}(\on{Sym}^{[]}(\gvee))}(\BC[T^*G^\vee],\BC[T^*G^\vee])
\end{NB2}%
$.
\end{NB}

\textup{(b)}
\begin{NB}
We have $H^*_{G_\CO}(\Gr_G,\cF_1\otimes^! \cF_2)=
H^*_{G_\CO}(\Gr_G,i_\Delta^!(\cF_1\boxtimes\cF_2))=
    \on{Ext}^*_{D_G(\Gr_G)}({\mathbb D}\cF_1,\cF_2)=\on{Ext}^*_{D_G(\Gr_G)}({\mathbf
      1}_{\Gr_G},{\mathcal C}_G\cF_1\star\cF_2)=
    \on{Ext}^*_{D_G(\Gr_G)}(\on{IC}(\Gr^0_G),{\mathcal C}_G\cF_1\star\cF_2)$
\begin{NB2}$=i_0^!({\mathcal C}_G\cF_1\star\cF_2)$
\end{NB2}%
. Here the third equality holds since ${\mathcal C}_G\circ{\mathbb D}$ is
the rigidity for $(D_G(\Gr_G),\star)$, see~\ref{lem:rigidity}.
\end{NB}%
The first isomorphism is the rigidity for the monoidal category
$(D_G(\Gr_G),\otimes^!)$, while the second is for
$(D_G(\Gr_G),\star)$. See \ref{lem:rigidity}.

\textup{(c)}
\begin{NB}
We have \begin{NB2}$i_0^!({\mathbb D}\Areg\star\CF)\iso$\end{NB2}%
\begin{multline*}\on{RHom}_{D_G(\Gr_G)}(\on{IC}(\Gr^0_G),{\mathbb D}\Areg\star\CF)\iso
\on{RHom}_{D_G(\Gr_G)}({\mathbf 1}_{\Gr_G},{\mathbb D}\Areg\star\CF)\\
\iso\on{RHom}_{D_G(\Gr_G)}({\mathcal C}_G\Areg,\CF)\iso
\on{RHom}_{D_G(\Gr_G)}(\Areg,{\mathcal C}_G\CF),
\end{multline*}
%$$=\on{RHom}_{D^{G^\vee}(\on{Sym}^{[]}(\gvee))}(\on{Sym}^{[]}(\gvee),\bigoplus_\lambda V^\lambda\otimes
%V^\lambda\otimes\on{Sym}^{[]}(\gvee)\otimes_{\on{Sym}^{[]}(\gvee)}\Psi^{-1}
%(\CF)).$$ But for a $G\ltimes\on{Sym}^{[]}(\gvee)$-module $M$ we have
%$$\on{Forg}M=\bigoplus_\lambda
%\on{Hom}_{G\ltimes\on{Sym}^{[]}(\gvee)}(\on{Sym}^{[]}(\gvee),V^\lambda\otimes
%V^\lambda\otimes\on{Sym}^{[]}(\gvee)\otimes_{\on{Sym}^{[]}(\gvee)}M).$$
%\textup{(d)} We have $i_0^!({\mathbb D}\Areg\star\CF)=
%\on{Ext}({\mathbf 1},{\mathbb D}\Areg\star\CF)=
%\on{Ext}({\mathcal C}_G\Areg,\CF)
%=\on{Ext}({\mathbb D}\Areg,\CF)=H^*_{G_\CO}(\Gr_G,\Areg\otimes^!\CF).$
and all the quasiisomorphisms are compatible with the action of
$G^\vee\ltimes\on{Sym}^{[]}(\gvee)$.
Now the last expression is
$\Psi^{-1}\circ\mathcal C_G(\cF)$ by (a), which is nothing but $\Phi(\cF)$.
\end{NB}% 
The first isomorphism is a consequence of (a) together with
${\mathbb D}\Areg={\mathcal C}_G\Areg$. The second and third are nothing but
(b). In order to see that the second and third isomorphisms are upgraded to
$\on{RHom}$, we observe that quasi-isomorphisms
\begin{equation*}
\on{RHom}_{D_G(\Gr_G)}({\mathbf 1}_{\Gr_G},\cC_G\Areg\star\bullet)\iso
\on{RHom}_{D_G(\Gr_G)}(\BD\Areg,\bullet)\iso
\on{RHom}_{D_G(\Gr_G)}(\BC_{\Gr_G},\Areg\otimes^!\bullet)
\end{equation*}
%\begin{equation*}
%  \on{RHom}_{D_G(\Gr_G)}({\mathbf 1}_{\Gr_G},{\mathbb D}\Areg\star\CF)
%  \iso
%  \on{RHom}_{D_G(\Gr_G)}({\mathcal C}_G\Areg,\CF)
%  \iso
%  \on{RHom}_{D_G(\Gr_G)}(\Areg,{\mathcal C}_G\CF)
%\end{equation*}
are compatible with the action of $G^\vee\ltimes\on{Sym}^{[]}(\gvee)$.
\end{proof}

Let us suppose $\cF\in D_G(\Gr_G)$ is a ring object, i.e.\ it is
equipped with a commutative multiplication homomorphism
$\mathsf m_\cF\colon \cF\star\cF \to \cF$.
Then $\Psi^{-1}(\cF)\in D^{G^\vee}(\on{Sym}^{[]}(\gvee))$ is also a
ring object, i.e.\ it is equipped with
$\Psi^{-1}(\mathsf m_\cF)\colon
\Psi^{-1}(\cF)\otimes_{\on{Sym}^{[]}(\gvee)}\Psi^{-1}(\cF)\to\Psi^{-1}(\cF)$.
The same is true for $\Phi$.
On the other hand,
$\on{RHom}_{D_G(\Gr_G)}(\mathbf 1_{\Gr_G},\Areg\star\cF) =
\on{RHom}_{D_G(\Gr_G)}(\Areg,\cF)$ in \ref{0.1}(b) is equipped with a
multiplication by $\mathsf m_\cF$ and $\sfm_{\Areg}$ (equivalently, a
coproduct $\cC_G\BD\sfm_{\Areg}\colon \Areg\to\Areg\star\Areg$).
%corresponding to the same named coproduct $\varDelta\colon 
%\BC[T^*G^\vee]^{[]}\to\BC[T^*G^\vee]^{[]}\otimes\BC[T^*G^\vee]^{[]}$
%arising from the coproduct $\CC[G^\vee]\to\CC[G^\vee]\otimes\CC[G^\vee]$.
%
Similarly,
$\on{RHom}_{D_G(\Gr_G)}(\mathbf 1_{\Gr_G}, \cC_G\Areg\star\cF) =
\on{RHom}_{D_G(\Gr_G)}(\BD\Areg, \cF)$ in \ref{0.1}(c) is equipped
with a multiplication by $\mathsf m_\cF$ and
$\cC_G\sfm_{\Areg}\colon \cC_G\Areg\star\cC_G\Areg\to\cC_G\Areg$
(equivalently, a coproduct
$\BD\sfm_{\Areg}\colon\BD\Areg\to\BD\Areg\star\BD\Areg$).
Finally, a multiplication on 
$\on{RHom}_{D_G(\Gr_G)}(\CC_{\Gr_G},\Areg\otimes^!\cF)$ is defined as 
in~\ref{prop:!-product}.
\begin{NB}
  Really ? What is $\Areg\star\Areg\to\Areg$ (or 
  $\mathcal C_G\Areg\to \mathcal C_G\Areg\star\mathcal C_G\Areg$,
  $\DD\Areg\star\DD\Areg\to\Areg$) ? Isn't it just $\cC_G\sfm$ ?

  \begin{NB2}
    Misha's message on Dec.\ 2, 2017:
  Dear Hiraku,

  I suggest to use the isomorphism $\DD\Areg=\cC_G\Areg$ and the
  covariant autoequivalence $\cC_G$ to transfer the multiplication
  ${\mathsf m}$ and comultiplication $\varDelta$ from $\Areg$ to
  $\DD\Areg=\cC_G\Areg$. Note that ${\mathsf m}$ is commutative, while
  $\varDelta$ is not cocommutative, so if we swap their duals for
  $\DD\Areg$, we obtain something different (more like $U(\fg^\vee)$).
  \end{NB2}
\end{NB}%

\begin{Proposition}
\label{prop:comproducts}
\textup{(a)} Multiplications on $\Psi^{-1}(\CF)$,
$\on{RHom}_{D_G(\Gr_G)}(\Areg,\cF)$ and
$\on{RHom}_{D_G(\Gr_G)}\linebreak[3]({\mathbf 1}_{\Gr_G},\Areg\star\cF)$ are equal
under the isomorphism in \ref{0.1}(a).

\textup{(b)} The same is true for $\Phi(\CF)$,
$\on{RHom}_{D_G(\Gr_G)}(\mathbf 1_{\Gr_G},\cC_G\Areg\star\cF)$,
$\on{RHom}_{D_G(\Gr_G)}(\BD\Areg, \cF)$, and
$\on{RHom}_{D_G(\Gr_G)}(\CC_{\Gr_G},\Areg\otimes^!\cF)$, under the
isomorphisms of~\ref{0.1}(c).
\end{Proposition}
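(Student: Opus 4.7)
The plan is to reduce each of the three multiplications in (a) to a single canonical multiplication defined purely in terms of the monoidal structure, and then observe that the isomorphisms of \ref{0.1}(a) intertwine these canonical multiplications by naturality, formal monoidality of $\Psi$, and the Yoneda lemma. First I would compare the multiplications on $\on{RHom}_{D_G(\Gr_G)}({\mathbf 1}_{\Gr_G},\Areg\star\cF)$ and $\on{RHom}_{D_G(\Gr_G)}(\Areg,\cF)$. Under the rigidity of $(D_G(\Gr_G),\star)$ from \ref{lem:rigidity}, which uses $\cC_G\BD\Areg=\Areg$, a morphism $x\colon{\mathbf 1}_{\Gr_G}\to\Areg\star\cF$ corresponds to $\tilde{x}\colon\Areg\to\cF$. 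The multiplication built from the coproduct $\cC_G\BD\sfm_\Areg\colon\Areg\to\Areg\star\Areg$ and $\sfm_\cF\colon\cF\star\cF\to\cF$ then translates, under rigidity, precisely into the Yoneda-style composition $\tilde{x}\cdot\tilde{y}\colon\Areg\xrightarrow{\cC_G\BD\sfm_\Areg}\Areg\star\Areg\xrightarrow{\tilde{x}\star\tilde{y}}\cF\star\cF\xrightarrow{\sfm_\cF}\cF$, which is exactly the multiplication on $\on{RHom}(\Areg,\cF)$ induced by $\sfm_\Areg$ and $\sfm_\cF$. This is a direct diagram chase using the naturality of rigidity.

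Next I would compare $\on{RHom}_{D_G(\Gr_G)}(\Areg,\cF)$ with $\Psi^{-1}(\cF)$. Applying the monoidal equivalence $\Psi$, this becomes the claim that for $M=\Psi^{-1}(\cF)\in D^{G^\vee}(\on{Sym}^{[]}(\gvee))$, the matrix-coefficient isomorphism
\begin{equation*}
\varphi_M\colon M\iso\on{RHom}_{D^{G^\vee}(\on{Sym}^{[]}(\gvee))}(\BC[T^*G^\vee]^{[]},M)
\end{equation*}
used in the proof of \ref{0.1}(a) intertwines the ring object multiplication on $M$ with the multiplication on the right hand side induced by $\Psi^{-1}(\sfm_\cF)$ and the coproduct on $\BC[T^*G^\vee]^{[]}$ corresponding under $\Psi$ to $\sfm_\Areg$. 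Since $\Psi$ is a monoidal equivalence it sends $\sfm_\cF$ to $\Psi^{-1}(\sfm_\cF)$ and the coproduct on $\Areg$ to a coproduct on $\BC[T^*G^\vee]^{[]}$; thus the compatibility reduces to the purely algebraic statement that, for a ring object $M$ in $\on{Rep}(G^\vee)$ over $\on{Sym}^{[]}(\gvee)$, the classical matrix-coefficient map $m\mapsto(g\mapsto gm)$ is a ring homomorphism, where the target is endowed with the convolution induced by the coproduct of $\BC[G^\vee]$ and multiplication of $M$. This is the standard Yoneda/Tannakian compatibility.

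The main obstacle will be performing these checks at the dg-level rather than only at the level of derived categories, so that the multiplications, which genuinely need a dg-enhancement to be defined, really do match. One addresses this in the same spirit as in the proof of \ref{0.1}(a) (see also \cite{MR2422266}): the derived Satake equivalence passes through explicit dg-algebra quasi-isomorphisms, and the formality result of \ref{rem:formality} applied to $\Areg$ and its $\otimes^!$-powers guarantees that the $A_\infty$-structures on $\on{RHom}$-complexes collapse to their cohomological multiplications. With this dg-enhancement in place, the two preceding paragraphs carry over verbatim.

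Part (b) is then a formal consequence of (a). The isomorphisms in \ref{0.1}(c) factor as $\Phi=\mathfrak C_{G^\vee}\circ\Psi^{-1}$ together with the chain of rigidities connecting $\on{RHom}(\BD\Areg,\cF)$, $\on{RHom}({\mathbf 1}_{\Gr_G},\cC_G\Areg\star\cF)$ and $\on{RHom}(\BC_{\Gr_G},\Areg\otimes^!\cF)$ via \ref{0.1}(b) and \ref{lem:rigidity}. Since $\mathfrak C_{G^\vee}$, $\cC_G$ and $\BD$ are (anti)equivalences intertwining the relevant monoidal structures and exchanging $\Areg$ with $\BD\Areg=\cC_G\Areg$ together with $\sfm_\Areg$ and $\BD\sfm_\Areg$ in a compatible way, the four multiplications in (b) are obtained from those in (a) by applying these (anti)equivalences, and hence their coincidence follows from part (a).
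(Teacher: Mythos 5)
Your argument for part (a) is essentially correct and follows the paper's approach: the comparison of the three multiplications reduces, after applying rigidity for $\star$ and unwinding the matrix-coefficient isomorphism $\varphi_M$, to the observation that evaluation at the identity $\on{ev}_1\colon\CC[G^\vee]\to\CC$ is a ring homomorphism, plus the dg-level concerns you correctly flag and handle via formality.

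Part (b) has a genuine gap. You assert that all four multiplications are obtained from those in (a) by applying the (anti)equivalences $\mathfrak C_{G^\vee}$, $\cC_G$, $\BD$, but this is only true for the first three (those involving $\Phi$, $\cC_G\Areg\star\cF$ and $\BD\Areg$). The fourth multiplication, on $\on{RHom}_{D_G(\Gr_G)}(\CC_{\Gr_G},\Areg\otimes^!\cF)$, is \emph{not} defined by duality from the $\star$-picture: it is the gluing construction's multiplication from \ref{prop:!-product}, built from the morphism
$(\Areg\otimes^!\cF)\star(\Areg\otimes^!\cF)\to(\Areg\star\Areg)\otimes^!(\cF\star\cF)$
followed by $\sfm_{\Areg}\otimes^!\sfm_\cF$. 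What remains to be proven --- and is the bulk of the paper's proof of (b) --- is that the rigidity isomorphism $\Areg\otimes^!\cF\cong\RHHom(\DD\Areg,\cF)$ from \cite[Prop.~3.4.6]{KaSha} intertwines this gluing multiplication with the $\RHHom$-multiplication built from $\sfm_\cF$ and $\DD\sfm_\Areg$. This is a nontrivial sheaf-theoretic check: the paper reduces it to the commutativity of the diagram comparing the two maps $(\Areg\otimes^!\cF)\star(\Areg\otimes^!\cF)\to(\Areg\star\Areg)\otimes^!(\cF\star\cF)$ and $\RHHom(\DD\Areg,\cF)\star\RHHom(\DD\Areg,\cF)\to\RHHom(\DD\Areg\star\DD\Areg,\cF\star\cF)$ under the rigidity isomorphism, and that commutativity is established by unfolding both horizontal arrows through the convolution factorization $\star=m_*(q^!)^{-1}p^!$ and checking compatibility of the Kashiwara--Schapira adjunction morphisms at each of the three stages. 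Your appeal to ``formal consequence of (a)'' does not supply this step, since the $\otimes^!$-multiplication originates from a completely separate construction (\ref{prop:!-product}) that was never routed through rigidity or the derived Satake equivalence.
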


\begin{proof}
\textup{(a)} 
\begin{NB}
Old version: 
For an algebra $A$ in the category
$D^{G^\vee}(\on{Sym}^{[]}(\gvee))$, the following diagram is commutative:
\begin{equation*}
\tiny{\begin{CD}
\on{RHom}_{D^{G^\vee}(\on{Sym}^{[]}(\gvee))}(\on{Sym}^{[]}(\gvee),\BC[T^*G^\vee]^{[]}\otimes A)\otimes
\on{RHom}_{D^{G^\vee}(\on{Sym}^{[]}(\gvee))}(\on{Sym}^{[]}(\gvee),\BC[T^*G^\vee]^{[]}\otimes A)
@<\sim<\varphi_A\otimes\varphi_A< A\otimes A \\
@VVV @| \\
\on{RHom}_{D^{G^\vee}(\on{Sym}^{[]}(\gvee))}(\on{Sym}^{[]}(\gvee),\BC[T^*G^\vee]^{[]}\otimes
\BC[T^*G^\vee]^{[]}\otimes A\otimes A) @. A\otimes A \\
@V{\sfm_{\BC[T^*G^\vee]^{[]}}}VV @| \\
\on{RHom}_{D^{G^\vee}(\on{Sym}^{[]}(\gvee))}(\on{Sym}^{[]}(\gvee),\BC[T^*G^\vee]^{[]}\otimes A\otimes A) 
@<\sim<\varphi_{A\otimes A}< A\otimes A \\
@V{\sfm_A}VV @V{\sfm_A}VV \\
\on{RHom}_{D^{G^\vee}(\on{Sym}^{[]}(\gvee))}(\on{Sym}^{[]}(\gvee),\BC[T^*G^\vee]^{[]}\otimes A) @<\sim<\varphi_A< A.
\end{CD}
}
\end{equation*}
Equivalently, the following diagram is commutative:
\begin{equation*}
\begin{CD}
\on{RHom}_{D^{G^\vee}(\on{Sym}^{[]}(\gvee))}(\BC[T^*G^\vee]^{[]},A)\otimes
\on{RHom}_{D^{G^\vee}(\on{Sym}^{[]}(\gvee))}(\BC[T^*G^\vee]^{[]},A)
@>\sim>\phi_A\otimes\phi_A> A\otimes A \\
@VVV @| \\
\on{RHom}_{D^{G^\vee}(\on{Sym}^{[]}(\gvee))}(\BC[T^*G^\vee]^{[]}\otimes
\BC[T^*G^\vee]^{[]},A\otimes A) @. A\otimes A \\
@V{\bD\sfm_{\BC[T^*G^\vee]^{[]}}}VV @| \\
\on{RHom}_{D^{G^\vee}(\on{Sym}^{[]}(\gvee))}(\BC[T^*G^\vee]^{[]},A\otimes A) 
@>\sim>\phi_{A\otimes A}> A\otimes A \\
@V{\sfm_A}VV @V{\sfm_A}VV \\
\on{RHom}_{D^{G^\vee}(\on{Sym}^{[]}(\gvee))}(\BC[T^*G^\vee]^{[]},A) @>\sim>\phi_A> A.
\end{CD}
\end{equation*}
Here in the middle left vertical morphism we identify
$\BC[T^*G^\vee]^{[]}=\bD\BC[T^*G^\vee]^{[]}$, so that $\bD\sfm_{\BC[T^*G^\vee]^{[]}}$
goes from $\BC[T^*G^\vee]^{[]}=\bD\BC[T^*G^\vee]^{[]}$ to
$\bD\BC[T^*G^\vee]^{[]}\otimes\bD\BC[T^*G^\vee]^{[]}=
\BC[T^*G^\vee]^{[]}\otimes\BC[T^*G^\vee]^{[]}$.
We now apply the derived Satake equivalence to 
$A=\Psi^{-1}(\CF)$.
\end{NB}%
    The isomorphism
    $\on{RHom}_{D_G(\Gr_G)}(\Areg,\cF)\cong
    \on{RHom}_{D_G(\Gr_G)}(\mathbf 1_{\Gr_G},\Areg\star\cF)$
    is given by the rigidity, and respects the multiplication by
    definition. Therefore it is enough to check the compatibility
    under the isomorphism
    $\Psi^{-1}(\cF) \cong \on{RHom}_{D_G(\Gr_G)}(\mathbf
    1_{\Gr_G},\Areg\star\cF)$.
    This isomorphism is nothing but $\varphi_{M}$
    (where $M = \Psi^{-1}(\cF)$) in \eqref{eq:phiM} under the derived Satake
    equivalence. Therefore it is enough to check that $\varphi_{M}$
    respects the multiplication when $M$ is an algebra in the category
    $D^{G^\vee}(\on{Sym}^{[]}(\gvee))$. This is trivial as
    $\varphi_{M}^{-1}$ is given by the evaluation
    $\on{ev}_1\colon \CC[G^\vee]\to \CC$ at $1\in G^\vee$.
    \begin{NB}
  \begin{equation*}
    \begin{CD}
      \CC[G^\vee]\otimes\CC[G^\vee]
      @>\on{ev}_1\otimes\on{ev}_1>> \CC
      \\
      @V{m_{\CC[G^\vee]}}VV @|
      \\
      \CC[G^\vee]@>>\on{ev}_1>\CC
    \end{CD}
  \end{equation*}
  is commutative as $f\otimes g$ is sent to $f(1)g(1) = (fg)(1)$.
\end{NB}%

\textup{(b)} Applying (a) to ${\mathcal C}_G\CF$, we see that the
multiplications on $\Phi(\CF)$ and
$\on{RHom}_{D_G(\Gr_G)}(\mathbf 1_{\Gr_G},
\linebreak[2]\cC_G\Areg\star\cF)=\on{RHom}_{D_G(\Gr_G)}(\BD\Areg, \cF)$
are equal under the isomorphisms of~\ref{0.1}(c). It remains to
compare them with the multiplication on
$\on{RHom}_{D_G(\Gr_G)}(\CC_{\Gr_G},\Areg\otimes^!\cF)$ defined
in~\ref{prop:!-product} as the composition
\begin{multline*}\on{RHom}_{D_G(\Gr_G)}(\CC_{\Gr_G},\Areg\otimes^!\CF)\otimes 
\on{RHom}_{D_G(\Gr_G)}(\CC_{\Gr_G},\Areg\otimes^!\CF)\\
\to\on{RHom}_{D_G(\Gr_G)}\left(\CC_{\Gr_G},(\Areg\otimes^!\CF)\star
(\Areg\otimes^!\CF)\right)\\
\xrightarrow{\eqref{eq:2.40.1}}
\on{RHom}_{D_G(\Gr_G)}\left(\CC_{\Gr_G},(\Areg\star\Areg)\otimes^!
(\CF\star\CF)\right)
\stackrel{\mathsf m}{\to}\on{RHom}_{D_G(\Gr_G)}(\CC_{\Gr_G},\Areg\otimes^!\CF).
\end{multline*}

Note that
$\on{RHom}_{D_G(\Gr_G)}(\DD\Areg, \cF) \cong
\on{RHom}_{D_G(\Gr_G)}(\CC_{\Gr_G}, \RHHom(\DD\Areg,\cF))$.
Recall that the convolution product $\star$ is defined as
$m_* (q^*)^{-1} p^*$, see \eqref{eq:40}, where we omit $\bar{\ \ }$ for
brevity.
Since $p$ and $q$ are smooth with fibers both $G_\cO$, we have
$(q^*)^{-1} p^* = (q^!)^{-1} p^!$.
By \cite[(2.6.24)]{KaSha} for $m_*$ and \cite[Prop.~3.1.13]{KaSha} for
$p^!$, $q^!$, we have a natural homomorphism
\begin{equation}\label{eq:starHom}
  \RHHom(\DD\Areg,\cF) \star \RHHom(\DD\Areg,\cF)
  \to \RHHom(\DD\Areg\star\DD\Areg, \cF\star\cF).
\end{equation}
\begin{NB}
\begin{multline*}
  m_* (q^!)^{-1} p^! \left(\RHHom(\DD\Areg,\cF) \boxtimes
    \RHHom(\DD\Areg,\cF)\right)
  \\
  \to
  \RHHom(m_* (q^!)^{-1} p^! (\DD\Areg\boxtimes\DD\Areg),
  m_* (q^!)^{-1} p^! (\cF\boxtimes\cF))
\end{multline*}
\end{NB}%
Hence we have the multiplication on
$\on{RHom}_{D_G(\Gr_G)}(\CC_{\Gr_G}, \RHHom(\DD\Areg,\cF))$ by
$\sfm_\cF$ and $\DD\sfm_{\Areg}$. Then the isomorphism
$\on{RHom}_{D_G(\Gr_G)}(\DD\Areg, \cF) \cong
\on{RHom}_{D_G(\Gr_G)}(\CC_{\Gr_G}, \RHHom(\DD\Areg,\cF))$ is
compatible with the multiplication.
\begin{NB}
  Let us first consider \cite[(2.6.24)]{KaSha}. It is given, e.g.,
  by the combination of \cite[(2.6.15) and (2.6.17)]{KaSha} as
  $f_*\RHHom(G_1, G_2)\to f_*\RHHom(f^* f_* G_1, G_2)
  \cong \RHHom(f_* G_1, f_* G_2)$.
  By the definition, it is compatible with the `global' homomorphisms
  $\on{RHom}_{D(Y)}(G_1,G_2) \to \on{RHom}_{D(X)}(f_* G_1, f_* G_2)$
  under
  \begin{equation*}
    \begin{split}
      & \on{RHom}_{D(Y)}(G_1,G_2) = \on{RHom}_{D(Y)}(\CC_Y,
      \RHHom(G_1,G_2)) = \on{RHom}_{D(X)}(\CC_X, f_*\RHHom(G_1,G_2)),\\
      & \on{RHom}_{D(X)}(\CC_X, \RHHom(f_* G_1,f_* G_2)) =
  \on{RHom}_{D(X)}(f_*G_1,f_*G_2).
    \end{split}
  \end{equation*}
  \begin{NB2}
  Next consider \cite[(2.6.27)]{KaSha}. It is given, e.g., by the
  combination of \cite[(2.6.15), (2.6.16), (2.6.17)]{KaSha} as
  $f^* \RHHom(F_1, F_2) \to f^*\RHHom(F_1, f_* f^* F_2)
  \cong f^* f_* \RHHom(f^* F_1, f^* F_2)
  \to \RHHom(f^*F_1, f^* F_2)$.
  We have the global homomorphism
  $\on{RHom}_{D(X)}(F_1,F_2) \to \on{RHom}_{D(Y)}(f^* F_1, f^* F_2)$,
  which is compatible with the interior one under
  \begin{equation*}
    \begin{split}
      & \on{RHom}_{D(X)}(F_1,F_2) = \on{RHom}_{D(X)}(\CC_X,
      \RHHom(F_1,F_2)) \xrightarrow{\id\to f_* f^*}
      \on{RHom}_{D(Y)}(\CC_Y, f^*\RHHom(F_1,F_2)),
      \\
      & \on{RHom}_{D(Y)}(f^* F_1, f^* F_2) = \Hom_{D(Y)}(\CC_Y,
      \RHHom(f^*F_1,f^*F_2)).
    \end{split}
  \end{equation*}
  \end{NB2}%

  Next consider \cite[Prop.~3.1.13]{KaSha}. Note $f^! = f^*[2d]$ ($d$
  is the dimension of fibers). We have a homomorphism
  \begin{multline*}
    \on{RHom}_{D(X)}(F_1,F_2) = \on{RHom}_{D(X)}(\CC_X,
    \RHHom(F_1,F_2))\\
    \xrightarrow{f_! f^!\to \id}
      \on{RHom}_{D(Y)}(f_! f^!\CC_X, \RHHom(F_1,F_2)) \\
      \cong \on{RHom}_{D(Y)}(\CC_Y[2d], f^!\RHHom(F_1,F_2))\\
      \xrightarrow[\cong]{\text{\cite[Prop.~3.1.13]{KaSha}}}
      \on{RHom}_{D(Y)}(\CC_Y[2d], \RHHom(f^* F_1, f^! F_2)) \\
      \cong \on{RHom}_{D(Y)}(\CC_Y, \RHHom(f^* F_1, f^* F_2))
      \cong \on{RHom}_{D(Y)}(f^* F_1, f^* F_2).
    \end{multline*}
    In view of the proof of \cite[Prop.~3.1.13]{KaSha} with
    $G = f^!\CC_X = \CC_Y[2d]$, this is nothing but
    \begin{equation*}
      \on{RHom}_{D(X)}(\CC_X\otimes F_1, F_2) \to
      \on{RHom}_{D(Y)}(f^!\CC_X\otimes f^* F_1, f^! F_2)
    \end{equation*}
    in \cite[Prop.~3.1.11]{KaSha}. It is the `global' homomorphism.
\end{NB}%

Now our remaining task is to check that the isomorphism
$\Areg\otimes^!\cF\cong \RHHom(\DD\Areg,\cF)$ (see
\cite[Prop.~3.4.6]{KaSha}) is compatible with the multiplication.
Since the following diagram commutes:
\begin{equation*}
\begin{CD}
  (\Areg\star\Areg)\otimes^!(\CF\star\CF)
  @>{\sfm_{\Areg}\otimes^!\sfm_\CF}>> \Areg\otimes^!\CF\\
  @V{\wr}VV @V{\wr}VV\\
  \RHHom(\BD\Areg\star\BD\Areg,\CF\star\CF)
  @>{\sfm_\CF\circ\BD\sfm_{\Areg}}>>
  \RHHom(\BD\Areg,\CF),
\end{CD}
\end{equation*}
\begin{NB}
  Note we implicitly use
  $\DD\Areg\star\DD\Areg \cong \DD(\Areg\star\Areg)$ here, as we apply
  $\DD\sfm_{\Areg}$. To show this isomorphism, we again use
  $(q^*)^{-1} p^* = (q^!)^{-1} p^!$.
\end{NB}%
the proof is reduced to the commutativity of
\begin{equation}
  \label{eq:241vs618}
  \begin{CD}
    (\Areg\otimes^!\cF)\star (\Areg\otimes^!\cF) @>\eqref{eq:2.40.1}>>
    (\Areg\star\Areg)\otimes^! (\cF\star\cF) \\
    @V{\wr}VV @V{\wr}VV\\
    \RHHom(\DD\Areg,\cF)\star \RHHom(\DD\Areg,\cF)
    @>>\eqref{eq:starHom}>
    \RHHom(\DD\Areg\star\DD\Areg,\cF\star\cF).
  \end{CD}
\end{equation}
Recall that horizontal arrows in \eqref{eq:241vs618} are defined as
composite of homomorphisms for $p^!$, $q^!$, $m_*$ under
$\star = m_* (q^!)^{-1} p^!$. Thus the commutativity follows from
compatibilities of homomorphisms for $p^!$, $q^!$, $m_*$ under the
isomorphism \cite[Prop.~3.4.6]{KaSha}. We leave the reader to check
the detail.
\begin{NB}
  A reference for the rigidity $(D_G(\Gr_G),\otimes^!)$ is
  \cite[Prop.~3.4.6]{KaSha}. It is given as the composite
  \begin{equation}\label{eq:3.4.6}
    \begin{split}
      & \cF\otimes^! \DD\cG \cong \DD(\DD\cF\otimes \cG) \cong
    \RHHom(\DD\cF\otimes\cG,\DC_X)
    \xrightarrow[\sim]{\text{\cite[(2.6.7)]{KaSha}}}
    \RHHom(\cG,\RHHom(\DD\cF,\DC_X))\\
    \cong\; & \RHHom(\cG,\DD\DD\cF)
    \xrightarrow[\sim]{\text{\cite[Prop.~3.4.3]{KaSha}}} \RHHom(\cG,\cF).
    \end{split}
  \end{equation}
  We need to use the compatibility of this rigidity with various
  functors. For example, consider $!$-pullback. We have
  \cite[Prop.~2.6.5, Prop.~3.1.13]{KaSha}:
  \begin{gather*}
    f^*\cF_1\otimes f^*\cF_2 \cong f^*(\cF_1\otimes \cF_2),\\
    f^!\RHHom(\cF_1,\cF_2) \cong \RHHom(f^*\cF_1, f^!\cF_2).
  \end{gather*}
  Thus
  \begin{gather*}
    f^! (\cF\otimes^!\DD\cG) \cong f^!\cF \otimes^! f^!\DD\cG
    \cong f^!\cF\otimes^! \DD f^*\cG,\\
    f^!\RHHom(\cG,\cF) \cong \RHHom(f^*\cG, f^!\cF).
  \end{gather*}
  Thus it is enough to check that \eqref{eq:3.4.6} is compatible with
  respect to $f^!$.

  \cite[Prop.~3.1.13]{KaSha} is proved as follows:
  \begin{equation*}
    \begin{split}
    & \on{Hom}(\mathcal H, f^!\RHHom(\cG,\cF)) \cong
    \on{Hom}(f_!\mathcal H, \RHHom(\cG, \cF)) \\
    & \xrightarrow[\cong]{\text{\cite[Prop.~2.6.3]{KaSha}}}
    \on{Hom}(f_!\mathcal H\otimes \cG, \cF)
    \xrightarrow[\cong]{\text{\cite[Prop.~2.6.6]{KaSha}}}
    \on{Hom}(f_!(\mathcal H\otimes f^*\cG), \cF)\\
    & \cong
    \on{Hom}(\mathcal H\otimes f^*\cG, f^! \cF)
    \cong
    \on{Hom}(\mathcal H, \RHHom(f^*\cG, f^! \cF)).
    \end{split}
  \end{equation*}
  \begin{NB2}
  For pushforward:
  \begin{equation*}
    \begin{split}
    & \on{RHom}(G_1\otimes^! G_2, G_1\otimes^! G_2)
    \xrightarrow{\text{\cite[(2.6.17)]{KaSha}}}
    \on{RHom}(G_1\otimes^! G_2, f^!f_!G_1 \otimes^! f^! f_! G_2) \\
    & \xrightarrow[\cong]{\text{\cite[Prop.~2.6.5]{KaSha}}}
    \on{RHom}(G_1\otimes^! G_2, f^!(f_!G_1 \otimes^! f_! G_2))
    \cong 
    \on{RHom}(f_!(G_1\otimes^! G_2), f_! G_1\otimes^! f_! G_2).
    \end{split}
  \end{equation*}
  \begin{equation*}
    \begin{split}
    & \on{RHom}(\RHHom(F,G), \RHHom(F,G))
    \xrightarrow{\text{\cite[(2.6.17)]{KaSha}}}
    \on{RHom}(\RHHom(F,G), \RHHom(f^* f_* F, f^! f_! G)) \\
    & \xrightarrow[\cong]{\text{\cite[Prop.~3.1.13]{KaSha}}}
    \on{RHom}(\RHHom(F,G), f^!\RHHom(f_* F, f_! G))
    \cong 
    \on{RHom}(f_!\RHHom(F, G), \RHHom(f_* F, f_! G)).
    \end{split}
  \end{equation*}
  HN cannot check the assertion in this way.
  \end{NB2}

  In order to check the commutativity of~\eqref{eq:241vs618},
  we have to check the commutativity of the diagram
  \begin{equation}
    \label{pushforward}
    \begin{CD}
      m_*(\cX\otimes^!\cY) @>{\BD{\text{\cite[(2.6.22)]{KaSha}}}}>>
      m_*\cX\otimes^!m_*\cY\\
      @V{\wr}VV @V{\wr}VV\\
      m_*\RHHom(\BD\cX,\cY) @>{\text{\cite[(2.6.24)]{KaSha}}}>>
      \RHHom(m_*\BD\cX,m_*\cY)
    \end{CD}
  \end{equation}
  for a proper morphism $m$, as well as the commutativity of the
  diagram
  \begin{equation}
    \label{pullback}
    \begin{CD}
      p^*(\cX\otimes^!\cY) @= \BD p^!(\BD\cX\otimes\BD\cY) @>\eta>>
      p^!\cX\otimes^!p^*\cY\\
      @V{\wr}VV @. @V{\wr}VV\\
      p^*\RHHom(\BD\cX,\cY) @>{\text{\cite[(2.6.27)]{KaSha}}}>>
      \RHHom(p^*\BD\cX,p^*\cY) @= \RHHom(\BD p^!\cX,p^*\cY)
    \end{CD}
  \end{equation}
  for a smooth morphism $p$, where $\eta$ is the composition
  \begin{equation*}
    \BD p^!(\BD\cX\otimes\BD\cY)
    \xrightarrow{\BD\text{\cite[3.1.11]{KaSha}}}
    \BD(p^*\BD\cX\otimes p^!\BD\cY)\cong p^!\cX\otimes^!p^*\cY.
  \end{equation*}
  Here I am confused: should we also use the isomorphism of $p^*$ and
  $p^!$ up to shift for a smooth $p$?

  It suffices to check the commutativity at the level of costalks
  at the points $w\in W\xrightarrow{p}Z$. We denote the embedding of
  $z=p(w)$ into $Z$ by $u_z$. We denote the relative dimension of $p$
  by $d$. Then the costalk of $p^*(\cX\otimes^!\cY)$ at $w$ is
  $u_z^!\cX\otimes u_z^!\cY[-2d]$ as well as the costalk of
  $p^!\cX\otimes^!p^*\cY$ at $w$, and $\eta$ is identity.
  On the other hand, the costalk of $p^*\RHHom(\BD\cX,\cY)$ at $w$
  is $\Hom(u_z^*\BD\cX,u_z^!\cY)[-2d]=
  \Hom((u_z^!\cX)^\vee,u_z^!\cY)[-2d]$
  by~\cite[Proposition~3.1.13]{KaSha} as well as the costalk of
  $\RHHom(\BD p^!\cX,p^*\cY)$ at $w$, and the morphism
  of~\cite[(2.6.27)]{KaSha} is identity.

  As for~\eqref{pushforward}, it suffices to check the commutativity
  at the level of costalks at the points $z\in Z\xleftarrow{m}W$. Let
  $W_z$ be the fiber of $W$ over $z\in Z$, and let
  $i_z\colon W_z\hookrightarrow W$ be its closed embedding. Then we can
  replace $W$ by $W_z$, and $Z$ by $z$, and $\cX,\cY$ by
  $i_z^!\cX,i_z^!\cY$. So the claim follows from the commutativity of
  \begin{equation*}
     \begin{CD}
      R\Gamma(W,\cX\otimes^!\cY) @>{\BD{\text{\cite[(2.6.22)]{KaSha}}}}>>
      R\Gamma(W,\cX)\otimes R\Gamma(W,\cY)\\
      @V{\wr}VV @V{\wr}VV\\
      \on{RHom}_{D(W)}(\BD\cX,\cY) @>{\text{\cite[(2.6.24)]{KaSha}}}>>
      \Hom(R\Gamma(W,\BD\cX),R\Gamma(W,\cY))
    \end{CD}
  \end{equation*}
  The lower arrow is just the functoriality of $R\Gamma(W,\bullet)$.
  Its dual is $R\Gamma(W,\BD\cX\otimes\BD\cY)\leftarrow
  R\Gamma(W,\BD\cX)\otimes R\Gamma(W,\BD\cY)$ given by the definition
  of tensor product of sheaves, i.e.~\cite[(2.6.22)]{KaSha}.
\end{NB}%

\begin{NB} 
The proof is reduced to the commutativity of the following diagram
applied in the case $\CG_1=\CG_2=\Areg,\ \CF_1=\CF_2=\CF$:
\begin{equation*}
{\scriptsize
      \begin{CD}
      \on{Ext}^*_{D_G(\Gr_G)}(\CC_{\Gr_G}, \CG_1\otimes^! \CF_1)\otimes
      \on{Ext}^*_{D_G(\Gr_G)}(\CC_{\Gr_G}, \CG_2\otimes^! \CF_2) @>\cong>>
      \on{Ext}^*_{D_G(\Gr_G)}(\mathbb D\CG_1,\CF_1)\otimes
      \on{Ext}^*_{D_G(\Gr_G)}(\mathbb D\CG_2,\CF_2) \\
      @| @| \\
      H^*_{G_\cO}({\Gr_G}, \CG_1\otimes^! \CF_1)\otimes
      H^*_{G_\cO}({\Gr_G}, \CG_2\otimes^! \CF_2) @>\cong>>
      H^*_{G_\CO}(\Gr_G,\RHHom(\BD\CG_1,\CF_1))\otimes
      H^*_{G_\CO}(\Gr_G,\RHHom(\BD\CG_2,\CF_2))\\
      @V{(q^*)^{-1}p^*}VV @VV{(q^*)^{-1}p^*}V \\
      H^*_{G_\cO}({\Gr_G}\tilde\times{\Gr_G}, (\CG_1\otimes^! \CF_1)
      \tilde\boxtimes (\CG_2\otimes^! \CF_2)) @>\cong>>
      H^*_{G_\CO}(\Gr_G\tilde{\times}\Gr_G,\RHHom(\BD\CG_1,\CF_1)
      \tilde{\boxtimes}\RHHom(\BD\CG_2,\CF_2))\\
    @V\text{$\otimes^!$ commutes with $p^*$, $q^*$}VV @VV\text{same as left}V \\
      H^*_{G_\cO}({\Gr_G}\tilde\times{\Gr_G}, (\CG_1\tilde\boxtimes\CG_2)
      \otimes^! (\CF_1\tilde\boxtimes\CF_2)) @>\cong>>
      H^*_{G_\CO}(\Gr_G\tilde{\times}\Gr_G,\RHHom(\BD\CG_1
      \tilde{\boxtimes}\BD\CG_2,\CF_1\tilde{\boxtimes}\CF_2)) \\
      @| @| \\
      H^*_{G_\cO}({\Gr_G}, m_*\left( (\CG_1\tilde\boxtimes\CG_2)
      \otimes^! (\CF_1\tilde\boxtimes\CF_2)\right)) @>\cong>>
      H^*_{G_\CO}(\Gr_G, m_* \left(\RHHom(\BD\CG_1
      \tilde{\boxtimes}\BD\CG_2,\CF_1\tilde{\boxtimes}\CF_2)\right)) \\
    @V{\text{\cite[(2.6.22)]{KaSha}}}VV @VV{\text{\cite[(2.6.24)]{KaSha}}}V \\      H^*_{G_\cO}({\Gr_G},  m_*(\CG_1\tilde\boxtimes\CG_2)
      \otimes^! m_*(\CF_1\tilde\boxtimes\CF_2)) @>\cong>>
      H^*_{G_\CO}(\Gr_G, \RHHom(m_*(\BD\CG_1
      \tilde{\boxtimes}\BD\CG_2),m_* (\CF_1\tilde{\boxtimes}\CF_2)))\\
      @| @| \\
      H^*_{G_\cO}({\Gr_G}, (\CG_1\star \CG_2) \otimes^! (\CF_1\star\CF_2)) @>\cong>>
      H^*_{G_\CO}(\Gr_G,\RHHom(\BD\CG_1\star\BD\CG_2,\CF_1\star\CF_2))\\
      @| @| \\
      \on{Ext}^*_{D_G(\Gr_G)}(\CC_{\Gr_G}, (\CG_1\star \CG_2) \otimes^! (\CF_1\star\CF_2))
      @>>\cong>  \on{Ext}^*_{D_G(\Gr_G)}(\mathbb D\CG_1\star \mathbb
      D\CG_2,
      \CF_1\star\CF_2).
    \end{CD}
}
\end{equation*}
Note that in our case $\CG_1=\CG_2=\Areg$, the graded vector spaces
$\on{Ext}^*_{D_G(\Gr_G)}(\CC_{\Gr_G}, \CG_1\otimes^!\CF_1),\\
      \on{Ext}^*_{D_G(\Gr_G)}(\CC_{\Gr_G}, \CG_2\otimes^!\CF_2)$, etc.\ are upgraded
to $\on{RHom}^*_{D_G(\Gr_G)}(\CC_{\Gr_G}, \CG_1\otimes^! \CF_1),\\
      \on{RHom}^*_{D_G(\Gr_G)}(\CC_{\Gr_G}, \CG_2\otimes^! \CF_2)$, etc., and 
the upgraded diagram remains commutative.
\end{NB}%
\begin{NB}
  HN thinks that a key point of the proof is not explained. We do not
  explain (a) the reason why each small square is commutative, (b) the
  reason why the vertical composite is equal to the original
  multiplication (where the final part given by
  $\sfm_{\Areg}\otimes^!\sfm_\cF$,
  $\sfm_\cF\circ\DD\sfm_{\Areg}$ is omitted.) For (a), HN already
  writes two squares where reasoning should be given. For (b), HN is
  wondering the arrow involving \cite[(2.6.24)]{KaSha}, namely why
  \begin{equation*}
    \begin{CD}
      \on{RHom}_{D_G(\Gr_G)}(\CC_{\Gr_G},
      m_*\left(\RHHom(\BD\Areg\tilde{\boxtimes}\BD\Areg,
        \CF\tilde{\boxtimes}\CF)\right)) @>\cong>>
      \on{RHom}_{D_G(\Gr_G\tilde\times\Gr_G)}
      (\BD\Areg\tilde{\boxtimes}\BD\Areg,\CF\tilde{\boxtimes}\CF)
      \\
      @V{\text{\cite[(2.6.24)]{KaSha}}}VV @VV{m_*}V \\
      \on{RHom}_{D_G(\Gr_G)}(\CC_{\Gr_G},
      \RHHom(m_*(\BD\Areg\tilde{\boxtimes}\BD\Areg),
      m_*(\CF\tilde{\boxtimes}\CF)))
      @>>\cong>
      \on{RHom}_{D_G(\Gr_G)}(m_*(\BD\Areg\tilde{\boxtimes}\BD\Areg),m_*(\CF\tilde{\boxtimes}\CF))
\end{CD}
\end{equation*}
is commutative.
\end{NB}%
\end{proof}

\subsection{Hamiltonian reduction}
\label{ham}
The right
Kostant-Whittaker reduction of $\Areg=\Psi(\BC[T^*G^\vee]^{[]})$ equipped with
$G^\vee$-action is a particular case of the following construction.

Let $\CG$ be a commutative ring object of $D^{G^\vee}_{}(\on{Sym}^{[]}(\gvee))$
equipped with an action of an algebraic group $H$. Let $\fh$ be the Lie algebra
of $H$. Let $\on{Sym}^{[]}(\fh)$ be the symmetric algebra of $\fh$ equipped
with a nonnegative grading (not necessarily the standard one, nor the doubled
standard one) and viewed as a dg-algebra with a trivial differential.
Let $\mu^*\colon \on{Sym}^{[]}(\fh)\to\on{RHom}_{D^{G^\vee}(\on{Sym}^{[]}(\gvee))}(\CG,\CG)$ be an $H$-equivariant
homomorphism of dg-algebras such that the multiplication morphism
${\mathsf m}\colon \CG\otimes_{\on{Sym}^{[]}(\gvee)}\CG\to\CG$ is $H$-equivariant
and $\on{Sym}^{[]}(\fh)$-linear.

In all the examples below the following property holds:
after applying the forgetful functor and taking
cohomology and their spectrum, $\on{Spec}H^*(\on{Forg}\CG)$ is equipped with
an $H$-invariant symplectic form, and $\mu^*$ is compatible with a moment map
$\mu\colon \on{Spec}H^*(\on{Forg}\CG)\to\fh^*$.

Given an $H$-invariant subvariety $X\subset\fh^*$ such that the projection
$\on{Sym}(\fh)\twoheadrightarrow\BC[X]$ is compatible with the grading
$\on{Sym}^{[]}(\fh)$ and induces the grading $\BC[X]^{[]}$, we define
$\CG\tslash(H,X):=(\CG\otimes_{\on{Sym}^{[]}(\fh)}\BC[X]^{[]})^H$. This is a
commutative ring
object of $D^{G^\vee}_{}(\on{Sym}^{[]}(\gvee))$. If $X=\{0\}\subset\fh^*$,
we simply write $\CG\tslash H$ for $\CG\tslash(H,\{0\})$.

\subsection{Leg amputation}
\label{amputation}
Following~\ref{prop:!-product},
\begin{NB} (or the beginning of current~\ref{sec:Sicilian})
\end{NB}
we consider a commutative ring object
$\scA^b:=i_\Delta^!(\boxtimes_{k=1}^b(\Areg)_k)$ (in particular, the ring object
associated with $S^2$ with three punctures
is $\scA^3$ in our present notation).
According to~\ref{sec:group_action}, $\scA^b$ is equipped with an action of
$\SL(N)^b=(G^\vee)^b$. More generally, we consider a commutative ring object
$\scA^b:=i_\Delta^!(\boxtimes_{k=1}^b(\Areg)_k)$ on $\Gr_{G}$ equipped with
an action of $(G^\vee)^b$ for a reductive flavor group $G$.
We set $W_G^b:=\on{Spec}H^*_{G_\CO}(\Gr_G,\scA^b)$. We conjecture that $H^*_{G_\cO}(\Gr_G,\scA^b)$ is finitely generated, which we checked so far only in type $A$.
We assume it hereafter. Then $W_G^b$ is an affine variety
with Poisson structure equipped with a hamiltonian action of $(G^\vee)^b$.
In particular, $W_{\PGL(N)}^3$ is $W$ of the beginning of
current~\ref{sec:Sicilian}.

Also, $W^2_G=T^*G^\vee$ since
$\BC[W^2_G]=\on{Forg}\circ\Phi(\Areg)=\BC[T^*G^\vee]$, see~\ref{0.1}(c).

%Going back to the commutative ring object
%$\scA^b:=i_\Delta^!(\boxtimes_{k=1}^b(\Areg)_k)$,
According to~\ref{def:dg-action}, we have the action of $b$ copies of
$\on{Sym}^{[]}(\gvee)$ on $\scA^b$. We can consider its Kostant-Whittaker
reduction $\kappa^r_b(\scA^b)=\scA^b\otimes_{\on{Sym}^{[]}_{\on{new}}(\gvee)}\BC[\Sigma]^{[]}$
with respect to the last copy of $G^\vee$ in $(G^\vee)^b$ (cf.~\ref{ham}).
More precisely we apply $\Psi$ to $\kappa^r_b(\Psi^{-1}\scA^b)=
(\Psi^{-1}\scA^b)\otimes_{\on{Sym}^{[]}_{\on{new}}(\gvee)}\BC[\Sigma]^{[]}$.
\begin{NB}
    Misha, this seems still imprecise. Give an explanation of the
    definition of $\kappa^r_b(\Psi^{-1}\scA^b)$. Is it the
    $U^\vee_-$-invariant part ? If so, is it really in
    $D^{G^\vee}_{}(\on{Sym}^{[]}(\gvee))$, i.e., is the
    perfectness preserved under the $U^\vee_-$-invariant part ?
    \begin{NB2}
      Recall that
      $\scA_R=\bigoplus_{\lambda\in X^+}\on{IC}(\ol\Gr{}^\lambda_G)\otimes V^\lambda$,
      and $\scA^b=\bigoplus_{\lambda_1,\ldots,\lambda_b\in X^+}i^!_\Delta
      \left(\boxtimes_{k=1}^b\on{IC}(\ol\Gr{}^{\lambda_k}_G)\right)
      \otimes\bigotimes_{k=1}^b
      V^{\lambda_k}$. Now $\kappa^r_b$ does something to the vector factors
      $\bigotimes_{k=1}^bV^{\lambda_k}$ leaving the sheaf factors
      $i^!_\Delta\left(\boxtimes_{k=1}^b\on{IC}(\ol\Gr{}^{\lambda_k}_G)\right)$
      intact. This is wrong. However, as Sasha argues, the right reduction
is defined directly at the level of Satake category.
    \end{NB2}%
\end{NB}%

\begin{Lemma}
\label{last copy}
$\kappa^r_b(\scA^b)=\scA^{b-1}$.
\end{Lemma}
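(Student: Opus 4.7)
The plan rests on three inputs: first, $\scA^b$ is the $b$-fold $\otimes^!$-power of $\Areg$ (via $i_\Delta^!(\CF\boxtimes\CG)=\CF\otimes^!\CG$); second, the $b$-th right action of $G^\vee\ltimes\on{Sym}^{[]}(\gvee)$ on $\scA^b$ is induced, through~\ref{def:dg-action}, from the right action on the $b$-th copy of $\Areg$ alone; and third, $\kappa^r(\Areg)=\DC_{\Gr_G}$ (\cite[Prop.~4]{MR2422266}, recalled in~\ref{derived Satake}), while $\DC_{\Gr_G}$ is the unit for the symmetric monoidal structure $(D_G(\Gr_G),\otimes^!)$.

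Granting these, the computation is short. Using symmetry of $\otimes^!$, rewrite
\[
\scA^b \;\simeq\; (\Areg)_b\otimes^!\scA^{b-1},
\]
and note that this isomorphism identifies the $b$-th right action of $G^\vee\ltimes\on{Sym}^{[]}(\gvee)$ on $\scA^b$ with the right action on $(\Areg)_b$, extended to the tensor product by~\ref{def:dg-action}. Since $\kappa^r_b$ only uses that $b$-th action, the plan is to establish an interchange
\[
\kappa^r_b\bigl((\Areg)_b\otimes^!\scA^{b-1}\bigr)\;\simeq\;\kappa^r\bigl((\Areg)_b\bigr)\otimes^!\scA^{b-1},
\]
from which
\[
\kappa^r_b(\scA^b)\;\simeq\;\DC_{\Gr_G}\otimes^!\scA^{b-1}\;=\;\scA^{b-1}
\]
follows immediately.

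The only real content is this interchange: the Kostant--Whittaker reduction performed on a single tensor slot commutes with $\otimes^!$ in the remaining slots. This is a compatibility between two derived operations---the $!$-tensor product, and derived tensor with $\BC[\Sigma]^{[]}$ over $\on{Sym}^{[]}_{\on{new}}(\gvee)_b$ followed by $U^\vee_{-,b}$-invariants---that by construction only interact through $(\Areg)_b$. I expect the cleanest verification to be a direct unwinding of the definition of the right $\on{Sym}^{[]}(\gvee)$-action on $\Areg\otimes^!\CF$ in~\ref{def:dg-action}, since both operations are then manifestly localized on the $(\Areg)_b$-slot. Should a more rigid framework be needed, one can instead transport across $\Psi^{-1}$ and use the formality~\ref{rem:formality} to reduce the assertion to a routine statement about graded modules over symmetric algebras, where the interchange of the two tensor products is elementary. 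This is the step I expect to be the main obstacle, essentially because it forces one to make precise the $b$-fold $\on{Sym}^{[]}(\gvee)$-module structure on $\scA^b$ at the level of dg-modules rather than just its cohomology.
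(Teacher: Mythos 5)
Your proposal is correct and is essentially the proof in the paper: the paper also rewrites $\scA^b=i_\Delta^!(\scA^{b-1}\boxtimes\Areg)=\scA^{b-1}\otimes^!\Areg$, verifies the interchange $\kappa^r_b(\CF\otimes^!\Areg)=\CF\otimes^!\kappa^r(\Areg)$ by exactly the ``direct unwinding'' you sketch (the right $\on{Sym}^{[]}(\gvee)$-action on $\CF\otimes^!\Areg$ factors through $\Areg$, so $(\CF\otimes^!\Areg)\otimes_A M=\CF\otimes^!(\Areg\otimes_A M)$ by associativity of tensor product), and then uses $\kappa^r(\Areg)=\DC_{\Gr_G}$ being the unit for $\otimes^!$. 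The one point you leave implicit that the paper records explicitly is that the resulting isomorphism is one of \emph{commutative ring} objects, which follows since $\mathsf m\colon\Areg\star\Areg\to\Areg$ is $\on{Sym}^{[]}(\gvee)$-linear for the right action, so the reduction inherits the ring structure from~\ref{prop:!-product}.
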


\begin{proof}
We have $\kappa^r_b(\scA^b)=\kappa^r_b(i_\Delta^!(\boxtimes_{k=1}^b(\Areg)_k))=
\kappa^r_b(i_\Delta^!(\scA^{b-1}\boxtimes\Areg))=
i_\Delta^!(\scA^{b-1}\boxtimes\kappa^r(\Areg))=
i_\Delta^!(\scA^{b-1}\boxtimes\DC_{\Gr_{G}})=\scA^{b-1}$.
\begin{NB}
  Misha, please explain why $i_\Delta^!$ commutes with $\kappa^r_b$.
  \begin{NB2}
      Recall that
      $\scA_R=\bigoplus_{\lambda\in X^+}\on{IC}(\ol\Gr{}^\lambda_G)\otimes V^\lambda$,
      and $\scA^b=\bigoplus_{\lambda_1,\ldots,\lambda_b\in X^+}i^!_\Delta
      \left(\boxtimes_{k=1}^b\on{IC}(\ol\Gr{}^{\lambda_k}_G)\right)
      \otimes\bigotimes_{k=1}^b
      V^{\lambda_k}$. Now $\kappa^r_b$ does something to the vector factors
      $\bigotimes_{k=1}^bV^{\lambda_k}$ leaving the sheaf factors
      $i^!_\Delta\left(\boxtimes_{k=1}^b\on{IC}(\ol\Gr{}^{\lambda_k}_G)\right)$
      intact. This is wrong.
    \end{NB2}%
\end{NB}%
Indeed,
%$\kappa^r$ is the restriction to the Kostant slice $\Sigma\subset(\gvee)^*$.
$\kappa^r_b(\bullet)=\bullet\otimes_{\on{Sym}^{[]}_{\on{new}}(\gvee)}\BC[\Sigma]^{[]}$ (with
respect to the action of the $b$-th copy of $\on{Sym}^{[]}_{\on{new}}(\gvee)$).
In the third equality we use that for
$\CF=\scA^{b-1}\in D_G(\Gr_G)$ and $\CF'=\Areg\in D_G(\Gr_G)$ with a dg-algebra
$A=\on{Sym}^{[]}(\gvee)$ equipped with a homomorphism to
$\on{RHom}_{D_G(\Gr_G)}(\CF',\CF')$, and a dg-module $M=\BC[\Sigma]^{[]}$ over $A$, we
have $(\CF\otimes^!\CF')\otimes_AM=\CF\otimes^!(\CF'\otimes_AM)$ by
associativity of tensor product. This equality is compatible with the
commutative ring structures by the construction in~\ref{prop:!-product}
(the reduction $\kappa^r_b(\scA^b)$ carries the induced ring structure
by the explanation in~\ref{ham} since the multiplication
${\mathsf m}\colon \Areg\star\Areg\to\Areg$ is $\on{Sym}^{[]}(\gvee)$-linear
for the right action of $\on{Sym}^{[]}(\gvee)$ on $\Areg=\Psi(\BC[T^*G]^{[]})$.)
\end{proof}

\begin{Corollary}
\label{last copy W}
$\kappa_b(W_G^b)=W_G^{b-1}$.
\end{Corollary}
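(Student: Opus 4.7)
The plan is to take $G_\CO$-equivariant cohomology of both sides of the sheaf-level identity $\kappa^r_b(\scA^b)=\scA^{b-1}$ from \ref{last copy} and to check that, on the algebra side, this operation matches the variety-level Kostant--Whittaker reduction $\kappa_b$ defining $\kappa_b(W_G^b)$.

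First, by definition, $\BC[W_G^{b-1}]=H^*_{G_\CO}(\Gr_G,\scA^{b-1})$, so applying $H^*_{G_\CO}(\Gr_G,-)$ to the right hand side of~\ref{last copy} gives $\BC[W_G^{b-1}]$ immediately. For the left hand side, formula \eqref{eq:coho} gives
\[
H^*_{G_\CO}(\Gr_G,\kappa^r_b(\scA^b))=\kappa^l\bigl(\Psi^{-1}(\kappa^r_b(\scA^b))\bigr),
\]
where $\kappa^l$ is the left Kostant--Whittaker reduction of the dg-module $\Psi^{-1}(\kappa^r_b(\scA^b))$ with respect to the residual left $\on{Sym}^{[]}(\gvee)$-action that always appears in the derived Satake picture. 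By the very definition of $\kappa^r_b$ at the level of derived Satake (cf.\ \ref{ham} and the construction preceding \ref{last copy}), we have
\[
\Psi^{-1}(\kappa^r_b(\scA^b))=\kappa^r_b(\Psi^{-1}(\scA^b)),
\]
i.e.\ $\kappa^r_b$ commutes with $\Psi^{-1}$.

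The key step is then to observe that $\kappa^l$ and $\kappa^r_b$ commute with each other: $\kappa^l$ is a tensor product over the standard ``left'' copy of $\on{Sym}^{[]}_{\mathrm{new}}(\gvee)$ acting on $\Psi^{-1}(\scA^b)$ (the copy that comes from $H^*_G(\mathrm{pt})$ under \eqref{eq:coho}), while $\kappa^r_b$ is a tensor product over the $b$-th of the $b$ ``right'' copies of $\on{Sym}^{[]}_{\mathrm{new}}(\gvee)$ acting on $\scA^b$ via the $b$-th factor of the $(G^\vee)^b$-action. These two actions commute, so the two tensor products can be performed in either order by associativity of tensor product, yielding
\[
\kappa^l\bigl(\kappa^r_b(\Psi^{-1}\scA^b)\bigr)=\kappa^r_b\bigl(\kappa^l(\Psi^{-1}\scA^b)\bigr)=\kappa^r_b\bigl(H^*_{G_\CO}(\Gr_G,\scA^b)\bigr)=\kappa^r_b(\BC[W_G^b]).
\]
The final equality is essentially tautological once one unpacks that $\kappa_b$ on the variety $W_G^b$ is defined precisely as the Kostant--Whittaker reduction of $\BC[W_G^b]$ with respect to the moment map of the $b$-th factor of $(G^\vee)^b$, which, after passing to cohomology, agrees with the dg-level reduction by \ref{rem:formality} (formality of the action of $(G^\vee)^b\ltimes\on{Sym}^{[]}(\gvee)^{\otimes b}$ on $\on{RHom}_{D_G(\Gr_G)}(\BC_{\Gr_G},\Areg^{\otimes^! b})$). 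Combining the two chains gives $\BC[\kappa_b(W_G^b)]=\BC[W_G^{b-1}]$, hence $\kappa_b(W_G^b)=W_G^{b-1}$.

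The main obstacle is the bookkeeping in the last step: one has to ensure that the dg-level reduction $\kappa^r_b$, after taking cohomology, really does induce the naive hamiltonian reduction on $\on{Spec}$ rather than some derived variant. This is where one invokes the formality result \ref{rem:formality}, which guarantees that there is no spectral sequence obstruction and the dg tensor product over $\on{Sym}^{[]}_{\mathrm{new}}(\gvee)$ with $\BC[\Sigma]^{[]}$ descends on cohomology to the ordinary tensor product with $\BC[\Sigma]$, i.e.\ to the usual Kostant--Whittaker reduction of $\on{Spec}H^*_{G_\CO}(\Gr_G,\scA^b)=W_G^b$. Modulo this formality input, the statement follows purely by commuting two independent reductions on a bi-module.
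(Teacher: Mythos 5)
Your proof follows the same architecture as the paper's: you reduce the statement to the commutation of $\kappa^l$ and $\kappa^r_b$ under derived Satake, observe that both are tensor products over independent copies of $\on{Sym}^{[]}_{\mathrm{new}}(\gvee)$ (the $0$-th ``left'' copy for $\kappa^l$, the $b$-th ``right'' copy for $\kappa^r_b$), and invoke associativity of tensor products. This matches the paper's strategy. The genuine content in both is the step where one pulls $H^*$ out of the $b$-th tensor product, i.e.\ the equality
\[
H^*\left(\left(\Psi^{-1}(\scA^b)\otimes_{_0}\BC[\Sigma]^{[]}\right)\otimes_{_b}\BC[\Sigma]^{[]}\right)
= H^*\left(\Psi^{-1}(\scA^b)\otimes_{_0}\BC[\Sigma]^{[]}\right)\otimes_{_b}\BC[\Sigma].
\]

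The point where you diverge from the paper, and where your argument is not complete, is the justification of this last equality. You invoke \ref{rem:formality}, which gives that $\on{RHom}_{D_G(\Gr_G)}(\BC_{\Gr_G},\Areg^{\otimes^!b})$ is formal over $\on{Sym}^{[]}(\gvee)^{\otimes b}$, and deduce that ``there is no spectral sequence obstruction.'' But formality of a dg-module $M$ merely says that $M$ is quasi-isomorphic to $H^*(M)$ (with zero differential); it does \emph{not} by itself give $H^*(M\otimes^L_{_b}\BC[\Sigma]^{[]}) = H^*(M)\otimes_{_b}\BC[\Sigma]$. After formality you still have $M\otimes^L_{_b}\BC[\Sigma]^{[]}\simeq H^*(M)\otimes^L_{_b}\BC[\Sigma]^{[]}$, and since $\BC[\Sigma]$ is the quotient of $\on{Sym}(\gvee)$ by a nontrivial regular sequence (the ideal of the Kostant slice), it is not flat over $\on{Sym}(\gvee)$; you must still know that $\mathrm{Tor}^{>0}_{\on{Sym}(\gvee)}(\BC[W^b_G],\BC[\Sigma])$ vanishes before concluding that the derived tensor product computes the ordinary one. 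This Tor-vanishing (i.e.\ that the $b$-th moment map pulls back $\Sigma$ in the expected codimension so that the hamiltonian reduction is genuinely underived) is precisely what needs a supplementary argument, and formality is silent about it. The paper supplies this step by a different route: a devissage from free modules to perfect complexes to Ind-perfect complexes, exploiting that $\Psi^{-1}(\scA^b)$ is Ind-perfect and that cohomology commutes with the filtered colimits underlying the Ind-structure, rather than appealing to \ref{rem:formality} at all. You should either supply the Tor-vanishing input explicitly or adopt the paper's devissage mechanism.
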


\begin{proof}
We have to check that $\kappa_b$ commutes with $H^*_{G_\CO}(\Gr_G,\bullet)$.
After applying the derived Satake equivalence we have to check that $\kappa_b^r$
commutes with $\kappa^l$. Recall that
$\kappa^l(\bullet)=H^*(\bullet\otimes_{\on{Sym}^{[]}_{\on{new}}(\gvee)}\BC[\Sigma]^{[]})$
(with respect to the action of the left copy of $\on{Sym}^{[]}_{\on{new}}(\gvee)$),
while $\kappa^b_r(\bullet)=\bullet\otimes_{\on{Sym}^{[]}_{\on{new}}(\gvee)}\BC[\Sigma]^{[]}$
(with respect to the action of the $b$-th right copy of
$\on{Sym}^{[]}_{\on{new}}(\gvee)$). We have
$\Psi^{-1}(\scA^b)\in D^{(G^\vee)^{b+1}}_{}(\on{Sym}^{[]}((\gvee)^{\oplus b+1})$
(one left structure and $b$ right structures). We assign number 0 to the
left structure. Then
\begin{multline*}
\BC[W_G^{b-1}]=H^*\left((\Psi^{-1}(\scA^b)\otimes_{_b\on{Sym}^{[]}_{\on{new}}(\gvee)}
\BC[\Sigma]^{[]})\otimes_{_0\on{Sym}^{[]}_{\on{new}}(\gvee)}\BC[\Sigma]^{[]}\right)\\
=H^*\left((\Psi^{-1}(\scA^b)\otimes_{_0\on{Sym}^{[]}_{\on{new}}(\gvee)}
\BC[\Sigma]^{[]})\otimes_{_b\on{Sym}^{[]}_{\on{new}}(\gvee)}\BC[\Sigma]^{[]}\right)\\
=H^*\left((\Psi^{-1}(\scA^b)\otimes_{_0\on{Sym}^{[]}_{\on{new}}(\gvee)}
\BC[\Sigma]^{[]})\right)\otimes_{_b\on{Sym}(\gvee)}\BC[\Sigma]=
\kappa_b(\BC[W^b_G]).
\end{multline*}
The third equality (commutation of taking cohomology and tensor product with a
$_b\on{Sym}^{[]}_{\on{new}}(\gvee)$-module) is clear for free modules, and then
for perfect complexes by devissage, and then for Ind-perfect complexes since
cohomology commutes with direct images.
%is the restriction to
%$\Sigma\subset(\gvee)^*$ (in the left copy of $(\gvee)^*)$, while $\kappa_b^r$
%is the restriction to $\Sigma\subset(\gvee)^*$ (in the right $b$-th copy of
%$(\gvee)^*)$, they clearly commute.
\end{proof}

\subsection{General surfaces for arbitrary reductive groups and fusion}
\label{cylinder G}
First we study the case of cylinder and give another explanation of the
identification $W_G^2=T^*G^\vee$.

We consider the equivalence 
\begin{equation*}
\Psi\boxtimes\Psi\colon
D^{G^\vee\times G^\vee}_{}(\on{Sym}^{[]}(\gvee\oplus\gvee))\to
D_{G\times G}(\Gr_{G}\times\Gr_{G}).
\end{equation*}
Under this equivalence, the ring object $\Areg\boxtimes\Areg\in
D_{G\times G}(\Gr_{G}\times\Gr_{G})$ corresponds to the
$G^\vee\times G^\vee$-equivariant free
$\on{Sym}^{[]}(\gvee\oplus\gvee)$-module
$\BC[G^\vee\times G^\vee]\otimes\on{Sym}^{[]}(\gvee\oplus\gvee)$
which will be denoted $\BC[T^*G^\vee\times T^*G^\vee]^{[]}$ for short.
It is equipped with the {\em right} action of $G^\vee\times G^\vee$ with
the right moment map $(\mu_r,\mu_r)$. The hamiltonian reduction with
respect to the diagonal right action
\begin{equation*}
\left(T^*G^\vee\times T^*G^\vee\right)\tslash\Delta_{G^\vee}:=
\on{Spec}\left(\BC[(\mu_r,\mu_r)^{-1}(\Delta_{(\fg^\vee)^*})]^{\Delta_{G^\vee}}\right)=
(\mu_r,\mu_r)^{-1}(\Delta_{(\fg^\vee)^*})\dslash \Delta_{G^\vee}
\end{equation*}
(the categorical quotient is the set-theoretical one, as it is with respect
to the free action of $G^\vee$) is
nothing but $T^*G^\vee$ equipped with the residual left action of
$G^\vee\times G^\vee\colon
(h_1,h_2)(g,\xi)=(h_2gh_1^{-1},\on{Ad}_{h_1}\xi)$,
and the equivariant morphism to $(\gvee)^*\oplus(\gvee)^*\colon
(g,\xi)\mapsto(\xi,\on{Ad}_g\xi)$. \linelabel{lne:1518}
Note that the natural projection $\BC[T^*G^\vee\times T^*G^\vee]\twoheadrightarrow
\BC[(\mu_r,\mu_r)^{-1}(\Delta_{(\fg^\vee)^*})]$ is compatible with the grading of
$\BC[T^*G^\vee\times T^*G^\vee]$, and so it induces a grading on the target,
to be denoted $\BC[(\mu_r,\mu_r)^{-1}(\Delta_{(\fg^\vee)^*})]^{[]}$. This in turn
induces a grading on the $\Delta_{G^\vee}$-invariant subalgebra, to be denoted
$\BC[T^*G^\vee\times T^*G^\vee\tslash\Delta_{G^\vee}]^{[]}$. Viewing it
as a $G^\vee\times G^\vee$-equivariant graded module over
$\on{Sym}^{[]}(\gvee\oplus\gvee)$ (with zero differential) and taking its free
resolution, we obtain the same named object of
$D^{G^\vee\times G^\vee}(\on{Sym}^{[]}(\gvee\oplus\gvee))$.
We will denote $\Psi\boxtimes\Psi\left(\BC[T^*G^\vee\times
T^*G^\vee\tslash\Delta_{G^\vee}]^{[]}\right)$ by
$\Areg\boxtimes\Areg\tslash\Delta_{G^\vee}\in
D_{G\times G}(\Gr_{G}\times\Gr_{G})$ for short, cf.~\ref{ham}.
\begin{NB}
By construction, $\BC[T^*G^\vee\times T^*G^\vee\tslash\Delta_{G^\vee}]^{[]}$ is a
graded module over $\on{Sym}^{[]}(\gvee\oplus\gvee)$ (with zero differential),
but not a free module. So it has to be replaced by a free resolution.
\end{NB}%

Now
$\Psi^{-1}(\Areg\star\Areg)=
\BC[T^*G^\vee]^{[]}\otimes_{\on{Sym}^{[]}(\gvee)}\BC[T^*G^\vee]^{[]}$,
and $\Psi^{-1}(\on{IC}(\Gr^0_G))=\on{Sym}^{[]}(\gvee)$. Hence, $W_G^2=
\on{Spec}H^*_{G_\CO}(\Gr_G,\scA^2)=(T^*G^\vee\times T^*G^\vee)\tslash\Delta_{G^\vee}=
T^*G^\vee$. The action $G^\vee\times G^\vee$ on $W_G^2$ is the natural action
of $G^\vee\times G^\vee$ on
$T^*G^\vee\colon (h_1,h_2)\cdot(g,\xi)=(h_2gh_1^{-1},\on{Ad}_{h_1}\xi)$;
in particular, the diagonal action of $\Delta_{G^\vee}$ is the adjoint action
$h(g,\xi)=(hgh^{-1},\on{Ad}_h\xi)$.

We denote $\scA^2\tslash\Delta_{G^\vee}$ by $\scB\in D_G(\Gr_G)$.  We have
$H^*_{G_\CO}(\Gr_G,\scB)=H^*_{G_\CO}(\Gr_G,\scA^2)\tslash\Delta_{G^\vee}=
\BC[(T^*G^\vee)\tslash\Delta_{G^\vee}]=\BC[T^\vee\times\ft]^W$.
%as desired.
Here the last equality is a multiplicative analog of the isomorphism
$(\fg^\vee\times\fg^\vee)\tslash \Delta_{G^\vee}=
(\ft^\vee\times\ft^\vee)/W$~\cite{MR1446576} due to I.~Losev. Its proof is given
in~\ref{subsec:Loseu} below.
More generally, we have
$H^*_{G_\CO}(\Gr_G,i^!_\Delta(\scA^b\boxtimes\scB))=
\BC[W^{b+2}_G\tslash\Delta^{b+1,b+2}_{G^\vee}]=
\BC[(\mu_r^{b+1},\mu_r^{b+2})^{-1}(\Delta^{b+1,b+2}_{(\fg^\vee)^*})]^{\Delta_{G^\vee}^{b+1,b+2}}$
where $\Delta^{b+1,b+2}_{G^\vee}$ stands for the diagonal in the
product of the last two copies in $(G^\vee)^{b+2}$.

We denote $\scB^g:=i^!_\Delta(\boxtimes_{k=1}^g\scB_k)\in D_G(\Gr_G)$. Then
$\on{Spec}H^*_{G_\CO}(\Gr_G,\scA^b\otimes^!\scB^g)$ is an object of HS associated
with a surface of genus $g$ with $b$ punctures. Now we turn to the study of
fusion of surfaces.

%\subsection{Fusion}
%\label{annihilation}

\begin{NB}
  \begin{Lemma}
\label{2:0}
$\on{pr}_{2*}(\Areg\boxtimes\Areg\tslash\Delta_{G^\vee})=\DC_{\Gr_{G}}$.
\end{Lemma}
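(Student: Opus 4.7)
The plan is to translate the statement across the two-factor derived Satake equivalence $\Psi\boxtimes\Psi$ of~\ref{derived Satake} and perform a direct computation on the Koszul-dual side. The key input is the \emph{family} version of~\cite[Theorem~2]{MR2422266}: the pushforward $\on{pr}_{2*}\colon D_{G\times G}(\Gr_G\times\Gr_G)\to D_G(\Gr_G)$ intertwines under $\Psi\boxtimes\Psi$ with the left Kostant-Whittaker reduction $\kappa^l_1$ applied to the first tensor factor, since fiberwise over a point of the second factor $\on{pr}_{2*}$ is just $H^*_{G_\CO}(\Gr_G,-)$. Because $\kappa^l_1$ acts on the first \emph{left} $\on{Sym}^{[]}(\gvee)$-structure while $\tslash\Delta_{G^\vee}$ acts on the diagonal \emph{right} $G^\vee$-structure, these two reductions commute, so it is enough to compute $\kappa^l_1$ applied to the dg-module $\Psi^{-1}(\Areg\boxtimes\Areg\tslash\Delta_{G^\vee})$.

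By the explicit description recalled in~\ref{cylinder G} (see around l.\lineref{lne:1518}), this dg-module equals $\BC[T^*G^\vee]^{[]}$ with the residual $G^\vee\times G^\vee$ left action $(h_1,h_2)\cdot(g,\xi)=(h_2 g h_1^{-1},\on{Ad}_{h_1}\xi)$ and the pair of left moment maps $(g,\xi)\mapsto(\xi,\on{Ad}_g\xi)$. To apply $\kappa^l_1$ I would note that $\mu_{l,1}^{-1}(\Upsilon)=G^\vee\times\Upsilon$ and that the first copy of $U^\vee_-$ acts by $u\cdot(g,\xi)=(gu^{-1},\on{Ad}_u\xi)$. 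Since $U^\vee_-$ acts freely and transitively on $\Upsilon$ over $\Sigma$, each orbit has a unique representative with $\xi\in\Sigma$; picking it gives a graded isomorphism
\[
\kappa^l_1\bigl(\BC[T^*G^\vee]^{[]}\bigr)\;=\;\BC[G^\vee\times\Sigma]^{[]},
\]
on which the residual second left $G^\vee$-action is $h_2\cdot(g,\xi)=(h_2 g,\xi)$ and the residual left $\on{Sym}^{[]}(\gvee)$-action is the remaining moment map $(g,\xi)\mapsto\on{Ad}_g\xi$.

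Finally, I would recognize this is exactly $\kappa^r\bigl(\BC[T^*G^\vee]^{[]}\bigr)=\BC[\mu_r^{-1}(\Sigma)]^{[]}$ together with its usual left $G^\vee\ltimes\on{Sym}^{[]}(\gvee)$-structure, since $\mu_r$ is the projection $(g,\xi)\mapsto\xi$ and the left action is $h\cdot(g,\xi)=(hg,\xi)$ with left moment map $\on{Ad}_g\xi$. By~\cite[Proposition~4]{MR2422266} this dg-module corresponds under $\Psi$ to $\kappa^r(\Areg)=\DC_{\Gr_G}$, completing the identification $\on{pr}_{2*}(\Areg\boxtimes\Areg\tslash\Delta_{G^\vee})=\DC_{\Gr_G}$.

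The main obstacle is the rigorous justification of step~one: the ``family'' assertion that $\on{pr}_{2*}$ on $D_{G\times G}(\Gr_G\times\Gr_G)$ corresponds to $\kappa^l_1$ on the outer tensor product on the dg-side. One must unpack how the formality and Ind-completion in~\cite{MR2422266} are compatible with external products and with the right $G^\vee$-action that gets used in $\tslash\Delta_{G^\vee}$. A safer route, if this abstract compatibility is delicate, would be to avoid Satake entirely by using the commutation of $\tslash\Delta_{G^\vee}$ with $\on{pr}_{2*}$ to reduce to computing $\on{pr}_{2*}(\Areg\boxtimes\Areg)\tslash\Delta_{G^\vee}$, then apply the known identification $H^*_{G_\CO}(\Gr_G,\Areg)\simeq\BC[G^\vee\times\Sigma]$ together with $(\BC[G^\vee\times\Sigma]\otimes\Areg)\tslash\Delta_{G^\vee}\simeq\kappa^r(\Areg)=\DC_{\Gr_G}$; this reduces the check to a direct Hamiltonian computation already encoded in the description of $T^*G^\vee\times T^*G^\vee\tslash\Delta_{G^\vee}$.
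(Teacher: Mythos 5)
Your proof is essentially the paper's, and your Hamiltonian computation is correct. The only thing you flag as a potential gap --- the ``family'' assertion that $\on{pr}_{2*}$ intertwines with $\kappa^l_1$ under $\Psi\boxtimes\Psi$ --- is not something you need to re-derive or approximate from Theorem~2 of~\cite{MR2422266}: it is precisely the content of Theorem~4 of that paper, which is the reference the paper uses at this step. That theorem says $\Psi$ intertwines pushforward to a point (i.e.\ $H^*_{G_\CO}(\Gr_G,-)$) with the left Kostant--Whittaker reduction, and its relative form over the second $\Gr_G$ factor is exactly the assertion you want. Once this is in place, your remaining steps --- commuting $\kappa^l_1$ (first left structure) with $\tslash\Delta_{G^\vee}$ (diagonal right structure), computing $\kappa^l_1$ on $\BC[T^*G^\vee]^{[]}$ by choosing the unique $\xi\in\Sigma$ representative in each $U^\vee_-$-orbit of $G^\vee\times\Upsilon$, and recognizing the result as $\kappa^r(\BC[T^*G^\vee]^{[]})$ with its residual left $G^\vee\ltimes\on{Sym}^{[]}(\gvee)$-structure --- agree with the paper's argument, which concludes by citing \cite[Proposition~4]{MR2422266}. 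Your closing ``safer route'' is not genuinely different: it still uses the dg-side Hamiltonian structure to interpret $\tslash\Delta_{G^\vee}$, so it does not bypass the cited theorem, only defers it.
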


\begin{proof}
According to~\cite[Theorem~4]{MR2422266}, under the equivalence
$\Psi\boxtimes\Psi$, the direct image $\on{pr}_{2*}$ corresponds to
the Kostant-Whittaker reduction $\kappa^l_1$ with respect to the {\em left}
action of $U^\vee_-$ in the first copy of $G^\vee$. The resulting object of
$D^{G^\vee}_{}(\on{Sym}^{[]}(\gvee))$ is
$\kappa^l_1\left(\BC[T^*G^\vee\times
T^*G^\vee\tslash\Delta_{G^\vee}]^{[]}\right)=\kappa^r(\BC[T^*G^\vee]^{[]})$
\begin{NB2} (compare lines \lineref{lne:1488} and \lineref{lne:1518}) \end{NB2}%
of~\ref{derived Satake} corresponding under $\Psi$ to $\DC_{\Gr_{G}}$.
\end{proof}
\end{NB}%

\begin{Proposition}
\label{2:0 W}
Let $\Delta_{G^\vee}^{b_1,b_2}$ denote the diagonal action of the $b_1$-st and
$b_2$-nd copy of $G^\vee$ on $W_G^{b_1}\times W_G^{b_2}$. Then $W_G^{b_1+b_2-2}=
(W_G^{b_1}\times W_G^{b_2})\tslash\Delta_{G^\vee}^{b_1,b_2}$.
\end{Proposition}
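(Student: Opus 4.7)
The plan is to prove the identity via derived Satake and an iterated application of the leg-amputation \ref{last copy}. First, I would use the monoidal property of $\Psi$ applied to $\Gr_G\times\Gr_G$, together with the K\"unneth formula, to obtain
\[
\BC[W_G^{b_1}\times W_G^{b_2}] = H^*_{(G_\cO)^2}\bigl((\Gr_G)^2,\scA^{b_1}\boxtimes\scA^{b_2}\bigr)
= \kappa^l\otimes\kappa^l\bigl(\Psi^{-1}(\scA^{b_1})\otimes\Psi^{-1}(\scA^{b_2})\bigr).
\]
The diagonal hamiltonian reduction $\tslash\Delta^{b_1,b_2}_{G^\vee}$ on the spectrum side corresponds on the dg side to a reduction $\kappa^r_\Delta$ with respect to the diagonal embedding of $\on{Sym}^{[]}_{\on{new}}(\gvee)\ltimes G^\vee$ into the product of the $b_1$-th and $b_2$-th right copies acting on $\Psi^{-1}(\scA^{b_1})\otimes\Psi^{-1}(\scA^{b_2})$. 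Since left and right Kostant--Whittaker reductions commute (by the same tensor-commutation argument as in the proof of \ref{last copy W}), the proposition reduces to the categorical fusion identity
\[
\kappa^r_\Delta\bigl(\Psi^{-1}(\scA^{b_1})\otimes\Psi^{-1}(\scA^{b_2})\bigr) \;\cong\; \Psi^{-1}(\scA^{b_1+b_2-2}).
\]

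To prove this identity, I would write $\scA^{b_1}=\scA^{b_1-1}\otimes^!\Areg_{b_1}$ and $\scA^{b_2}=\Areg_{1'}\otimes^!\scA^{b_2-1}$ so that the two distinguished right actions singled out by $\kappa^r_\Delta$ come precisely from the copies $\Areg_{b_1}$ and $\Areg_{1'}$. By the associativity of $\otimes^!$ (as exploited in the proof of \ref{last copy}) and the compatibility $\star \leftrightarrow \otimes_{\on{Sym}^{[]}(\gvee)}$ furnished by $\Psi$, the action of $\kappa^r_\Delta$ can be pushed through the remaining factors and reduces to the key computation
\[
\kappa^r_\Delta\bigl(\Psi^{-1}(\Areg)\otimes\Psi^{-1}(\Areg)\bigr) \;=\; \Psi^{-1}(\DC_{\Gr_G}).
\]
This is in turn the dg-level reformulation of the identification $W_G^2 = T^*G^\vee = (T^*G^\vee\times T^*G^\vee)\tslash\Delta_{G^\vee}$ from the beginning of \ref{cylinder G} (i.e.\ the fact that $T^*G^\vee$ is the identity $1$-morphism in $\mathrm{HS}$), combined with the identification $\DC_{\Gr_G}=\kappa^r(\Areg)$ from \ref{derived Satake}. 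Substituting and using $\scA^{b_1-1}\otimes^!\DC_{\Gr_G}\otimes^!\scA^{b_2-1} = \scA^{b_1-1}\otimes^!\scA^{b_2-1} = \scA^{b_1+b_2-2}$ (invoking \ref{last copy} once again to absorb the dualizing complex), one obtains the desired fusion identity.

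The main obstacle lies in the precise categorical formulation of the diagonal reduction $\kappa^r_\Delta$ on the external tensor product $\Psi^{-1}(\scA^{b_1})\otimes\Psi^{-1}(\scA^{b_2})$ and its compatibility with $\otimes^!$. Concretely, one must carefully keep track of the many left and right $\on{Sym}^{[]}(\gvee)$-actions on $\Psi^{-1}(\scA^{b_i})$, verify that tensoring with $\BC[\Sigma]^{[]}$ via the \emph{diagonal} of two right copies produces the same effect as first amputating one leg via $\kappa^r(\Areg)=\DC_{\Gr_G}$ and then gluing with $\otimes^!$, and ensure that all the associativity isomorphisms are compatible with taking cohomology (perfectness and Ind-perfectness arguments as in the proof of \ref{last copy W}). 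Once these formalities are in place, the proof becomes an essentially formal consequence of \ref{last copy}, \ref{last copy W}, and the identification $W_G^2=T^*G^\vee$.
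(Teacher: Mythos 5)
There is a genuine gap, and it lies precisely in what you flag at the end as ``the main obstacle.'' Your ``categorical fusion identity''
$\kappa^r_\Delta\bigl(\Psi^{-1}(\scA^{b_1})\otimes\Psi^{-1}(\scA^{b_2})\bigr)\cong\Psi^{-1}(\scA^{b_1+b_2-2})$
does not type-check. On the left you are working with an external product over $\Gr_G\times\Gr_G$, so $\Psi^{-1}(\scA^{b_1})\otimes\Psi^{-1}(\scA^{b_2})$ carries \emph{two} left $\on{Sym}^{[]}(\gvee)$-actions (one for each $\Gr_G$ factor) and $b_1+b_2$ right $G^\vee\ltimes\on{Sym}^{[]}(\gvee)$-actions. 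The reduction $\kappa^r_\Delta$ only collapses two of the right copies into one; it does not touch the two left copies. So after $\kappa^r_\Delta$ you still have an object equipped with two left $\on{Sym}^{[]}(\gvee)$-actions, whereas $\Psi^{-1}(\scA^{b_1+b_2-2})$, living over a single $\Gr_G$, has only one. The same objection applies to your base case $\kappa^r_\Delta(\Psi^{-1}(\Areg)\otimes\Psi^{-1}(\Areg))\cong\Psi^{-1}(\DC_{\Gr_G})$: the reduction of $\BC[T^*G^\vee\times T^*G^\vee]^{[]}$ by the diagonal right action gives $\BC[T^*G^\vee]^{[]}$ with a residual left $G^\vee\times G^\vee$-action (as spelled out in \ref{cylinder G}), which is not $\Psi^{-1}(\DC_{\Gr_G})=\kappa^r(\BC[T^*G^\vee]^{[]})$, a genuinely different object with only one left structure.

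The missing mechanism --- the reason the two $\Gr_G$ factors (equivalently, the two left $\on{Sym}$-actions) merge into one --- is supplied in the paper by the rigidity isomorphism of \ref{lem:rigidity} together with \ref{0.1}(b): $H^*_{G_\cO}(\Gr_G,\cF_1\otimes^!\cF_2)\cong\on{Ext}^*_{D_G(\Gr_G)}(\mathbf 1_{\Gr_G},\cC_G\cF_1\star\cF_2)$. This is what converts the $\otimes^!$-product (morally an external product pulled back to the diagonal of $\Gr_G\times\Gr_G$) into a convolution product on a single $\Gr_G$, which under $\Psi^{-1}$ becomes $\otimes_{\on{Sym}^{[]}(\gvee)}$. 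The identification of the two $\BC[\Sigma]$-actions that you need at the end (before taking $\kappa^l$) is then a consequence, recorded in \ref{rem:generalizedGaiotto}. Without invoking rigidity or \ref{0.1}, your two left actions stay distinct and the argument stalls exactly where you suspect. The paper's proof, in contrast, never works on $\Gr_G\times\Gr_G$ at all: it begins from $\BC[W_G^{b_1+b_2-2}]=H^*_{G_\cO}(\Gr_G,\scA^{b_1-1}\otimes^!\scA^{b_2-1})$, applies \ref{0.1}(b) to pass to a convolution Ext, applies derived Satake, reads off the hamiltonian reduction, and then uses \ref{0.1}(c) with \ref{rem:formality} to identify $\Phi(\scA^{b_i-1})=\BC[W_G^{b_i}]$ together with its right action. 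If you want to rescue your approach, you would need to reconstruct essentially this chain; the ``obstacle'' you identify is not a formality to be checked but the heart of the proof.
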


\begin{proof}
  We have 
\begin{multline*}
\BC[W_G^{b_1+b_2-2}]=H^*_{G_\CO}(\Gr_G,\scA^{b_1-1}\otimes^!\scA^{b_2-1})=
  \on{Ext}^*_{D_G(\Gr_G)}({\mathbf 1}_{\Gr_G},
{\mathcal C}_G\scA^{b_1-1}\star\scA^{b_2-1})\\
  =\on{Ext}^*_{D^{G^\vee}(\on{Sym}^{[]}(\gvee))}\left(\on{Sym}^{[]}(\gvee),
{\mathfrak C}_{G^\vee}\Psi^{-1}(\scA^{b_1-1})
  \otimes_{\on{Sym}^{[]}(\gvee)}\Psi^{-1}(\scA^{b_2-1})\right)\\
  =\on{Ext}^*_{D^{G^\vee}(\on{Sym}^{[]}(\gvee))}\left(\on{Sym}^{[]}(\gvee),
  \Phi(\scA^{b_1-1})\otimes_{\on{Sym}^{[]}(\gvee)}{\mathfrak C}_{G^\vee}
  \Phi(\scA^{b_2-1})\right),
\end{multline*}
(the second equality is~\ref{0.1}(b)).

Now
  $\on{Ext}^*_{D^{G^\vee}(\on{Sym}^{[]}(\gvee))}\left(\on{Sym}^{[]}(\gvee),
\Phi(\scA^{b_1-1})\otimes_{\on{Sym}^{[]}(\gvee)}{\mathfrak C}_{G^\vee}\Phi(\scA^{b_2-1})\right)$
is the hamiltonian reduction
  $(\Phi(\scA^{b_1-1})\boxtimes{\mathfrak C}_{G^\vee}
\Phi(\scA^{b_2-1}))\tslash\Delta_{G^\vee}$ of
    $\Phi(\scA^{b_1-1})\boxtimes{\mathfrak C}_{G^\vee}\Phi(\scA^{b_2-1})$ with
    respect to the diagonal (left) action of $G^\vee$. According to~\ref{0.1}(c)
and~\ref{rem:formality},
    \begin{equation*}
    \Phi(\scA^{b-1})=H^*_{G_\CO}(\Gr_G,\Areg\otimes^!\scA^{b-1})=
    H^*_{G_\CO}(\Gr_G,\scA^b)=\BC[W_G^b],
    \end{equation*} and the left
    $G^\vee\ltimes\on{Sym}^{[]}(\gvee)$-module structure in the LHS coincides
    with the {\em right} $G^\vee\ltimes\on{Sym}^{[]}(\gvee)$-module structure
    in the RHS (with respect to the last copy of
    $G^\vee\ltimes\on{Sym}^{[]}(\gvee)$). This completes the proof.
\end{proof}

\begin{Remark}\label{rem:generalizedGaiotto}
    The same argument shows that
    \begin{equation*}
        H^*_{G_\cO}(\Gr_G,\scA_1\otimes^!\scA_2) \cong
        H^*_{G_\cO}(\Gr_G,\Areg\otimes^!\scA_1)\otimes
        \mathfrak C_{G^\vee} H^*_{G_\cO}(\Gr_G,\Areg\otimes^!\scA_2)
        \tslash \Delta_{G^\vee}
    \end{equation*}
    for ring objects $\scA_1$, $\scA_2$ in $D_G(\Gr_G)$.

The natural action of $H^*_{G_\CO}(\on{pt})\otimes H^*_{G_\CO}(\on{pt})=
\BC[\Sigma]^{[]}\otimes \BC[\Sigma]^{[]}$ on the RHS factors through the
multiplication homomorphism
$\BC[\Sigma]^{[]}\otimes \BC[\Sigma]^{[]}\to\BC[\Sigma]^{[]}$, and the resulting
action of $H^*_{G_\CO}(\on{pt})=\BC[\Sigma]^{[]}$ in the RHS coincides with its
natural action in the LHS.
\begin{NB}
  HN is not sure where $\CC[\Sigma]^{[]}$-action is explained for
  $\Ext^*_{D^{G^\vee}(\on{Sym}(\gvee)^{[]}}$ in the displayed equation
  in the proof of \ref{2:0 W}. Therefore it is difficult for him to
  check this claim.

  Well, $\Ext^*_{D^{G^\vee}(\on{Sym}^{[]}(\gvee))}(\on{Sym}^{[]}(\gvee),M)=M^{G^\vee}$
  is equipped with the natural action of
  $\on{Sym}^{[]}(\gvee)^{G^\vee}=\BC[\Sigma]^{[]}$.
  
\end{NB}%
\end{Remark}

\begin{Remark}
  \label{rem:quantum Satake}
  The results of~\ref{derived Satake}--\ref{cylinder G} have their quantum
  counterparts if we consider an extra equivariance with respect to the
  loop rotations. They are based on the equivalence of monoidal triangulated
  categories $D^{G^\vee}(U_\hbar^{[]}(\gvee))\iso
  D_{G_\CO\rtimes\BC^\times}(\Gr_G)$~\cite[Theorem~5]{MR2422266}
  (recall that $D_G(\Gr_G)$ is a shorthand for the $G_\CO$-equivariant derived
  category $D_{G_\CO}(\Gr_G)$). In particular, the
  regular sheaf $\Areg\in D_{G_\CO\rtimes\BC^\times}(\Gr_G)$ corresponds to
  $U_\hbar^{[]}(\gvee)\ltimes\BC[G^\vee]$.
  For a loop rotation equivariant ring object $\scA\in D_{G_\CO\rtimes\BC^\times}(\Gr_G)$
  one can consider the loop rotation equivariant cohomology ring
  $H^*_{G_\CO\rtimes\BC^\times}(\Gr_G,\scA)$. Similarly to~\ref{rem:generalizedGaiotto},
  we have
  \begin{equation*}
        H^*_{G_\cO\rtimes\BC^\times}(\Gr_G,\scA_1\otimes^!\scA_2) \cong
        H^*_{G_\cO\rtimes\BC^\times}(\Gr_G,\Areg\otimes^!\scA_1)\otimes
        \mathfrak C_{G^\vee} H^*_{G_\cO\rtimes\BC^\times}(\Gr_G,\Areg\otimes^!\scA_2)
        \tslash \Delta_{G^\vee}
    \end{equation*}
  ({\em quantum Hamiltonian reduction}) for ring objects $\scA_1$, $\scA_2$
  in $D_{G_\CO\rtimes\BC^\times}(\Gr_G)$.

  In particular, we set
  $\BC_\hbar[W_G^b]:=H^*_{G_\CO\rtimes\BC^\times}(\Gr_G,\scA^b)$,
  a quantization of $\BC[W_G^b]$. Then $\BC_\hbar[W_G^{b_1+b_2-2}]=
  (\BC_\hbar[W_G^{b_1}]\otimes\BC_\hbar[W_G^{b_2}])\tslash\Delta_{G^\vee}$.
\end{Remark}

%%% Local Variables:
%%% mode: latex
%%% TeX-master: "affine_pre"
%%% End:

% !TEX root = affine_pre.tex
\subsection{Gluing construction vs hamiltonian reduction}
\label{subsec:glu_ham}

Let us slightly change the point of view to our gluing construction
\ref{subsec:glue} so that it formally looks similar to a hamiltonian
reduction.

Let $\scA$ be a commutative ring object on $\Gr_G$. Let $G'$ be a
subgroup of $G$, which is also reductive. We have an inclusion
$i\colon \Gr_{G'}\to\Gr_G$. Then
\begin{quote}
  The $!$-pull back $i^!\scA$ is a ring object on $\Gr_{G'}$.
\end{quote}
When $\scA$ arises as $\pi_*(\DC_{\cR}[-2\dim\bN_\cO])$ from a
representation $\bN$, $i^!\scA$ is the ring object associated with
$\bN$ viewed as a representation of $G'$.

Next suppose we have a homomorphism $G\to G''$ to another reductive
group $G''$. We consider the induced morphism
$p\colon\Gr_G\to \Gr_{G''}$, which is equivariant under the induced
group homomorphism $G_\cO\to G''_\cO$. Then
\begin{quote}
  The pushfoward $Q_{p*}\scA$ is a ring object on $\Gr_{G''}$.
\end{quote}
Here $Q_{p*}$ is the general pushforward as in \ref{subsec:affG_flavor}.
The construction of \ref{subsec:affG_flavor} is an example of the
pushforward, where $G$, $G''$ here are $\tilde G$, $G_F$ there, and
$\scA\in D_G(\Gr_G)$ here is the ring object on
$D_{\tilde G}(\Gr_{\tilde G})$ associated with a representation $\bN$
of $\tilde G$ there.
When $G''$ is the trivial group, the pushforward is nothing but taking
the cohomology $H^*_{G_\cO}(\Gr_G,\scA)$.
In physics terminology this operation corresponds to the
\emph{gauging} with respect to the kernel of the homomorphism
$G\to G''$.

Note that this construction is \emph{formally} similar to a
hamiltonian reduction: suppose that we have a hamiltonian $G$ space
$X$. We take a hamiltonian reduction $X\tslash G'$ with respect to a
normal subgroup $G'\triangleleft G$. Then $X\tslash G'$ is a
hamiltonian $G'' = G/G'$ space.
This is not just an analogy if we consider gauging in quantum field
theories:
The Higgs branch of a gauge theory associated with $(G,\bN)$ is the
hamiltonian reduction $\bN\oplus\bN^*\tslash G$. (See
\cite{Tach-review} for a review for mathematicians.)

As an example of the similarity, let us consider \eqref{eq:fund} which
we regard as a quantum field theory \emph{upgrade} of the definition
$W^{g,b} = \Spec H^*_{G_\CO}(\Gr_G,\scA^b\otimes^!\scB^g)$. Let us
consider the Coulomb branch of the left hand side, which should be
equal to the Higgs branch of the right hand side. Under the gauging
$\tslabar$, the Higgs branch is replaced by the symplectic
reduction as we have just mentioned. Hence we get
\begin{equation*}
    \cM_C(S_{G^\vee}(C)) = 
      \mathcal N_{G}^b \times (\fg\oplus\fg^*)^g
    \tslash G_{\mathrm{diag}},
\end{equation*}
where $\mathcal N_G$ is the nilpotent cone of $G$, and
$\fg\oplus \fg^*$ is symplectic by the natural pairing.
Thus the Coulomb branch $\cM_C(S_{G^\vee}(g,b))$ is the `additive
version' of the $G$-character variety on the punctured Riemann surface
$C$, where the monodromy around punctures sit in the regular
unipotent orbit.
\begin{NB}
  For a general nilpotent element $e_i$, one replaces the regular
  nilpotent orbit by the Lusztig-Spaltenstein dual of $e_i$.
\end{NB}%
\begin{NB}
    Let us consider $\cT = S_{SL(2)}(S^2,\text{three punctures})$. The
    corresponding Hitchin moduli space is a point. Therefore we
    conclude that
    $\cM_C(\Hyp(\CC^2\otimes\CC^2\otimes\CC^2)) = \mathrm{pt}$. Thus
    my naive consideration might be correct.
\end{NB}%
When $G$ is of type $A$, this is the Higgs branch of the quiver gauge
theory associated with the quiver \cite[3(iii)
Figure~5]{2015arXiv150303676N}. See the references therein to see why
it is an additive version of a $G$-character variety.

%%% Local Variables:
%%% mode: latex
%%% TeX-master: "affine_pre"
%%% End:

\subsection{Gluing in the Higgs branch side}\label{subsec:gluHiggs}

Let us pursue the analogy between the gluing construction and
hamiltonian reduction further. Let us consider a ring object
associated with $S_G(g,b)$ in the Coulomb branch side instead of the
Higgs branch side. It is the Higgs branch ring object associated with
the right hand side of \eqref{eq:fund} after exchanging $G$ and
$G^\vee$. Hence it is
\begin{equation*}
    \cA_{S_G(g,b)} = \boxtimes_{k=1}^b (\cA_R)_k \boxtimes
    \boxtimes_{l=1}^g \on{Sym}(\fg^\vee \oplus(\fg^\vee)^*)_l
    \tslash \Delta_{G^\vee},
\end{equation*}
where $\on{Sym}(\fg^\vee \oplus(\fg^\vee)^*)$ is considered as a ring object
on the affine Grassmannian $\Gr_{\{e\}}$ for the trivial group $\{e\}$
with the diagonal $G^\vee$-action. Therefore $\cA_{S_G(g,b)}$ is a
ring object in $D_{G^b}(\Gr_{G^b})$.
Since \ref{2:0 W} is a consequence of an upgraded equality in quantum
field theories (due to Gaiotto \cite{MR3006961}), we have the
corresponding property also for $\cA_{S_G(g,b)}$. It is nothing but
the following:

\begin{Proposition}\label{prop:d1}
\begin{equation*}
    p_* i_{\Delta^{b_1,b_2}}^! (\cA_{S_G(g_1,b_1)} \boxtimes \cA_{S_G(g_2,b_2)})
    = \cA_{S_G(g_1+g_2,b_1+b_2-2)},
\end{equation*}
where \textup{(a)}
$i_{\Delta^{b_1,b_2}}\colon \Gr_G^{b_1+b_2-1}\to \Gr_{G}^{b_1}\times
\Gr_G^{b_2}$
is the product of the evident map
$\Gr_{G}^{b_1+b_2-2}\xrightarrow{\cong}
\Gr_G^{b_1-1}\times\Gr_G^{b_2-1}$
and the diagonal embedding $\Gr_{G}\to (\Gr_{G})^2$ of the last factor
to the product of the $b_1$st and the $b_2$nd factors, and
\textup{(b)} $p\colon (\Gr_{G})^{b_1+b_2-1}\to (\Gr_{G})^{b_1+b_2-2}$
is the projection given by forgetting the last factor.
\end{Proposition}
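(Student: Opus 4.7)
The plan is to prove this by the same mechanism that underlies \ref{2:0 W}, but now at the level of the ring objects $\cA_{S_G(g,b)}$ themselves rather than only their global sections. The key geometric input is that after merging two $\Areg$ factors and pushing forward, one recovers $T^*G^\vee$, which is the identity cylinder in the target symplectic category, so that two separate diagonal $G^\vee$-reductions collapse to a single one.

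First, I would expand $\cA_{S_G(g_1,b_1)}\boxtimes\cA_{S_G(g_2,b_2)}$ according to the definition of each factor as a hamiltonian reduction of a box product. The morphism $i_{\Delta^{b_1,b_2}}^!$ affects only the $b_1$-st $\Areg$ of the first tensor and the $b_2$-nd $\Areg$ of the second, leaving the other $b_1+b_2-2$ copies of $\Areg$ and all $g_1+g_2$ copies of $\on{Sym}(\fg^\vee\oplus(\fg^\vee)^*)$ untouched; similarly, $p_*$ only integrates over the newly-merged factor. Thus both operations pass transparently through the remaining box-factors, and, by base change, also commute with the two separate $\Delta_{G^\vee}$-reductions (each of which acts diagonally on its collection of factors, and in particular only via a single $G^\vee$-action on the factor that will be merged).

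Second, I would compute the pushforward of the merged piece. Along the merged factor one has $i_\Delta^!(\Areg\boxtimes\Areg)=\Areg\otimes^!\Areg$, and the pushforward to the trivial Grassmannian (a point) is, by definition, $H^*_{G_\CO}(\Gr_G,\Areg\otimes^!\Areg)=\BC[T^*G^\vee]=\BC[W_G^2]$ computed in~\ref{cylinder G}. As a commutative ring object on the affine Grassmannian of the trivial group — that is, merely a commutative algebra — this pushforward is $T^*G^\vee$ equipped with its two commuting left and right $G^\vee$-actions, one inherited from each factor of the original box product.

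Third, I would perform the remaining hamiltonian reductions. Restricted to the merged factor, the $\Delta_{G^\vee}$-reduction coming from $\cA_{S_G(g_1,b_1)}$ couples to one of the $G^\vee$-actions on $T^*G^\vee$, and the $\Delta_{G^\vee}$-reduction coming from $\cA_{S_G(g_2,b_2)}$ couples to the other. The elementary identity
\[
(X_1\times T^*G^\vee\times X_2)\tslash(G^\vee\times G^\vee)\cong (X_1\times X_2)\tslash\Delta_{G^\vee},
\]
used already in the discussion preceding~\ref{2:0 W} (and a consequence of the fact that $T^*G^\vee$ is the identity endomorphism in the category $\mathrm{HS}$), then identifies the two separate diagonal reductions with a single diagonal $\Delta_{G^\vee}$-reduction across all the remaining factors. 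Matching this with the definition yields $\cA_{S_G(g_1+g_2,b_1+b_2-2)}$.

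The main obstacle will be making rigorous the commutation of the hamiltonian reduction operation $\tslash\Delta_{G^\vee}$ with both $p_*$ and $i_{\Delta^{b_1,b_2}}^!$ at the level of ring objects in $D_{G^{b_1+b_2}}(\Gr_G^{b_1+b_2})$: the reduction is defined via derived Satake, and one must check that pushing forward and restricting along diagonals in factors not touched by the relevant $\on{Sym}^{[]}(\gvee)$-action genuinely pass through the reduction. As in the proof of~\ref{last copy} this reduces to associativity of tensor products of dg-modules combined with the base change statements for $p_*$ and $i_\Delta^!$ on the factors they do not affect, but writing this out carefully while tracking every $G^\vee$-action is the substantive bookkeeping step.
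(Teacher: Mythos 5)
Your strategy is conceptually aligned with the paper's — both hinge on $T^*G^\vee$ being the identity in the target symplectic category — but the implementation diverges, and the divergence is where the difficulty lies.

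The paper never attempts to commute the hamiltonian reductions $\tslash\Delta_{G^\vee}$ past $p_*$ and $i_{\Delta^{b_1,b_2}}^!$. Instead, it first establishes (via an explicit gauge-fixing map using the last group element $g_b$) an isomorphism of $(G^\vee)^b$-hamiltonian spaces
\[
(T^*G^\vee)^b\times(\fg^\vee\times\fg^\vee)^g\tslash\Delta_{G^\vee}^r
\;\cong\;
(T^*G^\vee)^{b-1}\times(\fg^\vee\times\fg^\vee)^g,
\]
recording explicitly the residual $(G^\vee)^b$-action and its moment map, so that $\Psi^{-1}\cA_{S_G(g,b)}$ becomes an \emph{unreduced} product with a twisted action. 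It then applies \ref{0.1}(b) (together with \ref{rem:formality}) to interpret $p_*i_{\Delta^{b_1,b_2}}^!$ of the box product as the $\Ext^*_{D^{G^\vee}(\on{Sym}^{[]}(\gvee))}$ functor — i.e.\ a single hamiltonian reduction by the diagonal $G^\vee$ in the two merged factors, with a Chevalley twist $\mathfrak C_{G^\vee}$ on one side — and computes this directly with the gauge-fixed models and moment-map formulas. No commutation of $\tslash$ with sheaf operations is needed.

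Your first step, by contrast, asserts that $p_*$ and $i_{\Delta^{b_1,b_2}}^!$ ``commute with the two separate $\Delta_{G^\vee}$-reductions\ldots by base change.'' This is the genuine gap. The reductions are not sheaf-theoretic quotients in $D_{G^{b_1+b_2}}(\Gr_G^{b_1+b_2})$ for which base change is a meaningful tool; they are Kostant--Whittaker-style reductions on the derived Satake side, implemented as $(-)\otimes_{\on{Sym}^{[]}(\gvee)}\BC[\Sigma]^{[]}$ followed by derived $U^\vee_-$-invariants. Moreover — as you yourself note — each reduction \emph{does} act on the factor being merged, so there is no clean separation of the ``touched'' and ``untouched'' data. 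A commutation statement of the kind you need is analogous to (but strictly more involved than) the commutation of $\kappa^r$ with $\kappa^l$ proved in \ref{last copy W}, which the paper verifies by a nontrivial devissage on tensor products of dg-modules. Calling it ``base change'' elides this. Your ``elementary identity'' $(X_1\times T^*G^\vee\times X_2)\tslash(G^\vee\times G^\vee)\cong(X_1\times X_2)\tslash\Delta_{G^\vee}$ is also underspecified: to match the $\Ext^*$ formula of \ref{0.1}(b) one must insert a Chevalley involution $\mathfrak C_{G^\vee}$ on one side and track how it propagates through the residual actions, which your outline does not address. In short, your argument can likely be completed, but the ``substantive bookkeeping'' you defer is precisely the content the paper handles by the gauge-fixing isomorphism, and that is a qualitatively different and more robust tool than the commutation lemma you would need to prove instead.
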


\begin{proof}
Let us identify $\fg^\vee$ and $(\fg^\vee)^*$ by a non-degenerate
invariant form.
Let us consider
\[
  (T^*G^\vee)^b \times (\fg^\vee\times\fg^\vee)^g \tslash
\Delta^r_{G^\vee} = \underbrace{T^*G^\vee\times\cdots\times
  T^*G^\vee}_{\text{$b$ times}} \times
\underbrace{(\fg^\vee\times\fg^\vee) \times\cdots\times
  (\fg^\vee\times\fg^\vee)}_{\text{$g$ times}} \tslash
\Delta^r_{G^\vee},
\]
where $\Delta^r_{G^\vee}$ is the diagonal subgroup acting on
$T^*G^\vee\times\cdots\times T^*G^\vee$ by the right action, and on
$(\fg^\vee\times\fg^\vee)\times\cdots \times (\fg^\vee\times\fg^\vee)$
by the adjoint action. The dg-version of its coordinate ring is
$\Psi^{-1}\cA_{S_G(g,b)}$.
It is isomorphic to
$(T^*G^\vee)^{b-1}\times (\fg^\vee\times\fg^\vee)^g$ by
\begin{multline*}
    % (T^*G^\vee)^b \times (\fg\times\fg)^g \tslash \Delta^r_{G^\vee}\ni
    \left[ g_1,\xi_1, \dots, g_b,\xi_b, \eta_1,\zeta_1,\dots,
    \eta_g,\zeta_g \right] \mapsto 
    (g_1',\xi'_1,\dots,g_{b-1}',\xi'_{b-1},\eta'_1,\zeta'_1,\dots,
    \eta'_g,\zeta'_g) 
\\
    g'_k = g_k g_b^{-1}, \quad
    \xi'_k = \on{Ad}_{g_b}\xi_k \quad (k=1,\dots,{b-1}), \quad
    \eta'_l = \on{Ad}_{g_b} \eta_l, \quad
    \zeta'_l = \on{Ad}_{g_b} \zeta_l \quad (l=1,\dots,g).
\end{multline*}
\begin{NB}
    Here $\Delta^r_{G^\vee}$ acts by
    $\left[ g_1 h^{-1} ,\on{Ad}_h \xi_1, \dots, g_b h^{-1},\on{Ad}_h \xi_b, \on{Ad}_h \eta_l, \on{Ad}_h \zeta_l \right]$, hence this is well-defined.
\end{NB}%
The left $(G^\vee)^b$-action on $(T^*G^\vee)^b\times (\fg\times\fg)^g \tslash\Delta^r_{G^\vee}$ is
identified with the left $G^\vee$-action (and the trivial action
on $(\fg^\vee\times\fg^\vee)^g$) for the first $(b-1)$ factors, but
the last factor acts by
\begin{equation*}
    (T^*G^\vee)^{b-1}\ni
    (g_1',\xi_1',\dots,g'_{b-1},\xi_{b-1}') 
    \mapsto (g_1' h_b^{-1}, \on{Ad}_{h_b}\xi'_1,
    \dots, g'_{b-1} h_b^{-1}, \on{Ad}_{h_b}\xi'_{b-1},
    \on{Ad}_{h_b} \eta'_l, \on{Ad}_{h_b} \zeta'_l)
\end{equation*}
for $h_b\in G$. The corresponding moment map is also the standard one
for the first $(b-1)$-factors, and the last one is
\begin{equation*}
    \begin{NB}
        \on{Ad}_{g_b}\xi_b
        = - \on{Ad}_{g_b}\xi_1 - \cdots - \on{Ad}_{g_b}\xi_{b-1} 
        - \sum_{l=1}^g \on{Ad}_{g_b}([\eta_l,\zeta_l]) =
    \end{NB}%
    - \xi_1' - \dots - \xi_{b-1}' - \sum_{l=1}^g [\eta'_l, \zeta'_l].
\end{equation*}
This is nothing but the restriction to the diagonal subgroup of the
product of the right action and the adjoint action.

Now by~\ref{0.1}(b) and~\ref{rem:formality},
$p_*i_\Delta^!(\cA_{S_G(g_1,b_1)}\boxtimes\cA_{S_g(g_2,b_2)})$ goes to
\begin{equation*}
    \on{Ext}^*_{D^{G^\vee}(\on{Sym}(\gvee))}(\on{Sym}^{[]}(\fg^\vee),
    {\mathfrak C}_{G^\vee}\Psi^{-1}\cA_{S_G(g_1,b_1)}
%    \CC[(T^*G^\vee)^{b_1}\times(\fg^\vee\times\fg^\vee)^{g_1}
%    \tslash \Delta^r_{G^\vee}] 
    \otimes_{\on{Sym}^{[]}(\fg^\vee)}
    \Psi^{-1}\cA_{S_G(g_2,b_2)}),
%   \CC[(T^*G^\vee)^{b_2}\times(\fg^\vee\times\fg^\vee)^{g_2}
%    \tslash \Delta^r_{G^\vee}]),
\end{equation*}
under the derived Satake equivalence. Here $\fg^\vee$ is the Lie
algebra of the diagonal subgroup in the product of last factors of
$(G^\vee)^{b_1}$ and $(G^\vee)^{b_2}$.
It is equal to
\begin{equation*}
    \CC[(T^*G^\vee)^{b_1+b_2-2}\times (\fg^\vee\times\fg^\vee)^{g_1+g_2}
    \tslash \Delta^r_{G^\vee}]^{[]}
\end{equation*}
by the above computation. This is nothing but $\Psi^{-1}\cA_{S_G(g_1+g_2,b_1+b_2-2)}$.
\end{proof}

\begin{NB}
    A key of this result and \ref{2:0} can be summarized by the identity
    \begin{equation*}
        G_1\times G_2 \curvearrowright 
        T^*G^\vee\times T^*G^\vee \tslash \Delta^r_{G^\vee}
        = G_1 \curvearrowright T^*G^\vee \curvearrowleft G_2,
    \end{equation*}
    where $G_1$, $G_2$ are copies of $G^\vee$. Here $\curvearrowright$
    is the left action, and $\curvearrowleft$ is the right action.
\end{NB}

\begin{NB}
We have $\kappa^r(T^*G)=\{(g_1,\xi_1) : \on{Ad}_{g_1}\xi_1\in\Sigma\}$. 
The residual $G$-action is $g(g_1,\xi_1)=(g_1g^{-1},\on{Ad}_g\xi_1)$,
and the moment map is $\mu_l(g_1,\xi_1)=\xi_1$.
We have $\kappa^l(T^*G)=\{(g_2,\xi_2) : \xi_2\in\Sigma\}$.
The residual $G$-action is $g(g_2,\xi_2)=(gg_2,\xi_2)$, and the moment
map is $\mu_r(g_2,\xi_2)=\on{Ad}_{g_2}\xi_2$.
Hence $(\kappa^r(T^*G)\times\kappa^l(T^*G))\tslash\Delta_G=
\{(g_1,\xi_1,g_2,\xi_2) : \on{Ad}_{g_1}\xi_1\in\Sigma\ni\xi_2,\ 
\xi_1=\on{Ad}_{g_2}\xi_2\}/\Delta_G$.
Choosing $g\in\Delta_G$ to be $g_1$ we see the latter quotient coincides with
$\{(\xi'_1:=\on{Ad}_{g_1}\xi_1,\ g':=g_1g_2,\ \xi'_2:=\xi_2) : 
\xi'_1=\on{Ad}_{g'}\xi'_2\}$. Recall that $\Sigma$ is a slice, hence it 
follows $\xi'_1=\xi'_2=\on{Ad}_{g'}\xi'_2$, i.e.\ we obtain the universal
centralizer, i.e.\ $\kappa^r\kappa^l(T^*G)$ as desired.
\end{NB}

%%% Local Variables:
%%% mode: latex
%%% TeX-master: "affine_pre"
%%% End:

\begin{NB}

  \begin{quote}
    HN comments out a short review of 3d version of
    \cite{Tach-review}. It is kept as a file \verb+cat_tmp.tex+, hence
    will appear when we delete \verb+%+ of \verb+\input{cat_tmp}+
    several lines below.
  \end{quote}
  
\renewenvironment{NB}{
\color{blue}{\bf NB2}. \footnotesize
}{}
\renewenvironment{NB2}{
\color{purple}{\bf NB3}. \footnotesize
}{}
\end{NB}

\subsection{Hamiltonian reduction of \texorpdfstring{$T^*G$}{T^*G} with respect to the adjoint action}
\label{subsec:Loseu}

Let $G$ be a connected reductive group over $\BC$ and let $\fg$ be its Lie
algebra.
Consider the adjoint action of $G$ on itself and the induced Hamiltonian
action of the cotangent
bundle $T^*G$. Using a non-degenerate invariant form we identify $\fg$ with
$\fg^*$, this gives rise to
an identification $T^*G\cong G\times \fg$ (with the diagonal action of $G$).
The moment map
$\mu\colon T^*G\rightarrow \fg$ becomes $(g,x)\mapsto \on{Ad}_gx-x$. It follows that
$\mu^{-1}(0)=\{(g,x)|
\on{Ad}_gx=x\}$. Consider the Hamiltonian reduction $\mu^{-1}(0)\dslash G$ with
the reduced scheme structure.

Now consider $T^*T=T\times \ft$. We have a natural morphism of varieties
$\psi\colon (T\times \ft)/W\rightarrow
\mu^{-1}(0)\dslash G$ induced from $T\times \ft\hookrightarrow G\times \fg$.

\begin{Proposition}[I.~Losev]
\label{Prop:iso}
The morphism $\psi\colon  (T\times \ft)/W\rightarrow \mu^{-1}(0)\dslash G$ is an isomorphism of varieties.
\end{Proposition}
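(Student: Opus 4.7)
The plan is to show $\psi$ is an isomorphism by verifying (i) well-definedness, (ii) bijectivity on closed points, (iii) isomorphism over a dense open subset, and (iv) upgrading to a global isomorphism using normality of the source. Well-definedness is immediate: for $(t,h)\in T\times\ft$, $\on{Ad}_t h=h$ so $(t,h)\in\mu^{-1}(0)$, and representatives in $N_G(T)$ realize $W$-translates as $G$-translates, so the map factors through $W$-invariants. Note that the source $(T\times\ft)/W$ is normal, being the quotient of a smooth variety by a finite group.

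For bijectivity on closed points, I would use Jordan decomposition: writing $g=g_sg_u$ and $x=x_s+x_n$, the commutation $\on{Ad}_gx=x$ forces $g_s,g_u,x_s,x_n$ to pairwise commute. A standard one-parameter subgroup argument shows that the unique closed orbit in $\overline{G\cdot(g,x)}$ is $G\cdot(g_s,x_s)$, so closed $G$-orbits in $\mu^{-1}(0)$ are in bijection with $G$-conjugacy classes of commuting pairs of semisimple elements. Any such pair lies in a common maximal torus, hence in a $G$-translate of $T\times\ft$; and two points of $T\times\ft$ are $G$-conjugate iff $W$-conjugate. This gives the claimed bijection on closed points, and in particular $\psi$ is surjective and dominant.

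For the generic isomorphism, let $U\subset\mu^{-1}(0)$ be the locus where $g$ is regular semisimple. There $Z_G(g)$ is the unique maximal torus through $g$ and $x\in\Lie Z_G(g)$, so the natural map $G\times^{N_G(T)}(T^{\on{reg}}\times\ft)\to U$ is an isomorphism. Hence $U\dslash G\cong(T^{\on{reg}}\times\ft)/W$, and $\psi$ restricts to an isomorphism on the dense open subset $(T^{\on{reg}}\times\ft)/W\subset(T\times\ft)/W$. In particular $\psi^{-1}$ is a well-defined morphism on a dense open subscheme of the target.

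The main obstacle is to promote these observations to an isomorphism of affine schemes. My plan is first to show that $\psi$ is a finite morphism: bijectivity on closed points yields finite fibers, and properness follows by comparison with the Chevalley maps $G\to T/W$ and $\fg\to\ft/W$, whose product makes $(T\times\ft)/W\to T/W\times\ft/W$ finite, compatibly with $\psi$. A finite birational morphism from a normal variety identifies the source with the normalization of the target, so $(T\times\ft)/W$ is the normalization of $\mu^{-1}(0)\dslash G$. The remaining—and most delicate—step is to verify that the target is already normal, equivalently that $\psi^*\colon\BC[\mu^{-1}(0)]^G\to\BC[T\times\ft]^W$ is surjective. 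I expect this to follow by an explicit extension argument: every $W$-invariant function on $T\times\ft$ extends across the codimension-one discriminant locus to a $G$-invariant regular function on $\mu^{-1}(0)$, by checking that the complement of $U$ in the principal irreducible component of $\mu^{-1}(0)^{\on{red}}$ has codimension at least two, and handling the non-principal components using the closed-orbit analysis from step (ii). This is the multiplicative analogue of the classical identification $\BC[\{(x,y)\in\fg\times\fg:[x,y]=0\}]^G\cong\BC[\ft\times\ft]^W$, and the hard part is the codimension/extension estimate.
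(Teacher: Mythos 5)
Your first two steps (well-definedness and bijectivity on closed points via Jordan decomposition and Hilbert--Mumford) match the paper's Step~1, and you are right that the whole problem reduces to showing that $\mu^{-1}(0)\dslash G$ is normal — a bijective morphism onto a normal variety over $\BC$ is an isomorphism, which is exactly the paper's Step~2. Your observation that $\psi$ is finite (by comparison with the Chevalley maps $G\to T/W$ and $\fg\to\ft/W$, using the cancellation law for proper morphisms) is a clean extra point that the paper does not make explicitly, and it correctly identifies $(T\times\ft)/W$ as the normalization of the target.

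The gap is precisely where you flag it, and it is not a small one. Your plan to prove normality of $\mu^{-1}(0)\dslash G$ by a Hartogs-type extension is essentially circular: to extend a regular function across a codimension-$\geq 2$ locus, the ambient space must satisfy Serre's condition $S_2$ (equivalently, $\cO = j_*\cO|_{\text{open}}$ for such opens), and $S_2$ of the quotient together with the generic smoothness you already have \emph{is} normality — it is exactly what needs to be proved. Trying to run Hartogs on $\mu^{-1}(0)$ instead and then take $G$-invariants does not help, because normality (or even $S_2$) of the group-valued commuting variety $\{(g,x):\on{Ad}_gx=x\}$ is itself unknown and, as far as I can tell, harder than the statement at hand; it is the analogue of the notoriously difficult normality question for the Lie-algebra commuting variety. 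The codimension estimate you would need for the non-principal components is also left entirely open. The paper avoids this circle by a completely different mechanism: the \'etale/formal slice theorem for Hamiltonian actions localizes the problem to the formal neighborhood of each closed orbit $G\cdot(g,0)$, where the reduced space becomes the \emph{additive} reduction $(\fh\oplus\fh)\tslash H$ for $H=Z_G(g)$, whose normality is the known theorem of~\cite{MR1446576}; the $\BC^\times$-contraction along the $\fg$-factor (Step~5) then propagates normality from $G\dslash G$ to the whole of $\mu^{-1}(0)\dslash G$. In short: your reduction to ``target is normal'' is right, but the proposed extension argument presupposes what it aims to prove, whereas the paper substitutes a slice-theorem reduction to the already-established Lie-algebra result.
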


We can consider the analogous situation for the Lie algebras: we have
the moment map $\underline{\mu}\colon \fg^2\rightarrow \fg, (x,y)\mapsto [x,y]$.
In this situation,
a direct analog of~\ref{Prop:iso} is known thanks to~\cite{MR1446576}:
we have $\ft^2/W\iso\underline{\mu}^{-1}(0)\dslash G$.
In particular,
the variety $\underline{\mu}^{-1}(0)\dslash G$ is normal.

\begin{proof} The proof is in several steps.

{\it Step 1}. Let us show that $\psi$ is a bijection. The variety $\mu^{-1}(0)\dslash G$
parameterizes the closed $G$-orbits in $\mu^{-1}(0)=\{(g,x)| \on{Ad}_gx=x\}$. It follows easily
from the Hilbert-Mumford theorem that the orbit $G(g,x)$ is closed if and only if both
$g,x$ are semisimple. Also any $G$-orbit of semisimple commuting elements $(g,x)$
intersects $T\times \ft$ in a single $W$-orbit. The claim in the beginning of the step follows.

{\it Step 2}. We claim that it is enough to show that $\mu^{-1}(0)\dslash G$ is a normal
algebraic variety. Indeed, any bijective morphism to a normal variety is an isomorphism.
The normality of $\mu^{-1}(0)\dslash G$ will follow if we check that the formal neighborhood
of every point in $\mu^{-1}(0)\dslash G$ is normal. In order to do that we will describe
the formal neighborhood using a version of a slice theorem for Hamiltonian actions
on affine symplectic varieties, see, e.g., \cite{slice} (in that paper complex analytic
neighborhoods were considered, but the result carries over to the formal neighborhood
in a straightforward way).

{\it Step 3}. Let us recall the slice theorem. Let $Y$ be a smooth affine symplectic
variety equipped with a Hamiltonian action (with moment map $\mu$) of a reductive group
$G$ and let $y\in Y$ be a point with closed $G$-orbit. Let us write $H$ for the stabilizer
of $y$ in $G$. The normal space $T_yY/T_y Gy$ can be decomposed as $\fh^\perp\oplus V$,
where $V$ is a symplectic vector space with $H$ acting on $V$ by linear symplectomorphisms.
Then the formal neighborhood of $Gy$ in $Y$ is $G$-equivariantly isomorphic to the formal neighborhood of
the zero section in $G\times^H (\fh^\perp\oplus V)$. An isomorphism can be chosen
to be compatible with symplectic forms and moment maps. In particular, the moment
map $\mu'\colon  G\times^H (\fh^\perp\oplus V)\rightarrow \fg$ is the unique $G$-equivariant
map that on the fiber $\fh^{\perp}\oplus V$ over $1H$ is given by $\mu(z,v)=z+\mu_H(v)$, where $\mu_H\colon V\rightarrow \fh$
is the standard moment map for a linear symplectic action. In particular, we see that
the formal neighborhood of $Gy$ in $\mu^{-1}(0)\dslash G$ is isomorphic to the formal
neighborhood of $0$ in $\mu_H^{-1}(0)\dslash H$.

{\it Step 4}. Consider $Y=G\times \fg$ and $y=(g,0)$ for a semisimple element $g\in G$.
We can identify $T_y Gy$ with $\{\on{Ad}_gx-x| x\in \fg\}$ so $T_y Y/T_y Gy=\fh\oplus \fg$
and $H=Z_G(g)$. We conclude that $V\cong \fh\oplus \fh$ with diagonal action of $H$.
By~\cite{MR1446576}, we see that $\mu^{-1}_H(0)\dslash H$ is normal.
So the formal neighborhood of $G(g,0)$ in $\mu^{-1}(0)\dslash G$ is normal, equivalently,
$G(g,0)$ is a normal  point.

{\it Step 5}. To finish the proof note that $\BC^\times$ acts on $\mu^{-1}(0)\dslash G$,
the action is induced from the dilation action on $\fg$. This action contracts
$\mu^{-1}(0)\dslash G$ to $G\dslash G$. Since the points in the latter are normal,
$\mu^{-1}(0)\dslash G$ is a normal algebraic variety.
\end{proof}

%%% Local Variables:
%%% mode: latex
%%% TeX-master: "affine_pre"
%%% End:

\appendix

\section{Group action on the Coulomb branch}\label{sec:group_action}

In this section we give a proof of the expected property
\cite[\S4(iii)(d)]{2015arXiv150303676N}, using an idea of Namikawa
\cite{2016arXiv160306105N}. See also
\cite{2017arXiv170103825C}.\footnote{The third named author thanks
  Amihay Hanany for his explanation of the idea to use the Lie algebra
  of degree $1$ subspace.}

\subsection{The degree $1$ subspace}

Let us consider the $\CC^\times$-action on the Coulomb branch $\cM_C$
given by $\Delta(\lambda)$ as in \remref{discrepancy}(2). Recall that
the $\CC^\times$-action is shifted from one given by the homological
degree by a hamiltonian action. In particular, the Poisson bracket
$\{\ ,\ \}$ is of degree $-1$ as in \subsecref{subsec:Cartan}.

Consider the subspace $\fl$ of degree $1$ elements in $\CC[\cM_C]$. It
forms a Lie subalgebra under the Poisson bracket $\{\ ,\ \}$. Then
$\CC[\cM_C]$ can be considered as a representation of this Lie algebra
$\fl$ by the Poisson bracket: $\{ f,\bullet\}$ ($f\in\fl$). If we
restrict it to the regular locus of $\cM_C$, it is nothing but the
hamiltonian vector field $H_f$ associated with $f\in\fl$ by the
symplectic form. The action preserves the Poisson bracket
\begin{NB}
  by Jacobi identity
\end{NB}%
and the degree.
\begin{NB}
  as $1 + (-1) = 0$
\end{NB}%
In more geometric term, $H_f$ preserves the symplectic form and
commutes with the $\CC^\times$-action.

\begin{Remark}
  Namikawa \cite{2016arXiv160306105N} shows that $\cM_C$ is the
  closure of a nilpotent orbit if $\CC[\cM_C]$ is generated by $\fl$,
  under the assumption that $\cM_C$ has symplectic singularities. In
  this case $\fl$ is the Lie algebra of
  $\operatorname{Aut}^{\CC^\times}(\cM_C,\omega)$, the group of
  $\CC^\times$-equivariant symplectic automorphisms of $\cM_C$. We
  conjecture that this statement is true for general $\cM_C$.
  Namikawa's argument works in much more general cases without the
  assumption that the coordinate ring is generated by
  $\fl$.\footnote{The third named author thanks Yoshinori Namikawa for
    explanation.} But we are not sure as we do not know $\cM_C$ has
  symplectic singularities, and the $\CC^\times$-action is not conical
  in general. These seem essential in Namikawa's argument.
\end{Remark}

\subsection{Balanced vertices in quiver gauge
  theories}\label{subsec:balanced}

Let us take a quiver $Q = (Q_0,Q_1)$ and two $Q_0$-graded vector
spaces $V = \bigoplus V_i$, $W = \bigoplus W_i$. We consider the
associated quiver gauge theory $(\GL(V),\bN)$ as in
\cite[\S2(iv)]{2015arXiv150303676N} and \secref{QGT}, i.e.,
\begin{equation*}
  \GL(V) = \prod_{i\in Q_0} \GL(V_i), \qquad
  \bN = \bigoplus_{h\in Q_1} \Hom(V_{\vout{h}}, V_{\vin{h}})\oplus
  \bigoplus_{i\in Q_0} \Hom(W_i, V_i).
\end{equation*}

In order to treat a group action on a line bundle in
\ref{subsec:flav-symm-group2} and \ref{subsec:line-bundles-via}, we
also consider a larger symmetry group
$\tilde G = \GL(V)\times \GL(W)/\CC^\times$ with
$\tilde G/\GL(V) = \PGL(W) = \prod_{i\in Q_0} \GL(W_i)/\CC^\times$,
where both $\CC^\times$ are diagonal scalar subgroups.

Recall $\CC[\cM_C]$ has a grading parametrized by $\pi_1(\GL(V))$
(\secref{sec:grading}). In our situation, we have
$\pi_1(\GL(V)) = \bigoplus \pi_1(\GL(V_i)) \cong \ZZ^{Q_0}$.
For the larger symmetry group, we have
$\pi_1(\tilde G) \cong \ZZ^{Q_0}\oplus \ZZ^{\{i \in Q_0| W_i\neq
  0\}}/\ZZ$, where $\ZZ$ is embedded into
$\ZZ^{Q_0}\oplus \ZZ^{\{i \in Q_0| W_i\neq 0\}}$ by
$1\mapsto (\dim V_i, \dim W_i)$.
We have the corresponding action of
$\pi_1(\GL(V))^\wedge \cong (\CC^\times)^{Q_0}$ on $\cM_C$ and
$\pi_1(\tilde G)^\wedge \cong (\CC^\times)^{\# Q_0 + \#\{i \mid W_i\neq
    0\} - 1}$
(modulo finite groups) on a line bundle in \ref{subsec:flav-symm-group2},
\ref{subsec:line-bundles-via}. Here $(\ )^\wedge$ is the Pontryagin
dual.
We will not be interested in the action of finite groups, hence we replace $\pi_1(\tilde G)$ by its free part $\pi_1(\tilde G)_\fr$ hereafter.
\begin{NB}
    We have a natural embedding $\pi_1(G)\subset\pi_1(\tilde G)_\fr$.
\end{NB}%
We have the corresponding space $H^2_{\GL(V)}(\mathrm{pt})$ (or
$H^2_{\tilde G}(\mathrm{pt})$ for $\pi_1(\tilde G)^\wedge$), which
consists of degree $1$ elements. 

\begin{NB}
  Since a coweight $\lambda$ of the flavor symmetry group $G_F$ is
  fixed when we consider a line bundle, $\pi_1(\tilde G)_\fr^\vee$ is
  too big unless $W_i$ is nonzero except a single $i$. I do not see
  how to fix this problem.
\end{NB}%

A vertex $i$ is \emph{balanced} if there is no edge loop at $i$, and
the corresponding coweight $\mu$ satisfies
$\langle\mu,\alpha_i\rangle = 0$, i.e.,
$2\dim V_i = \dim W_i + \sum_j a_{ij} \dim V_j$, where $a_{ij}$ is the
number of edges (either in $Q_1$ or its opposite) between $i$ and $j$.
We consider the subquiver $Q^{\mathrm{bal}}$ of $Q$ consisting of
balanced vertices and edges among them. By a well-known result (e.g.\
\cite[Th.~4.3]{Kac}), $Q^{\mathrm{bal}}$ is a union of finite $ADE$
quivers, unless $Q^{\mathrm{bal}}$ is a union of connected components
of $Q$ of affine type with $W=0$ on them. We suppose it is not the
latter case.

We consider elements $E_i^{(1)}$, $F_i^{(1)}$, $H_i^{(1)}$ from the
shifted Yangian considered in \secref{sec:quantization}. Looking at
relations therein, we see that their Poisson brackets satisfy the
relations of $\algsl_2$, as $H_i^{(p)} = 0$ ($p < 0$), $H_i^{(0)} = 1$ as
$\langle\mu,\alpha_i\rangle = 0$.
Moreover if both $i$ and $j$ are balanced, $E_i^{(1)}$, $F_i^{(1)}$,
$H_i^{(1)}$, $E_j^{(1)}$, $F_j^{(1)}$, $H_j^{(1)}$ satisfy the
relations of $\algsl_3$ or $\algsl_2\oplus\algsl_2$ according to
whether $i$ and $j$ are connected in the quiver or not. 
\begin{NB}
    In \ref{sec:quantization} we assumed that the number of edges
    between $i$ and $j$ are at most $1$. This is automatically
    satisfied under our assumption.
\end{NB}%
We then have the corresponding semisimple Lie algebra
$\fl^{\mathrm{bal}}_{\on{ss}}$ generated by $E_i^{(1)}$, $F_i^{(1)}$,
$H_i^{(1)}$ ($i\in Q_0^{\mathrm{bal}}$). 
\begin{NB}
Let $L^{\mathrm{bal}}$ be the
simply-connected semisimple Lie group whose Lie algebra is
$\fl^{\mathrm{bal}}$.
\end{NB}%

From the definition of $H_i(z)$ in \ref{sec:quantization}, $H_i^{(1)}$
is the coefficient of $z^{-1}$ in
$Z_i(z) \prod_{j} W_j(z)^{a_{ij}} / W_i(z)^2$, where
$Z_i(z) = \prod_{k:i_k=i} (z - z_k)$,
$W_i(z) = \prod_r (z - w_{i,r})$. (Note that we set $\hbar = 0$.) Here
$z_k$, $w_{i,r}$ are equivariant variables for $\prod \GL(W_i)$ and
$\GL(V_i)$ respectively. Therefore $H_i^{(1)}$ is
$-\sum_{k:i_k=i} z_k - \sum_{j,s} a_{ij} w_{j,s} + 2\sum_r w_{i,r}$.
This is nothing but $-c_1(W_i) - \sum_{j} a_{ij} c_1(V_j) + 2c_1(V_i)$ if
we regard $V_i$, $W_i$ as representations of $\GL(V_i)$, $\GL(W_i)$
respectively.
Now we apply \lemref{lem:former_claim}. The Poisson bracket
$\{ H_i^{(1)}, \bullet\}$ is given by
$\gamma^W_i + \sum_{j}a_{ij} \gamma_j - 2\gamma_i$ on the component
with grading
$\gamma = (\gamma_j,\gamma^W_j)\in\ZZ^{Q_0}\oplus\ZZ^{Q_0}/\ZZ$.
\begin{NB}
    Note that this is well defined as $i$ is balanced:
    $\dim W_i + \sum_j a_{ij} \dim V_j - 2\dim V_i = 0$.
\end{NB}%
\begin{NB}
This was originally written:

It means that $\gamma$, regarded as a weight for $\fl^{\mathrm{bal}}$,
is equal to $-\sum_{j\in Q_0} \gamma_j \alpha_j$, where
$\langle\alpha_j, H_i^{(1)}\rangle = 2\delta_{ij} - a_{ij}$ as
usual.
\end{NB}%
In particular, the action of $H_i^{(1)}$ is lifted to
$\pi_1(\GL(V))^\wedge \cong (\CC^\times)^{Q_0}$ for $\cM_C$, 
and to $\pi_1(\tilde G)_\fr^\wedge$ for line bundles.

\begin{Lemma}\label{lem:balanc-vert-quiv}
  $E_i^{(1)}$, $F_i^{(1)}$, $H_i^{(1)}$
  \textup($i\in Q_0^{\mathrm{bal}}$\textup) are of degree $1$.
\end{Lemma}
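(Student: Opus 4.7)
The plan is to treat $H_i^{(1)}$ and $E_i^{(1)},F_i^{(1)}$ separately: the first by inspecting its explicit formula, the second by a short application of the modified monopole formula \eqref{eq:modified} in the previously-cited section.

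For $H_i^{(1)}$, I would use the explicit expression
\begin{equation*}
H_i^{(1)} = -\sum_{k\,:\,i_k = i} z_k - \sum_{j,s} a_{ij}\, w_{j,s} + 2\sum_r w_{i,r}
\end{equation*}
established just above the lemma. This class sits inside $H^*_{\tilde G}(\mathrm{pt}) \subset H^{\tilde G_\cO}_*(\cR)$, i.e.\ on the component of $\cR$ labelled by the zero coweight, where the modified $\CC^\times$-grading of \ref{discrepancy}(2) reduces to the normalized cohomological grading in which each equivariant parameter $z_k,\, w_{j,s}$ has degree $1$. Since $H_i^{(1)}$ is linear in these parameters, it is homogeneous of degree $1$.

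For $E_i^{(1)}$ and $F_i^{(1)}$, the plan is first to identify, from the shifted-Yangian description in \ref{sec:quantization}, the component of $\cR$ on which they live, and then to read off the minimal modified degree on that component from \eqref{eq:modified}. Concretely, $F_i^{(1)}$ (resp.\ $E_i^{(1)}$) is, up to scalar, the fundamental class of the fiber of $\pi\colon \cR\to\Gr_{\GL(V)}$ over the minuscule coweight $\lambda$ given by $\lambda_{i,1} = +1$ (resp.\ $-1$) and all other entries $0$. On this component, \eqref{eq:modified} tells us that the minimal modified degree is exactly $\Delta(\lambda)$, the higher degrees being contributed by the $\GL(V)$-equivariant parameters $P_G(t;\lambda)$.

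The heart of the argument is then the short calculation
\begin{equation*}
\Delta(\lambda) = \tfrac{1}{2}\sum_{\chi \in \mathrm{wts}(\bN)} |\langle\chi,\lambda\rangle| - \sum_{\alpha > 0}|\langle\alpha,\lambda\rangle| = \tfrac{1}{2}\Bigl(\dim W_i + \textstyle\sum_j a_{ij}\dim V_j\Bigr) - (\dim V_i - 1),
\end{equation*}
where the arrow contributions at vertex $i$ give $\sum_j a_{ij}\dim V_j$ (there is no edge loop at $i$, by definition of balancedness), the framing at $i$ contributes $\dim W_i$, and the positive roots of $\GL(V_i)$ contribute $\dim V_i - 1$ (only the simple reflections involving the first coordinate pair nontrivially with $\lambda$). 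Substituting the balancedness relation $2\dim V_i = \dim W_i + \sum_j a_{ij}\dim V_j$ collapses this to $\Delta(\lambda) = 1$. That $E_i^{(1)},F_i^{(1)}$ actually realise this minimal degree (rather than vanishing in it) is immediate from the Yangian construction, where they are nonzero fundamental classes. I do not anticipate any serious obstacle: the only delicate point is bookkeeping around the grading convention fixed in \ref{discrepancy}(2) and around the identification of $E_i^{(1)},F_i^{(1)}$ with minuscule-coweight fundamental classes in the shifted-Yangian conventions of \ref{sec:quantization}. Once these are in hand, the substance of the lemma is the one-line balancedness computation above.
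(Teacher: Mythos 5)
Your proposal is correct and follows essentially the same route as the paper: for $H_i^{(1)}$ one observes it is a cohomological-degree-$2$ element of $H^*_{\GL(V)}(\mathrm{pt})$ (or $H^*_{\tilde G}(\mathrm{pt})$), hence $\Delta$-degree $1$; and for $E_i^{(1)}, F_i^{(1)}$ one identifies them (via the shifted-Yangian homomorphism) with the fundamental classes $[\cR_{\pm\varpi_{i,1}}]$ and computes $\Delta(\pm\varpi_{i,1}) = -(\dim V_i - 1) + \tfrac12(\dim W_i + \sum_j a_{ij}\dim V_j) = 1$ using balancedness. The paper's proof is a two-line citation to the degree formula in the quoted references, while you reconstruct that computation directly from the modified monopole formula \eqref{eq:modified}, but the substance is identical.
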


Therefore the Lie algebra $\fl$ in the previous subsection at least
contains the semisimple Lie algebra $\fl^{\mathrm{bal}}_{\on{ss}}$
above. 

Note that $H^{(1)}_i$ is in $H^2_{\GL(V)}(\mathrm{pt})$ or
$H^2_{\tilde G}(\mathrm{pt})$ when we consider the larger group
$\tilde G$. Let $\fl^{\mathrm{bal}}$ (resp.\
$\tilde{\fl}^{\mathrm{bal}}$) be the Lie subalgebra of $\fl$ generated
by $\fl^{\mathrm{bal}}_{\mathrm{ss}}$ and $H^2_{\GL(V)}(\mathrm{pt})$
(resp.\ $H^2_{\tilde G}(\mathrm{pt})$).

\begin{proof}
We have already checked the assertion for $H_i^{(1)}$.

Looking at the definition of the homomorphism in
\thmref{th:PhiBar}, we see that $E_i^{(1)}$, $F_i^{(1)}$ are
fundamental classes $[\cR_{\pm\varpi_{i,1}}]$ up to sign. By the
formula for $\Delta(\pm\varpi_{i,1})$ in \eqref{eq:Deltadeg}, their degree is
$1$ as $i$ is a balanced vertex.
\begin{NB}
\begin{equation*}
  - (\dim V_i - 1) + \frac12 \left(\dim W_i + \sum_j a_{ij}\dim V_j\right)
  = 1.
\end{equation*}
\end{NB}%
\begin{NB}
Note also that $H_i^{(1)}$ is a cohomological degree $2$ element in
$H^*_{\GL(V)}(\mathrm{pt})$, hence it is of degree $1$ with respect to
$\Delta$. It can be also checked from the commutator relation
$\{ E_i^{(1)}, F_i^{(1)}\} = H_i^{(1)}$.
\end{NB}%
\end{proof}

Recall $\{ H_i^{(1)}, \bullet\}$ defines an element of $\pi_1(\tilde G)_\fr^\vee$ 
by $\gamma\mapsto \gamma_i^W + \sum_j a_{ij} \gamma_j - 2\gamma_i$. More
generally $H_\alpha$ corresponding to a root $\alpha$ of
$\fl^{\mathrm{bal}}_{\on{ss}}$ defines an element of $\pi_1(\tilde
G)_\fr^\vee$.
Thus we regard naturally coroots $\in R^{\mathrm{bal} \vee}$ of
$\fl^{\mathrm{bal}}_{\on{ss}}$ as elements in $\pi_1(\tilde G)_\fr^\vee$.
If we disregard the flavor symmetry, we consider the restriction to
$\gamma_j^W = 0$, hence we still have
$R^{\mathrm{bal} \vee}\subset\pi_1(G)^\vee$.

On the other hand,
\(
  \pi_1(\prod_{i\in Q^{\mathrm{bal}}_0}\GL(V_i))\cong \ZZ^{Q^{\mathrm{bal}}_0}
\)
is naturally identified with the root lattice of the Lie algebra
$\fl^{\mathrm{bal}}_{\on{ss}}$ by sending the $i$-th coordinate vector to the
$i$-th simple root $\alpha_i$. Thus we consider roots
$\in R^{\mathrm{bal}}$ as elements of
$\pi_1(\prod_{i\in Q^{\mathrm{bal}}_0}\GL(V_i))\subset
\pi_1(\GL(V))\subset\pi_1(\tilde G)_\fr$.

We regard $R^{\mathrm{bal}}\subset\pi_1(\GL(V))$,
$R^{\mathrm{bal}\vee}\subset \pi_1(\GL(V))^\vee$ as a root datum, and
consider the corresponding reductive group $L^{\mathrm{bal}}$. For
line bundles, we consider
$R^{\mathrm{bal}}\subset\pi_1(\tilde G)_\fr$,
$R^{\mathrm{bal}\vee}\subset \pi_1(\tilde G)_\fr^\vee$. We denote the
corresponding reductive group by ${\tilde L}^{\mathrm{bal}}$.

\begin{NB}
\begin{Remark}\label{rem:adjointgroup}
  Since $j$ may not be in $Q_0^{\mathrm{bal}}$, the restriction of
  $\sum_{j\in Q_0} \gamma_j \alpha_j$ to the Cartan subalgebra
  $\bigoplus \CC H^{(1)}_i$ of $\fl^{\mathrm{bal}}$ may not be in a
  root lattice in general.
  However in the example in \ref{subsec:ABG}, all vertices are
  balanced, hence it is in a root lattice. Therefore the action of
  $L^{\mathrm{bal}}$ on $\CC[\cM_C]$ given in the next proposition
  factors through the adjoint group of $L^{\mathrm{bal}}$. In fact
  $\cM_C$ is the nilpotent cone and the action is the standard one in
  this example, this is obviously true.
\end{Remark}
\end{NB}%

\begin{Proposition}
\label{prop:Integrable}
The actions of $\{ E_i^{(1)}, \bullet\} $, $\{ F_i^{(1)}, \bullet\}$
are locally nilpotent. Hence the action of $\fl^{\mathrm{bal}}$
\textup(resp.\ ${\tilde\fl}^{\mathrm{bal}}$\textup) is lifted to
$L^{\mathrm{bal}}$ \textup(resp.\ $\tilde L^{\mathrm{bal}}$\textup).
\end{Proposition}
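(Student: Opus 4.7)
The plan is to first establish local nilpotency of $\{E_i^{(1)},\bullet\}$ and $\{F_i^{(1)},\bullet\}$ by a bidegree counting argument on $\BC[\cM_C]$, and then to integrate the resulting infinitesimal $\fl^{\mathrm{bal}}$-action via standard Lie theory, taking care that the commutation with the torus produces the root datum $(R^{\mathrm{bal}},R^{\mathrm{bal}\vee})$ stated.

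The first step exploits the bigrading of $\BC[\cM_C]$ by the pair (modified $\Delta$-degree, $\pi_1(\GL(V))$-grading) inherited from \eqref{eq:modified} and \ref{sec:grading}. By \ref{lem:balanc-vert-quiv} the element $E_i^{(1)}$ has $\Delta$-degree $1$, and unravelling its construction as the fundamental class $[\cR_{\varpi_{i,1}}]$ shows that its $\pi_1$-degree is $\alpha_i$ (and similarly $-\alpha_i$ for $F_i^{(1)}$) under the identification $\pi_1(\GL(V))\cong\ZZ^{Q_0}$ with the root lattice. Since the Poisson bracket has $\Delta$-degree $-1$, the adjoint action $\{E_i^{(1)},\bullet\}$ preserves $\Delta$-degree while translating the $\pi_1$-weight by $\alpha_i$. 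The monopole formula \eqref{eq:modified} shows that each fixed $\Delta$-degree component of $\BC[\cM_C]$ is finite-dimensional, and moreover only finitely many $\pi_1$-classes can appear in it since only finitely many dominant coweights $\lambda$ satisfy $2\Delta(\lambda)\le d$. Hence for any $f$ of $\Delta$-degree $d$, iterating $\{E_i^{(1)},\bullet\}$ walks through finitely many weight spaces in a single direction, and must terminate; the same holds for $\{F_i^{(1)},\bullet\}$.

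With local nilpotency in hand, the second step exponentiates each $\algsl_2$-triple $(E_i^{(1)},F_i^{(1)},H_i^{(1)})$ via $\exp(t\{E_i^{(1)},\bullet\})$ and $\exp(t\{F_i^{(1)},\bullet\})$ to a well-defined locally finite $\SL_2^{\alpha_i}$-action on $\BC[\cM_C]$ (and on the line-bundle sections in the flavored setting). The quadratic Serre relations, which are consequences of the $\algsl_3$ (resp.\ $\algsl_2\oplus\algsl_2$) Poisson relations recorded in \ref{subsec:balanced} for any pair of balanced vertices, then amalgamate these $\SL_2^{\alpha_i}$-actions into an action of the simply-connected semisimple group of type $Q^{\mathrm{bal}}$ by the standard integrability formalism (cf.\ Kac). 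On the other hand, the Cartan-type Poisson brackets $\{H_i^{(1)},\bullet\}=\gamma_i^W+\sum_j a_{ij}\gamma_j-2\gamma_i$ on the $\pi_1$-graded piece of weight $\gamma$, computed in \ref{subsec:balanced}, coincide with the differential of the existing torus action of $\pi_1(\GL(V))^\wedge$ through the coroot $H_i^{(1)}\in H^2_{\GL(V)}(\mathrm{pt})$. Consequently the amalgamation extends through the torus to an action not of the simply-connected cover but of the reductive group $L^{\mathrm{bal}}$ whose root datum is exactly $(R^{\mathrm{bal}},R^{\mathrm{bal}\vee})\subset(\pi_1(\GL(V)),\pi_1(\GL(V))^\vee)$. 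The $\tilde L^{\mathrm{bal}}$-version is obtained verbatim by replacing $\pi_1(\GL(V))$ by $\pi_1(\tilde G)_\mathrm{fr}$ throughout, since the enlargement only affects the available torus characters and not the commutation relations.

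The main obstacle is the finiteness ingredient underlying local nilpotency: while finite-dimensionality of each $\Delta$-graded piece is immediate from the monopole formula, confirming that only finitely many $\pi_1$-weights can appear in each fixed $\Delta$-degree requires the property that $\Delta(\lambda)\to\infty$ as $|\lambda|\to\infty$ on the set of dominant coweights, which should be addressed (it is the "good" or at worst "ugly" hypothesis on the quiver gauge theory, and it holds in the setups of interest). Once this finiteness is established, both steps are standard and the main work is in organizing the bookkeeping for the torus compatibility that pins down the root datum precisely.
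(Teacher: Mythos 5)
Your proposal correctly identifies the broad strategy — a weight/degree counting argument — and the flaw you flag in your last paragraph is indeed the real one, and it is fatal as stated rather than a bookkeeping remark. You need, for each fixed $\Delta$-degree $d$, that only finitely many dominant coweights $\mu$ of $\GL(V)$ contribute classes of degree $d$. This would follow if $\Delta(\mu)\to\infty$ as $|\mu|\to\infty$, i.e.\ if the quiver gauge theory were ``good or ugly'' (equivalently, if the $\CC^\times$-action on $\cM_C$ were conical). But the Proposition is stated and proved without that hypothesis, and for a general quiver gauge theory it simply fails: there may be infinitely many $\mu$ with $\Delta(\mu)\le d$, so the degree-$d$ piece of $\CC[\cM_C]$ can be infinite-dimensional and your finiteness step has nothing to stand on. The balanced condition on the single vertex $i$ does not give global positivity of $\Delta$.

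The paper's fix is not merely to ``address'' this — it is a genuinely different reduction that avoids the global positivity assumption entirely. Instead of working with the whole of $\CC[\cM_C]$, one introduces the closed subvarieties $\cR_{\preceq\la}$ where the usual bound $\le\la^j$ is imposed at every vertex $j\neq i$ but \emph{not} at the vertex $i$. Two things are then true simultaneously: first, $\{E_i^{(1)},\bullet\}$ and $\{F_i^{(1)},\bullet\}$ only shift the coweight component at $i$, so they preserve $H^{\GL(V)_\cO}_*(\cR_{\preceq\la})$; second, the degree-$d$ piece $H^{\GL(V)_\cO}_*(\cR_{\preceq\la})[d]$ is finite-dimensional. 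The finiteness uses only the balanced condition at $i$: writing $\Delta(\mu)$ explicitly, the terms involving $\mu^i$ are
\[
- \sum_{a<b} |\mu^i_a - \mu^i_b| +
\frac12\Bigl( \sum_j a_{ij} \sum_{a,b} | \mu^i_a - \mu^j_b|
+ \dim W_i \sum_a |\mu^i_a|\Bigr),
\]
and since the $\mu^j$ with $j\neq i$ are already bounded (by $\preceq\la$), the relation $2\dim V_i=\dim W_i+\sum_j a_{ij}\dim V_j$ lets one absorb the negative first sum into the positive second and third, forcing a bound on $\sum_a|\mu^i_a|$ from $\Delta(\mu)\le d$. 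This is a \emph{local} estimate at the vertex $i$, not a global conicality statement, and it is exactly what your argument is missing. You should replace the global weight-counting in $\CC[\cM_C]$ with this filtered version and the explicit bound; the integration step you sketch afterwards is standard and unproblematic.
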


\begin{proof}
  When the $\CC^\times$-action on $\cM_C$ is conical, this is clear as
  subspaces of $\CC[\cM_C]$ with given degree are finite dimensional,
  and $E_i^{(1)}$, $F_i^{(1)}$ preserve them. In order to deal with
  general cases, we modify the argument.

  Consider a closed subvariety $\cR_{\le\lambda}$ as in
  \subsecref{triples}. Since $\GL(V) = \prod_{j\in Q_0} \GL(V_j)$, we can
  modify it by imposing the constraint at $j\neq i$, but not on
  $i$. Let us denote the resulted closed subvariety by
  $\cR_{\preceq\la}$. It is still true that $H^{\GL(V)_{\cO}}_*(\cR)$ is the
  limit of $H^{\GL(V)_{\cO}}_*(\cR_{\preceq\la})$. Operators
  $\{ E_i^{(1)}, \bullet\} $, $\{ F_i^{(1)}, \bullet\}$ are
  well-defined on $H^{\GL(V)_{\cO}}_*(\cR_{\preceq\la})$, as we do not impose
  the constraint at $i$.

  Let $H^{\GL(V)_{\cO}}_*(\cR_{\preceq\la})[d]$ denote the subspace of
  $H^{\GL(V)_{\cO}}_*(\cR_{\preceq\la})$ of degree $d$ elements. It is enough
  to show that it is finite dimensional, as $\{ E_i^{(1)}, \bullet\}$,
  $\{F_i^{(1)},\bullet\}$ preserve this subspace.

  Suppose that an element in $H^{\GL(V)_{\cO}}_*(\cR_{\preceq\la})[d]$ is
  contained in $H^{\GL(V)_{\cO}}_*(\cR_{\le\mu})[d]$. If we decompose $\mu$
  as $(\mu^j)$ according to $j\in Q_0$, the component $\mu^j$ with
  $j\neq i$ is less than or equal to the component $\la^j$ of $\la$ by
  the definition of $\preceq$.

  In order to bound the remaining component $\mu^i$, Let us look at
  the formula of $\Delta(\mu)$:
  \begin{equation*}
    \Delta(\mu) = - \sum_{\alpha\in\Delta^+} |\langle\alpha,\mu\rangle|
    + \frac12 \sum_{\chi} |\langle\chi,\mu\rangle|\dim \bN(\chi).
  \end{equation*}
  We have $\Delta(\mu)\le d$ by our assumption. Let us look at terms
  involving $\mu^i$:
  \begin{equation}\label{eq:monobound}
    - \sum_{a<b} |\mu^i_a - \mu^i_b| +
    \frac12\left( \sum_j a_{ij} \sum_{a,b} | \mu^i_a - \mu^j_b|
      + \dim W_i \sum_a |\mu^i_a|\right),
  \end{equation}
  where we write $\mu^j = (\mu^j_1,\mu^j_2,\dots)$. This is bounded by
  a constant from above, as we have bounds on $\mu^j_a$ ($j\neq i$).
  Since $\mu^j_b$ is bounded, the middle term can be replaced by
  $\sum_j a_{ij} \dim V_j \sum_a |\mu^i_a|$. Now by the assumption
  $2\dim V_i = \dim W_i + \sum a_{ij}\dim V_j$, the first term can be
  absorbed in the middle and last term, so that we still have a bound
  on $\sum_a |\mu^i_a|$.
  \begin{NB}
    For example, suppose $\dim V_i = 3$. We have
    $|\mu^i_1 - \mu^i_2| + |\mu^i_2 - \mu^i_3| + |\mu^i_3 - \mu^i_1|
    \le 2 (|\mu^i_1|+|\mu^i_2|+|\mu^i_3|)$. On the other hand, we
    get $3(|\mu^i_1|+|\mu^i_2|+|\mu^i_3|)$ from the second term.
  \end{NB}%
  Thus $\mu$ is bounded by a constant depending on $\lambda$ and $d$. Hence
  $H^{\GL(V)_{\cO}}_*(\cR_{\preceq\la})[d]$ is finite dimensional.

  This argument works also for the case of $\tilde{L}^{\mathrm{bal}}$.
\end{proof}

Note that the comoment map of the $L^{\mathrm{bal}}$-action on $\cM_C$
is the natural homomorphism
\begin{equation*}
  \CC[\fl^{\mathrm{bal}*}] = \operatorname{Sym}(\fl^{\mathrm{bal}})
  \to \CC[\cM_C]
\end{equation*}
by the definition of the action.

\begin{Example}[cf.\ \ref{Cartan_grading}]\label{ex:stab}
    Consider a framed quiver gauge theory of type $ADE$. Let us define
    two coweights $\lambda = \sum_{i\in Q_0} \dim W_i \omega_i$,
    $\mu = \lambda - \sum_{i\in Q_0} \dim V_i\alpha_i$. Then the
    Coulomb branch is the generalized slice $\oW^{\lambda^*}_{\mu^*}$
    for the adjoint group $G$ of type $Q$, where
    $\lambda^* = - w_0(\lambda)$, $\mu^* = -w_0(\mu)$
    (\ref{Coulomb_quivar}). Then the group $L^{\mathrm{bal}}$, acting
    on $\cM_C$ is $\operatorname{Stab}_G(\mu^*)$, as $\pi_1(\GL(V))$
    is the weight lattice of $G$. The action is the standard one, at
    least when $\mu$ is dominant. The following argument is explained
    to the authors by Joel Kamnitzer:

    First consider the case $\mu=0$. Then $\oW^{\lambda^*}_{0}$ is the
    intersection of $\overline{\Gr}_{G}^{\la}$ and
    $\Gr_{G,0} = G_1[[t^{-1}]]$, where $G_1[[t^{-1}]]$ is the first
    congruence subgroup of $G[[t^{-1}]]$ as in \cite{kwy}. Then the
    assertion follows from a computation of Poisson brackets on
    $\CC[G_1[[t^{-1}]]]$ in \cite[Prop.~2.13]{kwy}. For a general
    dominant $\mu$, we replace $\Gr_{G,0}$ by $\Gr_{G,\mu^*}$ the
    orbit of $G_1[[t^{-1}]]$ through $\mu^*$. But $\CC[\Gr_{G,\mu^*}]$
    is a Poisson subalgebra of $\CC[\Gr_{G,0}]$ preserved by the
    action of $\operatorname{Stab}_G(\mu^*)$ (see \cite[a paragraph
    before Lemma~2.19]{kwy}). Hence the assertion follows from the
    $\mu=0$ case.
\end{Example}

\begin{Example}\label{ex:psln}
    Let us consider the quiver gauge theory in \ref{subsec:ABG}. All
    vertices are balanced in this case. We have
    \begin{equation*}
        \pi_1(\tilde G) \cong 
        \underbrace{\ZZ\oplus\cdots\oplus\ZZ}_{\text{$N$ times}}/
        (N,N-1,\cdots,2,1)\ZZ \cong \ZZ^{N-1},
    \end{equation*}
    where the isomorphism is given by
    $[\lambda_1,\dots,\lambda_N] \mapsto
    (\lambda_1-N\lambda_N,\dots,\lambda_{N-1}-2\lambda_N)$.
    We have an exact sequence
    \begin{equation*}
    0 \to \pi_1(G)\cong\ZZ^{N-1} \to \pi_1(\tilde G) \cong \ZZ^{N-1}
    \to \pi_1(\PGL(N)) \cong \ZZ/N\ZZ \to 0,
  \end{equation*}
  where the first inclusion is given by $\lambda_1 = 0$, and the last
  projection is $[\lambda_1,\dots,\lambda_N]\mapsto \lambda_1\bmod N$.
  It is clear that $\pi_1(G)$ is the weight lattice of $\PGL(N)$,
  while $\pi_1(\tilde G)$ is that of $\SL(N)$. Therefore
  $L^{\mathrm{bal}} = \PGL(N)$, but $\tilde L^{\mathrm{bal}} = \SL(N)$.
\end{Example}

\begin{Remark}\label{rem:deghalf}
    Let us consider the case $\langle \mu,\alpha_i\rangle = -1$
    instead of $0$. Then $H_i^{(p)} = 0$ ($p\le 0$), $H_i^{(1)} =
    1$. Thus we have $\{ E_i^{(1)}, F_i^{(1)}\} = 1$. (Note also
    $\Delta(\pm\varpi_{i,1}) = 1/2$ by \eqref{eq:Deltadeg}.)
    Looking at the argument in the proof of \ref{prop:Integrable}, we
    see that we only need a bound $\dim V_i - 1 \le \dim W_i + \sum
    a_{ij} \dim V_j$
    \begin{NB}
        $= 2\dim V_i - 1$
    \end{NB}%
    to derive a bound on $\sum_a |\mu^i_a|$ from
    \eqref{eq:monobound}. In particular, the proof of
    \ref{prop:Integrable} works in the case
    $\langle\mu,\alpha_i\rangle = -1$, hence $\{ E_i^{(1)},\bullet\}$,
    $\{F_i^{(1)},\bullet\}$ are locally nilpotent, and the
    corresponding hamiltonian vector fields $H_{E_i^{(1)}}$,
    $H_{F_i^{(1)}}$ are integrable. Moreover $[H_{E_i^{(1)}},
    H_{F_i^{(1)}}] = 0$ as $\{ E_i^{(1)}, F_i^{(1)}\} = 1$. Therefore
    we have an action of $\BG_a^2$.
    Let $\varPhi = (F_i^{(1)}, -E_i^{(1)})\colon \cM_C\to\BA^2$. Then
    $\varPhi$ is $\BG_a^2$-equivariant, and the action map
    $\BG_a^2\times\varPhi^{-1}(0)\cong \cM_C$ is an isomorphism.
    \begin{NB}
        The inverse is $\cM_C\ni x\mapsto
        (\varPhi(x),\varPhi(x)^{-1}\cdot x)$.
    \end{NB}%
    See also \cite[Th.(i)]{2016arXiv160306105N}.

    Suppose further that $j$ is balanced, i.e., $\langle
    \mu,\alpha_j\rangle = 0$. Commutation relations in
    \secref{sec:quantization} imply that $\{ F_j^{(1)}, E_i^{(1)}\} =
    0$, $\{ H_j^{(1)}, E_i^{(1)}\} =
    \begin{NB}
        -\frac{\alpha_i\cdot\alpha_j}2 (H_j^{(0)} E_i^{(1)} +
    E_i^{(1)} H_j^{(0)}) = 
    \end{NB}%
    - (\alpha_i\cdot\alpha_j)E_i^{(1)}$ and $\{ E_j^{(1)}, F_i^{(1)}\}
    = 0$, $\{H_j^{(1)}, F_i^{(1)}\}
    \begin{NB}
        = \frac{\alpha_i\cdot\alpha_j}2 (H_j^{(0)} F_i^{(1)} +
    F_i^{(1)} H_j^{(0)}) 
    \end{NB}%
    = (\alpha_i\cdot\alpha_j) F_i^{(1)}$.
    Thus $E_i^{(1)}$ (resp.\ $F_i^{(1)}$) is a lowest (resp.\ highest)
    weight vector of an $\algsl(2)_j = \langle E_j^{(1)}, F_j^{(1)},
    H_j^{(1)}\rangle$ module with the highest (resp.\ lowest) weight
    $\mp(\alpha_i\cdot\alpha_j)$.
\end{Remark}

%%% Local Variables:
%%% mode: latex
%%% TeX-master: "affine_pre"
%%% End:

%\input{hilbert}

\section{A global convolution diagram for the variety of triples}
\label{gusgusgus}

\centerline{By Gus Lonergan}
%\address{Department of Mathematics, Massachusetts Institute of Technology, 
%Cambridge, MA 02139}
%\email{guslonergan@gmail.com}
%\urladdr{www.math.sc.edu/$\sim$howard} 

\bigskip

The aim of this appendix is to give another proof of the commutativity of the Coulomb branch by constructing a global convolution diagram for $\cR$. This is a direct generalization of the traditional proof of the case $\bN=0$, which uses the Beilinson-Drinfeld global convolution diagram for $\Gr_G$.

\subsection{Preliminaries on arc-spaces and loop-spaces}\label{one}

\subsubsection{}In this section, we recall certain standard constructions and facts of \cite[Chapters~4-5]{Beilinson-Drinfeld}.

\subsubsection{}\label{abbrv}Let $X$ be a smooth complex curve and let $S$ be a finite set. Given a commutative ring $R$ and an $R$-point $x$ of $X^S$, we denote the coordinates of $x$ by $x_s$ ($s\in S$), and write $\Delta_{S}(x)$ for the formal neighborhood of the union of the graphs of $x_s$ ($s\in S$). For notational simplicity, we frequently remove commas and braces from $S$, and also drop the part $(x)$, when it is clear which point we refer to. So for example the expression:
$$
\Delta_{\{1,2\}}(x)
$$
becomes:
$$
\Delta_{12}.
$$
\subsubsection{}Now fix an affine algebraic group $A$ over $\BC$. Consider the following functor from commutative rings to groups over $X^S$:
$$
A_{S}(R):=\{(x,f)|x\in X^S(R),f\colon \Delta_S\to A\}.
$$
Then $A_{S}$ is represented by the limit of a projective system of smooth affine group schemes over $X^S$:
$$
A_{S}=\lim\limits_{\longleftarrow} (\ldots\to (A_{S})_2\to (A_{S})_1)
$$
such that each transition morphism is a smooth homomorphism. In particular, $A_{S}$ is a formally smooth affine group scheme (of countably infinite type) over $X^S$, but this is not so important for us. Recall that in the definition of the Coulomb branch as a convolution algebra formal homological shifts such as
$$
[2\dim A(\cO)]
$$
appear (for $A=G,N$). Similarly, in the global situation formal homological shifts such as
$$
[2\dim A_{S}]
$$
will appear\footnote{Only for $S$ of cardinality $1$ or $2$; but it clarifies the picture and simplifies the exposition to work more generally at this point.}. For example, in the case where the underlying space is $A_{S}$, for each $d$ we have $\DC_{(A_{S})_d}\cong \BC_{(A_{S})_d}[2\dim (A_{S})_d]$. These complexes are compatible in the natural way under $!$-pullbacks along the transition morphisms. We thus consider $\DC_{A_{S}}$ as the formal homological shift
$$
\DC_{A_{S}}\cong \BC_{A_{S}}[2\dim A_{S}],
$$
where both sides are to be understood by evaluating on smooth quotients of $A_{S}$ and `piecing together' using $!$-pullbacks. Likewise we have a formal expression
$$
\DC_{A_{S}}[-2\dim A_{S}]\cong \BC_{A_{S}}
$$
where both sides are to be understood by evaluating on the smooth quotients $(A_{S})_d$ of $A_{S}$ and `piecing together' using $*$-pullbacks.

\subsubsection{}\label{pq}Let $\theta\colon S'\to S$ be a morphism of finite sets. It induces a map $X^{S}\to X^{S'}$. Given an $R$-point $x$ of $X^S$, this map determines an $R$-point $x'$ of $X^{S'}$, and an embedding $\Delta_{S'}(x')\to \Delta_{S}(x)$. Hence by restriction along this embedding we obtain a map
$$
p^\theta\colon A_{S}\to A_{S'}.
$$
This induces a homomorphism 
$$
q^{\theta}\colon A_{S}\to A_{S'}\times_{X^{S'}}X^S
$$
over the base $X^S$. If $\theta$ is surjective, then $q^\theta$ is an isomorphism. If $\theta$ is injective, then $q^\theta$ seems strange at first sight. For instance if $\theta'$ is a section of $\theta$ then $q^\theta$ is an isomorphism over the resulting copy of $X^{S'}\subset X^S$, whereas over a typical point of $X^S$, $q^\theta$ takes the form of a projection map
$$
A(\cO)^S\to A(\cO)^{S'}.
$$
However, this is misleading: $q^\theta$ is \emph{pro-smooth} when $\theta$ is injective. What we mean by this is that the projective systems of smooth affine group schemes over $X^S$ with smooth transition morphisms
$$
((A_{S})_d)_{d\in\bN}
$$
underlying $A_S$ may be taken, simultaneously for all $S$, to be compatible with all $q^\theta$, i.e. so that $q^\theta$ is the limit of a morphism
$$
(q^\theta_d\colon (A_{S})_d\to (A_{S'})_d\times_{X^{S'}}X^S)_{d\in\bN}
$$
of projective systems, where each map $q^\theta_d$ is a smooth homomorphism over $X^S$. Thus, it makes sense to write (and is true that):
$$
(q^\theta)^*\DC_{A_{S'}\times_{X^{S'}}(X^S)}[-2\dim A_{S'}-2(|S|-|S'|)] = \DC_{A_{S}}[-2\dim A_{S}],
$$
\emph{et cetera}, where the formula should be understood as a statement about complexes on smooth quotients over $X^S$, compatible under $*$-pullbacks.

\subsubsection{Example}Consider the case $S=\{1\}$. Then $A_1$ is a Zariski-locally trivial $A(\cO)$-bundle over $X$. Then, the formal homological shift $[2\dim A(\cO)]$ also makes sense in this context, and we have $[2\dim A(\cO)]=[2\dim A_1-2]$.
%\end{example}

\subsubsection{Example}Consider for instance the case $A=\BC$ and $S=\{1,2\}$. Then $A_{12}$ should be thought of as a deformation of the first following projective system into the second:
$$
\begin{matrix}
(\BC[[t]]/t^{2d})_d & \rightsquigarrow & (\BC[[t]]/t^d\times \BC[[t]]/t^d)_d
\end{matrix}
$$
while $A_1$ should be thought of as a trivial deformation: 
$$
\begin{matrix}
(\BC[[t]]/t^{d})_d & \rightsquigarrow & (\BC[[t]]/t^d)_d
\end{matrix}
$$
and we have the morphism of deformations of projective systems:
$$
\begin{matrix}
(\BC[[t]]/t^{2d})_d & \rightsquigarrow & (\BC[[t]]/t^d\times \BC[[t]]/t^d)_d\\
\downarrow & & \downarrow\\
(\BC[[t]]/t^{d})_d & \rightsquigarrow & (\BC[[t]]/t^d)_d
\end{matrix}
$$
where the first downward arrow is the quotient map, and the second downward arrow is the projection map (to the first factor), both of which halve dimension in the $d^{th}$ approximation. It just happens that the limit of the first downward arrow is an isomorphism, while the limit of the second downward arrow is a non-trivial projection.
%\end{example}

\subsubsection{}From now on, we assume $\theta$ is an injection, and identify $S'$ with its image under $\theta$. Now, in addition to the formal neighborhood $\Delta_S$ we have the punctured formal neighborhood
$$
\Delta_{S}^{S'}(x):=\Delta_{S}(x)-\cup_{s\in S'}x_s
$$
where in this formula we conflate the point $x_s$ with its graph. The general notational paradigm\footnote{Warning: this doesn't apply to $X$!} here is that subscripts determine discs and superscripts determine punctures. Consider the functor
$$
A_{S}^{S'}(R):=\{(x,f)|x\in X^S(R), f\colon \Delta_S^{S'}(x)\to A\}.
$$
Then $A_{S}^{S'}$ is represented by an ind-scheme, formally smooth over $X^S$. It is a group in ind-schemes (over $X^S$), but not an inductive limit of groups. Nonetheless, it is an \emph{ind-locally nice, reasonable} ind-scheme in the sense of \cite{drin}, meaning that it is a direct limit of closed embeddings with finitely generated ideals:
$$
(A_S^{S'})^1\to (A_S^{S'})^2\to\ldots
$$
of schemes over $X^S$, each of which is locally nice, meaning that Zariski-locally\footnote{In \cite{drin} this is relaxed to `Nisnevich-locally'.} it is the product of a finite-type scheme with an affine space (of countable dimension). We shall call such an ind-scheme \emph{reasonably nice}. The subgroup $A_S$ may be taken as the first subscheme $(A_S^{S'})^1$ in this inductive structure. The left- and right-regular actions of the subgroup $A_S$ preserve the inductive structure, meaning that each $(A_S^{S'})^c$ has an action on both sides by $A_S$ over $X^S$, even though it is not itself a group. Moreover the quotient $(A_S^{S'})^c/A_S$ is of finite-type over $X^S$, and flat, although generally quite singular. The result is that the quotient
$$
A_S^{S'}/A_S
$$
has the structure of ind-finite-type flat ind-scheme over $X^S$.

\begin{Lemma}\label{proper}\begin{enumerate}\item $A_S^{S'}/A_S$ is ind-projective if and only if $A$ is reductive.\
\item $A_S^{S'}/A_S$ is reduced if and only if $A$ has non no-trivial characters.\end{enumerate}\end{Lemma}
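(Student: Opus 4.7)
The plan is to reduce both assertions to the classical structure theory of the ordinary affine Grassmannian $\Gr_A := A_1^{1}/A_1$ of $A$, whose fibers over $X$ recover $A(\cK)/A(\cO)$. Both ind-projectivity and reducedness are local properties on the base $X^S$ in the étale topology, and by the discussion in \ref{one}, the fiber of $A_S^{S'}/A_S$ over a geometric point $x\in X^S$ decomposes as
\[
\prod_{\pi\,:\,\pi\cap S'\neq\emptyset} \Gr_A,
\]
where $\pi$ runs over the equivalence classes of the relation $s\sim s'\iff x_s=x_{s'}$ on $S$. Combined with the fact, already recorded in \ref{one}, that $A_S^{S'}/A_S$ is ind-finite-type and flat over $X^S$, both properties propagate between fibers and the total space. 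Hence the lemma reduces to: $\Gr_A$ is ind-projective iff $A$ is reductive, and $\Gr_A$ is reduced iff $A$ has no non-trivial characters.

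For part (1), the ``if'' direction is classical: when $A$ is reductive, the Cartan decomposition $A(\cK)=\bigsqcup_{\lambda\in X_*(T)/W}A(\cO)\cdot t^\lambda\cdot A(\cO)$ expresses $\Gr_A$ as an increasing union of finite-dimensional Schubert closures $\overline{\Gr_A^\lambda}$, each of which is projective. For the ``only if'' direction, I would use the Levi decomposition: if $A$ has a non-trivial unipotent radical $U$, then the natural map $U(\cK)\to \Gr_A$ factors through the quotient $U(\cK)/U(\cO)$, which is (ind-)isomorphic to an infinite-dimensional affine space and embeds as a closed ind-subscheme of $\Gr_A$; this is incompatible with ind-projectivity.

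For part (2), the plan is to exploit the negative-loop factorization $A(\cK)=A_1^-(\cK)\cdot A(\cO)$ (where $A_1^-(\cK)$ is the group of loops $f$ with $f(\infty)=1$), so that $\Gr_A$ receives a natural candidate atlas from $A_1^-(\cK)$. When $A$ has no non-trivial characters, one shows this produces a genuinely reduced scheme structure by analyzing the intersection $A_1^-(\cK)\cap A(\cO)$ via the structure theory of $A$ (Levi decomposition together with the vanishing of $X^*(A)$). Conversely, a non-trivial character $\chi\colon A\to \BG_m$ gives rise to additional formal structure in the coordinate ring of $\Gr_A$ along the identity component, which one identifies with a nilpotent in the structure sheaf. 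The compatibility of these constructions with the equivariant action of $A_S$ ensures that the characterization of reducedness transfers from the fibers to the global family $A_S^{S'}/A_S$.

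The main obstacle will be part (2): part (1) reduces cleanly to the well-established classification of ind-projectivity for affine Grassmannians, and the obstruction from the unipotent radical provides a clean contradiction in the non-reductive case. In contrast, the interplay between characters of $A$ and non-reduced structure on $\Gr_A$ is subtle; it requires a careful local model of $\Gr_A$ at the base point and a precise control of how $A(\cO)$ sits inside $A(\cK)$, and so will occupy the bulk of the proof.
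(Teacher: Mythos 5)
The paper does not actually supply a proof of this lemma; it is stated as a background fact (of which part (1), for $A$ reductive, is the classical ind-projectivity of the Beilinson--Drinfeld Grassmannian, and part (2) generalizes Beilinson--Drinfeld's theorem that $\Gr_G$ is reduced iff $G$ is semisimple). So there is no paper argument to compare yours against; I will instead assess your outline on its own terms.

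Your reduction to $\Gr_A$ via the factorization of geometric fibers is the correct geometric input, and part (1) is essentially sound: the ``if'' direction should be done directly via the global Schubert filtration (the fiberwise criterion for properness of a flat, separated, finite-type morphism over a Noetherian base does hold, but invoking the explicit filtration is cleaner and is what the literature does), and your ``only if'' argument — the fiber of $\Gr_A \to \Gr_{A/U}$ over the base point is the closed ind-affine ind-subscheme $U(\cK)/U(\cO)$, which cannot sit inside anything ind-projective — is correct. For the transfer of reducedness between fibers and total space, you should be explicit: the paper records that $A_S^{S'}/A_S$ is ind-finite-type and flat over $X^S$; since $X^S$ is reduced, flatness gives $\cO_{A_S^{S'}/A_S}\hookrightarrow \prod \cO$ of the generic fibers, so reducedness is equivalent to reducedness of the generic fiber, which is (a form of) a product of $\Gr_A$'s. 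That is the honest justification for ``propagation,'' and it requires only the generic fiber rather than all geometric fibers.

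The genuine gap is in part (2). The ``factorization'' $A(\cK) = A_1^-(\cK)\cdot A(\cO)$ you invoke is not a global decomposition — the big cell $A[t^{-1}]_1 \hookrightarrow \Gr_A$ is an open embedding onto a dense open, not a global chart — so an atlas built from a single such cell cannot by itself decide reducedness. For the ``only if'' direction, the clean argument is: a non-trivial character $\chi\colon A\to\BG_m$ is surjective (for $A$ connected), so $\chi|_T$ is non-trivial on a maximal torus, and one picks a cocharacter $\lambda\colon\BG_m\to A$ with $\chi\circ\lambda\neq 1$; then $\lambda$ induces a closed embedding $\Gr_{\BG_m}\hookrightarrow\Gr_A$ (closedness because $\BG_m(\cO)=\BG_m(\cK)\cap A(\cO)$ by the valuative criterion), and the non-reducedness of $\Gr_{\BG_m}$ — visible from $1+\epsilon t^{-1}$ for $\epsilon^2=0$ — pulls into $\Gr_A$. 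For the ``if'' direction, vanishing of characters is equivalent, via the Levi decomposition $A = L\ltimes U$, to $L$ semisimple; then $\Gr_A\to\Gr_L$ has $U(\cK)/U(\cO)$-fibers (ind-affine spaces, hence reduced), $\Gr_L$ is reduced by Beilinson--Drinfeld 4.5.1, and one must check this fibration is Zariski-locally trivial so that reducedness propagates from base and fiber to total space. That local triviality, not the big cell, is the point where care is required.
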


\begin{Remark}$A_S^S$ is the Beilinson-Drinfeld grassmannian (on $|S|$ points).\end{Remark}

\subsubsection{}For any chain of inclusions $S''\xrightarrow{\theta'} S'\xrightarrow{\theta} S$ we have natural maps
$$
p^{\theta}\colon A_S^{S''}\to A_{S'}^{S''},
$$
$$
q^{\theta}\colon A_S^{S''}\to A_{S'}^{S''}\times_{X^{S'}}X^S,
$$
defined as in subsection \ref{pq}. Then $q^{\theta}\colon A_S^{S''}\to A_{S'}^{S''}\times_{X^{S'}}X^S$ has as a subgroup $q^\theta\colon A_S\to A_{S'}\times_{X^{S'}}X^S$, and the resulting map
$$
A_S^{S''}/A_S\to (A_{S'}^{S''}/A_{S'})\times_{X^{S'}}X^S
$$
is an isomorphism. 

\subsubsection{Warning}Observe that $A_S^{S''}$ is an ind-$A_S$-torsor over the ind-scheme $(A_{S'}^{S''}/A_{S'})\times_{X^{S'}}X^S$, and the homomorphism $q^\theta\colon A_S^{S''}\to A_{S'}^{S''}\times_{X^{S'}}X^S$ is surjective. It is tempting therefore to try to view $A_S^{S''}$ as being in some sense a torsor over $A_{S'}^{S''}\times_{X^{S'}}X^S$ for some group $\ker q^\theta$. However, the kernel of the projective system
$$
((A_S)_d\to (A_{S'})_d\times_{X^{S'}}X^S)_{d\in\bN}
$$
of subsection \ref{pq} is not Mittag-Leffler. We are not sure how to overcome this issue, so do not attempt to take this point of view.
%\end{warning}

%\subsubsection{}We also have the closed embedding
%$$
%r^{\theta'}\colon A_S^{S''}\to A_S^{S'}.
%$$
%We are saving this for later.

\subsection{Global convolution diagram for $\cR$}

%\subsubsection{}We begin by writing down several functors from commutative algebras to sets, and natural transformations between them, in terms of points of $X^S$, principal $G$-bundles on punctured formal neighborhoods, isomorphisms between them, their $\bN$-sections, etc. Their representing objects are all constructed in terms of the objects recalled in Section \ref{one}, and their relevant properties are consequences of facts mentioned there.

\subsubsection{}For a finite set $S$, we put
$$
\cT_S^{S'}(R) = \{(x,\cE,f,\tilde{v})\}/\sim
$$
where $x\in X^S(R)$, $\cE$ is a principal $G$-bundle on $\Delta_S$, $f$ is a trivialization of $\cE$ on $\Delta_S^{S'}$, and $\tilde{v}$ is an $\bN$-section of $\cE$, taken up to equivalence. This is the same as the balanced product
$$
\cT_S^{S'}=G_S^{S'}\frac{\times_{X^S}}{^{G_S}}\bN_S.
$$
Thus, $\cT_S^{S'}$ is represented by a reasonably nice ind-scheme with an ind-pro-smooth map to the Beilinson-Drinfeld grassmannian $G_S^{S'}/G_S$. In particular it is formally smooth. Multiplication gives us a map
$$
\cT_S^{S'}\to \bN_S^{S'}
$$
and we define $\cR_S^{S'}$ to be the fiber product
$$
\cR_S^{S'}:= \cT_S^{S'}\times _{\bN_S^{S'}}\bN_S^{}.
$$
Over any closed $X^S$-subscheme of $G_S^{S'}/G_S$, the embedding $\cR_S^{S'}\to \cT_S^{S'}$ has finite codimension. Therefore $\cR_S^{S'}$ is also a reasonably nice ind-scheme, mapping to $G_S^{S'}/G_S$, and of ind-finite codimension in $\cT_S^{S'}$. Note that $\cR_S^{S'}$ is \emph{not} formally smooth, and in particular the map $\cR_S^{S'}\to G_S^{S'}/G_S$ is no longer ind-pro-smooth. As a functor we have
$$
\cR_S^{S'}(R)=\{(x,\cE,f,v)\}/\sim
$$
where $x,\cE,f$ are as in $\cT_S^{S'}$, and $v$ is an $\bN$-section of $\cE$ such that $f(v)$ extends\footnote{It is a priori defined only on $\Delta_S^{S'}$. The extension is necessarily unique.} to $\Delta_S$. We define the shifted dualizing complex on $\cT_S^{S'}$, $\cR_S^{S'}$ as for $\cT$, $\cR$. Namely:
\begin{enumerate}\item On each closed subscheme $(\cT_S^{S'})^c$ of $(\cT_S^{S'})^c$, pro-smooth over $(G_S^{S'}/G_S)^c$ we set
$$
\DC_{(\cT_S^{S'})^c}[-2\dim \bN_S+2|S|]
$$
to be the pullback of the dualizing complex of $(G_S^{S'}/G_S)^c$, i.e. the collection of its pullbacks to each formally smooth quotient $(\cT_S^{S'})^c_d$ of $(\cT_S^{S'})^c_d$ smooth over $(G_S^{S'}/G_S)^c$, compatible under $*$-pullback;
\item Since $\cT_S^{S'}$ is a reasonably nice ind-scheme, we can apply the $!$-pullback to such a collection of complexes on $(\cT_S^{S'})^c$, and obtain one on $(\cT_S^{S'})^{c-1}$. In this way, the collections $\DC_{(\cT_S^{S'})^c}[-2\dim \bN_S+2|S|]$ are compatible under $!$-pullbacks. The resulting compatible collection is called $\DC_{\cT_S^{S'}}[-2\dim \bN_S+2|S|]$.\
\item Using the ind-finite codimensionality of the embedding $i\colon \cR_S^{S'}\to\cT_S^{S'}$, we form a $!$-compatible collection of $*$-compatible collections of complexes
$$
\DC_{\cR_S^{S'}}[-2\dim \bN_S+2|S|]:=i^!\DC_{\cT_S^{S'}}[-2\dim \bN_S+2|S|].
$$\end{enumerate}

\subsubsection{}We will apply the abbreviations of subsection \ref{one}\oldref{abbrv} to our spaces $\cR,\cT$ etc. so that for instance
$$
\cR_{\{1,2\}}^{\{2\}}
$$
becomes
$$
\cR_{12}^2.
$$
We will also write $X^S$ as $\Pi_{s\in S}X_s$, e.g. $X^{\{1,2\}}=X_1\times X_2$. The obvious starting point for the global convolution diagram is $\cR_1^1\times\cR_2^2$, a Zariski-locally trivial $\cR$-bundle over $X_1\times X_2$. Consider the following space:
$$
\cR_{1+2}(R)=\{((x_1,x_2),\cE_1,\cE_2,f_1,f_2,v_1,v_2)\}/\sim
$$
where $x_1,x_2$ are $R$-points of $X$, each $\cE_i$ a principal $G$-bundle on $\Delta_{12}$, $f_i$ is a trivialization of $\cE_i$ on $\Delta_{12}^i$, and $v_i$ is an $\bN$-section of $\cE_i$ such that $f_i(v_i)$ extends to $\Delta_{12}$. It is constructed as
$$
\cR_{1+2}=\cR_{12}^1\times_{X_1\times X_2}\cR_{12}^2,
$$
a reasonably nice ind-scheme over $X_1\times X_2$. It is of ind-finite codimension in the formally smooth reasonably nice ind-scheme
$$
\cT_{1+2}=\cT_{12}^1\times_{X_1\times X_2}\cT_{12}^2=\{((x_1,x_2),\cE_1,\cE_2,f_1,f_2,\tilde{v}_1,\tilde{v}_2)\}/\sim.
$$
There is a map
$$
\alpha\colon \cR_{1+2}\to \cR_1^1\times \cR_2^2
$$
given by restricting $\cE_i,f_i,v_i$ to $\Delta_i\subset\Delta_{12}$. Over the diagonal $X_0\subset X_1\times X_2$, this map $\alpha$ is an isomorphism. But on the complement $U$ of the diagonal, we have a canonical isomorphism
$$
\cR_{1+2}|_{U}=(\cR_1^1\times\cR_2^2)|_U\times_U (\bN_1\times \bN_2)|_U
$$
and $\alpha$ is just the projection. Nonetheless, $\alpha$ \emph{is ind-pro-smooth}. Indeed, it is the product over $X_1\times X_2$ of maps
$$
\cR_{12}^1\to \cR_1^1\times X_2,
$$
$$
\cR_{12}^2\to \cR_2^2\times X_1;
$$
so it suffices to see that the former is ind-pro-smooth. But note that we can write
$$
\cT_1^1\times X_2 = G_1^1\frac{\times_{X_1}}{^{G_1}}\bN_1\times X_2 = G_{12}^1\frac{\times_{X_1\times X_2}}{^{G_{12}}}\bN_1
$$
where $G_{12}$ acts on $\bN_1$ via the homomorphism $G_{12}\to G_1$. Then, the natural map
$$
\cT_{12}^1\to \cT_1^1\times X_2
$$
is that associated to the pro-smooth map $\bN_{12}\to\bN_1$, so is ind-pro-smooth. The fact that the diagram
$$
\begin{matrix}\cR_{12}^1 &\to& \cR_1^1\times X_2\\
\downarrow&&\downarrow\\
\cT_{12}^1&\to&\cT_1^1\times X_2\end{matrix}
$$
is Cartesian gives the result. We have:
\begin{align}
\label{alpha}\alpha^*\DC_{\cR_1^1\times\cR_2^2}[-2\dim \bN_1\times\bN_2] \cong \DC_{\cR_{1+2}}[-2\dim\bN_{12}\times_{X_1\times X_2}\bN_{12}].
\end{align}
Note that $\cR_1^1\times \cR_2^2$, $\cT_1^1\times\cT_2^2$ are acted on factor-wise by $G_1\times G_2$, which receives the factor-wise map from $G_{12}\times_{X_1\times X_2} G_{12}$. This latter group also acts in the natural way on $\cR_{1+2}$, $\cT_{1+2}$, and the diagram
$$
\begin{matrix}\cR_{1+2} &\to& \cR_1^1\times \cR_2^2\\
\downarrow&&\downarrow\\
\cT_{1+2}&\to&\cT_1^1\times \cT_2^2\end{matrix}
$$
is $G_{12}\times_{X_1\times X_2} G_{12}$-equivariant. This action preserves the inductive structure of the diagram, and also the locally nice structure of each closed piece, which allows us to view the appropriately shifted dualizing complex on each space as $G_{12}\times_{X_1\times X_2} G_{12}$-equivariant. We may thus define the shifted equivariant Borel-Moore homologies:
$$
H^{G_1\times G_2}_{*-2\dim \bN_1\times\bN_2}(\cR_1^1\times\cR_2^2),
$$
$$
H^{G_{12}\times_{X_1\times X_2} G_{12}}_{*-2\dim \bN_1\times\bN_2}(\cR_1^1\times\cR_2^2),
$$
$$
H^{G_{12}\times_{X_1\times X_2} G_{12}}_{*-2\dim \bN_{12}\times_{X_1\times X_2}\bN_{12}}(\cR_{1+2}),
$$
as the colimits of the equivariant cohomologies of the appropriately shifted dualizing complexes on the various finite-dimensional approximations. We have maps
$$
H^{G_1\times G_2}_{*-2\dim \bN_1\times\bN_2}(\cR_1^1\times\cR_2^2)\to H^{G_{12}\times_{X_1\times X_2} G_{12}}_{*-2\dim \bN_1\times\bN_2}(\cR_1^1\times\cR_2^2)\to H^{G_{12}\times_{X_1\times X_2} G_{12}}_{*-2\dim \bN_{12}\times_{X_1\times X_2}\bN_{12}}(\cR_{1+2}).
$$
The first map is the restriction of the equivariant structure, while the second is induced by $\alpha^*$, using equation \ref{alpha}. This is the first step of our global convolution story. %Note that for $\bN=0$, $\alpha$ is an isomorphism; and in any case $\alpha$ is an isomorphism over the diagonal. Consequently, this first step is invisible in the $\bN=0$ case, and in the usual (local) definition of convolution for general $\bN$.\\

\subsubsection{}Let's define the remaining parts of the global convolution diagram. We set
$$
\widetilde{\cR}_{1+2}=\{((x_1,x_2),\cE_1,\cE_2,f_1,f_2,v_1,v_2,g_1)\}/\sim
$$
where $x_1,x_2,\cE_1,\cE_2,f_1,f_2,v_1,v_2$ are as in $\cR_{1+2}$, and $g_1$ is a trivialization of $\cE_1$ (on $\Delta_{12}$) required to satisfy:
$$
g_1v_1=f_2v_2.
$$
Note that $v_1$ is determined by the rest of the data as $v_1=g_1^{-1}f_2v_2$. That is, $\widetilde{\cR}_{1+2}$ is related to
$$
\widetilde{\cT}_{1+2} := \{((x_1,x_2),\cE_1,\cE_2,f_1,f_2,v_2,g_1)\}/\sim = G_{12}^1\times_{X_1\times X_2}\cR_{12}^2
$$
by the Cartesian square
$$
\begin{matrix}
\widetilde{\cR}_{1+2}&\to&\widetilde{\cT}_{1+2}\\
\downarrow&&\downarrow\\
\cR_{12}^1&\to&\cT_{12}^1
\end{matrix}
$$
where the rightmost downward arrow is the composition
$$
\widetilde{\cT}_{1+2} = G_{12}^1\times_{X_1\times X_2}\cR_{12}^2 \xrightarrow{} G_{12}^1\times_{X_1\times X_2}\bN_{12}\to G_{12}^1\frac{\times_{X_1\times X_2}}{^{G_{12}}}\bN_{12} = \cT_{12}^1.
$$
We have factor-wise actions of $G_{12}\times_{X_1\times X_2}G_{12}$ on $\widetilde{\cR}_{1+2}$, $\widetilde{\cT}_{1+2}$, such that the Cartesian diagram
\begin{align}\label{beta}
\begin{matrix}\widetilde{\cR}_{1+2}&\xrightarrow{\beta}&\cR_{1+2}\\
\downarrow&&\downarrow\\
\widetilde{\cT}_{1+2}&\xrightarrow{b}&\cT_{12}^1\times_{X_1\times X_2}\cR_{12}^2\end{matrix}
\end{align}
is equivariant. In terms of points, the left-hand $G_{12}$ acts by changing the trivialization $f_1$, while the right-hand factor acts by changing simultaneously the trivializations $g_1$, $f_2$; $\beta$ is the map which simply forgets $g_1$. The right-hand $G_{12}$ acts freely, and the quotient space is
$$
\overline{\cR}_{1+2}=\{((x_1,x_2),\cE_1,\cE_2,f_1,g_1^{-1}f_2,v_1,v_2)\}/\sim
$$
where $x_1,x_2,\cE_1,\cE_2,f_1,v_1$ are as in $\cR_{1+2}$, while $g_1^{-1}f_2$ is an isomorphism from $\cE_2$ to $\cE_1$ over $\Delta_{12}^2$, and $v_2$ is an $\bN$-section of $\cE_2$ such that $g_1^{-1}f_2v_2$ extends to $\Delta_{12}$ and is equal to $v_1$ there (again $v_1$ is determined by the rest of the data). We write 
$$
\gamma\colon  \widetilde{\cR}_{1+2}\to \overline{\cR}_{1+2}
$$
for the projection. It is ind-pro-smooth. Finally, we have a natural map
$$
\begin{matrix}
\hfill \delta\colon   \overline{\cR}_{1+2} & \to & \cR_{12}^{12}  =  \{((x_1,x_2),\cE,f,v)\}/\sim\\
 ((x_1,x_2),\cE_1,\cE_2,f_1,g_1^{-1}f_2,v_1,v_2) & \mapsto & ((x_1,x_2),\cE_2,f_1g_1^{-1}f_2,v_2).\end{matrix}
$$
Note that $\delta$ factors as $\delta=\delta'\delta''$ where $\delta''\colon  \overline{\cR}_{1+2}\to\bullet$ is an ind-closed embedding of finite codimension and $\delta'\colon \bullet\to \cR_{12}^{12}$ is defined by the Cartesian square
$$
\begin{matrix}\bullet & \xrightarrow{\delta'} & \cR_{12}^{12}\\
\downarrow && \downarrow\\
G_{12}^1\frac{\times_{X_1\times X_2}}{^{G_{12}}}G_{12}^2/G_{12} & \xrightarrow{d} & G_{12}^{12}/G_{12}\end{matrix}
$$
where the bottom row is simply the top row for $\bN=0$, and the vertical maps forget $v_1,v_2,v$. It is well-known that $d$ is ind-projective; this fact shows up already in \cite{MV2} and essentially follows from~\ref{proper}. It follows that $\delta$ is also ind-projective, meaning that in each piece of the inductive structure, $\delta$ is Zariski-locally of the form
$$
Y\times \BA\xrightarrow{f\times \on{id}} Z\times \BA
$$
for $f\colon Y\to Z$ a projective map between schemes of finite type, and $\BA$ some affine space of countable dimension. In fact, $\delta$ is an isomorphism over $U$, while over the diagonal its fibers are products of closed subvarieties of affine Grassmannians. Furthermore, $\delta$ is $G_{12}$-equivariant.\\

\subsubsection{}The global convolution diagram is
$$
\cR_1^1\times\cR_2^2\xleftarrow{\alpha} \cR_{1+2}\xleftarrow{\beta} \widetilde{\cR}_{1+2}\xrightarrow{\gamma} \overline{\cR}_{1+2}\xrightarrow{\delta} \cR_{12}^{12}.
$$
As we have explained, $\alpha$, $\beta$ are $G_{12}\times_{ X_1\times X_2}G_{12}$-equivariant, $\gamma$ is the quotient map by the free action of the right-hand $G_{12}$, and $\delta$ is equivariant for the remaining copy of $G_{12}$. We have already explained how $\alpha$ defines a map
$$
\alpha^*\colon  H^{G_1\times G_2}_{*-2\dim \bN_1\times\bN_2}(\cR_1^1\times\cR_2^2)\to H^{G_{12}\times_{X_1\times X_2} G_{12}}_{*-2\dim \bN_{12}\times_{X_1\times X_2}\bN_{12}}(\cR_{1+2}).
$$
Everything else works out essentially as in the main paper, as we now indicate. First, recall the $G_{12}\times_{ X_1\times X_2}G_{12}$-equivariant Cartesian diagram \ref{beta}:
$$
\begin{matrix}\widetilde{\cR}_{1+2}&\xrightarrow{\beta}&\cR_{1+2}\\
\downarrow&&\downarrow\\
\widetilde{\cT}_{1+2}&\xrightarrow{b}&\cT_{12}^1\times_{X_1\times X_2}\cR_{12}^2\end{matrix}
$$
and recall that $\widetilde{\cT}_{1+2}$ is nothing other than $G_{12}^1\times_{X_1\times X_2}\cR_{12}^2$. Thus we may write
$$
\begin{matrix}b &=& \on{pr}_1b\times_{X_1\times X_2}\on{pr}_2b\\
\on{pr}_1b & = & \psi\phi \hfill \\
\on{pr}_2b & = & \on{pr}_2 \hfill \end{matrix}
$$
where we have factored $\on{pr}_1b$ as
$$
G_{12}^1\times_{X_1\times X_2}\cR_{12}^2\xrightarrow{\phi} G_{12}^1\times_{X_1\times X_2}\bN_{12}\xrightarrow{\psi} \cT_{12}^1.
$$
It follows that
\begin{align*}%\label{b2}
b^*\DC_{\cT_{12}^1\times_{X_1\times X_2}\cR_{12}^2}[-2\dim \bN_{12}\times_{X_1\times X_2}\bN_{12}] \cong \DC_{\widetilde{\cT}_{1+2}}[-2\dim \bN_{12}\times_{X_1\times X_2}G_{12}]
\end{align*}
and hence by base change we have have a map
\begin{align}\label{beta2}
\beta^*\DC_{\cR_{1+2}}[-2\dim \bN_{12}\times_{X_1\times X_2}\bN_{12}] \to \DC_{\widetilde{\cR}_{1+2}}[-2\dim \bN_{12}\times_{X_1\times X_2}G_{12}].
\end{align}
This map is equivariant, and it therefore determines a `pullback with support' map:
$$
\beta^*\colon H^{G_{12}\times_{X_1\times X_2} G_{12}}_{*-2\dim \bN_{12}\times_{X_1\times X_2}\bN_{12}}(\cR_{1+2})\to H^{G_{12}\times_{X_1\times X_2} G_{12}}_{*-2\dim \bN_{12}\times_{X_1\times X_2}G_{12}}(\widetilde{\cR}_{1+2}).
$$
Since it is a $G_{12}$-torsor, $\gamma$ induces an isomorphism
$$
\gamma^*\colon H^{G_{12}}_{*-2\dim \bN_{12}}(\overline{\cR}_{1+2}) \xrightarrow{\sim} H^{G_{12}\times_{X_1\times X_2} G_{12}}_{*-2\dim \bN_{12}\times_{X_1\times X_2}G_{12}}(\widetilde{\cR}_{1+2}).
$$
Finally since it is ind-proper and equivariant, $\delta$ induces a map
$$
\delta_*\colon H^{G_{12}}_{*-2\dim \bN_{12}}(\overline{\cR}_{1+2})\to H^{G_{12}}_{*-2\dim \bN_{12}}(\cR_{12}^{12}).
$$

\subsubsection{}Recall that (dual) specialization maps commute with pullbacks along smooth maps and pushforwards along proper maps, and are compatible with equivariance with respect to smooth group schemes. Therefore, since every space in sight is a reasonably nice ind-scheme and the groups $G_S$ are pro-smooth over $X^S$, we have (dual) specialization maps to the diagonal $X_0\subset X_1\times X_2$:
$$
\begin{matrix}s_1\colon  & H^{G_1\times G_2}_{*-2\dim \bN_1\times\bN_2}(\cR_1^1\times\cR_2^2) & \to & H^{G_0\times_{X_0} G_0}_{*-2\dim \bN_0\times_{X_0}\bN_0}(\cR_0^0\times_{X_0}\cR_0^0)\\
s_2\colon  & H^{G_{12}\times_{X_1\times X_2} G_{12}}_{*-2\dim \bN_1\times\bN_2}(\cR_1^1\times\cR_2^2) & \to & H^{G_{0}\times_{X_0} G_{0}}_{*-2\dim \bN_0\times_{X_0}\bN_0}(\cR_0^0\times_{X_0}\cR_0^0)\\
s_3\colon  & H^{G_{12}\times_{X_1\times X_2} G_{12}}_{*-2\dim \bN_{12}\times_{X_1\times X_2}\bN_{12}}(\cR_{1+2}) & \to & H^{G_{0}\times_{X_0} G_{0}}_{*-2\dim \bN_{0}\times_{X_0}\bN_{0}}(\cR_{0}^0\times_{X_0}\cR_0^0)\\
s_4\colon  & H^{G_{12}\times_{X_1\times X_2} G_{12}}_{*-2\dim \bN_{12}\times_{X_1\times X_2}G_{12}}(\widetilde{\cR}_{1+2}) & \to & H^{G_{0}\times_{X_0} G_{0}}_{*-2\dim \bN_{0}\times_{X_0}G_{0}}(\widetilde{\cR}_{0})\\
s_5\colon  & H^{G_{12}}_{*-2\dim \bN_{12}}(\overline{\cR}_{1+2}) & \to & H^{G_{0}}_{*-2\dim \bN_{0}}(\overline{\cR}_{0})\\
s_6\colon  & H^{G_{12}}_{*-2\dim \bN_{12}}(\cR_{12}^{12}) & \to & H^{G_{0}}_{*-2\dim \bN_{0}}(\cR_{0}^{0}).\end{matrix}
$$
Here $\widetilde{\cR}_{0}$, $\overline{\cR}_{0}$ are respectively locally trivial
$p^{-1}(\cR\times\cR)$, $q(p^{-1}(\cR\times\cR))$-bundles over $X_0$ in the notations of
diagram~\eqref{eq:65}. In fact, the restriction of the convolution diagram to $X_0$ induces the following maps between the targets of the specialization maps:
$$
\begin{matrix}H^{G_0\times_{X_0} G_0}_{*-2\dim \bN_0\times_{X_0}\bN_0}(\cR_0^0\times_{X_0}\cR_0^0) & \xrightarrow{\on{id}} & H^{G_0\times_{X_0} G_0}_{*-2\dim \bN_0\times_{X_0}\bN_0}(\cR_0^0\times_{X_0}\cR_0^0)\\
& \xrightarrow{\on{id}} & H^{G_0\times_{X_0} G_0}_{*-2\dim \bN_0\times_{X_0}\bN_0}(\cR_0^0\times_{X_0}\cR_0^0)\\
& \xrightarrow{\beta_0^*} & H^{G_{0}\times_{X_0} G_{0}}_{*-2\dim \bN_{0}\times_{X_0}G_{0}}(\widetilde{\cR}_{0})\\
& \xrightarrow{(\gamma_0^*)^{-1}} & H^{G_{0}\times_{X_0} G_{0}}_{*-2\dim \bN_{0}\times_{X_0}G_{0}}(\widetilde{\cR}_{0})\\
& \xrightarrow{(\delta_0)_*} & H^{G_{0}}_{*-2\dim \bN_{0}}(\cR_{0}^{0})\end{matrix}
$$
I claim that the maps $\alpha^*,\beta^*,(\gamma^*)^{-1},\delta_*$ are intertwined with $\on{id},\beta_0^*,(\gamma_0^*)^{-1},(\delta_0)_*$ by the (dual) specialization maps. For $\alpha^*$, $(\gamma^*)^{-1}$ it is a consequence of ind-pro-smoothness of $\alpha,\gamma$ (and also pro-smoothness of $G_{12}$). For $\delta_*$ it is a consequence of ind-properness. For $\beta^*$, it is because the map
$$
(\beta_0)^*\DC_{\cR_0^0\times_{X_0}\cR_0^0}[-2\dim \bN_0\times_{X_0}\bN_0] \to \DC_{\widetilde{\cR}_{0}}[-2\dim \bN_{0}\times_{X_0}G_{0}]
$$
defined using the Cartesian square:
$$
\begin{matrix}\widetilde{\cR}_{0}&\xrightarrow{\beta_0}&\cR_{0}^0\\
\downarrow&&\downarrow\\
\widetilde{\cT}_{0}^0&\xrightarrow{b_0}&\cT_{0}^0\times_{X_0}\cR_{0}^0\end{matrix}
$$
obtained by restricting diagram \ref{beta} to $X_0$, factors as:
$$
\begin{matrix}
(\beta_0)^*\DC_{\cR_0^0\times_{X_0}\cR_0^0}[-2\dim \bN_0\times_{X_0}\bN_0] & \cong & (\beta_0)^*i_1^!\DC_{\cR_{1+2}}[-2\dim \bN_{12}\times_{X_1\times X_2}\bN_{12}+2]\\
& \xrightarrow{can} & i_2^!\beta^*\DC_{\cR_{1+2}}[-2\dim \bN_{12}\times_{X_1\times X_2}\bN_{12}+2]\\
& \xrightarrow{i_2^![2](\text{\ref{beta2}})} & i^!\DC_{\widetilde{\cR}_{1+2}}[-2\dim \bN_{12}\times_{X_1\times X_2}G_{12}+2]\\
& \cong & \DC_{\widetilde{\cR}_{0}}[-2\dim \bN_{0}\times_{X_0}G_{0}].\end{matrix}
$$
Here $can$ is the canonical map arising from the base change isomorphism,~(\ref{beta2}) denotes the map of equation~\ref{beta2}, and $i_1,i_2$ denote the appropriate inclusions of the diagonal subspaces. The consequence is the following formula:
$$
s_6\delta_*(\gamma^*)^{-1}\beta^*\alpha^*=(\delta_0)_*(\gamma_0^*)^{-1}\beta_0^*s_1\colon H^{G_1\times G_2}_{*-2\dim \bN_1\times\bN_2}(\cR_1^1\times\cR_2^2)\to H^{G_{0}}_{*-2\dim \bN_{0}}(\cR_{0}^{0}).
$$

\subsubsection{}Now each (dual) specialization map $s_n$ factors as $s_n'j_n^*$ where $j_n^*$ is the restriction map to the equivariant Borel-Moore homology of the part lying over $U$, and $s_n'$ is some other map. Furthermore, the restriction of the convolution diagram to $U$ induces the following maps between the targets of the restriction maps:
$$
\begin{matrix} & & H^{(G_1\times G_2)|_U}_{*-2\dim (\bN_1\times\bN_2)|_U}((\cR_1^1\times\cR_2^2)|_U) \hfill\\
& \xrightarrow{} & H^{(G_1\times G_2)|_U\times_{U} (G_1\times G_2)|_U}_{*-2\dim (\bN_1\times\bN_2)|_U}((\cR_1^1\times\cR_2^2)|_U) \hfill\\
& \xrightarrow{} & H^{(G_1\times G_2)|_U\times_{U} (G_1\times G_2)|_U}_{*-2\dim (\bN_1\times \bN_2)|_U\times_{U}(\bN_1\times \bN_2)|_U}((\cR_1^1\times \bN_2)|_U\times_{U} (\bN_1\times \cR_2^2)|_U) \hfill\\
& \xrightarrow{} & H^{((G_1\times_{X_1} G_1)\times (G_2\times_{X_2} G_2))|_U}_{*-2\dim ((\bN_1\times_{X_1}G_1)\times (G_2\times_{X_2}\bN_2))|_U}((\widetilde{\cR}_1\times (G_2\times_{X_2}\cR_2^2))|_U) \hfill\\
& \xrightarrow{} & H^{(G_{1}\times G_2)|_U}_{*-2\dim (\bN_{1}\times\bN_2)|_U}((\cR_1^1\times \cR_2^2)|_U) \hfill\\
& \xrightarrow{} & H^{(G_{1}\times G_2)|_U}_{*-2\dim (\bN_{1}\times\bN_2)|_U}((\cR_1^1\times \cR_2^2)|_U) \hfill\end{matrix}
$$
Let us explain what each map does:
\begin{enumerate}\item The first map views any $(G_1\times G_2)|_U$-equivariant class as also equivariant for the trivial actions of the left-hand copy of $G_2$, and the right-hand copy of $G_1$, in $(G_1\times G_2)|_U\times_{U} (G_1\times G_2)|_U$.\
\item The second map pulls this back along the $(\bN_2\times\bN_1)|_U$-bundle map (i.e. multiplies fiberwise by the equivariant fundamental class of $\bN(\cO)\times\bN(\cO)$).\
\item The third map starts by rewriting $(\cR_1^1\times \bN_2)|_U\times_{U} (\bN_1\times \cR_2^2)|_U$ as $((\cR_1^1\times_{X_1}\bN_1)\times(\bN_2\times_{X_2}\cR_2^2))|_U$, and rewriting the action of $(G_1\times G_2)|_U\times_{U} (G_1\times G_2)|_U$ as one of $((G_1\times_{X_1} G_1)\times (G_2\times_{X_2} G_2))|_U$. By definition, $\widetilde{\cR}_1$ is the locally trivial $p^{-1}(\cR\times\cR)$-bundle on $X_1$ given as
$$
\widetilde{\cR}_1=\bN_1\times_{\bN_1^1}(G_1^1\times_{X_1}\bN_1).
$$
The $G_1\times_{X_1}G_1$-equivariant map from here to $\cR_1^1\times_{X_1}\bN_1$ is given as the product (over $X_1$) of the quotient by the right-hand copy of $G_1$ with the projection to the right-hand copy of $\bN_1$. The `pullback with support' map
$$
H_{*-2\dim \bN_1\times_{X_1}\bN_1}^{G_1\times_{X_1}G_1}(\cR_1^1\times_{X_1}\bN_1)\to H_{*-2\dim \bN_1\times_{X_1}G_1}^{G_1\times_{X_1}G_1}(\widetilde{\cR}_1)
$$
corresponds to the composition of usual `pullback with support' (spread out over $X_1$) with multiplication by $H^*_{G_1}(X_1)$ under the identification 
$$
H_{*-2\dim \bN_1\times_{X_1}\bN_1}^{G_1\times_{X_1}G_1}(\cR_1^1\times_{X_1}\bN_1)=H_{*-2\dim \bN_1}^{G_1}(\cR_1^1)\otimes_{H^*(X_1)} H^*_{G_1}(X_1).
$$ Meanwhile, the `pullback with support' (actually, here no support is required) map
$$
H_{*-2\dim \bN_2\times_{X_2}\bN_2}^{G_2\times_{X_2}G_2}(\bN_2\times_{X_2}\cR_2^2)\to H_{*-2\dim G_2\times_{X_2}\bN_2}^{G_2\times_{X_2}G_2}(G_2\times_{X_2}\cR_2^2)
$$
is isomorphic simply to the multiplication map 
$$
H^*_{G_2}(X_2)\otimes_{H^*(X_2)}H_{*-2\dim \bN_2}^{G_2}(\cR_2^2)\to H_{*-2\dim \bN_2}^{G_2}(\cR_2^2).
$$\\
\item The fourth map is the isomorphism, and the fifth is the identity.\end{enumerate}
The result is that the composition of all these maps is the identity. On the other hand, since the restriction maps $j_n^*$ intertwine these maps with the corresponding maps on the $X_1\times X_2$ level, we have the following:
$$\begin{matrix}
(\delta_0)_*(\gamma_0^*)^{-1}\beta_0^*s_1 & = & s_6\delta_*(\gamma^*)^{-1}\beta^*\alpha^*\hfill\\
& = & s'_6j_6^*\delta_*(\gamma^*)^{-1}\beta^*\alpha^*\\
& = & s'_6j_1^*.\hfill\end{matrix}
$$

\subsubsection{}Finally, note that this last map $s'_6j_1^*$ is symmetric with respect to the automorphism $\tau$ of $H^{G_1\times G_2}_{*-2\dim \bN_1\times\bN_2}(\cR_1^1\times\cR_2^2)$ induced by the degree $2$ automorphisms of $G_1\times G_2$, $\cR_1^1\times\cR_2^2$ which switch the factors (and also exchange $1$ with $2$). Therefore, $(\delta_0)_*(\gamma_0^*)^{-1}\beta_0^*s_1$ has the same property. But, taking $X=\BC$, we identify the domain
$$
H^{G_1\times G_2}_{*-2\dim \bN_1\times\bN_2}(\cR_1^1\times\cR_2^2)=H^{G(\cO)}_{*-2\dim \bN(\cO)}(\cR)\otimes H^{G(\cO)}_{*-2\dim \bN(\cO)}(\cR)
$$
and the target 
$$
H^{G_{0}}_{*-2\dim \bN_{0}}(\cR_{0}^{0}) = H^{G(\cO)}_{*-2\dim \bN(\cO)}(\cR).
$$
The map $(\delta_0)_*(\gamma_0^*)^{-1}\beta_0^*s_1$ is the usual convolution ($s_1$ is an isomorphism) while $\tau$ is the standard twist. Therefore, the Coulomb branch is commutative as claimed.

%%% Local Variables:
%%% mode: latex
%%% TeX-master: "affine_pre"
%%% End:

\bibliographystyle{myamsalpha}
\bibliography{nakajima,mybib,coulomb}

\section*{Erratum}

As pointed out by Bielawski~\cite{Bie21}, the statement of
Proposition~5.20 is not correct. The action of $G^\vee$ on the second
factor should be twisted by the automorphism of $G^\vee$ interchanging
conjugacy classes of $g$ and $g^{-1}$. We use the automorphism in the
proof of Proposition~5.20, as $\mathfrak C_{G^\vee}$ is induced from
that.
The same correction applies to the last sentence of Remark~5.22.

We also need to twist the first factor of the diagonal embedding
$\Gr_G\to(\Gr_G)^2$ in Proposition~5.23 by the automorphism induced by
$g\mapsto g^{-1}$, $G((z))\to G((z))$. The induced automorphism
$\mathcal C_G$ on $D_G(\Gr_G)$ corresponds to $\mathfrak C_{G^\vee}$
under the derived Satake equivalence $\Psi$. The latter appeared
in $5$ lines above from the end of the proof of Proposition~5.23.

%%% Local Variables:
%%% mode: latex
%%% TeX-master: "affine_pre"
%%% End:

\end{document}